\documentclass[12pt]{article}
\usepackage{amssymb,amsmath}
\usepackage{cancel}
\usepackage{palatino}
\usepackage{longtable}
\usepackage{dsfont}
\usepackage[hmargin=3cm,vmargin=3cm]{geometry}
\usepackage{color}
\numberwithin{equation}{section}
\newtheorem{theorem}{Theorem}[section]   
     
\newtheorem{definition}[theorem]{Definition}
\newtheorem{defi}[theorem]{Definition}

\newtheorem{prop}[theorem]{Proposition}
\newtheorem{lemma}[theorem]{Lemma}
\newtheorem{rmk}[theorem]{Remark}

\newtheorem{example-notation}[theorem]{Example-Notation}
\newtheorem{remark}[theorem]{Remark}

\def\Tr{\mathrm{Tr}}

\def\d{\partial}
\def\n{\noindent}
\def\f{\frac}
\def\dna{d_{\nabla}}

\def\QEDclosed{\mbox{\rule[0pt]{1.3ex}{1.3ex}}} 

\def\QED{\QEDclosed} 
\def\endproof{\hspace*{\fill}~\QED\par\endtrivlist\unskip}
\newcommand{\eqa}{\begin{eqnarray}}
\newcommand{\eeqa}{\end{eqnarray}}
\newcommand{\beq}{\begin{equation}}
\newcommand{\eeq}{\end{equation}}

\setlength{\textwidth}{6.0in}
\setlength{\topmargin}{-0.2in}
\setlength{\textheight}{9.0 in} 
\setlength{\oddsidemargin}{.3in}
\setlength{\baselineskip}{2.5\baselineskip}    
\allowdisplaybreaks


\begin{document}
\title{Integrable systems, Nijenhuis geometry and Lauricella bi-flat structures}
\author{Paolo Lorenzoni$^1$ and Sara Perletti$^2$\\
{\small $^1$ Dipartimento di Matematica e Applicazioni}\\
{\small Universit\`a di Milano-Bicocca,}
{\small Via Roberto Cozzi 53, I-20125 Milano, Italy}\\
{\small paolo.lorenzoni@unimib.it}\\
{\small $^2$ Dipartimento di Matematica e Applicazioni}\\
{\small Universit\`a di Milano-Bicocca,}
{\small Via Roberto Cozzi 53, I-20125 Milano, Italy}\\
{\small s.perletti1@unimib.it}
}

\date{}
\maketitle
\vspace{-0.2in}

\begin{abstract} 
Combining the construction of integrable systems of hydrodynamic type starting from the Fr\"{o}licher-Nijenhuis bicomplex $(d,d_L)$ associated
 with a $(1,1)$-tensor field $L$ with vanishing Nijenhuis torsion with the construction of flat structures starting from integrable systems of hydrodynamic type  we define multi-parameter families of bi-flat structures $(\nabla,e,\circ,\nabla^*,*,E)$ associated with Fr\"{o}licher-Nijenhuis bicomplexes.   We call these structures Lauricella bi-flat structures since in the $n$-dimensional semisimple case $(n-1)$ flat coordinates of $\nabla$ are Lauricella functions. 
\end{abstract}    
 
 \tableofcontents
 
 \section{Introduction}
Given a tensor field $L$ of  type $(1,1)$ on a manifold $M$ with vanishing Nijenhuis torsion 
it is possible to define a bi-differential complex $(d,d_L,\Omega(M))$ called the  Fr\"{o}licher-Nijenhuis bicomplex on the Grasmann algebra $\Omega(M)$ of differential forms on $M$ (see \cite{FN}). The differential $d$ is the usual de Rham differential while the differential $d_L$ is defined as
 $$(d_L \omega)(X_0, \dots, X_k)=\sum_{i=0}^k (-1)^i (LX_i)(\omega(X_0, \dots, \hat{X}_i, \dots, X_k))+$$
$$+\sum_{0\leq i<j\leq k}(-1)^{i+j}\omega([X_i, X_j]_L, X_0, \dots, \hat{X}_i, \dots, \hat{X}_j, \dots X_k),$$
where  $\omega\in\Omega^k(M)$ and  
$$[X_i,X_j]_L=[LX_i, X_j]+[X_i, LX_j]-L[X_i,X_j].$$
For $L=I$ the vector field $[X_i,X_j]_L$ reduces to the commutator $[X_i,X_j]$ and the differential $d_L$ to $d$.

The fact that $d^2_L=0$ is equivalent to the vanishing of the Nijenhuis torsion, i.e. for any pair of vector fields $X$ and $Y$ we have
\beq\label{nijenhis}
[LX,LY]-L\,[X,LY]-L\,[LX,Y]+L^2\,[X,Y]=0.
\eeq
The differentials $d$ and $d_L$ anticommute. Indeed taking into account that also the tensor field $L+I$ 
 has vanishing Nijenhuis torsion we have 
\begin{eqnarray*}
d^2_{L+I}=(d+d_L)^2=d\cdot d_L+d_L\cdot d=0.
\end{eqnarray*}
In other words the pair $(d,d_L)$ defines a bidifferential complex. 
Such complex plays an important role in the theory of integrable systems, in both finite \cite{Magri} and infinite dimensional
 case.
 
 In the latter case it has been proved in \cite{LM} that starting from the Fr\"{o}licher-Nijenhuis bicomplex it is possible to construct an integrable hierachy of
  quasilinear systems of PDEs of the form
$$u^i_{t_n}=(V_n({\bf u}))^i_ju^j_x.$$
The tensor fields $V_n$ which define this hierarchy are polynomials in $L$
  $$V_n=L^n+a_0L^{n-1}+a_1L^{n-2}+\cdots+a_{n-1}I.$$
  The coefficients of these polynomials are obtained recursively from $a_0$ by means of a generalized Lenard-Magri chain.    
It has been proved in \cite{Limrn} that, in the case $L={\rm diag}(u_1,...,u_n)$ and $a_0=\sum_k\epsilon^k u_k$, the above hierarchy can be identified with the principal hierarchy of a bi-flat F-manifold. Bi-flat F-manifolds are a generaliziation of Dubrovin-Frobenius manifolds. The flat pencil of metrics associated with a Dubrovin-Frobenius manifold is replaced by a pair of flat connections satisfying suitable compatibility conditions. Many results in the theory of Dubrovin-Frobenius manifolds can be generalized in this more general setting. We mention:
\begin{itemize}
\item the relation with reflection groups \cite{KMS15,ALcomplex,KMS}.
\item the relation with Painlevé transcendents \cite{ALjgp,Limrn,ALmulti,KMS15,KM}. We point out that this relation survives also in the regular 3-dimensional non semisimple cases  leading to a correspondence with solutions of the full family of Painlev\'e IV and V depending on the number of Jordan blocks appearing in the Jordan form of the operator of multiplication by the Euler vector field, while in the case of Dubrovin-Frobenius manifolds the same cases are parametrized by elementary functions \cite{LP22}.   
\item the relation with (generalization of) cohomological field theories and (generalization of) Givental group action \cite{ABLR1}.
\item the relation with dispersive integrable hierarchies \cite{BR,ABLR2}.
\end{itemize}
 
The aim of this paper is to extend the construction of \cite{Limrn} in the non semisimple regular case. The extension is based on two main facts:
\begin{enumerate}
\item The construction of integrable hierarchies of hydrodynamic type starting from a $(1,1)$-tensor field $L$ with vanishing Nijenhuis torsion
 does not require that $L$ is diagonalizable.
\item In the theory of bi-flat F-manifolds there is a natural operator with vanishing Nijenhuis torsion: it is the operator of multiplication by the Euler
 vector field. Moreover in the regular case there are special coordinates
 found by David and Hertling in \cite{DH} where the unit vector field, the Euler vector field and the product 
  have a canonical form. 
\end{enumerate}
The above facts suggest to proceed in the following way:
\begin{itemize}
\item Following \cite{LM} for each David-Hertling canonical form of $L$ and for a suitable choice of the function $a_0$ we can construct integrable hierarchies of hydrodynamic type. Assuming that $a_0$ is a weighted sum of the traces of the blocks appearing in the David-Hertling canonical form of $E\circ$, the freedom reduces to a choice of $r$ parameters (the weights) where $r$ is the number of the blocks.
\item Then, following \cite{LP} we can impose that the flows of this hierarchy are symmetries of the principal
 hierarchy associated with a bi-flat F-manifold. 
\end{itemize}
The main result of the paper is that for any choice of the weights there is a unique associated bi-flat structure. Following \cite{ALmulti} we call bi-flat F-manifolds obtained in this way \emph{Lauricella bi-flat F-manifolds} since in the semisimple case they are related to the theory of Lauricella functions \cite{Lauricella} and Lauricella connections \cite{Looijenga}.
 
The paper is organized as follows: in Section 2 we recall the main facts about the theory of integrable systems of hydrodynamic type, in Section 3 we recall
 the definition of  (bi)-flat F-manifolds and the associated principal hierarchy; in Section 4 we recall the construction of integrable systems of hydrodynamic type
  by means of Fr\"{o}licher-Nijenhuis bicomplex and in Section 5 we recall the definition of Lauricella bi-flat F-manifolds in the semisimple case. Section 6 is devoted to describe non semisimple regular Lauricella bi-flat F-manifolds in dimensions $2,3,4,5$. This section plays an important role since it suggests a strategy to   prove the main result of the paper, first in the case of a single Jordan block of arbitrary size (Section 7) and finally in the general regular case (Section 8).   
   Even if the results of section 7 follow from the general theorem proved in Section 8, we have decided to keep it
    since it contains the essential ideas of the general proof without the extra  non trivial difficulties due to the high number of subcases that one has to consider in the case of an arbitrary number of Jordan blocks.
\newline
\newline
\noindent{\bf Acknowledgements}.  P.~L. is supported by funds of H2020-MSCA-RISE-2017 Project No. 778010 IPaDEGAN. We acknowledge the support of  INFN   (Istituto Nazionale di Fisica Nucleare) by IS-CSN4 Mathematical Methods of Nonlinear Physics. Authors are also thankful to GNFM (Gruppo Nazionale di Fisica Matematica) for supporting activities that contributed to the research reported in this paper.
 
\section{Integrable systems of hydrodynamic type}
Integrable diagonal systems of hydrodynamic type 
\begin{equation}
\label{hts}
u^i_t=v^i(u)u^i_x\qquad i=1,...,n.
\end{equation}
have been studied by Tsarev in \cite{ts}. Assuming $v^i\ne v^j$ Tsarev proved that all the information about the integrability of such systems is contained in the $n(n-1)$ functions
\begin{equation}\label{CHsymb}
\Gamma^i_{ij}=\f{\d_j v^i}{v^j-v^i},\qquad i\ne j.
\end{equation}
The system is integrable iff these functions satisfy the conditions 
\begin{equation}
\label{semih}
\partial_j\left(\f{\d_k v^i}{v^k-v^i}\right)=\partial_k\left(\f{\d_j v^i}{v^j-v^i}\right),\qquad\forall i\ne j\ne k\ne i.
\end{equation}
Systems satisfying the condition \eqref{semih} are called semi-Hamiltonian systems or rich systems. They possess infinitely many symmetries (depending on $n$ functions of a single variable)
\begin{equation}
\label{hts2}
u^i_{\tau}=w^i(u)u^i_x\qquad i=1,...,n.
\end{equation}
obtained solving the linear system
\beq\label{sym}
\d_j w^i=\Gamma^i_{ij}(w^j-w^i)
\eeq
and infinitely many densities of conservation laws 
 obtained solving the linear system
\begin{equation}
\label{cl}
\d_i\d_j h-\Gamma^i_{ij}\d_i h-\Gamma^j_{ji}\d_j h=0, \qquad i\neq j.
\end{equation}

Let us consider now a general system of hydrodynamic type
\beq\label{sht}
u^i_t=V^i_j({\bf u})u^j_x,\qquad i=1,...,n.
\eeq
Assuming that at each point the $(1,1)$-tensor field $V$ has pairwise distict eigenvalues, the diagonalizability of the system is equivalent to the vanishing of the Haantjes tensor of $V$
\cite{H} and the semi-Hamiltonian condition \eqref{shder} is equivalent to the vanishing of a tensor field, called the \emph{semi-Hamiltonian tensor}  \cite{PSS}. Sevennec observed that semi-Hamiltonian systems can be equivalently defined as diagonalizable systems of hydrodynamic type that can be written as systems of conservation laws \cite{Sevennec}.

The main equations of Tsarev's theory can be also formulated in terms of a family of torsionless connections.
\begin{defi}
Given an affinor $V$, a torsionless connection $\nabla$ satisfying 
\beq\label{tsarevconnection}
d_{\nabla}V=0
\eeq
will be called a \emph{Tsarev's connection} associated with $V$.
\end{defi}

Tsarev's connections are not uniquely defined: 
in the Riemann invariants $(r^1,...,r^n)$ where $V={\rm diag}(v^1,...,v^n)$ the above condition is equivalent to $\Gamma^i_{jk}=0$ for pairwise distinct indices and to \eqref{CHsymb}. Moreover $\Gamma^i_{ji}=\Gamma^i_{ij}$ due to the vanishing of the torsion.
 All the remaining Christoffel symbols $\Gamma^i_{jj}$ and $\Gamma^i_{ii}$ are free. To prove this fact we have to spell out the condition
$$(\dna V)^i_{jk}=\d_j V^i_k+\Gamma^i_{mj}V^m_k-\d_k V^i_j-\Gamma^i_{mk}V^m_j=0$$
in the Riemann invariants. If all indices are distinct we obtain 
\[(\dna V)^i_{jk}=\Gamma^i_{kj}(v^k-v^j)=0\] 
and as a consequence, taking into account that the characteristic velocities are pairwise distinct, we obtain $\Gamma^i_{kj}=0$ for $i\ne j\ne k\ne i$. If $i=k$ (or $i=j$) we get 
$$(\dna V)^i_{ji}=\d_j v^i+\Gamma^i_{ij}(v^i-v^j)=0.$$

\begin{theorem}
A diagonalizable system of hydrodynamic type with pairwise distinct characteristic velocities is \emph{semi-Hamiltonian} iff the Tsarev's connections
 associated with $V$ satisfy the condition $\dna^2 W=0$ for any matrix $W$
 commuting with $V$.
\end{theorem}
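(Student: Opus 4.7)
Plan. I would work in Riemann invariants $(r^1,\ldots,r^n)$ where $V=\mathrm{diag}(v^1,\ldots,v^n)$ with pairwise distinct $v^i$. As recalled before the theorem, in these coordinates a Tsarev connection has $\Gamma^i_{jk}=0$ for pairwise distinct $i,j,k$ and $\Gamma^i_{ij}=\Gamma^i_{ji}=\gamma^i_j:=\partial_j v^i/(v^j-v^i)$ for $i\neq j$, while the symbols $\Gamma^i_{ii}$ and $\Gamma^i_{jj}$ (for $j\neq i$) remain free. Since $V$ has pairwise distinct eigenvalues, any matrix $W$ commuting with $V$ is automatically diagonal, $W=\mathrm{diag}(w^1,\ldots,w^n)$.

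First I would translate $\dna^2 W=0$ into an algebraic condition on the curvature $R$ of $\nabla$. Treating $W$ as a $TM$-valued 1-form, torsionlessness yields $(\dna W)^i_{jk}=\nabla_j W^i_k-\nabla_k W^i_j$; iterating, applying the Ricci identity $[\nabla_j,\nabla_k]W^i_l=R^i_{mjk}W^m_l-R^m_{ljk}W^i_m$, and using the first Bianchi identity $\sum_{\mathrm{cyc}(j,k,l)}R^m_{ljk}=0$ (valid for torsionless $\nabla$) to kill the cyclic sum of $R^m_{ljk}W^i_m$ terms, one obtains
\begin{equation*}
(\dna^2 W)^i_{jkl}=R^i_{mjk}W^m_l+R^i_{mkl}W^m_j+R^i_{mlj}W^m_k.
\end{equation*}
Substituting the diagonal form $W^m_p=w^m\delta^m_p$ reduces the right-hand side to $R^i_{ljk}w^l+R^i_{jkl}w^j+R^i_{klj}w^k$, so requiring this to vanish for all pairwise distinct $j,k,l$ and for every choice of the $w^a$ is equivalent to $R^i_{ljk}=0$ for all pairwise distinct $l,j,k$ and every $i$.

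Next I would compute these curvature components using the Tsarev structure. A crucial preliminary observation is that the free Christoffels $\Gamma^i_{ii}$ and $\Gamma^i_{jj}$ ($j\neq i$) never enter $R^i_{ljk}$ with $l,j,k$ pairwise distinct, so the condition is intrinsic to $V$. When $i\notin\{l,j,k\}$, every Christoffel appearing in the defining formula has three distinct indices, and the quadratic cross terms vanish by index-incompatibility, so $R^i_{ljk}=0$ automatically. When $i=l$, the quadratic terms cancel against each other and one finds
\begin{equation*}
R^l_{ljk}=\partial_j\gamma^l_k-\partial_k\gamma^l_j,
\end{equation*}
whose vanishing is precisely the semi-Hamiltonian condition \eqref{semih}. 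When $i\in\{j,k\}$, a parallel computation produces, after simplification, the classical Tsarev identity
\begin{equation*}
\partial_k\gamma^i_j=\gamma^i_j\gamma^j_k+\gamma^i_k\gamma^k_j-\gamma^i_k\gamma^i_j.
\end{equation*}

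The main step requiring care is to check that this last identity, coming from the case $i\in\{j,k\}$, is implied by the semi-Hamiltonian condition together with the explicit form $\gamma^i_j=\partial_j v^i/(v^j-v^i)$. This is a direct rational-function calculation: using $\partial_j\partial_k v^i=\partial_k\partial_j v^i$ and clearing denominators, the identity reduces to an algebraic consequence of \eqref{semih}; equivalently, it is exactly the Frobenius compatibility condition for the symmetry system \eqref{sym}, which by Tsarev's theorem admits solutions depending on $n$ arbitrary functions of one variable precisely under the semi-Hamiltonian hypothesis. Combining the three cases, the vanishing of $R^i_{ljk}$ for all pairwise distinct $l,j,k$ is equivalent to \eqref{semih}, which is the statement of the theorem.
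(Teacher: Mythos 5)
Your proof is correct and follows essentially the same route as the paper's: expressing $(\dna^2 W)$ through the curvature, using diagonality and arbitrariness of $W$ to reduce to the vanishing of $R^i_{ljk}$ for pairwise distinct lower indices, identifying the two nontrivial families of conditions (the closure condition \eqref{shder}/\eqref{semih} and the Darboux--Tsarev system \eqref{sh}), and closing the loop via Tsarev's identity \eqref{tsarevid}. Your treatment is, if anything, slightly more explicit than the paper's (the Bianchi-identity justification of the $\dna^2$ formula, and the observation that the free Christoffel symbols and the case of four pairwise distinct indices drop out automatically), but the substance is the same.
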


\noindent
\emph{Proof}. By straightforward computation we get 
 $$[\dna^2 W]_{ijk}^l =R^l_{ijn}W^n_k+R^l_{jkn}W^n_i+R^l_{kin}W^n_j,$$
 where $R$ is the Riemann tensor of $\nabla$:
\[R^k_{ijl}=\d_j\Gamma^k_{il}- \d_i\Gamma^k_{jl}+\Gamma^k_{js}\Gamma^s_{il}-\Gamma^k_{is}\Gamma^s_{jl}.\]
In the Riemann invariants the set of matrices $W$ are diagonal and the
condition $\dna^2 W=0$  reads
 $$[\dna^2 W]_{ijk}^l =R^l_{ijk}w^k+R^l_{jki}w^i+R^l_{kij}w^j=0$$
 for any $(w^1,...,w^n)$. Due to arbitrariness of $W$ this is equivalent to $R^l_{ijk}=0$ for pairwise distinct indices $i,j,k$. If at least two of
  the three  indices $i,j,k$ are equal the condition is automatically satisfied since $R^l_{jik}=-R^l_{ijk}$. Thus assuming the indices $i,j,k$ pairwise
   distinct we need to consider the case $l=i$ (the cases $l=j$ and $l=k$ are equivalent). Due to arbitrariness of $W$ we obtain the conditions
 \[R^i_{jki}=0,\quad R^i_{ijk}=0.\]
The second condition, also known as \emph{Darboux-Tsarev system}, reads
\begin{equation}\label{sh}
\d_i\Gamma^k_{kj}+\Gamma^k_{ki}\Gamma^k_{kj}-\Gamma^k_{kj}\Gamma^j_{ji}
-\Gamma^k_{ik}\Gamma^i_{ij}=0,\qquad\forall i\ne j\ne k\ne i
\end{equation}
while the first condition reads
\begin{equation}
\label{shder}
\partial_j\Gamma^i_{ik}=\partial_k\Gamma^i_{ij},\qquad\forall i\ne j\ne k\ne i.
\end{equation}
Clearly \eqref{shder} follows from \eqref{sh}. Remarkably, if the functions $\Gamma^i_{ij}$ are given by \eqref{CHsymb} both conditions \eqref{semih} and  \eqref{sh} are equivalent to \eqref{shder} due to the identity \cite{ts}
\begin{equation}\label{tsarevid}
\d_i\Gamma^k_{kj}+\Gamma^k_{ki}\Gamma^k_{kj}-\Gamma^k_{kj}\Gamma^j_{ji}
-\Gamma^k_{ki}\Gamma^i_{ij}=\f{v^i-v^k}{v^j-v^i}\left[\d_j\left(\f{\d_i v^k}{v^i-v^k}\right)-\d_i\left(\f{\d_j v^k}{v^j-v^k}\right)\right].
\end{equation} 

\endproof

The Sevennec's result can be formulated as follows.
\begin{theorem}
Let $V$ be a $(1,1)$-tensor field with distinct eigenvalues and with vanishing Haantjies tensor, then 
$V$ defines a semi-Hamiltonian system iff among the associated  Tsarev's connections there is a flat
 connection. 
\end{theorem}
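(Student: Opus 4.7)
The plan is to handle the two implications separately: the backward direction follows at once from the preceding theorem, while the forward direction is established through a conservation-law construction.

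For the backward direction, suppose a flat Tsarev's connection $\nabla$ associated with $V$ exists. Since its Riemann tensor vanishes, the operator $d_{\nabla}^{2}$ vanishes on every tensor field, and in particular $d_{\nabla}^{2}W=0$ for every matrix $W$ commuting with $V$. By the previous theorem this forces $V$ to define a semi-Hamiltonian system.

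For the forward direction, one works in the Riemann invariants $(r^{1},\ldots,r^{n})$ in which $V=\mathrm{diag}(v^{1},\ldots,v^{n})$. The semi-Hamiltonian hypothesis makes the linear overdetermined system
\[
\partial_{i}\partial_{j}h-\Gamma^{i}_{ij}\partial_{i}h-\Gamma^{j}_{ji}\partial_{j}h=0,\qquad i\neq j,
\]
involutive, and Tsarev's classical integrability theorem then produces a general solution depending on $n$ arbitrary functions of one variable. Select $n$ functionally independent solutions $h^{(1)},\ldots,h^{(n)}$ in a neighborhood of a generic point and define $\nabla$ as the unique torsionless connection whose flat coordinates are $x^{a}:=h^{(a)}$; by construction $\nabla$ is flat, so one only needs to verify $d_{\nabla}V=0$ in order to conclude that $\nabla$ belongs to the family of Tsarev's connections associated with $V$.

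To check the last property, observe that for each $a$ the compatibility condition for the linear system $\partial_{i}f^{(a)}=v^{i}\partial_{i}h^{(a)}$ reduces to the conservation-law equation satisfied by $h^{(a)}$; hence a flux $f^{(a)}$ exists, and rewriting $V$ in the coordinates $x$ yields $\tilde V^{a}_{b}=\partial f^{(a)}/\partial x^{b}$, so that $V$ appears as the Jacobian of $(f^{(1)},\ldots,f^{(n)})$. Consequently $(d_{\nabla}V)^{a}_{bc}=\partial_{b}\tilde V^{a}_{c}-\partial_{c}\tilde V^{a}_{b}=0$, and $\nabla$ is a flat Tsarev's connection. The main obstacle is the existence of $n$ functionally independent densities of conservation laws; this is the standard Frobenius-type consequence of semi-Hamiltonianity, with the genericity of the initial data along a transversal curve ensuring functional independence.
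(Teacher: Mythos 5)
Your proof is correct, but both implications are argued differently from the paper, which leans on Sevennec's theorem as a black box in each direction. For the ``if'' part the paper works in flat coordinates of the given Tsarev connection, where $d_{\nabla}V=0$ reads $\partial_k V^i_j=\partial_j V^i_k$, so that $V^i_ju^j_x=\partial_x X^i$ locally; it then invokes Sevennec's characterization of semi-Hamiltonian systems as diagonalizable systems of conservation laws. You instead use the identity $[d_{\nabla}^2W]^l_{ijk}=R^l_{ijn}W^n_k+R^l_{jkn}W^n_i+R^l_{kin}W^n_j$ from the preceding theorem: flatness kills $d_{\nabla}^2W$ for every $W$ commuting with $V$, and the preceding theorem then gives semi-Hamiltonianity. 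This is cleaner and avoids Sevennec entirely; the only point worth making explicit is that the curvature conditions $R^i_{jki}=R^i_{ijk}=0$ appearing in that theorem involve only the symbols $\Gamma^i_{ij}$, which are common to all Tsarev connections, so verifying them for your particular flat representative suffices. For the ``only if'' part the paper again quotes Sevennec to get coordinates in which the system is a system of conservation laws and sets $\Gamma^i_{jk}=0$ there; you reprove the relevant half of Sevennec's statement by hand, solving the Darboux-type system \eqref{cl} for $n$ densities with independent differentials (Tsarev's integrability theorem guarantees solutions depending on $n$ functions of one variable, and generic Cauchy data gives independence of the gradients at a point), taking these as flat coordinates and observing that $V$ becomes the Jacobian of the fluxes, whence $d_{\nabla}V=0$ for the trivial connection in those coordinates. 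The net effect is a longer but more self-contained argument; the paper's version is shorter at the cost of citing Sevennec's result in both directions.
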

 
\noindent
\emph{Proof}.  Assume that $\nabla$ is a flat Tsarev's connection. In flat coordinates the condition $d_{\nabla}V=0$ reads
 $\d_kV^i_j=\d_jV^i_k$ which implies that in flat coordinates (locally) we have $V^i_j=\d_j X^i$ and thus $V^i_j u^j_x=\d_x X^i$. 
  Since Riemann invariants exist by hypothesis the  Sevennec's result implies that the sistem defined by $V$ is semi-Hamiltonian.
  
 Assume now that $V$ defines a semihamiltonian system then due to Sevennec's result there exist coordinates $(u^1,...,u^n)$ where  $V^i_j u^j_x=\d_x X^i$.  This implies $\d_kV^i_j=\d_jV^i_k$. Define $\nabla$ in the coordinates  $(u^1,...,u^n)$ as $\Gamma^i_{jk}=0$. Then the condition  $\d_kV^i_j=\d_jV^i_k$ can be written as $d_{\nabla}V=0$. In other words $\nabla$ is a flat Tsarev's connection.
 
\endproof  

The symmetries 
\beq\label{symsht}
u^i_\tau=W^i_j({\bf u})u^j_x,\qquad i=1,...,n.
\eeq
of the system \eqref{sht}  are defined by $(1,1)$-tensor fields $W({\bf u})$ commuting with $V$ and satisfying the condition
\beq\label{symmetries}
d_{\nabla}W=0.
\eeq
The full hierarchy is thus defined by the solutions of the system \eqref{symmetries}.

\begin{rmk}
In the general case  the symmetries are defined by $(1,1)$-tensor fields $W({\bf u})$ commuting with $V$ and satisfying the condition
\beq\label{symmetries gen}
V^s_i(d_{\nabla}W)^l_{js}+V^s_j(d_{\nabla}W)^l_{is}=0.
\eeq 
If $V$ is diagonalizable with distict eigenvalues, then \eqref{symmetries gen}  is equivalent to \eqref{symmetries} but in general
 \eqref{symmetries gen} is a weaker condition.
 \end{rmk}

\section{F-manifolds and integrable hierarchies}

\subsection{Flat and bi-flat F-manifolds}

F-manifolds have been introduced by Hertling and Manin in \cite{HM}.
\begin{definition}\label{defFmani}
An \emph{F-manifold} is a triple $(M,\circ,e)$, where $M$ is a manifold, $\circ$ is a commutative associative bilinear product  on the module of (local) vector fields, satisfying the following identity:
\begin{align}
&[X\circ Y,W\circ Z]-[X\circ Y, Z]\circ W-[X\circ Y, W]\circ Z-X\circ [Y, Z \circ W]+X\circ [Y, Z]\circ W \label{HMeq1free}\\
&+X\circ [Y, W]\circ Z-Y\circ [X,Z\circ W]+Y\circ [X,Z]\circ W+Y\circ [X, W]\circ Z=0,\notag
\end{align}
for all local vector fields $X,Y,W, Z$, where $[X,Y]$ is the Lie bracket, and $e$ is a distinguished vector field on $M$ such that $e\circ X=X$ for all local vector fields $X$. 
\end{definition}

In this paper we will consider F-manifolds equipped with some additional structures.
\begin{defi}\cite{manin}
A flat F-manifold $(M, \circ, \nabla, e)$ is an F-manifold $M$ equipped with a connection
 $\nabla$ related to the product $\circ$ and to the unit vector field $e$ by the following axiioms:
\begin{itemize}
\item {\bf Axiom 1} (the connection and the product). The one parameter family of connections
$$\nabla-\lambda\circ$$
is flat and torsionless for any $\lambda$.
\item
{\bf Axiom 2} (the connection and the vector field). The vector field $e$ is covariantly constant: $\nabla e=0$.
\end{itemize}
\end{defi}
Bi-flat F-manifolds are manifolds equipped with two ``compatible'' flat structures. They are defined in the following way. 

\begin{defi}\cite{ALjgp}
A \emph{bi-flat}  F-manifold $(M,\nabla,\nabla^{*},\circ,*,e,E)$
 is a manifold $M$ equipped with a pair
 of connections $\nabla$ and $\nabla^{*}$, a pair of products $\circ$ and $*$ on the tangent spaces $T_u M$ and  a pair of vector fields $e$ and  $E$ s.t.:
\begin{itemize}
\item $(\nabla,\circ,e)$ defines a flat structure on $M$.
\item $(\nabla^{*},*,E)$ defines a flat structure on $M$.
\item The two structures are related by the following conditions
$$X*Y:=(E\circ)^{-1}X\circ Y,\qquad[e,E]=e,\qquad  {\rm Lie}_E \circ=\circ,\qquad(d_{\nabla}-d_{\nabla^{*}})(X\,\circ)=0,$$
where $X$ and $Y$ are arbitrary vector fields and at a generic point the operator $E\circ$ is assumed to be invertible. 
\end{itemize}
\end{defi}
Notice that not all the axioms are independent. For instance the compatibility between the dual connection and the dual product follows from the other axioms \cite{ALmulti}. Notice the dual connection is defined only at the points where the operators $E\circ$ is invertible. At these points the condition 
\beq\label{ahe1}
(d_{\nabla}-d_{\nabla^{*}})(X\,\circ)=0\qquad\forall X
\eeq
 is equivalent to the condition
\beq\label{ahe2}
(d_{\nabla}-d_{\nabla^{*}})(X\,*)=0\qquad\forall X.
\eeq
This implies
\beq\label{dualfromnatural}
\Gamma^{*k}_{ij} =\Gamma^k_{ij}- c^{*l}_{ji}\nabla_l E^k.
\eeq
Moreover the flatness of the dual connection follows from the linearity of the Euler vector field (see \cite{ALcomplex} for the semisimple  case and \cite{KMS} for the general case).

\begin{remark}
The manifolds in the above definitions are real or complex manifolds. In the first case all the geometric data are supposed to be smooth.
 In the latter case $TM$ is intended as the holomorphic tangent bundle and
all the geometric data are supposed to be holomorphic. 
\end{remark}

\subsection{The principal hierarchy}
Given a (bi)-flat F-manifold one can define an integrable hierarchy starting from solutions of the equation \eqref{symmetries} (in this
 case $\nabla$ is given). Looking for solutions of the form $W=X\circ$ we  obtain the condition
\beq\label{admvf-intri}
\left(\nabla_Z X\right)\circ W=\left(\nabla_W X\right)\circ Z
\eeq
for all pairs $(Z,W)$ of vector fields, that is, in local coordinates,
\beq\label{admvf}
c^i_{jm}\nabla_k X^m=c^i_{km}\nabla_j X^m\ .
\eeq 
Let us define now the vector fields $X_{(p,l)}$ as follows: the vector fields $X_{(p,-1)}$  are covariantly constant with respect to $\nabla$, while the others are obtained through the recurrence relation:
\beq\label{recurrenceeq1}
\nabla X_{(p,l+1)}=X_{(p,l)}\circ. 
\eeq 
It is easy to check (see \cite{LPR} for details) that they are solutions of the system \eqref{admvf-intri}. As a consequence the corresponding flows
\[u^i_{t_(p,l)}=c^i_{jk}X^k_{(p,l)}u^j_x,\qquad i=1,...,n\]
commute and define an integrable hierarchy called \emph{the principal hierarchy}.

\section{Fr\"olicher-Nijenhuis bicomplex and integrable systems}
We recall now a construction of integrable hierarchies starting from the  Fr\"{o}licher-Nijenhuis  bicomplex \cite{LM}. 
Let $L$ be a  tensor field of type $(1,1)$ with vanishing Nijenhuis torsion. In \cite{LM} (see also \cite{L2006}) it has been proved that given any solution of 
 the equation
\beq
dd_La_0=0
\eeq
the tensor fields of type $(1,1)$  defined recursively by 
\begin{eqnarray*}
V_{k+1}=V_k L-a_k I,\qquad da_{k+1}=V^*_{k+1} da_0=d_L a_k -a_k da_0
\end{eqnarray*}
starting from the identity $I$, that is
\begin{eqnarray*}
&&V_0=I\\
&&V_1=L-a_0 I\\
&&V_{2}=L^2-a_0 L-a_1 I,\\
&&\cdots=\cdots
\end{eqnarray*}
 define an integrable hierarchy of hydrodynamic type. Remarkably this construction does not require that $L$ is diagonalizable.

\subsection{Examples} 
\subsubsection{Generalized $\epsilon$-system} 
The system of hydrodynamic type
\beq
\begin{bmatrix}
u^1_{t_1} \cr
u^2_{t_1}\cr
\vdots \cr
u^n_{t_1}
\end{bmatrix}
=\begin{bmatrix}
u^1-\sum_{k=1}^n\epsilon_k u^k & 0 & \dots & 0\cr
0 & u^2-\sum_{k=1}^n\epsilon_k u^k & \dots & 0\cr
\vdots & \ddots & \ddots & \vdots\cr
0 & \dots & 0 & u^n-\sum_{k=1}^n\epsilon_k u^k
\end{bmatrix}
\begin{bmatrix}
u^1_x \cr
u^2_x \cr
\vdots \cr
u^n_x
\end{bmatrix}
\eeq
has been obtained in \cite{pavlov} as finite component reduction of an infinite hydrodynamic chain. It
can be written as ${\bf u}_{t_1}=(L-a_0 I){\bf u}_x$ with $a_0=\sum_{k=1}^n\epsilon_k u^k$ and
\beq\label{L}
L=\begin{bmatrix}
u^1 & 0 & \dots & 0\cr
0 & u^2 & \dots & 0\cr
\vdots & \ddots & \ddots & \vdots\cr
0 & \dots & 0 & u^n
\end{bmatrix}.
\eeq
For specific values of the constants
 $\epsilon_i$ it provides well-known examples of integrable systems of hydrodynamic type. The above hierarchy 
  is related to the principal hierarchy associated
 with Lauricella bi-flat F-manifolds \cite{LP,Limrn}. 

\subsubsection{Kodama-Konopelchenko system}
 The system of hydrodynamic type \cite{KK}
\beq
\begin{bmatrix}
u^1_{t_1} \cr
u^2_{t_1}\cr
\vdots \cr
u^{n-1}_{t_1}\cr
u^n_{t_1}
\end{bmatrix}
=\begin{bmatrix}
u^1 & 1 & 0 & \dots & 0\cr
0 & u^1 & 1 & \dots & 0\cr
\vdots & \ddots & \ddots & \ddots & \vdots\cr
0 & \dots & 0 & u^1 & 1 \cr
0 & \dots & 0 & 0 & u^1
\end{bmatrix}
\begin{bmatrix}
u^1_x \cr
u^2_x \cr
\vdots \cr
u^{n-1}_x \cr
u^n_x
\end{bmatrix}
\eeq
can be written as ${\bf u}_{t_1}=(L-a_0 I){\bf u}_x$ with $a_0=-u^1$ and
$$L=
\begin{bmatrix}
0 & 1 & 0 & \dots & 0\cr
0 & 0 & 1 & \dots & 0\cr
\vdots & \ddots & \ddots & \ddots & \vdots\cr
0 & \dots & 0 & 0 & 1 \cr
0 & \dots & 0 & 0 & 0
\end{bmatrix}.
$$ 
Clearly $L$ has vanishing Nijenhuis torsion and $dd_L a_0=0$. Applying the first step of the recursive procedure we have
\begin{eqnarray*}
&&\d_1 a_1=-a_0\d_1a_0=-u^1\\
&&\d_2 a_1=\d_1 a_0-a_0\d_2a_0=-1\\
&&\d_3a_0=0\\
&&\vdots\\
&&\d_na_0=0.\\
\end{eqnarray*}
This implies (up to an inessential constant) $a_1=-u^2-\f{(u^1)^2}{2}$. Therefore the first commuting flow ${\bf u}_t=(L^2-a_0 L-a_1 I){\bf u}_x$
is given by
\beq
\begin{bmatrix}
u^1_{t_2} \cr
u^2_{t_2} \cr
\vdots \cr
u^{n-1}_{t_2}\cr
u^n_{t_2}
\end{bmatrix}
=\begin{bmatrix}
u^2+\f{(u^1)^2}{2} & u^1 & 1 & \dots & 0\cr
0 & u^2+\f{(u^1)^2}{2} & u^1 & \dots & 0\cr
\vdots & \ddots & \ddots & \ddots & \vdots\cr
0 & \dots & 0 & u^2+\f{(u^1)^2}{2} & u^1 \cr
0 & \dots & 0 & 0 & u^2+\f{(u^1)^2}{2}
\end{bmatrix}
\begin{bmatrix}
u^1_x \cr
u^2_x \cr
\vdots \cr
u^{n-1}_x \cr
u^n_x
\end{bmatrix}
\eeq 
The higher flows can be obtained in a similar way. 

\section{From integrable hierarchies to Lauricella bi-flat F-manifolds}
Given an F-manifold with  Euler vector field $(\circ,e,E)$ the operator $E\circ$ has vanishing Nijenhuis torsion (see for instance \cite{ALimrn}). Among Tsarev connections of the associated
 integrable system we consider those satisfying the additional conditions
  (in \cite{LP} they are called \emph{natural connections}):
 \[\nabla_j e^i=0,\qquad \nabla_kc^i_{jl}=\nabla_jc^i_{kl},\qquad\forall i,j,k,l=1,...,n.\]
  In the next two subsections we will show that for special choices of the function $a_0$ the natural connections defined in this way are flat. Moreover
   it is possible to define a second compatible flat structure.  

\subsection{Semisimple Lauricella  bi-flat structure}
Let us recall the following therem of \cite{Limrn} (see also \cite{LP} for the special case $\epsilon_1=\epsilon_2=...=\epsilon_n$).
\begin{theorem} 
For any choice of  $\epsilon_1,\epsilon_2,\dots,\epsilon_n$ there exists a unique semisimple bi-flat structure $(\nabla,\nabla^{*},\circ,*,e,E)$ with canonical coordinates $\{u^1,...,u^n\}$ such that $L=E\circ$ and 
\beq\label{maincond}
d_{\nabla}(L-a_0 I)=0,
\eeq
where $a_0=\sum_{k=1}^n\epsilon_k u^k$. Moreover, in canonical coordinates this structure is given by
\begin{eqnarray*}
e&=&\sum_{k=1}^n \partial_k,\quad E=\sum_{k=1}^n u^k\partial_k\\
c^i_{jk}&=&\delta^i_j\delta^i_k\qquad c^{*i}_{jk}=\frac{1}{u^i}\delta^i_j\delta^i_k\qquad\forall i,j,k\\
\Gamma^{(1)i}_{jk}&=&0\quad \Gamma^{(2)i}_{jk}=0\qquad\forall i\ne j\ne k \ne i\\
\Gamma^{(1)i}_{jj}&=&-\Gamma^{(1)i}_{ij}\quad\Gamma^{(2)i}_{jj}=-\f{u^i}{u^j}\Gamma^{(2)i}_{ij}\qquad i\ne j\\
\Gamma^{(1)i}_{ij}&=&\f{\epsilon_j}{u^i-u^j}\quad\Gamma^{(2)i}_{ij}=\f{\epsilon_j}{u^i-u^j}\qquad i\ne j\\
\Gamma^{(1)i}_{ii}&=&-\sum_{l\ne i}\Gamma^{(1)i}_{li},\quad\Gamma^{(2)i}_{ii}=-\sum_{l\ne i}\f{u^l}{u^i}\Gamma^{(2)i}_{li}-\f{1}{u^i}.\
\end{eqnarray*}
\end{theorem}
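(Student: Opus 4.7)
The plan is to extract the Christoffel symbols of $\nabla$ uniquely from the prescribed conditions and then verify, essentially by inspection together with one nontrivial flatness check, that the resulting connection and its dual form a bi-flat F-manifold. I would start in canonical coordinates where $V := L - a_0 I$ is diagonal with entries $v^i = u^i - \sum_k \epsilon_k u^k$. As shown in Section 2, the condition $d_\nabla V = 0$ together with torsion-freeness already forces $\Gamma^i_{jk} = 0$ for pairwise distinct indices and $\Gamma^i_{ij} = \partial_j v^i/(v^j - v^i) = \epsilon_j/(u^i - u^j)$ for $i \ne j$. The natural conditions then close the system: from $\nabla_j e^i = \sum_k \Gamma^i_{jk} = 0$ (using $e^k \equiv 1$ in canonical coordinates) one obtains $\Gamma^i_{jj} = -\Gamma^i_{ij}$ for $i \ne j$ and $\Gamma^i_{ii} = -\sum_{l \ne i} \Gamma^i_{il}$. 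This reproduces the tabulated Christoffel symbols of $\nabla$ and, at the same time, establishes uniqueness. A direct check on the diagonal product $c^i_{jk} = \delta^i_j \delta^i_k$ confirms that the remaining natural condition $\nabla_k c^i_{jl} = \nabla_j c^i_{kl}$ holds identically on the data just fixed.

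For existence the main point is the flatness of $\nabla$. By the theorem recalled in Section 2, the vanishing of the Riemann tensor of a Tsarev's connection for a diagonal system with distinct eigenvalues is equivalent to the Darboux--Tsarev system \eqref{sh} together with \eqref{shder}, and both reduce, via the identity \eqref{tsarevid}, to \eqref{shder} alone. For the linear $a_0$ chosen here the check is immediate: $\partial_j \Gamma^i_{ik} = \partial_j \frac{\epsilon_k}{u^i - u^k}$ vanishes whenever $j \notin \{i,k\}$, which makes the expression trivially symmetric in $j$ and $k$. Axiom 1 of flat F-manifolds, i.e.\ flatness of $\nabla - \lambda \circ$ for every $\lambda$, decomposes in canonical coordinates (where $c$ is constant) into torsion-freeness of $\circ$, the flatness of $\nabla$ just established, symmetry of $\nabla c$ in the first two indices (the natural condition), and associativity of $\circ$, all of which hold.

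The dual structure is then built constructively. The formula \eqref{dualfromnatural}, forced by the bi-flat compatibility $(d_\nabla - d_{\nabla^*})(X\circ) = 0$, defines $\Gamma^{*i}_{jk}$ from the $\Gamma^i_{jk}$ and from $\nabla E$; a short calculation using $c^{*i}_{jk} = \delta^i_j\delta^i_k/u^i$ recovers the tabulated expressions. Flatness of $\nabla^*$ is automatic from the linearity of $E$ in the canonical coordinates, as recalled in the excerpt just after \eqref{dualfromnatural}. The remaining bi-flat axioms $[e,E]=e$ and $\mathrm{Lie}_E\circ=\circ$ are trivially verified from $e = \sum_k\partial_k$, $E = \sum_k u^k\partial_k$ and the diagonal form of the product, while $E\circ$ is invertible on the open set where all $u^i$ are nonzero. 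The only step that is not pure bookkeeping is the flatness check in the middle paragraph, and even that collapses through \eqref{tsarevid} precisely because $a_0$ has been chosen linear in $u^1,\dots,u^n$; this is the conceptual content of the theorem.
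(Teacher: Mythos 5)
Your overall architecture is sound and matches how the paper handles the non-semisimple analogues in Sections 7--8 (the paper itself only recalls this semisimple theorem from \cite{Limrn} without proof): pin down $\Gamma^i_{jk}=0$ for distinct indices and $\Gamma^i_{ij}=\epsilon_j/(u^i-u^j)$ from $d_\nabla(L-a_0I)=0$ plus torsion-freeness, fix $\Gamma^i_{jj}$ and $\Gamma^i_{ii}$ from $\nabla e=0$, and obtain $\nabla^*$ from \eqref{dualfromnatural}. The uniqueness part and the derivation of the dual Christoffel symbols are correct.

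The gap is in the flatness of $\nabla$. Theorem 2.2 does \emph{not} say that the vanishing of the Riemann tensor is equivalent to \eqref{sh} and \eqref{shder}; it says that $d_\nabla^2W=0$ for all $W$ commuting with $V$ is equivalent to those conditions, and its proof shows this controls only the components $R^l_{ijk}$ with the three \emph{lower} indices $i,j,k$ pairwise distinct. Full flatness also requires the components with a repeated lower index, e.g.\ $R^k_{ijj}$ for $i\ne j$. For $k\notin\{i,j\}$ one can check that $R^k_{ijj}$ reduces to the Darboux--Tsarev system \eqref{sh} after substituting $\Gamma^k_{jj}=-\Gamma^k_{kj}$, but the components $R^i_{ijj}$ and $R^j_{ijj}$ involve $\Gamma^i_{ii}$ and $\Gamma^j_{jj}$ in an essential way and are \emph{not} consequences of semi-Hamiltonicity: their vanishing reduces to the three-term identity
\begin{equation*}
-\frac{\epsilon_j\epsilon_l}{(u^i-u^j)(u^i-u^l)}+\frac{\epsilon_j\epsilon_l}{(u^i-u^j)(u^j-u^l)}-\frac{\epsilon_j\epsilon_l}{(u^i-u^l)(u^j-u^l)}=0,
\end{equation*}
summed over $l\ne i,j$, which uses both $\nabla e=0$ and the explicit form of $\Gamma^i_{ij}$. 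This is exactly the content of the case-by-case curvature computations (and of Lemma 8.10) in the paper's general proof, and it cannot be waved away by citing Sevennec either, since Theorem 2.3 only guarantees that \emph{some} Tsarev connection is flat, not the natural one. A smaller instance of the same over-claim: flatness of $\nabla^*$ ``from linearity of $E$'' presupposes $\nabla\nabla E=0$, which is itself a computation with the $\Gamma$'s (one must first check that $\nabla_jE^i=\delta^i_j+\Gamma^i_{jk}u^k$ is constant), as the paper does explicitly in Step 4 of Section 7.
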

 This structure is related to the theory of Lauricella functions (see next section) and for this reason we will call it the \emph{semisimple Lauricella bi-flat structure}. This example is also related to the Euler-Poisson-Darboux system \cite{LM,L2006,LP,Limrn}
\beq\label{ddl}
dd_L k=da_0\wedge dk,
\eeq
In particular flat coordinates of $\nabla$ are special solutions of \eqref{ddl}.

\subsection{Non semisimple Lauricella bi-flat F-manifolds}
In this paper we consider a generalization of Lauricella bi-flat structures to the non semisimple regular case. We will use this important result of \cite{DH}
 about the existence of non semisimple canonical coordinates.
\begin{theorem}[\cite{DH}]\label{DavidHertlingth}
Let $(M, \circ, e, E)$ be a regular F-manifold of dimension greater or equal to $2$ with an Euler vector field $E$ of weight one. Furthermore assume that locally around a point $p\in M$, the Jordan canonical form of the operator $L$ has $n$ Jordan blocks $m_1,...,m_n$ with distinct eigenvalues. Then
there exists locally around $p$ a distinguished system of coordinates $\{u^1, \dots, u^{m_1+\dots+m_n}\}$ such that 
\begin{eqnarray} \label{canonical1}
e&=&\f{\partial}{\partial u^1}+\f{\partial}{\partial u^{m_1+1}}+\f{\partial}{\partial u^{m_1+m_2+1}}+\cdots+\f{\partial}{\partial u^{m_1+\cdots+m_{n-1}+1}},\\ \label{canonical2}
E&=&\sum_{s=1}^{m_1+\cdots+m_n}u^{s}\frac{\partial}{\partial u^{s}},\\
c^l_{ij}&=&\delta^l_{i+j-1},\quad i,j,l=m_1+\dots+m_{k-1}+1,...,m_1+\dots+m_{k-1}+m_k\\ \label{canonical3}
c^l_{ij}&=&0,\quad {\rm otherwise}.
\end{eqnarray}
\end{theorem}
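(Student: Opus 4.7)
The plan is to reduce the problem to the case of a single Jordan block via a spectral decomposition, and then build the canonical coordinates on each factor using the regularity assumption together with the compatibility between $E$ and $\circ$.

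The first step is to show that $L=E\circ$ has vanishing Nijenhuis torsion. This is a consequence of the Hertling--Manin identity \eqref{HMeq1free} applied with $Z=E$, combined with the weight-one condition $\mathrm{Lie}_E\circ=\circ$ and the existence of the unit $e$: a direct computation shows that $[LX,LY]-L[X,LY]-L[LX,Y]+L^2[X,Y]=0$ for all vector fields $X,Y$. Once this is established, the generalized eigenspace distributions $D_k=\ker(L-\lambda_k\,I)^{m_k}$ associated with the distinct eigenvalues $\lambda_1,\dots,\lambda_n$ are integrable and mutually involutive, and they are ideals with respect to $\circ$ (i.e.\ $D_i\circ D_j=0$ for $i\neq j$). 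This yields a local splitting $M\simeq M_1\times\cdots\times M_n$ as a product of F-manifolds, with $e=e_1+\cdots+e_n$ (one unit on each factor) and $E=E_1+\cdots+E_n$, where each $(M_k,\circ_k,e_k,E_k)$ is a regular F-manifold of dimension $m_k$ whose operator $L_k=E_k\circ_k$ has the single eigenvalue $\lambda_k$ with a unique Jordan block of size $m_k$.

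The second step treats the single-block factor of dimension $m$. Regularity means that at each point $p$ the algebra $(T_pM_k,\circ_k)$ is local with maximal ideal generated by a single nilpotent, so it is isomorphic to $\mathbb{C}[x]/(x^{m})$. Choose a vector field $N$ whose class generates this algebra at each point (for instance $N=E_k-\lambda_k e_k$, which is nilpotent of maximal order by regularity). Because $\mathrm{Lie}_E\circ=\circ$ and $[e,E]=e$, the operator $N\circ$ also has vanishing Nijenhuis torsion, and its iterated powers $e_k,\,N,\,N\circ N,\dots,N^{\circ(m-1)}$ form a pointwise basis of $TM_k$. Using Frobenius integrability applied to these compatible operators one constructs coordinates $\{v^1,\dots,v^m\}$ in which $e_k=\partial_{v^1}$ and $\partial_{v^i}\circ_k\partial_{v^j}=\delta^l_{i+j-1}\partial_{v^l}$; this is essentially the local classification of flat unital associative algebras with a cyclic nilpotent generator. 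Finally, the condition $\mathrm{Lie}_{E_k}\circ_k=\circ_k$ forces $E_k$ to act on the $v$-coordinates as a scaling with integer weights, and $[e_k,E_k]=e_k$ fixes the weight of $v^1$ to be $1$; together with associativity this forces $E_k=\sum_{s}v^s\partial_{v^s}$ after an additive translation of the $v$-coordinates (which preserves the form of $e_k$ and $\circ_k$). Relabelling the coordinates of the $k$-th factor as $u^{m_1+\cdots+m_{k-1}+1},\dots,u^{m_1+\cdots+m_k}$ and taking the product of the factors yields the canonical form \eqref{canonical1}--\eqref{canonical3}.

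The main obstacle is the second step: simultaneously normalising the unit, the Euler vector field and the product within a single Jordan block. The associativity of $\circ_k$ coupled with the Hertling--Manin identity gives a rigid system of PDEs for the structure constants, and the subtle point is that one must use the regularity hypothesis (a single cyclic generator) at every point to guarantee that the candidate coordinates $v^i$ really exist and are independent; without this, the flatness of the commuting operators $N^{\circ j}\circ$ could fail. The translation argument at the end, used to remove additive constants in $E_k$ and reach the pure Euler form, is comparatively standard, relying on the fact that weight-one homogeneity together with $[e,E]=e$ determines $E$ up to a shift by a constant vector in the span of lower-weight coordinate fields.
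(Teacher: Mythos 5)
First, note that the paper does not prove this statement: it is quoted from David and Hertling \cite{DH}, and no internal proof is given, so there is nothing in the paper to compare your argument against line by line. Your overall strategy --- vanishing Nijenhuis torsion of $L=E\circ$, decomposition into mutually annihilating integrable generalized eigendistributions, reduction to a single Jordan block, then normalization within a block --- is the same route taken in \cite{DH}, and the first two steps are sound and standard.

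The gap is in the single-block normalization. The frame $e_k,\,N,\,N\circ N,\dots,N^{\circ(m-1)}$ with $N=E_k-\lambda_k e_k$ is indeed a pointwise basis by regularity, but it is not a coordinate frame, and it is not the frame that appears in the canonical form: in the target coordinates one has $N=\sum_{s\ge 2}u^s\partial_s$ and $N\circ N=\sum_{s,t\ge2}u^su^t\,\partial_{s+t-1}$, so already for $m=3$ one computes $[N,N\circ N]=2(u^2)^2\partial_3\neq 0$. Hence ``Frobenius integrability applied to these compatible operators'' cannot produce coordinates $v^i$ with $\partial_{v^i}\circ\partial_{v^j}=\partial_{v^{i+j-1}}$; Frobenius only yields integrability of the distributions spanned by the powers of the maximal ideal, not a commuting frame with prescribed multiplication. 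The actual content of the theorem --- that inside this filtration one can choose a \emph{commuting} frame in which the structure constants become the constants $\delta^l_{i+j-1}$ and, simultaneously, $E$ becomes exactly $\sum_s v^s\partial_{v^s}$ --- is precisely what your sketch replaces with the phrase ``this is essentially the local classification of flat unital associative algebras with a cyclic nilpotent generator''; that classification is the statement to be proved, not an available input. The concluding homogeneity/translation argument for $E_k$ likewise presupposes coordinates in which $E_k$ is already linearizable, which is again part of what must be established.
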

We start from the integrable hierarchy associated with the tensor field
 $L=E\circ$ and with $a_0=\sum_{i=1}^n\epsilon_i{\rm Tr}(L_i)$. 
 By definition $L$ contains $n$ Jordan blocks $L_1,...,L_n$ of dimension $m_1,...,m_n$ respectively. Each Jordan block and has the form
\beq\label{Lblock}
L_k=\begin{bmatrix}
u^{k,1} & 0 & \dots & 0\cr
u^{k,2} & u^{k,1} & \dots & 0\cr
\vdots & \ddots & \ddots & \vdots\cr
u^{k,m_k} & \dots & u^{k,2} & u^{k,1}
\end{bmatrix}
\eeq
where $u^{k,l}=u^{m_1+\dots+m_{k-1}+l}$. 
The case $m_1=...=m_n=1$ corresponds to the usual generalized $\epsilon$-system. In the next Section we will prove that for any choice of  $\epsilon_1,\epsilon_2,\dots,\epsilon_n$ and $m_1,...,m_n$ and up to $n=5$ there exists a unique bi-flat F-structure such that $d_{\nabla}(L-a_0I)=0$. In Section 7 we will consider the case $n=1$ and $m_1$ arbitrary and in the final Section 8 we will show that for any choice of  $\epsilon_1,\epsilon_2,\dots,\epsilon_n$ there exists a unique regular bi-flat structure such that $L=E\circ$ and
\beq\label{maincond2}
d_{\nabla}(L-a_0I)=0.
\eeq
\begin{rmk}
By straightforward computation one gets
\[\left(d_{\nabla}(L-a_0I)\right)^i_{jk}\d_j a_0=L^i_j\nabla_i(da_0)_k-L^i_k\nabla_i(da_0)_j.\]
Therefore the condition \eqref{maincond2} implies
\[L^i_j\nabla_i(da_0)_k-L^i_k\nabla_i(da_0)_j=0.\]
\end{rmk}

\section{Bi-flat Lauricella structures in dimension $2,3,4,5$}
In this Section we provide a complete classification of non-semisimple bi-flat F-manifold structures in 2,3,4 and 5 dimensions.   
\subsection{$2$-dimensional case}
\subsubsection{$2\times 2$ Jordan block}
\beq
L=\begin{bmatrix}
u^{1} & 0\cr
u^{2} & u^{1} \cr
\end{bmatrix},\quad
e=\f{\partial}{\partial u^1}
\eeq
The non vanishing Christoffel symbol of $\nabla^{(1)}$ is $\Gamma^2_{22}=-\f{2\epsilon_1}{u^2}$.

\subsection{$3$-dimensional case}
\subsubsection{$3\times 3$ Jordan block}
\beq
L=\begin{bmatrix}
u^{1} & 0 & 0\cr
u^{2} & u^{1} &  0\cr
u^{3} & u^{2} & u^{1}
\end{bmatrix},\quad
e=\f{\partial}{\partial u^1},\quad a_0=3\epsilon_1u^1
\eeq
The non vanishing Christoffel symbols $\Gamma^i_{jk}$ (up to exchange of $j$ with $k$) are
\begin{eqnarray*}
\Gamma^2_{22}=\Gamma^3_{23}=-\f{3\epsilon_1}{u^2},\,\Gamma^3_{22}=\f{3\epsilon_1u^3}{(u^2)^2}
\end{eqnarray*}

\subsubsection{$2\times 2+1\times 1$ Jordan blocks}
\beq\label{Lblock}
L=\begin{bmatrix}
u^{1} & 0 & 0\cr
u^{2} & u^{1} &  0\cr
0 & 0 & u^{3}
\end{bmatrix},\quad
e=\f{\partial}{\partial u^1}+\f{\partial}{\partial u^3},\quad a_0=2\epsilon_1 u^1+\epsilon_3u^3
\eeq
The non vanishing Christoffel symbols $\Gamma^i_{jk}$ (up to exchange of $j$ with $k$) are
\begin{eqnarray*}
&&\Gamma^2_{22}=-\f{2\epsilon_1}{u^2},\,\Gamma^1_{13}=\Gamma^2_{23}=-\Gamma^1_{11}=-\Gamma^1_{33}=-\Gamma^2_{12}=\f{\epsilon_3}{u^1-u^3}\\
&&\Gamma^3_{11}=\Gamma^3_{33}=-\Gamma^3_{13}=\f{2\epsilon_1}{u^1-u^3},\,\Gamma^2_{11}=\Gamma^2_{33}=-\Gamma^2_{13}
=\f{\epsilon_3u^2}{(u^1-u^3)^2}
\end{eqnarray*}
\subsection{$4$-dimensional case}
\subsubsection{$4\times 4$ Jordan block}
\beq
L=\begin{bmatrix}
u^{1} & 0 & 0 & 0\cr
u^{2} & u^{1} & 0 & 0\cr
u^3 & u^2 & u^1 & 0\cr
u^{4} & u^3 & u^{2} & u^{1}
\end{bmatrix},\quad
e=\f{\partial}{\partial u^1},\quad a_0=4\epsilon_1u^1
\eeq
The non vanishing Christoffel symbols $\Gamma^i_{jk}$ (up to exchange of $j$ with $k$) are
\begin{eqnarray*}
&&\Gamma^2_{22}=\Gamma^3_{23}=\Gamma^4_{33}=\Gamma^4_{24}=-\f{4\epsilon_1}{u^2},\,\Gamma^3_{22}=\Gamma^4_{23}=\f{4\epsilon_1u^3}{(u^2)^2},\,\Gamma^4_{22}=\f{4\epsilon_1(u^2u^4-(u^3)^2)}{(u^2)^3}.
\end{eqnarray*}
\subsubsection{$3\times 3+1\times 1$ Jordan blocks}
\beq
L=\begin{bmatrix}
u^{1} & 0 & 0 & 0\cr
u^{2} & u^{1} & 0 & 0\cr
u^3 & u^2 & u^1 & 0\cr
0 & 0 & 0 & u^{4}
\end{bmatrix},\quad
e=\f{\partial}{\partial u^1}+\f{\partial}{\partial u^4},\quad a_0=3\epsilon_1u^1+\epsilon_4u^4
\eeq
The non vanishing Christoffel symbols $\Gamma^i_{jk}$ (up to exchange of $j$ with $k$) are
\begin{eqnarray*}
&&\Gamma^2_{22}=\Gamma^3_{23}=-\f{3\epsilon_1}{u^2},\,\Gamma^3_{11}=\Gamma^3_{44}=-\Gamma^3_{14}
=\f{\epsilon_4u^3}{(u^1-u^4)^2}-\f{\epsilon_4(u^2)^2}{(u^1-u^4)^3}\\
&&\Gamma^4_{11}=\Gamma^4_{44}=-\Gamma^4_{14}=\f{3\epsilon_1}{u^1-u^4},\,\\&&\Gamma^3_{12}=-\Gamma^3_{24}=-\Gamma^2_{14}=\Gamma^2_{11}=\Gamma^2_{44}=\f{\epsilon_4u^2}{(u^1-u^4)^2}\\
&&\Gamma^3_{34}=-\Gamma^3_{13}=-\Gamma^2_{12}=-\Gamma^1_{11}=\Gamma^1_{14}=-\Gamma^1_{44}=\f{\epsilon_4}{u^1-u^4},\,\\&&\Gamma^3_{22}=\f{3\epsilon_1u^3}{(u^2)^2}-\f{\epsilon_4}{u^1-u^4}
\end{eqnarray*}

\subsubsection{$2\times 2+2\times 2$ Jordan blocks}
\beq
L=\begin{bmatrix}
u^{1} & 0 & 0 & 0\cr
u^{2} & u^{1} & 0 & 0\cr
0 & 0 & u^3 & 0\cr
0 & 0 & u^{4} & u^{3}
\end{bmatrix},\quad
e=\f{\partial}{\partial u^1}+\f{\partial}{\partial u^3},\quad a_0=2\epsilon_1u^1+2\epsilon_3u^3
\eeq
The non vanishing Christoffel symbols $\Gamma^i_{jk}$ (up to exchange of $j$ with $k$) are
\begin{eqnarray*}
&&\Gamma^2_{22}=-\f{2\epsilon_1}{u^2},\,\Gamma^4_{44}=-\f{2\epsilon_3}{u^4},\,\,\Gamma^1_{13}=
-\Gamma^2_{12}=-\Gamma^1_{11}=-\Gamma^1_{33}=\f{2\epsilon_3}{u^1-u^3},\\
&&\Gamma^3_{11}=\Gamma^4_{34}=\Gamma^3_{33}=-\Gamma^3_{13}=-\Gamma^4_{14}=
\f{2\epsilon_1}{u^1-u^3},\\
&&\Gamma^2_{11}=\Gamma^2_{33}=-\Gamma^2_{13}=\f{2\epsilon_3u^2}{(u^1-u^3)^2},\,\Gamma^4_{11}=\Gamma^4_{33}=-\Gamma^4_{13}
=\f{2\epsilon_1u^4}{(u^1-u^3)^2}
\end{eqnarray*}

\subsubsection{$2\times 2+1\times 1+1\times 1$ Jordan blocks}
\beq
L=\begin{bmatrix}
u^{1} & 0 & 0 & 0\cr
u^{2} & u^{1} & 0 & 0\cr
0 & 0 & u^3 & 0\cr
0 & 0 & 0 & u^{4}
\end{bmatrix},\quad
e=\f{\partial}{\partial u^1}+\f{\partial}{\partial u^3}+\f{\partial}{\partial u^4},\quad a_0=2\epsilon_1u^1+\epsilon_3u^3+\epsilon_4u^4
\eeq
The non vanishing Christoffel symbols $\Gamma^i_{jk}$ (up to exchange of $j$ with $k$) are
\begin{eqnarray*}
&&\Gamma^2_{22}=-\f{2\epsilon_1}{u^2},\,\Gamma^1_{13}=\Gamma^2_{23}=-\Gamma^1_{33}=\f{\epsilon_3}{u^1-u^3},\,
\Gamma^1_{14}=\Gamma^2_{24}=-\Gamma^1_{44}=\f{\epsilon_4}{u^1-u^4},\\
&&\Gamma^3_{34}=-\Gamma^3_{44}=\f{\epsilon_4}{u^3-u^4},\,
\Gamma^4_{34}=-\f{\epsilon_3}{u^3-u^4},\,\Gamma^3_{13}=-\Gamma^3_{11}=\f{-2\epsilon_1}{u^1-u^3},\\
&&\Gamma^4_{14}=\f{-2\epsilon_1}{u^1-u^4},\,\Gamma^1_{11}=\Gamma^2_{12}=\Gamma^1_{33}+\Gamma^1_{44},\,\Gamma^2_{11}=\Gamma^2_{33}+\Gamma^2_{44},\\
&&\Gamma^2_{33}=-\Gamma^2_{13}=\f{\epsilon_3u^2}{(u^1-u^3)^2},\,\Gamma^2_{44}=-\Gamma^2_{14}=\f{\epsilon_4u^2}{(u^1-u^4)^2},\\
&&\Gamma^3_{33}=\f{2\epsilon_1}{u^1-u^3}-\f{\epsilon_4}{u^3-u^4},\,\,\Gamma^4_{44}=\f{2\epsilon_1}{u^1-u^4}
+\f{\epsilon_3}{u^3-u^4}
\end{eqnarray*}
\subsection{$5$-dimensional case}
\subsubsection{$5\times 5$ Jordan block}
\beq
L=\begin{bmatrix}
u^{1} & 0 & 0 & 0 & 0\cr
u^{2} & u^{1} & 0 & 0 & 0\cr
u^3 & u^2 & u^1 & 0 & 0\cr
u^{4} & u^3 & u^{2} & u^{1} &0 \cr
u^{5} & u^4 & u^{3} & u^{2} &u^1
\end{bmatrix},\quad
e=\f{\partial}{\partial u^1},\quad a_0=5\epsilon_1u^1
\eeq
The non vanishing Christoffel symbols $\Gamma^i_{jk}$ (up to exchange of $j$ with $k$) are
\begin{eqnarray*}
&&\Gamma^2_{22}=\Gamma^3_{23}=\Gamma^4_{33}=\Gamma^4_{24}=\Gamma^5_{25}=\Gamma^5_{34}=-\f{5\epsilon_1}{u^2},\\
&&\Gamma^3_{22}=\Gamma^4_{23}=\Gamma^5_{24}=\Gamma^5_{33}=\f{5\epsilon_1u^3}{(u^2)^2},\,\Gamma^4_{22}=\Gamma^5_{23}
=\f{5\epsilon_1(u^2u^4-(u^3)^2)}{(u^2)^3},\\
&&\Gamma^5_{22}=\f{5\epsilon_1((u^2)^2u^5-2u^2u^3u^4+(u^3)^3)}{(u^2)^4}
\end{eqnarray*}

\subsubsection{$4\times 4+1\times 1$ Jordan blocks}
\beq
L=\begin{bmatrix}
u^{1} & 0 & 0 & 0 & 0\cr
u^{2} & u^{1} & 0 & 0 & 0\cr
u^3 & u^2 & u^1 & 0 & 0\cr
u^4 & u^3 & u^2 & u^1 & 0\cr
0 & 0 & 0 & 0 & u^{5}
\end{bmatrix},\quad
e=\f{\partial}{\partial u^1}+\f{\partial}{\partial u^5},\quad a_0=4\epsilon_1u^1+\epsilon_5u^5.
\eeq
The non vanishing Christoffel symbols $\Gamma^i_{jk}$ (up to exchange of $j$ with $k$) are
\begin{eqnarray*}
&&\Gamma^2_{22}= \Gamma^3_{23}=\Gamma^4_{24}=\Gamma^4_{33}= -4\f{\epsilon_1}{u^2},\\ 
&&\Gamma^1_{15}=\Gamma^2_{25}=\Gamma^3_{35}=\Gamma^4_{45}=\f{\epsilon_5}{u^1-u^5}\\
&&\Gamma^1_{55}=\Gamma^2_{12}=\Gamma^1_{11}=\Gamma^3_{13}=\Gamma^4_{14}=\Gamma^4_{41}=-\f{\epsilon_5}{u^1-u^5},\\
&&\Gamma^5_{11}=\Gamma^5_{55}=-\Gamma^5_{15}=\f{4\epsilon_1}{u^1-u^5},\\
&&\Gamma^2_{11}=\Gamma^2_{55}=\Gamma^3_{12}=\Gamma^4_{13}=-\Gamma^2_{15}=-\Gamma^3_{25}=-\Gamma^4_{35}=
-\Gamma^4_{53}=\f{\epsilon_5u^2}{(u^1-u^5)^2},\\ 
&&\Gamma^3_{11}=\Gamma^3_{55}=\Gamma^4_{12}=-\Gamma^3_{15}= -\Gamma^4_{25}= 
\f{\epsilon_5u^3}{(u^1-u^5)^2}-\f{\epsilon_5(u^2)^2}{(u^1-u^5)^3},\\ 
&&\Gamma^3_{22}=\Gamma^4_{23}=\f{4\epsilon_1u^3}{(u^2)^2}-\f{\epsilon_5}{u^1-u^5},\\ 
&&\Gamma^4_{11}= \Gamma^4_{55}=-\Gamma^4_{15}=\f{\epsilon_5(u^2)^3}{(u^1-u^5)^4}-\f{2\epsilon_5u^2u^3}{(u^1-u^5)^3}
+\f{\epsilon_5u^4}{(u^1-u^5)^2},\\
&&\Gamma^4_{22}=\f{4\epsilon_1(u^2u^4-(u^3)^2)}
{(u^2)^3}+\f{\epsilon_5u^2}{(u^1-u^5)^2}.
\end{eqnarray*}

\subsubsection{$3\times 3+2\times 2$ Jordan blocks}
\beq
L=\begin{bmatrix}
u^{1} & 0 & 0 & 0 & 0\cr
u^{2} & u^{1} & 0 & 0 &0\cr
u^{3} & u^{2} & u^1 & 0 &0\cr
0 & 0 & 0 & u^4 & 0\cr
0 & 0 & 0 & u^{5} & u^{4}
\end{bmatrix},\quad
e=\f{\partial}{\partial u^1}+\f{\partial}{\partial u^4}\quad a_0=3\epsilon_1u^1+2\epsilon_4u^4.
\eeq
The non vanishing Christoffel symbols $\Gamma^i_{jk}$ (up to exchange of $j$ with $k$) are
\begin{eqnarray*}
&&\Gamma^1_{14} = \Gamma^2_{24} = \Gamma^3_{34} = -\Gamma^1_{11} = -\Gamma^1_{44} = -\Gamma^2_{12} = 
-\Gamma^3_{13} = \f{2\epsilon_4}{u^1-u^4},\\ 
&&\Gamma^2_{11} = \Gamma^2_{44} =\Gamma^3_{12} = -\Gamma^2_{14}=-\Gamma^3_{24}= \f{2 u^2\epsilon_4}{(u^1-u^4)^2},\, 
\Gamma^2_{22} =\Gamma^3_{23} = -\f{3\epsilon_1}{u^2},\\ 
&&\Gamma^5_{55} = -\f{2\epsilon_4}{u^5},\,\Gamma^3_{22}= 
\f{3\epsilon_1 u^3}{(u^2)^2}-\f{2\epsilon_4}{u^1-u^4},\\ 
&&\Gamma^3_{11}=\Gamma^3_{44}=-\Gamma^3_{14} =\f{2\epsilon_4(u^1u^3-(u^2)^2-u^3u^4)}{(u^1-u^4)^3},\\
&&\Gamma^4_{11}=\Gamma^4_{44}=-\Gamma^4_{14}=-\Gamma^5_{15}=\Gamma^5_{45}=\f{3\epsilon_1}{u^1-u^4},\\
&&\Gamma^5_{11}=\Gamma^5_{44}=-\Gamma^5_{14}=\f{3u^5\epsilon_1}{(u^1-u^4)^2},  
\end{eqnarray*}

\subsubsection{$3\times 3+1\times 1+1\times 1$ Jordan blocks}
\beq
L=\begin{bmatrix}
u^{1} & 0 & 0 & 0 & 0\cr
u^{2} & u^{1} & 0 & 0 &0\cr
u^{3} & u^{2} & u^1 & 0 &0\cr
0 & 0 & 0 & u^4 & 0\cr
0 & 0 & 0 & 0 & u^{5}
\end{bmatrix},\quad
e=\f{\partial}{\partial u^1}+\f{\partial}{\partial u^4}+\f{\partial}{\partial u^5},\quad a_0=3\epsilon_1u^1+\epsilon_4u^4+\epsilon_5u^5.
\eeq
The non vanishing Christoffel symbols $\Gamma^i_{jk}$ (up to exchange of $j$ with $k$) are
\begin{eqnarray*}
&&\Gamma^3_{23}= \Gamma^2_{22}=-\f{3\epsilon_1}{u^2},\,\Gamma^1_{14}=\Gamma^2_{24}=\Gamma^3_{34}=-\Gamma^1_{44}
=\f{\epsilon_4}{u^1-u^4},\\    
&&\Gamma^1_{15}=\Gamma^2_{25}=\Gamma^3_{35}=-\Gamma^1_{55}= \f{\epsilon_5}{u^1-u^5},\, 
\Gamma^4_{45}=-\Gamma^4_{55}=\f{\epsilon_5}{u^4-u^5},\\ 
&&\Gamma^5_{44}=-\Gamma^5_{45}=\f{\epsilon_4}{u^4-u^5},\,
\Gamma^4_{11}=-\Gamma^4_{14}=\f{3\epsilon_1}{u^1-u^4},\,
\Gamma^5_{11}=-\Gamma^5_{15}=\f{3\epsilon_1}{u^1-u^5},\\
&&\Gamma^2_{11}=\Gamma^3_{12}=\f{\epsilon_4u^2}{(u^1-u^4)^2}+\f{\epsilon_5u^2}{(u^1-u^5)^2},\\ 
&&\Gamma^2_{44}=-\Gamma^2_{14}=-\Gamma^3_{24}=\f{u^2\epsilon_4}{(u^1-u^4)^2},\, 
\Gamma^2_{55}=-\Gamma^2_{15}=-\Gamma^3_{25}=\f{u^2\epsilon_5}{(u^1-u^5)^2},\\ 
&&\Gamma^1_{11}=\Gamma^2_{12}=\Gamma^3_{13}=-\f{\epsilon_4}{u^1-u^4}-\f{\epsilon_5}{u^1-u^5},\\
&&\Gamma^3_{11}=\f{\epsilon_4(-(u^2)^2+(u^1-u^4)u^3)}{(u^1-u^4)^3}+\f{\epsilon_5(-(u^2)^2+(u^1-u^5)u^3)}{(u^1-u^5)^3},\\  
&&\Gamma^3_{44}=-\Gamma^3_{14}=\f{\epsilon_4(u^1u^3-(u^2)^2-u^3u^4)}{(u^1-u^4)^3},\, 
\Gamma^3_{55}=-\Gamma^3_{15}=\f{\epsilon_5(u^1u^3-(u^2)^2-u^3u^5)}{(u^1-u^5)^3},\\ 
&&\Gamma^4_{44}=\f{3\epsilon_1}{u^1-u^4}-\f{\epsilon_5}{u^4-u^5},\,
\Gamma^5_{55}=\f{3\epsilon_1}{u^1-u^5}+\f{\epsilon_4}{u^4-u^5},\\ 
&&\Gamma^3_{22}=\f{3u^3\epsilon_1}{(u^2)^2}-\f{\epsilon_4}{u^1-u^4}-\f{\epsilon_5}{u^1-u^5}. 
\end{eqnarray*}

\subsubsection{$2\times 2+2\times 2+1\times 1$ Jordan blocks}
\beq
L=\begin{bmatrix}
u^{1} & 0 & 0 & 0& 0\cr
u^{2} & u^{1} & 0 & 0 & 0\cr
0 & 0 & u^3 & 0 & 0 \cr
0 & 0 & u^4 & u^3 & 0 \cr
0 & 0 & 0 & 0 & u^{5}
\end{bmatrix},\quad
e=\f{\partial}{\partial u^1}+\f{\partial}{\partial u^3}+\f{\partial}{\partial u^5},\quad a_0=2\epsilon_1u^1+2\epsilon_3u^3+\epsilon_5u^5.
\eeq
The non vanishing Christoffel symbols $\Gamma^i_{jk}$ (up to exchange of $j$ with $k$) are
\begin{eqnarray*}
&&\Gamma^1_{11}=\Gamma^2_{12}=-\f{2\epsilon_3}{u^1-u^3}-\f{\epsilon_5}{u^1-u^5},\,\Gamma^3_{33}=\Gamma^4_{34}=\f{2\epsilon_1}{u^1-u^3}-\f{\epsilon_5}{u^3-u^5},\\
&&-\Gamma^1_{13}=\Gamma^1_{33}= -\f{2\epsilon_3}{u^1-u^3},\,\Gamma^1_{15}=\Gamma^2_{25}=-\Gamma^1_{55}=\f{\epsilon_5}{u^1-u^5},\\ 
&&\Gamma^2_{11}=\f{2u^2\epsilon_3}{(u^1-u^3)^2}+\f{u^2\epsilon_5}{(u^1-u^5)^2},\, 
\Gamma^2_{33}=-\Gamma^2_{13}=\f{2u^2\epsilon_3}{(u^1-u^3)^2},\\
&&\Gamma^2_{55}=-\Gamma^2_{15}=\f{u^2\epsilon_5}{(u^1-u^5)^2},\,\Gamma^2_{22}=-\f{2\epsilon_1}{u^2},\, 
\Gamma^4_{44}=-\f{2\epsilon_3}{u^4},\,\Gamma^2_{23}=\f{2\epsilon_3}{u^1-u^3},\\
&&\Gamma^3_{11}=-\Gamma^3_{13}=-\Gamma^4_{14}=\f{2\epsilon_1}{u^1-u^3},\, 
\Gamma^3_{35}=\Gamma^4_{45}=-\Gamma^3_{55}=\f{\epsilon_5}{u^3-u^5},\\ 
&&\Gamma^4_{11}=-\Gamma^4_{13}=\f{2\epsilon_1u^4}{(u^1-u^3)^2},\, 
\Gamma^4_{33}=\f{2u^4\epsilon_1}{(u^1-u^3)^2}+\f{u^4\epsilon_5}{(u^3-u^5)^2},\\
&&\Gamma^4_{55}=-\Gamma^4_{35}=\f{u^4\epsilon_5}{(u^3-u^5)^2},\,  
\Gamma^5_{11}=-\Gamma^5_{15}=\f{2\epsilon_1}{u^1-u^5},\, 
\Gamma^5_{33}=-\Gamma^5_{35}=\f{2\epsilon_3}{u^3-u^5},\\  
&&\Gamma^5_{55}=\f{2\epsilon_1}{u^1-u^5}+\f{\epsilon_3}{u^3-u^5}
\end{eqnarray*}

\subsubsection{$2\times 2+1\times 1+1\times 1+1\times1$ Jordan blocks}
\beq
L=\begin{bmatrix}
u^{1} & 0 & 0 & 0 & 0\cr
u^{2} & u^{1} & 0 & 0 & 0\cr
0 & 0 & u^3 & 0 & 0\cr
0 & 0 & 0 & u^{4} & 0\cr
0 & 0 & 0 & 0 & u^5
\end{bmatrix},\quad
e=\f{\partial}{\partial u^1}+\f{\partial}{\partial u^3}+\f{\partial}{\partial u^4}+\f{\partial}{\partial u^5},
\quad a_0=2\epsilon_1u^1+\epsilon_3u^3+\epsilon_4u^4+\epsilon_5u^5.
\eeq
The non vanishing Christoffel symbols $\Gamma^i_{jk}$ (up to exchange of $j$ with $k$) are
\begin{eqnarray*}
&&\Gamma^1_{11}=\Gamma^2_{12}=-\f{\epsilon_3}{u^1-u^3}-\f{\epsilon_4}{u^1-u^4}-\f{\epsilon_5}{u^1-u^5},\\ 
&&\Gamma^1_{13}=-\Gamma^1_{33}=\f{\epsilon_3}{u^1-u^3},\,\Gamma^1_{14}=\f{\epsilon_4}{u^1-u^4},\, 
\Gamma^1_{15}=-\Gamma^1_{55}=\f{\epsilon_5}{u^1-u^5},\,\Gamma^1_{44}=-\f{\epsilon_4}{u^1-u^4},\\
&&\Gamma^2_{11}=\f{\epsilon_3u^2}{(u^1-u^3)^2}+\f{\epsilon_4u^2}{(u^1-u^4)^2}+\f{\epsilon_5u^2}{(u^1-u^5)^2},\\ 
&&\Gamma^2_{13}=-\f{u^2\epsilon_3}{(u^1-u^3)^2},\, 
\Gamma^2_{14}=-\f{u^2\epsilon_4}{(u^1-u^4)^2},\,\Gamma^2_{15}=-\f{u^2\epsilon_5}{(u^1-u^5)^2},\\ 
&&\Gamma^2_{22}=-\f{2\epsilon_1}{u^2},\,\Gamma^2_{23}=\f{\epsilon_3}{u^1-u^3},\,\Gamma^2_{24}=\f{\epsilon_4}{u^1-u^4},\, 
\Gamma^2_{25}=\f{\epsilon_5}{u^1-u^5},\\ 
&&\Gamma^2_{33}=\f{u^2\epsilon_3}{(u^1-u^3)^2},\, 
\Gamma^2_{44}=\f{u^2\epsilon_4}{(u^1-u^4)^2},\, 
\Gamma^2_{55}=\f{u^2\epsilon_5}{(u^1-u^5)^2},\,\Gamma^3_{11}=-\Gamma^3_{13}=\f{2\epsilon_1}{u^1-u^3},\\
&&\Gamma^3_{33}=\f{2\epsilon_1}{u^1-u^3}-\f{\epsilon_4}{u^3-u^4}-\f{\epsilon_5}{u^3-u^5},\\
&&\Gamma^3_{34}=-\Gamma^3_{44}=\f{\epsilon_4}{u^3-u^4},\, 
\Gamma^3_{35}=-\Gamma^3_{55}=\f{\epsilon_5}{u^3-u^5},\\
&&\Gamma^4_{11}=-\Gamma^4_{14}=\f{2\epsilon_1}{u^1-u^4},\,
\Gamma^4_{33}=-\Gamma^4_{34}=\f{\epsilon_3}{u^3-u^4},\\ 
&&\Gamma^4_{44}=\f{2\epsilon_1}{u^1-u^4}+\f{\epsilon_3}{u^3-u^4}-\f{\epsilon_5}{u^4-u^5},\\ 
&&\Gamma^4_{45}=-\Gamma^4_{55}=\f{\epsilon_5}{u^4-u^5},\,
\Gamma^5_{11}=-\Gamma^5_{15}=\f{2\epsilon_1}{u^1-u^5},\\
&&\Gamma^5_{33}=-\Gamma^5_{35}=\f{\epsilon_3}{u^3-u^5},\,
\Gamma^5_{44}=-\Gamma^5_{45}=\f{\epsilon_4}{u^4-u^5},\\
&&\Gamma^5_{55}=\f{2\epsilon_1}{u^1-u^5}+\f{\epsilon_3}{u^3-u^5}+\f{\epsilon_4}{u^4-u^5}.
\end{eqnarray*}   
  
  \endproof

\section{The case of a Jordan block of arbitrary size}
Looking at the formulas for the case of a single Jordan block we observe that in order to pass from the $n\times n$ Jordan block with $n\epsilon_1=\epsilon$ to the $(n+1)\times (n+1)$ Jordan blok with $(n+1)\epsilon_1=\epsilon$ there is a simple rule. The new non vanishing Christoffel symbols
 are $\Gamma^{n+1}_{ij}$ with $i,j\ne 1$. The Christoffel simbol $\Gamma^{n+1}_{22}$ is given by the formula
\beq\label{mainide}
\Gamma^{n+1}_{22}=-\f{1}{u^2}\sum_{s=1}^{n-1}u^{2+s}\Gamma^{n+1-s}_{22}
\eeq
while the Christoffel simbols $\Gamma^{n+1}_{ij}$ with $i$ and $j$ different from $1$ and not simultaneously equal to $2$ can be written in terms of the Christoffel symbols  $\Gamma^{n}_{ij}$:
\beq\label{steps}
 \Gamma^{n+1}_{ij}= \Gamma^{n}_{i-1,j}=\Gamma^{n}_{i,j-1}
\eeq
provided that $i-1\ne 1$ or $j-1\ne 1$ (all the Christofell symbols with $i$ or $j$ equal to $1$ vanish). Notice that from the above definition it follows immediately that all the non vanishing Christoffel symbols can be obtained recursively starting from $\Gamma^2_{22}=-\f{\epsilon}{u^2}$. Indeed applying the relation \eqref{steps} $i+j-4$ times we obtain (of course if $n-i-j<-3$ the Christoffel symbols vanish)
\beq
\Gamma^{n+1}_{ij}= \Gamma^{n+5-i-j}_{22}
\eeq
and more in general (the above property holds for all $n$)
\begin{eqnarray}
\label{mainide_bis}
\Gamma^{k}_{ij}&=& \Gamma^{k+4-i-j}_{22}\qquad {\rm if}\,\,k-i-j\ge -2,\,\,i,j\ne 1\\
\Gamma^{k}_{ij}&=& 0\qquad {\rm if}\,\,k-i-j<-2.
\end{eqnarray}
\subsection{Technical lemmas}
Using the above remarks one can easily prove by induction the following lemmas.
\begin{lemma}
The Christoffel symbols $\Gamma^k_{ij}$ associated with the $n\times n$ Jordan block, defined recursively as explained above starting from the $2\times 2$ Jordan block satisfy the following identity:
\beq
\f{\partial\Gamma^k_{ij}}{\partial u^l}=\f{\partial\Gamma^{k-1}_{ij}}{\partial u^{l-1}},\qquad l>2.
\eeq
\end{lemma}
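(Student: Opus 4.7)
The plan is to prove the identity in two stages: first reduce to the diagonal case $i=j=2$, then prove that case by induction on the upper index using the master recursion \eqref{mainide}.

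For the reduction I would invoke the explicit formula \eqref{mainide_bis}, which says that $\Gamma^k_{ij}=\Gamma^{k+4-i-j}_{22}$ whenever $i,j\neq 1$ and $k-i-j\geq -2$, and $\Gamma^k_{ij}=0$ otherwise. If $i=1$ or $j=1$, both sides of the desired identity vanish trivially. If $k-i-j\leq -3$, both $\Gamma^k_{ij}$ and $\Gamma^{k-1}_{ij}$ vanish. In the remaining cases, setting $m:=k+4-i-j\geq 2$, the identity reduces to
\[
\frac{\partial \Gamma^m_{22}}{\partial u^l}=\frac{\partial \Gamma^{m-1}_{22}}{\partial u^{l-1}},\qquad l>2.
\]

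For the reduced statement I would use strong induction on $m$, writing $f_m:=\Gamma^m_{22}$ and rewriting \eqref{mainide} as $u^2 f_{n+1}+\sum_{s=1}^{n-1}u^{2+s}f_{n+1-s}=0$. Differentiating with respect to $u^l$ for $l>2$ yields
\[
u^2\,\partial_l f_{n+1} = -[\,3\le l\le n+1\,]\,f_{n+3-l} - \sum_{s=1}^{n-1}u^{2+s}\,\partial_l f_{n+1-s},
\]
while differentiating the analogous recursion for $f_n$ with respect to $u^{l-1}$ produces a structurally identical expression for $u^2\,\partial_{l-1}f_n$. The inductive hypothesis $\partial_l f_{n+1-s}=\partial_{l-1}f_{n-s}$, valid for all $s\geq 1$, then matches the two right-hand sides term by term, completing the step.

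The main subtlety, and the only place requiring care, is the treatment of the endpoints of the summations. The $s=n-1$ term in the $f_{n+1}$ sum would formally require applying the hypothesis to $f_1$, which is not defined; this is rescued by the fact that $f_2=-\epsilon/u^2$ depends only on $u^2$, so $\partial_l f_2=0$ for $l\geq 3$ and the term drops out for free. A second watchpoint is the case $l=3$: now the $f_n$-recursion must be differentiated with respect to $u^2$ itself, which produces an extra term $f_n$ coming from the explicit $u^2$-prefactor. This extra term is exactly the $f_{n+3-l}$ appearing on the other side, since $n+3-l=n$ when $l=3$, so the match is again perfect. Beyond this index bookkeeping, the induction is routine and I do not expect any further obstacle.
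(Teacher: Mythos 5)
Your proposal is correct and follows essentially the same route as the paper: reduce to the case $i=j=2$ via the shift identities \eqref{mainide_bis}, then run a (strong) induction on the upper index using the recursion \eqref{mainide}, with the endpoint term handled by $\Gamma^1_{22}=0$ and the fact that $\Gamma^2_{22}$ depends only on $u^2$. If anything, your explicit bookkeeping of the terms produced by differentiating the coordinate factors $u^{2+s}$ and the $1/u^2$ prefactor (the $f_{n+3-l}$ term and the $l=3$ case) is more careful than the paper's own displayed computation, which silently drops these mutually cancelling contributions.
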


\noindent
{\it Proof}. It is sufficient to prove the lemma in the case $i=j=2$. Indeed, if $\Gamma^k_{ij}$ vanishes it is easy to check that also  $\Gamma^{k-1}_{ij}$  
 vanishes. If $\Gamma^k_{ij}$ does not vanish then there exists $2\le h\le n+1$ satisfying $i+j-k=4-h$. Moreover  $\Gamma^k_{ij}=\Gamma^h_{22}$
 and $\Gamma^{k-1}_{ij}=\Gamma^{h-1}_{22}$. For $h=2$ we have to prove that
\beq
\f{\partial\Gamma^2_{22}}{\partial u^l}=\f{\partial\Gamma^{1}_{22}}{\partial u^{l-1}},\qquad l>2.
\eeq
The l.h.s. vanishes since $\Gamma^2_{22}$ depends only on $u^2$ while the r.h.s vanishes since $\Gamma^1_{22}=0$. For $h>2$ we have to prove that
\beq
\f{\partial\Gamma^h_{22}}{\partial u^l}=\f{\partial\Gamma^{h-1}_{22}}{\partial u^{l-1}},\qquad l>2.
\eeq
For $h=3$ it is true. Assume that it is true for a given $h\ge 3$. Then using \eqref{mainide} and the inductive hypothesis we get
$$
\f{\partial\Gamma^h_{22}}{\partial u^l}=-\f{1}{u^2}\sum_{s=1}^{h-2}u^{2+s}\f{\partial \Gamma^{h-s}_{22}}{\partial u^l}
=-\f{1}{u^2}\sum_{s=1}^{h-3}u^{2+s}\f{\partial \Gamma^{h-s}_{22}}{\partial u^l}
=-\f{1}{u^2}\sum_{s=1}^{h-3}u^{2+s}\f{\partial \Gamma^{h-s-1}_{22}}{\partial u^{l-1}}
=\f{\partial\Gamma^{h-1}_{22}}{\partial u^{l-1}}.
$$

\endproof

\begin{lemma}
The Christoffel symbols $\Gamma^k_{ij}$ associated with the $n\times n$ Jordan block, defined recursively as explained above starting from the $2\times 2$ Jordan block satisfy the following identities:
\begin{eqnarray}
&&\sum_{k=1}^n\Gamma^i_{jk}u^k=0,\qquad i\ne j\quad {\rm or}\quad i=1\quad{\rm or}\quad j=1\\
&&\sum_{k=1}^n\Gamma^i_{jk}u^k=-\epsilon ,\qquad i=j\ne 1
\end{eqnarray}
\end{lemma}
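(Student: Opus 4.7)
The plan is to reduce everything to the diagonal symbols $\Gamma^m_{22}$ via the closed-form identity \eqref{mainide_bis} and then invoke the recursion \eqref{mainide}. I would split the argument according to the three cases suggested by the statement.

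When $i=1$ or $j=1$, every Christoffel symbol appearing in the sum has a $1$ among its indices and therefore vanishes by the convention already noted before \eqref{mainide_bis}; hence $\sum_{k}\Gamma^{i}_{jk}u^{k}=0$ is immediate. For the diagonal case $i=j\ne 1$, identity \eqref{mainide_bis} forces $\Gamma^{i}_{ik}=0$ unless $i-i-k\ge -2$, i.e.\ $k\le 2$. The value $k=1$ still gives a vanishing symbol, so only $k=2$ survives, and there $\Gamma^{i}_{i2}=\Gamma^{i+4-i-2}_{22}=\Gamma^{2}_{22}=-\epsilon/u^{2}$. Multiplying by $u^{2}$ yields exactly $-\epsilon$, as claimed.

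The heart of the argument is the off-diagonal case $i\ne j$ with $i,j\ne 1$. If $i<j$, then the non-triviality condition $k\le i-j+2<2$ rules out every term, so the sum is zero. If $i>j$, I would use \eqref{mainide_bis} to write
$$\sum_{k=1}^{n}\Gamma^{i}_{jk}u^{k}=\sum_{k=2}^{i-j+2}\Gamma^{i+4-j-k}_{22}u^{k},$$
and the change of summation index $t=i+4-j-k$ would transform this into $\sum_{t=2}^{i-j+2}u^{i-j+4-t}\Gamma^{t}_{22}$, so it only remains to show that this last sum vanishes.

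But this is exactly what the recursion \eqref{mainide} encodes: multiplying \eqref{mainide} by $u^{2}$ and re-indexing via $t=n+1-s$ gives
$$\sum_{t=2}^{N}u^{N+2-t}\,\Gamma^{t}_{22}=0,\qquad N\ge 3,$$
and applying it with $N=i-j+2\ge 3$ (the inequality uses exactly that $i>j$) gives the required identity. The only bookkeeping point is that the effective upper limit $i-j+2$ must not exceed $n$, which is automatic: since $i\le n$ and $j\ge 2$ one has $i-j+2\le n$, so no term of the recursion is lost. I do not expect any real obstacle beyond this index-range check; once \eqref{mainide_bis} and \eqref{mainide} are in hand the proof is purely an exercise in re-indexing.
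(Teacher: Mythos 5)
Your proposal is correct and follows essentially the same route as the paper: the trivial vanishing when an index equals $1$, the observation that only $k=2$ survives on the diagonal, and the reduction of the off-diagonal case to the recursion \eqref{mainide}. The only cosmetic difference is that you normalize all symbols to $\Gamma^{t}_{22}$ via \eqref{mainide_bis} before re-indexing, whereas the paper first shifts to the case $j=2$ and reads \eqref{mainide} directly as $\sum_{s}u^{s}\Gamma^{i}_{2s}=0$; these are the same computation.
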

For $i=1$ or $j=1$ the first identity is trivially satisfied since all the summands vanish. Thus we can assume both indices different from $1$. For $i=j=2$  the
 only term surviving in the sum is $\Gamma^2_{22}u^2$ and the result follows from the definition of  $\Gamma^2_{22}$. For $j=2$ and $i\ne 2$ the result
 follows from the formula \eqref{mainide}. Indeed, we can rewrite the identity
$$
\Gamma^{i}_{22}=-\f{1}{u^2}\sum_{s=1}^{i-2}u^{2+s}\Gamma^{i-s}_{22}
$$
as
$$\sum_{s=2}^{i}u^{s}\Gamma^{i}_{2s}=0.$$
Taking into account that for $s>i$ $\Gamma^{i}_{2s}=0$ and $\Gamma^{i}_{21}=0$ we have
$$\sum_{s=1}^{n}u^{s}\Gamma^{i}_{2s}=0.$$
The cases $i>2$ and $j>2$ can be reduced to the above cases using the identity $\Gamma^i_{jk}=\Gamma^{i-1}_{j-1,k}$. For instance, if $i=j$ applying
 $(i-2)$-times this identity we reduce to the case $i=j=2$.

\endproof 

\begin{lemma}
The connection $\nabla$ associated with the $n\times n$ Jordan block satisfies the condition
\beq
\nabla_jE^i=(1-\epsilon)\delta^i_j+\epsilon e^ie^j.
\label{Lemma7.3J=1}
\eeq
\end{lemma}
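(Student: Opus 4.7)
The plan is to translate the identity into an identity about Christoffel symbols and then read it off from the preceding lemma. Since $E=\sum_s u^s\partial_s$ we have $E^i=u^i$, so
\[
\nabla_j E^i=\partial_j u^i+\Gamma^i_{jk}u^k=\delta^i_j+\sum_{k=1}^n\Gamma^i_{jk}u^k.
\]
Moreover, in the David--Hertling coordinates for a single Jordan block $e=\partial/\partial u^1$, hence $e^i=\delta^i_1$ and $e^ie^j=\delta^i_1\delta^j_1$. Thus the identity to prove is equivalent to
\[
\sum_{k=1}^n\Gamma^i_{jk}u^k=-\epsilon\,\delta^i_j+\epsilon\,\delta^i_1\delta^j_1.
\]

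The second step is a case analysis of the right-hand side against the previous lemma, which states $\sum_k\Gamma^i_{jk}u^k=0$ if $i\ne j$ or if $i=1$ or $j=1$, and $\sum_k\Gamma^i_{jk}u^k=-\epsilon$ if $i=j\ne 1$. I split into three cases. If $i=j=1$, the lemma gives $0$ for the sum, and the right-hand side is $-\epsilon+\epsilon=0$, so both sides agree. If $i=j\ne 1$, the lemma gives $-\epsilon$ for the sum, while the right-hand side is $-\epsilon+0=-\epsilon$. Finally, if $i\ne j$, at least one of the terms on the right-hand side vanishes and in fact both do: $\delta^i_j=0$, and $\delta^i_1\delta^j_1=0$ because $i\ne j$ forces at least one of the indices to differ from $1$; meanwhile the lemma gives $0$ (either because $i\ne j$, or more trivially because one of the indices equals $1$). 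In all three cases the two sides match.

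There is no real obstacle here; the only point requiring a moment of care is to notice that the case $i=j=1$ is covered by the first clause of the previous lemma (the ``$i=1$ or $j=1$'' branch) rather than the second, so that the $-\epsilon$ contribution from $-\epsilon\,\delta^i_j$ is exactly cancelled by the $+\epsilon$ contribution from $\epsilon\,e^ie^j$. Combining the three cases yields the claimed formula
\[
\nabla_jE^i=(1-\epsilon)\delta^i_j+\epsilon\,e^ie^j,
\]
which completes the proof.
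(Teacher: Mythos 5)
Your proof is correct and follows essentially the same route as the paper: the paper simply writes $\nabla_jE^i=\delta^i_j+\Gamma^i_{jk}u^k$ and says the result follows from the preceding lemma, and your case analysis ($i=j=1$, $i=j\ne 1$, $i\ne j$) is exactly the verification that the paper leaves implicit. The observation that the $i=j=1$ case falls under the ``$i=1$ or $j=1$'' branch, so that $-\epsilon\,\delta^i_j$ and $\epsilon\,e^ie^j$ cancel, is the right (and only) point of care.
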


\n
\emph{Proof}.  Since $\nabla_jE^i=\delta^i_j+\Gamma^i_{jk}u^k$ the result follows from the previous lemma.

\endproof

\begin{lemma}
The component of $E^{-1}$ are defined recursively by
\beq
(E^{-1})^1=\f{1}{u^1},\qquad (E^{-1})^{n+1}=-\f{1}{u^1}\sum_{s=1}^{n}u^{1+s}(E^{-1})^{n+1-s}
\eeq
\end{lemma}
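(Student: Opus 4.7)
The vector field $E^{-1}$ is, by definition, the inverse of $E$ with respect to the product $\circ$, i.e.\ the (unique, where it exists) vector field satisfying $E\circ E^{-1}=e$. Since $L=E\circ$, this is the linear system $L\cdot E^{-1}=e$. My plan is simply to read off this matrix equation row by row in the David--Hertling coordinates of a single Jordan block and observe that the two statements of the lemma are precisely the first equation and the recursion induced by the remaining ones.

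By Theorem \ref{DavidHertlingth} and formula \eqref{Lblock}, in the single block case $L^i_j=u^{i-j+1}$ for $1\le j\le i\le n$ and $L^i_j=0$ otherwise, while $e^i=\delta^i_1$. The $i$-th component of $L\cdot E^{-1}=e$ therefore reads
\[
\sum_{j=1}^{i} u^{i-j+1}\,(E^{-1})^{j}=\delta^i_1,\qquad i=1,\dots,n.
\]
For $i=1$ this is $u^1\,(E^{-1})^1=1$, giving $(E^{-1})^1=1/u^1$, which is the first assertion. For $i\ge 2$, isolating the term $j=i$ and performing the change of summation variable $s=i-j$ (so that $j=i-s$ runs over $s=1,\dots,i-1$) yields
\[
u^1\,(E^{-1})^{i}=-\sum_{s=1}^{i-1} u^{s+1}\,(E^{-1})^{i-s},
\]
which, relabelling $i=n+1$, is exactly the claimed recursion.

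There is essentially no obstacle here: the statement is a direct translation of the matrix equation $L\cdot E^{-1}=e$ into coordinates, using the explicit lower-triangular Toeplitz form of the single Jordan block. The only point worth noting is that the recursion makes sense (and produces a well-defined $E^{-1}$) precisely on the open set $\{u^1\neq 0\}$, which coincides with the locus where $L=E\circ$ is invertible, consistent with the remark after the definition of bi-flat F-manifold that the dual structure is defined only where $E\circ$ is invertible.
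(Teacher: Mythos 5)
Your proof is correct and follows essentially the same route as the paper: both write the defining relation $E^{-1}\circ E=e$ (equivalently $L\cdot E^{-1}=e$) in the David--Hertling coordinates of a single Jordan block, obtain $\sum_{j}u^{i-j+1}(E^{-1})^{j}=\delta^i_1$, and read off the base case from $i=1$ and the recursion from the remaining rows. No issues.
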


\n
\emph{Proof}. By definition $E^{-1}\circ E=e$. In canonical coordinates we obtain
\begin{eqnarray*}
(E^{-1})^{i+1-k}u^k=\delta^i_1.
\end{eqnarray*}
In the one component case we obtain $(E^{-1})^1=\f{1}{u^1}$. 
In the $(n+1)$-component case we obtain
\begin{eqnarray*}
&&(E^{-1})^{1}u^1=1\\
&&(E^{-1})^{2}u^1+(E^{-1})^{1}u^2=0\\
&&\qquad\vdots\\
&&(E^{-1})^{n+1}u^1+(E^{-1})^{n}u^2+\cdots+(E^{-1})^{1}u^{n+1}=0.\\
\end{eqnarray*}
The first $n$ equations coincide with the equations for the $n$-component case.
\endproof

\begin{lemma}
The dual connection $\nabla^*$ is defined by
\beq
\Gamma^{*k}_{ij}=\Gamma^{k}_{ij}+(\epsilon-1)(E^{-1})^{k-i-j+2}-\epsilon\,(E^{-1})^1\,\delta^k_1\delta^1_i\delta^1_j
\eeq
where it is understood that $(E^{-1})^{k-i-j+2}=0$ if $k-i-j<-1$.
\end{lemma}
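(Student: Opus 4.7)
The plan is to derive the formula directly from the general identity \eqref{dualfromnatural}, namely $\Gamma^{*k}_{ij} =\Gamma^k_{ij}- c^{*l}_{ji}\nabla_l E^k$, by computing the two ingredients $c^{*l}_{ji}$ and $\nabla_l E^k$ in the single Jordan block case and then contracting them.

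First I would compute the dual structure constants $c^{*l}_{ij}$. By the definition $X*Y = (E\circ)^{-1}X\circ Y$, in components one has $c^{*l}_{ij}=(E^{-1})^a c^{l}_{ab}c^{b}_{ij}$. Plugging in the David--Hertling canonical form $c^{b}_{ij}=\delta^{b}_{i+j-1}$ and $c^{l}_{ab}=\delta^{l}_{a+b-1}$ for an $n\times n$ Jordan block, the two Kronecker deltas force $a=l-i-j+2$, so
\[
c^{*l}_{ij}=(E^{-1})^{l-i-j+2},
\]
with the convention $(E^{-1})^{m}=0$ for $m\le 0$ (this is consistent with the convention stated in the lemma).

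Next I would substitute the formula for $\nabla_l E^k$ given by Lemma \ref{Lemma7.3J=1}. Since $e^i=\delta^i_1$, that lemma reads
\[
\nabla_l E^k=(1-\epsilon)\delta^k_l+\epsilon\,\delta^k_1\delta^1_l.
\]
Contracting with $c^{*l}_{ji}$ yields two terms:
\[
c^{*l}_{ji}\nabla_l E^k=(1-\epsilon)(E^{-1})^{k-i-j+2}+\epsilon\,(E^{-1})^{3-i-j}\delta^k_1.
\]
The only step requiring a small observation is the second term: since $(E^{-1})^{3-i-j}$ vanishes whenever $i+j>2$ and both $i,j\ge 1$, the only surviving contribution is $i=j=1$, which by convention gives $(E^{-1})^1\delta^1_i\delta^1_j$. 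Substituting back into \eqref{dualfromnatural} produces exactly the claimed expression
\[
\Gamma^{*k}_{ij}=\Gamma^{k}_{ij}+(\epsilon-1)(E^{-1})^{k-i-j+2}-\epsilon\,(E^{-1})^1\,\delta^k_1\delta^1_i\delta^1_j.
\]

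There is no real obstacle here: the result is a direct consequence of \eqref{dualfromnatural} combined with the previous two lemmas (the shape of $\nabla E$ and the recursive form of $E^{-1}$). The only care needed is in tracking the index shifts coming from the Jordan block product $c^l_{ij}=\delta^l_{i+j-1}$ and in using the convention $(E^{-1})^m=0$ for nonpositive $m$ to collapse the $\delta^k_1$-term into a diagonal contribution supported at $i=j=1$.
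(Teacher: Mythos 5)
Your proposal is correct and follows essentially the same route as the paper: both start from \eqref{dualfromnatural}, insert $\nabla_l E^k=(1-\epsilon)\delta^k_l+\epsilon e^ke^l$ from the preceding lemma, compute $c^{*l}_{ji}=(E^{-1})^{l-i-j+2}$ from the Jordan-block structure constants, and contract. Your explicit remark that the $\delta^k_1$-term survives only for $i=j=1$ is a detail the paper leaves implicit, but it is the same argument.
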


\n
\emph{Proof}. From \eqref{dualfromnatural} and \eqref{Lemma7.3J=1} it follows that
\begin{align}
	\Gamma^{*k}_{ij} =\Gamma^k_{ij}- c^{*l}_{ji}((1-\epsilon)\delta^k_l+\epsilon e^ke^l) 
	=\Gamma^k_{ij}- (1-\epsilon)c^{*k}_{ji}-\epsilon c^{*1}_{ji} \delta^k_1.
	\notag
\end{align}
By definition 
$$c^{*i}_{jk}=c^i_{jl}c^l_{km}(E^{-1})^m=\delta^i_{j+l-1}\delta^l_{k+m-1}(E^{-1})^m=\delta^i_{j+l-1}(E^{-1})^{l+1-k}
=(E^{-1})^{2+i-j-k}.$$
Substituting in $\Gamma^{*k}_{ij}$ we get the result.

\endproof

\subsection{The proof of the theorem}
From the above lemmas it follows that 
\begin{theorem}
The connection $\nabla$ associated with the $n\times n$ Jordan block, the product $\circ$ with structure constants $c^k_{ij}=\delta^k_{i+j-1}$, the
 vector field $e$ of components $e^i=\delta^i_1$, the dual connection defined by the Christoffel symbols $\Gamma^{*k}_{ij}=\Gamma^{k}_{ij}+(\epsilon-1)(E^{-1})^{k-i-j+2}-\epsilon\,(E^{-1})^1\,\delta^k_1\delta^1_i\delta^1_j$, the dual product $*$ with structure constants $c^{*i}_{jk}=(E^{-1})^{2+i-j-k}$ and the vector field of components $E^i=u^i$
 define a unique bi-flat structure.
\end{theorem}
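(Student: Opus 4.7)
The plan is to work in the David--Hertling canonical coordinates of Theorem~\ref{DavidHertlingth}, which, together with the hypothesis $L=E\circ$, already forces the stated form of $e$, $E$, and the structure constants $c^k_{ij}=\delta^k_{i+j-1}$, as well as the F-manifold identities $[e,E]=e$ and $\mathrm{Lie}_E\circ=\circ$. Consequently, the essential content of the theorem is the assertion that the connection pair $(\nabla,\nabla^{*})$ is uniquely determined by the stated axioms and that the resulting data form a bi-flat F-structure.

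For the connection $\nabla$, I would argue that the axioms $\nabla e=0$, the naturality symmetry $\nabla_k c^i_{jl}=\nabla_j c^i_{kl}$, and $d_\nabla(L-a_0 I)=0$ together constitute a linear system for the Christoffel symbols with a unique solution. The condition $\nabla e=0$ forces $\Gamma^i_{j1}=0$; the naturality symmetry expanded in canonical coordinates, using $c^i_{jl}=\delta^i_{j+l-1}$, yields the index shift \eqref{steps}; and the component of $d_\nabla(L-a_0 I)=0$ with lower indices equal to $2$, with $a_0=\epsilon u^1$, produces the recursion \eqref{mainide} seeded by $\Gamma^2_{22}=-\epsilon/u^2$. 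Since \eqref{mainide_bis} reduces every nonvanishing symbol to some $\Gamma^h_{22}$, both existence and uniqueness of $\nabla$ follow. The dual connection is thereafter fixed by \eqref{dualfromnatural} together with the explicit form of $\nabla E$ established in the third technical lemma, and the dual product $*$ is fixed by $X*Y=(E\circ)^{-1}X\circ Y$ using the expression for $E^{-1}$ from the fourth technical lemma.

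It remains to verify that $\nabla$ is flat and that $\nabla-\lambda\circ$ is flat and torsionless for all $\lambda$. The corresponding statements for $\nabla^{*}$ would then follow from the general principle cited earlier that the dual of a natural flat connection is flat whenever $E$ is linear (which here holds as $E^i=u^i$), while the bi-flat compatibility $(d_\nabla-d_{\nabla^{*}})(X\circ)=0$ is equivalent to \eqref{dualfromnatural} and thus automatic by the very construction of $\Gamma^{*k}_{ij}$. The principal obstacle is establishing $R^l_{ijk}=0$ for $\nabla$. I would induct on $n$: the first technical lemma, $\partial\Gamma^k_{ij}/\partial u^l=\partial\Gamma^{k-1}_{ij}/\partial u^{l-1}$ for $l>2$, relates curvature components at level $n$ to those at level $n-1$, while the $l=2$ direction must be handled separately using \eqref{mainide} and the contracted identities $\sum_k\Gamma^i_{jk}u^k$ of the second technical lemma. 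Once flatness of $\nabla$ is established, flatness of $\nabla-\lambda\circ$ for arbitrary $\lambda$ reduces to the already verified naturality symmetry combined with flatness, completing the verification of all bi-flat axioms.
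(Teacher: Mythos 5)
Your plan follows essentially the same route as the paper: the Christoffel symbols are pinned down by $\nabla e=0$, the naturality symmetry and $d_\nabla(L-a_0I)=0$ through the recursion \eqref{mainide}--\eqref{steps}, flatness of $\nabla$ is proved by induction on $n$ using the derivative-shift lemma with the $l=2$ and contracted-identity lemmas handling the remaining cases, uniqueness comes from the triangular structure of the linear system, and the dual structure is read off from \eqref{dualfromnatural} together with the linearity of $E$. The only imprecision is your parenthetical that linearity of $E$ ``holds as $E^i=u^i$'': the canonical coordinates are not flat for $\nabla$, so one must actually verify $\nabla\nabla E=0$ from the constancy of $\nabla_jE^i=(1-\epsilon)\delta^i_j+\epsilon e^ie^j$ (the paper's Step 4), which is exactly what your third technical lemma supplies.
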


\n
{\it Proof}. 

\begin{itemize}
\item {\bf Step 1: flatness of $\nabla$}. We already know that the connection $\nabla$ is flat for $n=2,3,4,5$. We need to prove that if  
 the connection  $\nabla$ associated with the $n\times n$ Jordan block is flat, that is
\beq
\partial_k\Gamma^i_{hj}-\partial_h\Gamma^i_{kj}-\sum_{l=1}^n(\Gamma^i_{hl}\Gamma^l_{kj}-\Gamma^i_{kl}\Gamma^l_{hj})=0, \qquad i,j,h,k=1,...,n
\eeq 
then also the connection associated with the $(n+1)\times (n+1)$ Jordan block is flat:
\beq
R^i_{hkj}=\partial_k\Gamma^i_{hj}-\partial_h\Gamma^i_{kj}-\sum_{l=1}^{n+1}(\Gamma^i_{hl}\Gamma^l_{kj}-\Gamma^i_{kl}\Gamma^l_{hj})=0,\qquad i,j,h,k=1,...,n+1.
\eeq
Since  $\Gamma^i_{j,n+1}=0$ if $i\ne n+1$ we need only to check the case $i=n+1$. We have $3$ interesting subcases: $h>2,k>2,j>2$, $h=2,k>2,j>2$ and $h=j=2$,$k>2$. In the first case using the lemma and the definition of the Christoffel symbols we have
 (notice that $\Gamma^2_{ij}=0$ if $i$ or $j$ are greater than $2$)
\begin{eqnarray*}
R^{n+1}_{hkj}&=&\partial_{k-1}\Gamma^{n}_{hj}-\partial_{h-1}\Gamma^n_{kj}-\sum_{l=3}^{n}(\Gamma^{n+1}_{hl}\Gamma^l_{kj}-\Gamma^{n+1}_{kl}\Gamma^l_{hj})=\\
&=&\partial_{k-1}\Gamma^{n-1}_{h-1,j}-\partial_{h-1}\Gamma^{n-1}_{k-1,j}-\sum_{l=3}^{n}(\Gamma^{n-1}_{h-1,l-1}\Gamma^{l-1}_{k-1,j}
-\Gamma^{n-1}_{k-1,l-1}\Gamma^{l-1}_{h-1,j})\\
&=&\partial_{k-1}\Gamma^{n-1}_{h-1,j}-\partial_{h-1}\Gamma^{n-1}_{k-1,j}-\sum_{l=1}^{n}(\Gamma^{n-1}_{h-1,l}\Gamma^{l}_{k-1,j}
-\Gamma^{n-1}_{k-1,l-1}\Gamma^{l-1}_{h-1,j})\\
&=&R^{n-1}_{h-1,k-1,j}.
\end{eqnarray*}
The quantity $R^{n-1}_{h-1,k-1,j}$ vanishes by hypothesis if $j=1,...,n$. For $j=n+1$ it vanishes since each term in the sum contains a Christoffel symbol of the form $\Gamma^r_{s,n+1}$ with $r<n+1$. In the second case we have
\begin{eqnarray*}
R^{n+1}_{2kj}&=&\partial_k\Gamma^{n+1}_{2j}-\partial_2\Gamma^{n+1}_{kj}-\sum_{l=3}^{n+1}(\Gamma^{n+1}_{2l}\Gamma^l_{kj}-\Gamma^{n+1}_{kl}\Gamma^l_{2j})=\\
&=&\partial_{k-1}\Gamma^{n}_{2j}-\partial_2\Gamma^{n}_{k-1,j}-\sum_{l=3}^{n+1}(\Gamma^{n}_{2,l-1}\Gamma^{l-1}_{k-1,j}-\Gamma^{n}_{k-1,l}\Gamma^l_{2j})=\\
&=&\partial_{k-1}\Gamma^{n}_{2j}-\partial_2\Gamma^{n}_{k-1,j}-\sum_{l=1}^{n}(\Gamma^{n}_{2l}\Gamma^{l}_{k-1,j}-\Gamma^{n}_{k-1,l}\Gamma^l_{2j})=\\
&=&R^{n}_{2,k-1,j}=0.
\end{eqnarray*}
Finally in the last case we have
\begin{eqnarray*}
R^{n+1}_{2k2}&=&\partial_k\Gamma^{n+1}_{22}-\partial_2\Gamma^{n+1}_{k2}-\sum_{l=3}^{n+1}\Gamma^{n+1}_{2l}\Gamma^l_{k2}+\sum_{l=2}^{n+1}\Gamma^{n+1}_{kl}\Gamma^l_{22}=\\
&=&\partial_{k-1}\Gamma^{n}_{22}-\partial_2\Gamma^{n}_{k-1,2}-\sum_{l=3}^{n+1}\Gamma^{n}_{2,l-1}\Gamma^{l-1}_{k-1,2}+\sum_{l=2}^{n}\Gamma^{n}_{k-1,l}\Gamma^l_{22}=\\
&=&\partial_{k-1}\Gamma^{n}_{22}-\partial_2\Gamma^{n}_{k-1,2}-\sum_{l=1}^{n}\Gamma^{n}_{2l}\Gamma^{l}_{k-1,2}+\sum_{l=1}^{n}\Gamma^{n}_{k-1,l}\Gamma^l_{22}=\\
&=&R^{n}_{2,k-1,2}=0.
\end{eqnarray*}
\item {\bf Step 2: flatness of $e$}. It is equivalent to $\Gamma^i_{j1}=0$.
\item {\bf Step 3: compatibility of $\nabla$ and $\circ$}. In canonical coordinates this means that
$$\Gamma^k_{i+j-1,l}-\Gamma^{k-i+1}_{lj}=\Gamma^k_{l+j-1,i}-\Gamma^{k-l+1}_{ij}.$$
Let us prove this condition by induction. We already know that it is satisfied up to $n=5$. Let us suppose that it is satisfied for a given $n$. We have to prove that 
$$\Gamma^{n+1}_{i+j-1,l}-\Gamma^{n+2-i}_{lj}=\Gamma^{n+1}_{l+j-1,i}-\Gamma^{n+2-l}_{ij},$$
where it is understood that $\Gamma^k_{ij}=0$ if $i,j$ or $k$ are greater than $n+1$.  
If $i=1$ or $l=1$ the above condition is trivially satisfied. If $i,l\ne 1$ we have $\Gamma^{n+1}_{i+j-1,l}=\Gamma^{n+1}_{l+j-1,i}$ (if $i+j\ge n+1$ both sides vanish). If $j=1$ the remaining condition is trivially satisfied. If also $j\ne 1$ the remaining conditon is satisfied since $\Gamma^{n+2-i}_{lj}=\Gamma^{n+1}_{l+i-1,j}$ and $\Gamma^{n+2-l}_{ij}=\Gamma^{n+1}_{i+l-1,j}$ (if $i+l\ge n+2$ both Christoffel symbols vanish). 
\item {\bf Step 4: linearity of the Euler vector field}. We need to prove $\nabla\nabla E=0$. In local coordinates we have:
\begin{eqnarray*}
\nabla_i\nabla_j E^k&=&\Gamma^k_{il} \nabla_j E^l-\Gamma^l_{ik} \nabla_j E^k=\\
&=&\Gamma^k_{il}((1-\epsilon)\delta^l_j+\epsilon e^le^j)-\Gamma^l_{ij}((1-\epsilon)\delta^k_l+\epsilon e^ke^l)=0.
\end{eqnarray*}
\item {\bf Step 5: properties of the dual structure}. The flatness of $\nabla^*$ and the additional properties follows from the linearity of $E$ and from the definition of $\nabla^*$ and $*$.
\item {\bf Step 6: Uniqueness}. The connection $\nabla$  is uniquely determined by the conditions 
\begin{eqnarray*}
\nabla_j e^i&=&\d_je^i+\Gamma^i_{jl}e^l=0\\
(d_{\nabla}V)^k_{ij}&=&\frac{\partial V^k_j}{\partial u^i}+\Gamma^k_{il}V^l_j-\frac{\partial V^k_i}{\partial u^j}-\Gamma^k_{jl}V^l_i=0.
\end{eqnarray*}
Indeed, in the case of a single Jordan block in David-Hertling coordinates the affinor $V$ has the form
\beq
V=\begin{bmatrix}
V^1_1 & 0 & 0 & \dots & 0\cr
u^{2} & V^2_2 & 0 & \dots & 0\cr
u^3 & u^2 & V^3_3 & \dots & 0\cr
\vdots & \vdots & \ddots & \ddots &0 \cr
u^{n} & u^{n-1} & \dots & u^2 &V^n_n
\end{bmatrix},
\eeq

where $V^1_1=V^2_2=...=V^n_n=(1-n\epsilon)u_1$. Denoting by $\dots$ terms which do not contain any Christoffel symbols
 we have that : 
 \newline
 \newline
- The vanishing of $(d_{\nabla} V)^i_{n,n-1}$ defines uniquely $\Gamma^i_{nn}$.
\newline
\newline
- Using the previous condition the vanishing of $(d_{\nabla} V)^i_{n-2,n}$ defines uniquely $\Gamma^i_{n-1,n}$.
\newline
\newline
- More in general using the previous conditions the vanishing of $(d_{\nabla} V)^i_{k,n}$ defines uniquely $\Gamma^i_{k+1,n}$.
\newline
 \newline
- Similarly the vanishing of $(d_{\nabla} V)^i_{n-2,n-1}$ defines uniquely $\Gamma^i_{n-1,n-1}$ and the vanishing of
 $(d_{\nabla} V)^i_{k,n-1}$ defines uniquely $\Gamma^i_{k+1,n-1}$.
 \newline
 \newline
In general,  the vanishing of $(d_{\nabla} V)^i_{n-j-1,n-j}$ defines uniquely $\Gamma^i_{n-j,n-j}$ 
 and the vanishing of $(d_{\nabla} V)^i_{k,n-j}$ defines uniquely $\Gamma^i_{k+1,n-j}$ taking into account all the previous conditions
  starting from $j=n-1,k=n$. In this way get
  all the Christoffel symbols apart from $\Gamma^i_{1j}=\Gamma^i_{j1}$ which vanish due to the condition $\nabla e=0$. This means that the connection
   constructed above is unique.
\end{itemize}

\section{The case of an arbitrary number of  Jordan blocks}
Theorem $7.6$ can be extended to the general case where the operator $L=E\,\circ$ has a block diagonal form. In order to do so, the crucial Lemmas $7.1$-$7.5$ must be suitably extended too and some new preliminar results must be taken into account.

Let $(M,\circ,e,E)$ be a regular F-manifold of dimension $n\geq2$ with an Euler vector field $E$ of weight one. Around a point $p\in M$, let the canonical form of the operator $L=E\,\circ$ have $r$ Jordan blocks $L_1,\dots,L_r$ of sizes $m_1,\dots,m_r$ with distinct eigenvalues. Let us define $a_0=\overset{r}{\underset{\alpha=1}{\sum}}\epsilon_\alpha\,\Tr(L_\alpha)$. Our final goal is to prove that for any choice of $\epsilon_1,\dots,\epsilon_r$ there exists a unique regular bi-flat F-structure with non-semisimple canonical coordinates such that $L=E\,\circ$ and $d_\nabla(L-a_0\,I)=0$.

Any set of coordinates $u^1,\dots,u^n$ for $M$ can be re-labelled by means of the following notation: for each $\alpha\in\{2,\dots,r\}$ and for each $j\in\{1,\dots,m_\alpha\}$ we write
\begin{equation}
	\label{relabellingcoordinates}
	j(\alpha)=m_1+\dots+m_{\alpha-1}+j
\end{equation}
(for $\alpha=1$ we set $j(\alpha)=j$) so that $u^{j(\alpha)}$ denotes the $j$-th coordinate associated to the $\alpha$-th Jordan block. From now on, we will write $u^i$ when seeing the coordinate as running from $1$ to the dimension of the manifold and we will write $u^{i(\alpha)}$ when in need to highlight the Jordan block to which the coordinate refers. According to this notation, $\partial_i$ and $\partial_{i(\alpha)}$ will denote the partial derivative with respect to $u^i$ and $u^{i(\alpha)}$ respectively.

Let us recall that in these coordinates we have
\begin{itemize}
	\item[$\bullet$] $e=\overset{r}{\underset{\alpha=1}{\sum}}\,\partial_{1(\alpha)}$;
	\item[$\bullet$] $E=\overset{n}{\underset{s=1}{\sum}}\,u^s\,\partial_{s}=\overset{r}{\underset{\sigma=1}{\sum}}\,\overset{m_\sigma}{\underset{s=1}{\sum}}\,u^{s(\sigma)}\,\partial_{s(\sigma)}$;
	\item[$\bullet$] $c^{k(\gamma)}_{i(\alpha)j(\beta)}=\delta^\gamma_\alpha\delta^\gamma_\beta\delta^k_{i+j-1}$ for any $\alpha,\beta,\gamma\in\{1,\dots,r\}$, $i\in\{1,\dots,m_\alpha\}$, $j\in\{1,\dots,m_\beta\}$, $k\in\{1,\dots,m_\gamma\}$.
\end{itemize}

Since we seek a bi-flat F-manifold structure, we reasonably start by assuming $\nabla e=0$ and $\nabla_ic^l_{jk}=\nabla_jc^l_{ik}$ for every $i,j,k,l\in\{1,\dots,n\}$.

\begin{remark}
	Due to regularity condition we are implicitly assuming that $u^{2(\alpha)}\ne 0$ and $u^{1(\alpha)}\ne u^{1(\beta)}$ if $\alpha\ne\beta$.
\end{remark}

\subsection{The Christoffel symbols}
\begin{prop}
	For every $\alpha,\beta\in\{1,\dots,r\}$ and $i\in\{1,\dots,m_\alpha\}$, $j\in\{1,\dots,m_\beta\}$ we have
	\begin{equation}
		\label{nablae=0}
		\overset{r}{\underset{\sigma=1}{\sum}}\,\Gamma^{i(\alpha)}_{1(\sigma)j(\beta)}=0.
	\end{equation}
\end{prop}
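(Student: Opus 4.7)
The plan is to derive the identity as a direct consequence of the two standing assumptions on the connection $\nabla$ that have already been imposed on the sought-for bi-flat F-structure: the covariant constancy of the unit vector field, $\nabla e=0$, and the vanishing torsion.

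In David-Hertling coordinates one has $e=\sum_{\sigma=1}^{r}\partial_{1(\sigma)}$, so the components $e^{l}=\sum_{\sigma=1}^{r}\delta^{l}_{1(\sigma)}$ are constants; in particular $\partial_{j(\beta)}e^{i(\alpha)}=0$ for all $\alpha,\beta,i,j$. Expanding $\nabla_{j(\beta)}e^{i(\alpha)}=0$ in components with the paper's sign conventions therefore gives
\[
0=\partial_{j(\beta)}e^{i(\alpha)}+\sum_{l=1}^{n}\Gamma^{i(\alpha)}_{j(\beta)\,l}\,e^{l}=\sum_{\sigma=1}^{r}\Gamma^{i(\alpha)}_{j(\beta)\,1(\sigma)},
\]
the sum over $l$ collapsing onto the indices $l=1(\sigma)$ where $e^{l}=1$.

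Finally, the symmetry of the Christoffel symbols in their lower indices (vanishing torsion) allows one to rewrite the previous equality as $\sum_{\sigma=1}^{r}\Gamma^{i(\alpha)}_{1(\sigma)\,j(\beta)}=0$, which is exactly \eqref{nablae=0}. There is essentially no obstacle here: the proposition is a short unpacking of the two hypotheses $\nabla e=0$ and $\Gamma^{k}_{ij}=\Gamma^{k}_{ji}$. Its role is that of a basic building block that will be repeatedly invoked during the inductive determination of the remaining Christoffel symbols in the multi-block arguments to come.
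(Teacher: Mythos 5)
Your proof is correct and follows the same route as the paper: expand $\nabla_{j(\beta)}e^{i(\alpha)}=0$ in David--Hertling coordinates, observe that the sum over the lower index collapses onto $l=1(\sigma)$ because $e^{k(\sigma)}=\delta^k_1$, and use the symmetry of the lower indices to obtain the stated form. The only (harmless) difference is that you make the torsion-free step explicit, whereas the paper leaves the index swap implicit.
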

\begin{proof}
	By writing $\nabla e=0$ in canonical coordiantes we get
	\begin{align}
		\notag
		0&=\nabla_{j(\beta)}e^{i(\alpha)}=\partial_{j(\beta)}e^{i(\alpha)}+\overset{r}{\underset{\sigma=1}{\sum}}\,\overset{m_\sigma}{\underset{k=1}{\sum}}\,\Gamma^{i(\alpha)}_{j(\beta)k(\sigma)}e^{k(\sigma)}=\overset{r}{\underset{\sigma=1}{\sum}}\,\Gamma^{i(\alpha)}_{j(\beta)1(\sigma)}
	\end{align}
	as $e^{k(\sigma)}=\delta^k_1$, for every $\alpha,\beta\in\{1,\dots,r\}$ and $i\in\{1,\dots,m_\alpha\}$, $j\in\{1,\dots,m_\beta\}$.
\end{proof}
\begin{prop}
	Let $\alpha$, $\beta$, $\gamma$ be pairwise distinct. The following relations hold:
	\begin{itemize}
		\item[$(i)$] $\Gamma^{l(\beta)}_{i(\alpha)(j+k)(\alpha)}=\Gamma^{l(\beta)}_{(i+k)(\alpha)j(\alpha)}$;
		\item[$(ii)$] $\Gamma^{k(\beta)}_{i(\alpha)j(\beta)}=\Gamma^{l(\beta)}_{i(\alpha)m(\beta)}$ when $k-j=l-m$;
		\item[$(iii)$] $\Gamma^{k(\alpha)}_{(i-1)(\alpha)j(\alpha)}=\Gamma^{k(\alpha)}_{i(\alpha)(j-1)(\alpha)}$ when the lower indices are both different from $1$ and not simultaneously equal to $2$;
		\item[$(iv)$] $\Gamma^{k(\alpha)}_{i(\alpha)j(\alpha)}=\Gamma^{(k+1)(\alpha)}_{i(\alpha)(j+1)(\alpha)}$ when $j\neq1$;
		\item[$(v)$] $\Gamma^{k(\alpha)}_{i(\alpha)j(\beta)}=-\Gamma^{(k-i+1)(\alpha)}_{1(\beta)j(\beta)}$;
		\item[$(vi)$] $\Gamma^{k(\gamma)}_{i(\alpha)j(\beta)}=0$.
	\end{itemize}
	The above quantities must be considered when the indices do not exceed the size of the corresponding Jordan block.
\end{prop}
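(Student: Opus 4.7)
The proof proceeds identity by identity, exploiting two fundamental relations: the compatibility $\nabla_i c^l_{jk}=\nabla_j c^l_{ik}$ of the connection with the product (which in David--Hertling coordinates becomes a purely algebraic identity on the Christoffel symbols since $c$ is constant and $\partial c\equiv 0$), and the flatness-type condition $d_\nabla(L-a_0 I)=0$. Because $c^{k(\gamma)}_{i(\alpha)j(\beta)}=\delta^\gamma_\alpha\delta^\gamma_\beta\delta^k_{i+j-1}$ is block-diagonal, every contraction $\Gamma\cdot c$ or $\Gamma\cdot V$ collapses to a handful of surviving terms once the block selection rules are enforced, and torsion-freeness $\Gamma^k_{ij}=\Gamma^k_{ji}$ is used throughout to cancel symmetric pairs.

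The four cross-block identities each drop out of one carefully chosen compatibility equation. For (vi) take $\nabla_{i(\alpha)} c^{k(\gamma)}_{1(\gamma)j(\beta)}=\nabla_{1(\gamma)} c^{k(\gamma)}_{i(\alpha)j(\beta)}$ with $\alpha,\beta,\gamma$ pairwise distinct: every structure constant appearing on the right-hand side has lower indices in two different blocks and therefore vanishes, while on the left only the term $-\Gamma^{k(\gamma)}_{i(\alpha)j(\beta)}$ survives. Identity (v) follows analogously from $\nabla_{i(\alpha)} c^{k(\alpha)}_{1(\beta)j(\beta)}=\nabla_{1(\beta)} c^{k(\alpha)}_{i(\alpha)j(\beta)}$; identity (i) from $\nabla_{i(\alpha)} c^{l(\beta)}_{j(\alpha)k(\alpha)}=\nabla_{j(\alpha)} c^{l(\beta)}_{i(\alpha)k(\alpha)}$, where the cross-block contractions $c^{l(\beta)}_{\bullet(\alpha)}$ kill two of the three terms on each side; and identity (ii) from $\nabla_{i(\alpha)} c^{l(\beta)}_{j(\beta)m(\beta)}=\nabla_{j(\beta)} c^{l(\beta)}_{i(\alpha)m(\beta)}$ after torsion-freeness cancels one of the paired mixed terms.

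The intra-block identities (iii) and (iv) require combining both structural conditions. For (iii) I specialize $(d_\nabla V)^{k(\alpha)}_{i(\alpha)j(\alpha)}=0$ with $i,j\ge 2$: the two $\partial V$ contributions cancel pairwise (they coincide whenever non-zero, thanks to $V^{k(\alpha)}_{j(\alpha)}=u^{(k-j+1)(\alpha)}$ off the diagonal), and expanding the remaining $\Gamma V-\Gamma V$ piece as a polynomial in the independent coordinates $u^{p(\alpha)}$, $p\ge 2$, the vanishing of each coefficient yields
\[
\Gamma^{k(\alpha)}_{i(\alpha)(p+j-1)(\alpha)}=\Gamma^{k(\alpha)}_{j(\alpha)(p+i-1)(\alpha)},
\]
that is, $\Gamma^{k(\alpha)}_{A(\alpha)B(\alpha)}$ depends only on $A+B$ in the admissible range, which is precisely (iii). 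For (iv) I feed (iii) back into the intra-block compatibility equation $\nabla_{i(\alpha)} c^{k(\alpha)}_{j(\alpha)2(\alpha)}=\nabla_{j(\alpha)} c^{k(\alpha)}_{i(\alpha)2(\alpha)}$: the leading terms on the two sides become equal by (iii), the middle ones cancel by torsion-freeness, and what remains is
\[
\Gamma^{(k-j+1)(\alpha)}_{i(\alpha)2(\alpha)}=\Gamma^{(k-i+1)(\alpha)}_{j(\alpha)2(\alpha)},
\]
i.e.\ $\Gamma^{P(\alpha)}_{I(\alpha)2(\alpha)}$ depends only on $P-I$. Using (iii) to rewrite both $\Gamma^{k(\alpha)}_{i(\alpha)j(\alpha)}$ and $\Gamma^{(k+1)(\alpha)}_{i(\alpha)(j+1)(\alpha)}$ as entries with second lower index equal to $2$, one checks that their values of $P-I$ coincide and (iv) follows. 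The main obstacle is entirely bookkeeping: the admissible index ranges at each step must be tracked carefully because lower index $1$ interacts with the distinct relation $\sum_\sigma\Gamma^{i(\alpha)}_{1(\sigma)j(\beta)}=0$ of Proposition 8.1, and the boundary case excluded in (iii) (both lower indices equal to $2$) sits exactly at the edge of the range controlled by the coefficient extraction on the $u^{p(\alpha)}$'s.
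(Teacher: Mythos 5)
Your handling of the cross-block identities $(i)$, $(ii)$, $(v)$, $(vi)$ is correct and coincides with the paper's argument: the paper packages all instances of $\nabla_ic^l_{jk}=\nabla_jc^l_{ik}$ into the single identity
\[
\delta_{\beta\gamma}\Gamma^{l(\delta)}_{i(\alpha)(j+k-1)(\beta)}-\delta^\delta_\beta\Gamma^{(l-j+1)(\beta)}_{i(\alpha)k(\gamma)}-\delta_{\alpha\gamma}\Gamma^{l(\delta)}_{j(\beta)(i+k-1)(\alpha)}+\delta^\delta_\alpha\Gamma^{(l-i+1)(\alpha)}_{j(\beta)k(\gamma)}=0
\]
and runs through the coincidence patterns of the block labels, which is exactly your ``choose the right instance of the compatibility equation'' strategy, and your choices for those four items are the right ones.

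The gap is in $(iii)$, and it propagates to $(iv)$. You derive $(iii)$ from $d_\nabla(L-a_0I)=0$ by writing the resulting relation as $\sum_{p}\bigl(\Gamma^{K(\alpha)}_{I(\alpha)(p+J-1)(\alpha)}-\Gamma^{K(\alpha)}_{J(\alpha)(p+I-1)(\alpha)}\bigr)u^{p(\alpha)}=0$ and equating coefficients of the $u^{p(\alpha)}$. That inference is not available: at this stage the Christoffel symbols are unknown functions of all the coordinates, and an identity $\sum_p f_p(u)\,u^{p(\alpha)}=0$ does not force each $f_p$ to vanish (take $f_1=u^{2(\alpha)}$, $f_2=-u^{1(\alpha)}$). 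Wherever such weighted sums appear in the paper (Proposition 8.3, Lemma 8.6) they are resolved by induction on the index, and only \emph{after} the purely algebraic relations of this proposition are in hand. Indeed the paper obtains $(iii)$ from the same intra-block compatibility identity as the other items (its case $\alpha=\beta=\gamma=\delta$), not from $d_\nabla(L-a_0I)=0$ --- a condition that in the text is only imposed after this proposition, so invoking it here also quietly strengthens the hypotheses. Your derivation of $(iv)$ (intra-block compatibility with one lower index equal to $2$, combined with $(iii)$) is the same as the paper's, but it takes $(iii)$ as input and therefore inherits the gap.
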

\begin{proof}
	The condition $\nabla_ic^l_{jk}=\nabla_jc^l_{ik}$ $\forall i,j,k,l\in\{1,\dots,n\}$ can be rewritten as
	\begin{equation}
		\nabla_{i(\alpha)}c^{l(\delta)}_{j(\beta)k(\gamma)}=\nabla_{j(\beta)}c^{l(\delta)}_{i(\alpha)k(\gamma)}\label{nablasymm}
	\end{equation}
	for every $\alpha,\beta,\gamma,\delta\in{1,\dots,r}$ and $i\in\{1,\dots,m_\alpha\}$, $j\in\{1,\dots,m_\beta\}$, $k\in\{1,\dots,m_\gamma\}$, $l\in\{1,\dots,m_\delta\}$. This yields
	\begin{align}
		0&=\nabla_{i(\alpha)}c^{l(\delta)}_{j(\beta)k(\gamma)}-\nabla_{j(\beta)}c^{l(\delta)}_{i(\alpha)k(\gamma)}\notag\\&=\xcancel{\partial_{i(\alpha)}c^{l(\delta)}_{j(\beta)k(\gamma)}}+\Gamma^{l(\delta)}_{i(\alpha)s(\sigma)}c^{s(\sigma)}_{j(\beta)k(\gamma)}\cancel{-\Gamma^{s(\sigma)}_{i(\alpha)j(\beta)}c^{l(\delta)}_{s(\sigma)k(\gamma)}}-\Gamma^{s(\sigma)}_{i(\alpha)k(\gamma)}c^{l(\delta)}_{j(\beta)s(\sigma)}\notag\\&\xcancel{-\partial_{j(\beta)}c^{l(\delta)}_{i(\alpha)k(\gamma)}}-\Gamma^{l(\delta)}_{j(\beta)s(\sigma)}c^{s(\sigma)}_{i(\alpha)k(\gamma)}\cancel{+\Gamma^{s(\sigma)}_{j(\beta)i(\alpha)}c^{l(\delta)}_{s(\sigma)k(\gamma)}}+\Gamma^{s(\sigma)}_{j(\beta)k(\gamma)}c^{l(\delta)}_{i(\alpha)s(\sigma)}\notag\\&=\delta_{\beta\gamma}\Gamma^{l(\delta)}_{i(\alpha)(j+k-1)(\beta)}-\delta^\delta_\beta\Gamma^{(l-j+1)(\beta)}_{i(\alpha)k(\gamma)}-\delta_{\alpha\gamma}\Gamma^{l(\delta)}_{j(\beta)(i+k-1)(\alpha)}+\delta^\delta_\alpha\Gamma^{(l-i+1)(\alpha)}_{j(\beta)k(\gamma)}\notag
	\end{align}
	where we used the Einstein convention when summing both over the greek indices labelling the Jordan blocks and over the latin indices describing the inner coordinates. Therefore we got
	\begin{equation}
		\delta_{\beta\gamma}\Gamma^{l(\delta)}_{i(\alpha)(j+k-1)(\beta)}-\delta^\delta_\beta\Gamma^{(l-j+1)(\beta)}_{i(\alpha)k(\gamma)}-\delta_{\alpha\gamma}\Gamma^{l(\delta)}_{j(\beta)(i+k-1)(\alpha)}+\delta^\delta_\alpha\Gamma^{(l-i+1)(\alpha)}_{j(\beta)k(\gamma)}=0\label{prop2}
	\end{equation}
	for all suitable indices. Since condition \eqref{nablasymm} is symmetric with respect to the exhange of $\alpha$ and $\beta$, the only relevant cases are the following:
	\begin{itemize}
		\item[$1.$] $\alpha=\beta=\gamma=\delta$
		\item[$2.$] $\alpha=\beta=\gamma\neq\delta$
		\item[$3.$] $\alpha=\beta=\delta\neq\gamma$
		\item[$4.$] $\alpha=\delta=\gamma\neq\beta$
		\item[$5.$] $\alpha=\beta\neq\gamma=\delta$
		\item[$6.$] $\alpha=\gamma\neq\beta=\delta$
		\item[$7.$] $\alpha=\beta\neq\gamma\neq\delta\neq\alpha$
		\item[$8.$] $\alpha=\gamma\neq\beta\neq\delta\neq\alpha$
		\item[$9.$] $\alpha=\delta\neq\gamma\neq\beta\neq\alpha$
		\item[$10.$] $\gamma=\delta\neq\alpha\neq\beta\neq\gamma$
		\item[$11.$] $\alpha$, $\beta$, $\gamma$, $\delta$ are pairwise distinct.
	\end{itemize}
	\textbf{Case 1: $\alpha=\beta=\gamma=\delta$.} Condition \eqref{prop2} reads
	\begin{align}
		\notag
		\Gamma^{l(\alpha)}_{i(\alpha)(j+k-1)(\alpha)}-\Gamma^{(l-j+1)(\alpha)}_{i(\alpha)k(\alpha)}-\Gamma^{l(\alpha)}_{j(\alpha)(i+k-1)(\alpha)}+\Gamma^{(l-i+1)(\alpha)}_{j(\alpha)k(\alpha)}=0.
	\end{align}
	For $k=0$ we get $\Gamma^{l(\alpha)}_{i(\alpha)(j-1)(\alpha)}-\Gamma^{l(\alpha)}_{j(\alpha)(i-1)(\alpha)}=0$ where the indexed make sense for $i,j>1$, which proves $(iii)$. For $j=2$ we get
	\begin{align}
		0&=\cancel{\Gamma^{l(\alpha)}_{i(\alpha)(k+1)(\alpha)}}-\Gamma^{(l-1)(\alpha)}_{i(\alpha)k(\alpha)}\cancel{-\Gamma^{l(\alpha)}_{2(\alpha)(i+k-1)(\alpha)}}+\Gamma^{(l-i+1)(\alpha)}_{2(\alpha)k(\alpha)}\notag\\&=-\Gamma^{(l-1)(\alpha)}_{i(\alpha)k(\alpha)}+\Gamma^{(l-i+1)(\alpha)}_{2(\alpha)k(\alpha)}
		\notag
	\end{align}
	for $i\neq1$ (otherwise we couldn't apply $(iii)$ to cancel out the two terms in the first line), which proves $(iv)$.
	\\\textbf{Case 2: $\alpha=\beta=\gamma\neq\delta$.} Condition \eqref{prop2} reads
	\begin{equation}
		\Gamma^{l(\delta)}_{i(\alpha)(j+k-1)(\alpha)}-\Gamma^{l(\delta)}_{j(\alpha)(i+k-1)(\alpha)}=0
	\end{equation}
	which proves $(i)$.
	\\\textbf{Case 3: $\alpha=\beta=\delta\neq\gamma$.} Condition \eqref{prop2} reads
	\begin{equation}
		-\Gamma^{(l-j+1)(\alpha)}_{i(\alpha)k(\gamma)}+\Gamma^{(l-i+1)(\alpha)}_{j(\alpha)k(\gamma)}=0
	\end{equation}
	which proves $(ii)$.
	\\\textbf{Case 6: $\alpha=\gamma\neq\beta=\delta$.} Condition \eqref{prop2} reads
	\begin{equation}
		-\Gamma^{(l-j+1)(\beta)}_{i(\alpha)k(\alpha)}-\Gamma^{l(\beta)}_{j(\beta)(i+k-1)(\alpha)}=0
	\end{equation}
	which (by relabelling the indices) proves $(v)$.
	\\\textbf{Case 9: $\alpha=\delta\neq\gamma\neq\beta\neq\alpha$.} Condition \eqref{prop2} reads
	\begin{equation}
		\Gamma^{(l-i+1)(\alpha)}_{j(\beta)k(\gamma)}=0
	\end{equation}
	which proves $(vi)$.
	\\In all the remaining cases condition \eqref{prop2} is trivially satisfied.
\end{proof}
Coherently with our final goal, let us furthermore assume
\begin{equation}
	\label{dnablaoriginale}
	d_\nabla(L-a_0\,I)=0.
\end{equation}
This yields
\begin{align}
	0&=\big(d_\nabla(L-a_0\,I)\big)^{i(\alpha)}_{j(\beta)k(\gamma)}\notag\\&=\partial_{j(\beta)}(L-a_0\,I)^{i(\alpha)}_{k(\gamma)}-\partial_{k(\gamma)}(L-a_0\,I)^{i(\alpha)}_{j(\beta)}\notag\\&+\Gamma^{i(\alpha)}_{j(\beta)l(\delta)}(L-a_0\,I)^{l(\delta)}_{k(\gamma)}-\Gamma^{i(\alpha)}_{k(\gamma)l(\delta)}(L-a_0\,I)^{l(\delta)}_{j(\beta)}\notag\\&=\cancel{\delta^\alpha_\gamma\delta^\alpha_\beta\delta^{i-k+1}_j}-\delta^\alpha_\gamma\delta^i_k m_\beta\epsilon_\beta\delta^1_j\cancel{-\delta^\alpha_\beta\delta^\alpha_\gamma\delta^{i-j+1}_k}+\delta^\alpha_\beta\delta^i_j m_\gamma\epsilon_\gamma\delta^1_k\notag\\&+\overset{r}{\underset{\delta=1}{\sum}}\,\overset{m_\delta}{\underset{l=1}{\sum}}\,\Gamma^{i(\alpha)}_{j(\beta)l(\delta)}\delta_{\delta\gamma}u^{(l-k+1)(\gamma)}\mathds{1}_{\{l\geq k\}}\notag\\&-\overset{r}{\underset{\delta=1}{\sum}}\,\overset{m_\delta}{\underset{l=1}{\sum}}\,\Gamma^{i(\alpha)}_{k(\gamma)l(\delta)}\delta_{\delta\beta}u^{(l-j+1)(\beta)}\mathds{1}_{\{l\geq j\}}\notag\\&=\delta^\alpha_\beta\delta^i_j m_\gamma\epsilon_\gamma\delta^1_k-\delta^\alpha_\gamma\delta^i_k m_\beta\epsilon_\beta\delta^1_j\notag\\&+\overset{m_\gamma}{\underset{l=k}{\sum}}\,\Gamma^{i(\alpha)}_{j(\beta)l(\gamma)}u^{(l-k+1)(\gamma)}-\overset{m_\beta}{\underset{l=j}{\sum}}\,\Gamma^{i(\alpha)}_{k(\gamma)l(\beta)}u^{(l-j+1)(\beta)}
	\notag
\end{align}
as
\begin{align}
	(L-a_0\,I)^{a(\eta)}_{b(\mu)}&=L^{a(\eta)}_{b(\mu)}-a_0\,\delta^{a(\eta)}_{b(\mu)}\notag\\&=\delta^{\eta}_{\mu}\,\overset{m_\eta}{\underset{s=1}{\sum}}\,u^{s(\eta)}\,\delta^{a+b-1}_{s}-\delta^{\eta}_{\mu}\,\overset{r}{\underset{\alpha=1}{\sum}}\,m_\alpha\epsilon_\alpha u^{1(\alpha)}.
	\notag
\end{align}
Therefore \eqref{dnablaoriginale} amounts to
\begin{align}
	\label{dnabla0}
	&\delta^\alpha_\beta\delta^i_j m_\gamma\epsilon_\gamma\delta^1_k-\delta^\alpha_\gamma\delta^i_k m_\beta\epsilon_\beta\delta^1_j+\overset{m_\gamma}{\underset{l=k}{\sum}}\,\Gamma^{i(\alpha)}_{j(\beta)l(\gamma)}u^{(l-k+1)(\gamma)}-\overset{m_\beta}{\underset{l=j}{\sum}}\,\Gamma^{i(\alpha)}_{k(\gamma)l(\beta)}u^{(l-j+1)(\beta)}=0.
\end{align}

\begin{prop}
	Let $\alpha$, $\beta$, $\gamma$ be pairwise distinct. Then
	\begin{itemize}
		\item[$1.$] $\Gamma^{k(\alpha)}_{i(\alpha)j(\alpha)}=\begin{cases}
			\Gamma^{(k-i-j+4)(\alpha)}_{2(\alpha)2(\alpha)}\qquad&\text{if}\,\,\,k-i-j\geq-2\\
			0&\text{if}\,\,\,k-i-j\leq-3
		\end{cases}\qquad$when $i,j>1$
		\item[$2.$] $\Gamma^{2(\alpha)}_{2(\alpha)2(\alpha)}=-\frac{m_\alpha\epsilon_{\alpha}}{u^{2(\alpha)}}$
		\item[$3.$] $\Gamma^{1(\alpha)}_{1(\alpha)j(\alpha)}=0\quad$when or $j\geq2$
		\item[$4.$] $\Gamma^{k(\alpha)}_{i(\alpha)j(\beta)}=0\quad$when $k<i$ or $j\geq2$
		\item[$5.$] $\Gamma^{i(\alpha)}_{i(\alpha)1(\beta)}=\frac{m_\beta\epsilon_\beta}{u^{1(\alpha)}-u^{1(\beta)}}$
		\item[$6.$] $\Gamma^{(i+h)(\alpha)}_{i(\alpha)1(\beta)}=-\frac{1}{u^{1(\alpha)}-u^{1(\beta)}}\,\overset{h+1}{\underset{s=2}{\sum}}\,\Gamma^{(i+h-s+1)(\alpha)}_{i(\alpha)1(\beta)}\,u^{s(\alpha)}\quad$for $h\geq1$
		\item[$7.$] $\Gamma^{n(\alpha)}_{2(\alpha)2(\alpha)}=\Gamma^{(n-2)(\alpha)}_{1(\alpha)1(\alpha)}-\Gamma^{2(\alpha)}_{2(\alpha)2(\alpha)}\frac{u^{n(\alpha)}}{u^{2(\alpha)}}-\frac{1}{u^{2(\alpha)}}\overset{n-3}{\underset{l=1}{\sum}}\big(\Gamma^{(l+2)(\alpha)}_{2(\alpha)2(\alpha)}-\Gamma^{l(\alpha)}_{1(\alpha)1(\alpha)}\big)u^{(n-l)(\alpha)}$$\qquad$ for $n\geq3$\footnote{The last summation is not to be considered for $n=2,3$.}.
	\end{itemize}
\end{prop}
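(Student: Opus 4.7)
The strategy is to combine three inputs: the algebraic identities $(i)$--$(vi)$ of the preceding proposition (which come from $\nabla_i c^l_{jk}=\nabla_j c^l_{ik}$), the trace relation $\sum_\sigma\Gamma^{i(\alpha)}_{1(\sigma)j(\beta)}=0$ coming from $\nabla e=0$ in the first proposition of this section, and the master equation \eqref{dnabla0} extracted from $d_\nabla(L-a_0I)=0$. The seven items are best proved in an order that lets each build on the previous ones, and item 7 will be the main technical obstacle.

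I would start with item 1, which is purely structural inside a single Jordan block. For $\Gamma^{k(\alpha)}_{i(\alpha)j(\alpha)}$ with $i,j>1$, relation $(iii)$ lets me trade a unit between the two lower indices (keeping both $>1$), while relation $(iv)$ simultaneously decreases the upper index and the second lower index by one; iterating these brings the symbol to $\Gamma^{(k-i-j+4)(\alpha)}_{2(\alpha)2(\alpha)}$ as long as the upper index stays $\geq 2$ throughout. When $k-i-j\leq -3$ the same reductions push the upper index below $2$, and the resulting symbol is seen to vanish using item 3 together with the out-of-range convention for indices.

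Item 2 comes from specializing \eqref{dnabla0} to $\alpha=\beta=\gamma$, $i=j=2$, $k=1$: after using the trace relation together with $(v)$ to dispose of the Christoffel symbols carrying a $1(\sigma)$ in a lower slot, the identity collapses to $m_\alpha\epsilon_\alpha+\Gamma^{2(\alpha)}_{2(\alpha)2(\alpha)}\,u^{2(\alpha)}=0$. Item 3 is a short induction on $j$ decreasing from $m_\alpha$: relation $(iv)$ gives $\Gamma^{1(\alpha)}_{1(\alpha)j(\alpha)}=\Gamma^{2(\alpha)}_{1(\alpha)(j+1)(\alpha)}$, whose right-hand side is zero by range when $j=m_\alpha$, and iteration covers all $j\geq 2$. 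For item 4 I substitute $\alpha\ne\beta$ into \eqref{dnabla0} and split into the two sub-conditions $k<i$ and $j\geq 2$: in each sub-case the upper-triangular structure of $L$, together with $(v)$ and $(vi)$, collapses every sum and forces the target symbol to vanish. Items 5 and 6 then follow quickly: item 5 from \eqref{dnabla0} with $\alpha\ne\beta=\gamma$, $i=j$, $k=1$, where items 3 and 4 kill every summand except $\Gamma^{i(\alpha)}_{i(\alpha)1(\beta)}(u^{1(\alpha)}-u^{1(\beta)})$, which is set equal to $m_\beta\epsilon_\beta$; item 6 from the choice of indices in \eqref{dnabla0} that produces the announced recursion, with item 5 serving as the base step.

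Item 7 is the delicate one and is where I expect most of the work to go. I would substitute $\alpha=\beta=\gamma$, $j=1$, $k=2$, $i=n$ into \eqref{dnabla0}, expand both sums in full, and then rewrite every Christoffel symbol that appears by means of items 1--3. Collecting the terms that multiply each coordinate $u^{s(\alpha)}$ isolates $\Gamma^{n(\alpha)}_{2(\alpha)2(\alpha)}$ as the coefficient of $u^{n(\alpha)}$, while the residual part assembles into the sum $\sum_{l=1}^{n-3}\bigl(\Gamma^{(l+2)(\alpha)}_{2(\alpha)2(\alpha)}-\Gamma^{l(\alpha)}_{1(\alpha)1(\alpha)}\bigr)u^{(n-l)(\alpha)}$ appearing in the statement; the base cases $n=2,3$ flagged in the footnote have to be verified directly, since the last sum is then empty. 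The real obstacle is combinatorial: keeping track of which index shifts yield in-range versus out-of-range symbols, which applications of $(iv)$ are legitimate, and which summands survive each reduction. Once that bookkeeping is under control the identity drops out from matching the coefficients of the $u^{s(\alpha)}$.
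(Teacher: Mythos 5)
Your overall strategy --- item 1 from the index-shifting relations of the preceding proposition, everything else from specializations of \eqref{dnabla0} --- is the paper's strategy, and items 1, 2 and 7 would go through essentially as you describe (your index choices for items 2 and 7 differ from the paper's only by harmless relabelings and the antisymmetry of \eqref{abc0} in $j,k$). The genuine gap is in item 4, precisely the step you dispatch in one sentence. For $j\geq 2$ the claim is $\Gamma^{k(\alpha)}_{i(\alpha)j(\beta)}=0$ with $k\geq i$, and neither $(v)$ nor $(vi)$ delivers this: $(vi)$ is vacuous here because only two blocks are involved, and $(v)$ merely rewrites the symbol as $-\Gamma^{(k-i+1)(\alpha)}_{1(\beta)j(\beta)}$, which is not covered by any vanishing statement established so far. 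Specializing \eqref{dnabla0} to $\gamma=\alpha\neq\beta$ and $j\geq2$ gives \eqref{dnabla0_8.3_4}, an identity coupling the whole families $\Gamma^{i(\alpha)}_{j(\beta)l(\alpha)}$ and $\Gamma^{i(\alpha)}_{k(\alpha)l(\beta)}$; nothing in it collapses, and the paper has to run a double induction (in $j$ descending from $m_\beta$ and in $i-k$ ascending from $0$) to peel the unknowns off one at a time, each step isolating a single symbol multiplied by the nonzero factor $u^{1(\alpha)}-u^{1(\beta)}$. Since your derivations of items 5 and 6 rely on item 4 to kill all but one summand, they inherit this gap. You have also misplaced the difficulty: item 7 is a direct expansion of \eqref{abc0} with $(j,k)=(2,1)$, $i\geq3$, followed by a change of summation variable, and is not the main obstacle.

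Two smaller points. In item 3 you invoke relation $(iv)$ on $\Gamma^{1(\alpha)}_{1(\alpha)j(\alpha)}$, but $(iv)$ was derived only for first lower index different from $1$ (its proof needs $(iii)$, which excludes lower indices equal to $1$); the clean route is $\Gamma^{1(\alpha)}_{1(\alpha)j(\alpha)}=-\sum_{\sigma\neq\alpha}\Gamma^{1(\alpha)}_{1(\sigma)j(\alpha)}$ followed by relation $(ii)$, which sends each summand to a symbol with nonpositive upper index. Note also that the paper's detour through \eqref{abc0} at this stage is not wasted effort: it establishes the stronger statement $\Gamma^{k(\alpha)}_{1(\alpha)j(\alpha)}=0$ for all $k<j$ (equation \eqref{8.3u}), which is used later (in Proposition 8.4 and in the flatness computation) and which your version of item 3 does not supply.
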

\begin{proof}
	By virtue of Proposition 8.2, for suitable values of $s$ and $t$ we can write
	\begin{equation}
		\notag
		\Gamma^{k(\alpha)}_{i(\alpha)j(\alpha)}=\Gamma^{(k+s+t)(\alpha)}_{(i+s)(\alpha)(j+t)(\alpha)}.
	\end{equation}
	In particular, for $s=2-i$ and $t=2-j$ we get
	\begin{equation}
		\notag
		\Gamma^{k(\alpha)}_{i(\alpha)j(\alpha)}=\Gamma^{(k-i-j+4)(\alpha)}_{2(\alpha)2(\alpha)}
	\end{equation}
	which proves $(1.)$ (for $k-i-j\leq-3$ the quantity vanishes automatically as the upper index $k-i-j-4\leq0$).
	
	By taking $\alpha=\beta=\gamma$ in \eqref{dnabla0} we get
	\begin{align}
		m_\alpha\epsilon_\alpha(\delta^i_j\delta^1_k-\delta^i_k \delta^1_j)+\overset{m_\alpha}{\underset{l=k}{\sum}}\,\Gamma^{i(\alpha)}_{j(\alpha)l(\alpha)}u^{(l-k+1)(\alpha)}-\overset{m_\alpha}{\underset{l=j}{\sum}}\,\Gamma^{i(\alpha)}_{k(\alpha)l(\alpha)}u^{(l-j+1)(\alpha)}&=0
		\label{abc0}
	\end{align}
	which, for $k=1$ and $i=j=m_\alpha$, gives
	\begin{align}
		0&=m_\alpha\epsilon_\alpha+\overset{m_\alpha}{\underset{l=1}{\sum}}\,\Gamma^{m_\alpha(\alpha)}_{m_\alpha(\alpha)l(\alpha)}u^{l(\alpha)}-\Gamma^{m_\alpha(\alpha)}_{1(\alpha)m_\alpha(\alpha)}u^{1(\alpha)}\notag\\&=m_\alpha\epsilon_\alpha\cancel{+\Gamma^{m_\alpha(\alpha)}_{m_\alpha(\alpha)1(\alpha)}u^{1(\alpha)}}+\overset{m_\alpha}{\underset{l=2}{\sum}}\,\Gamma^{m_\alpha(\alpha)}_{m_\alpha(\alpha)l(\alpha)}u^{l(\alpha)}\cancel{-\Gamma^{m_\alpha(\alpha)}_{1(\alpha)m_\alpha(\alpha)}u^{1(\alpha)}}\notag\\&\overset{1.}{=}m_\alpha\epsilon_\alpha+\overset{m_\alpha}{\underset{l=2}{\sum}}\,\Gamma^{(4-l)(\alpha)}_{2(\alpha)2(\alpha)}u^{l(\alpha)}\mathds{1}_{\{l\leq2\}}=m_\alpha\epsilon_\alpha+\Gamma^{2(\alpha)}_{2(\alpha)2(\alpha)}u^{2(\alpha)}
		\notag
	\end{align}
	which proves $(2.)$.
	
	By taking $k=1$ and $i=j\geq2$ in \eqref{abc0} we get
	\begin{align}
		0&=m_\alpha\epsilon_\alpha+\overset{m_\alpha}{\underset{l=1}{\sum}}\,\Gamma^{i(\alpha)}_{i(\alpha)l(\alpha)}u^{l(\alpha)}-\overset{m_\alpha}{\underset{l=i}{\sum}}\,\Gamma^{i(\alpha)}_{1(\alpha)l(\alpha)}u^{(l-i+1)(\alpha)}\notag\\&=m_\alpha\epsilon_\alpha+\overset{m_\alpha}{\underset{l=2}{\sum}}\,\Gamma^{i(\alpha)}_{i(\alpha)l(\alpha)}u^{l(\alpha)}-\overset{m_\alpha}{\underset{l=i+1}{\sum}}\,\Gamma^{i(\alpha)}_{1(\alpha)l(\alpha)}u^{(l-i+1)(\alpha)}
		\notag
	\end{align}
	where in the first summation all terms vanish but the one for $l=2$, by virtue of (1.), as $i-i-l\leq-3$ for each $l\geq3$. Therefore we have
	\begin{align}
		0&=m_\alpha\epsilon_\alpha+\Gamma^{i(\alpha)}_{i(\alpha)2(\alpha)}u^{2(\alpha)}-\overset{m_\alpha}{\underset{l=i+1}{\sum}}\,\Gamma^{i(\alpha)}_{1(\alpha)l(\alpha)}u^{(l-i+1)(\alpha)}\notag\\&\overset{1.}{=}m_\alpha\epsilon_\alpha+\Gamma^{2(\alpha)}_{2(\alpha)2(\alpha)}u^{2(\alpha)}-\overset{m_\alpha}{\underset{l=i+1}{\sum}}\,\Gamma^{i(\alpha)}_{1(\alpha)l(\alpha)}u^{(l-i+1)(\alpha)}\notag\\&\overset{2.}{=}\cancel{m_\alpha\epsilon_\alpha}+\cancel{\Gamma^{2(\alpha)}_{2(\alpha)2(\alpha)}u^{2(\alpha)}}-\overset{m_\alpha}{\underset{l=i+1}{\sum}}\,\Gamma^{i(\alpha)}_{1(\alpha)l(\alpha)}u^{(l-i+1)(\alpha)}
		\notag
	\end{align}
	that is
	\begin{align}
		\label{8.3_3}
		\overset{m_\alpha}{\underset{l=i+1}{\sum}}\,\Gamma^{i(\alpha)}_{1(\alpha)l(\alpha)}u^{(l-i+1)(\alpha)}=0.
	\end{align}
	For $i=m_\alpha-1$ we get $\Gamma^{(m_\alpha-1)(\alpha)}_{1(\alpha)m_\alpha(\alpha)}u^{2(\alpha)}=0$, thus $\Gamma^{(m_\alpha-1)(\alpha)}_{1(\alpha)m_\alpha(\alpha)}=0$. Let us prove, by induction, that $\Gamma^{(m_\alpha-s)(\alpha)}_{1(\alpha)m_\alpha(\alpha)}=0$ for every integer $s\geq1$. Given an integer $h\geq2$, let us suppose it holds for each $s\leq h-1$. By taking $i=m_\alpha-h$ in \eqref{8.3_3} we get
	\begin{align}
		\notag
		\overset{m_\alpha}{\underset{l=m_\alpha-h+1}{\sum}}\,\Gamma^{(m_\alpha-h)(\alpha)}_{1(\alpha)l(\alpha)}u^{(l-m_\alpha+h+1)(\alpha)}=0
	\end{align}
	that, by virtue of Proposition 8.2, is
	\begin{align}
		\notag
		\overset{m_\alpha}{\underset{l=m_\alpha-h+1}{\sum}}\,\Gamma^{(2m_\alpha-h-l)(\alpha)}_{1(\alpha)m_\alpha(\alpha)}u^{(l-m_\alpha+h+1)(\alpha)}=0
	\end{align}
	where the only term surviving in the summation is the one for $l=m_\alpha$, as for each $l\leq m_\alpha-1$ we have $\Gamma^{(2m_\alpha-h-l)(\alpha)}_{1(\alpha)m_\alpha(\alpha)}=\Gamma^{(m_\alpha-s)(\alpha)}_{1(\alpha)m_\alpha(\alpha)}$ for $s=h+l-m_\alpha\leq h-1$. Therefore we proved
	\begin{align}
		\label{8.3u}
		\Gamma^{k(\alpha)}_{1(\alpha)j(\alpha)}=0\qquad\text{for }k<j.
	\end{align}
	In particular, $\Gamma^{1(\alpha)}_{1(\alpha)j(\alpha)}=0$ for each $j\geq2$, which proves $(3.)$.
	
	By virtue of Proposition 8.2 we have $\Gamma^{k(\alpha)}_{i(\alpha)j(\beta)}=-\Gamma^{(k-i+1)(\alpha)}_{1(\beta)j(\beta)}$ which trivially vanishes whenever $k<i$. In order to complete the proof of point $(4.)$ we then have to show that $\Gamma^{k(\alpha)}_{i(\alpha)j(\beta)}=0$, given $k\geq i$ and $j\geq2$. By taking $\beta\neq\gamma=\alpha$ in \eqref{dnabla0} we get
	\begin{align}
		\label{bneqca0}
		&-\delta^i_k m_\beta\epsilon_\beta\delta^1_j+\overset{\cancelto{i}{m_\alpha}}{\underset{l=k}{\sum}}\,\Gamma^{i(\alpha)}_{j(\beta)l(\alpha)}u^{(l-k+1)(\alpha)}-\overset{m_\beta}{\underset{l=j}{\sum}}\,\Gamma^{i(\alpha)}_{k(\alpha)l(\beta)}u^{(l-j+1)(\beta)}=0
	\end{align}
	which, by taking $j\geq2$, yields
	\begin{align}
		\label{dnabla0_8.3_4}
		&\overset{i}{\underset{l=k}{\sum}}\,\Gamma^{i(\alpha)}_{j(\beta)l(\alpha)}u^{(l-k+1)(\alpha)}-\overset{m_\beta}{\underset{l=j}{\sum}}\,\Gamma^{i(\alpha)}_{k(\alpha)l(\beta)}u^{(l-j+1)(\beta)}=0.
	\end{align}
	We are going to prove that
	\begin{align}
		\label{8.3_4thind}
		\Gamma^{i(\alpha)}_{j(\beta)k(\alpha)}=0\qquad\forall i\geq k
	\end{align}
	by a double procedure of induction: an external one over $i$ and a nested one over $j$. Let us first take $j=m_\beta$ and show
	\begin{align}
		\label{8.16induzesterna_1}
		\Gamma^{i(\alpha)}_{m_\beta(\beta)k(\alpha)}=0\qquad\forall\,i\geq k.
	\end{align}
	By taking $i=k$ and $j=m_\beta$ in \eqref{dnabla0_8.3_4} we get
	\begin{align}
		\notag
		&\Gamma^{k(\alpha)}_{m_\beta(\beta)k(\alpha)}\big(u^{1(\alpha)}-u^{1(\beta)}\big)=0
	\end{align}
	which yields $\Gamma^{k(\alpha)}_{m_\beta(\beta)k(\alpha)}=0$. Given an integer $p\geq1$, let us suppose
	\begin{align}
		\label{8.16induzesterna_2}
		\Gamma^{(k+s)(\alpha)}_{m_\beta(\beta)k(\alpha)}=0\qquad\forall\,s\leq p-1
	\end{align}
	and show $\Gamma^{(k+p)(\alpha)}_{m_\beta(\beta)k(\alpha)}=0$. By taking $i=k+p$ and $j=m_\beta$ in \eqref{dnabla0_8.3_4} we get
	\begin{align}
		\notag
		&\overset{k+p}{\underset{l=k}{\sum}}\,\Gamma^{(k+p)(\alpha)}_{m_\beta(\beta)l(\alpha)}u^{(l-k+1)(\alpha)}-\Gamma^{(k+p)(\alpha)}_{k(\alpha)m_\beta(\beta)}u^{1(\beta)}=0
	\end{align}
	where in the summation only the term for $l=k$ survives, as 
	\begin{align}
		\Gamma^{(k+p)(\alpha)}_{m_\beta(\beta)l(\alpha)}&\overset{Prop.8.2}{=}\Gamma^{(2k+p-l)(\alpha)}_{m_\beta(\beta)k(\alpha)}\overset{\eqref{8.16induzesterna_2}}{=}0
		\notag
	\end{align}
	for each $l\geq k+1$. This implies
	\begin{align}
		\notag
		&\Gamma^{(k+p)(\alpha)}_{k(\alpha)m_\beta(\beta)}(u^{1(\alpha)}-u^{1(\beta)})=0
	\end{align}
	thus $\Gamma^{(k+p)(\alpha)}_{k(\alpha)m_\beta(\beta)}=0$, proving \eqref{8.16induzesterna_1}. Given an integer $q\geq1$, let us now suppose that
	\begin{align}
		\label{8.16induzesterna_3}
		\Gamma^{i(\alpha)}_{(m_\beta-t)(\beta)k(\alpha)}=0\qquad\forall\,i\geq k,\,\forall\,t\leq q-1
	\end{align}
	and show
	\begin{align}
		\label{8.16induzesterna_4}
		\Gamma^{i(\alpha)}_{(m_\beta-q)(\beta)k(\alpha)}=0\qquad\forall\,i\geq k.
	\end{align}
	By taking $i=k$ and $j=m_\beta-q$ in \eqref{dnabla0_8.3_4} we get
	\begin{align}
		&\Gamma^{k(\alpha)}_{(m_\beta-q)(\beta)k(\alpha)}u^{1(\alpha)}-\overset{m_\beta}{\underset{l=m_\beta-q}{\sum}}\,\Gamma^{k(\alpha)}_{k(\alpha)(m_\beta-q)(\beta)}u^{(l-m_\beta+q+1)(\beta)}=0
		\notag
	\end{align}
	where in the summation only the term for $l=m_\beta-q$ survives, as
	\begin{align}
		\Gamma^{k(\alpha)}_{k(\alpha)l(\beta)}&\overset{\eqref{8.16induzesterna_3}}{=}0
		\notag
	\end{align}
	for each $l\geq m_\beta-q+1$. This yields
	\begin{align}
		&\Gamma^{k(\alpha)}_{(m_\beta-q)(\beta)k(\alpha)}(u^{1(\alpha)}-u^{1(\beta)})=0
		\notag
	\end{align}
	thus $\Gamma^{k(\alpha)}_{(m_\beta-q)(\beta)k(\alpha)}=0$, proving \eqref{8.16induzesterna_4} for $i=k$. Given an integer $p\geq1$, let us suppose \eqref{8.16induzesterna_4} to hold whenever $i\leq k+p-1$, that is
	\begin{align}
		\label{8.16induzesterna_5}
		\Gamma^{(k+s)(\alpha)}_{(m_\beta-q)(\beta)k(\alpha)}=0\qquad\forall\,s\leq p-1.
	\end{align}
	We want to show that $\Gamma^{(k+p)(\alpha)}_{(m_\beta-q)(\beta)k(\alpha)}=0$. By taking $i=k+p$ and $j=m_\beta-q$ in \eqref{dnabla0_8.3_4} we get
	\begin{align}
		\notag
		&\overset{k+p}{\underset{l=k}{\sum}}\,\Gamma^{(k+p)(\alpha)}_{(m_\beta-q)(\beta)l(\alpha)}u^{(l-k+1)(\alpha)}-\overset{m_\beta}{\underset{l=m_\beta-q}{\sum}}\,\Gamma^{(k+p)(\alpha)}_{k(\alpha)l(\beta)}u^{(l-m_\beta+q+1)(\beta)}=0
	\end{align}
	where in the first summation only the term for $l=k$ survives, as 
	\begin{align}
		\Gamma^{(k+p)(\alpha)}_{(m_\beta-q)(\beta)l(\alpha)}&\overset{Prop.8.2}{=}\Gamma^{(2k+p-l)(\alpha)}_{(m_\beta-q)(\beta)k(\alpha)}\overset{\eqref{8.16induzesterna_5}}{=}0
		\notag
	\end{align}
	for each $l\geq k+1$, and in the second summation only the term for $l=m_\beta-q$ survives, as
	\begin{align}
		\Gamma^{(k+p)(\alpha)}_{k(\alpha)l(\beta)}&\overset{\eqref{8.16induzesterna_3}}{=}0
		\notag
	\end{align}
	for each $l\geq m_\beta-q+1$. It follows that
	\begin{align}
		\notag
		&\Gamma^{(k+p)(\alpha)}_{(m_\beta-q)(\beta)k(\alpha)}\big(u^{1(\alpha)}-u^{1(\beta)}\big)=0
	\end{align}
	thus $\Gamma^{(k+p)(\alpha)}_{(m_\beta-q)(\beta)k(\alpha)}=0$, proving \eqref{8.16induzesterna_4} and consequently $(4.)$.
	
	By taking $i=k$ and $j=1$ in \eqref{bneqca0} we get
	\begin{align}
		0&=-m_\beta\epsilon_\beta+\Gamma^{i(\alpha)}_{1(\beta)i(\alpha)}u^{1(\alpha)}-\overset{m_\beta}{\underset{l=1}{\sum}}\,\Gamma^{i(\alpha)}_{i(\alpha)l(\beta)}u^{l(\beta)}\notag\\&\overset{(4.)}{=}-m_\beta\epsilon_\beta+\Gamma^{i(\alpha)}_{1(\beta)i(\alpha)}\big(u^{1(\alpha)}-u^{1(\beta)}\big)
		\notag
	\end{align}
	which proves $(5.)$.
	
	By taking $i=k+h$ with $h\geq1$ and $j=1$ in \eqref{bneqca0} we get
	\begin{align}
		0&=\overset{k+h}{\underset{l=k}{\sum}}\,\Gamma^{(k+h)(\alpha)}_{1(\beta)l(\alpha)}u^{(l-k+1)(\alpha)}-\overset{m_\beta}{\underset{l=1}{\sum}}\,\Gamma^{(k+h)(\alpha)}_{k(\alpha)l(\beta)}u^{l(\beta)}\notag\\&\overset{(4.)}{=}\overset{k+h}{\underset{l=k+1}{\sum}}\,\Gamma^{(k+h)(\alpha)}_{1(\beta)l(\alpha)}u^{(l-k+1)(\alpha)}+\Gamma^{(k+h)(\alpha)}_{k(\alpha)1(\beta)}\big(u^{1(\alpha)}-u^{1(\beta)}\big).
		\notag
	\end{align}
	Thus
	\begin{align}
		\Gamma^{(k+h)(\alpha)}_{1(\beta)k(\alpha)}&=-\frac{1}{u^{1(\alpha)}-u^{1(\beta)}}\overset{k+h}{\underset{l=k+1}{\sum}}\,\Gamma^{(k+h)(\alpha)}_{1(\beta)l(\alpha)}u^{(l-k+1)(\alpha)}\notag\\&\overset{s=l-k+1}{=}-\frac{1}{u^{1(\alpha)}-u^{1(\beta)}}\overset{h+1}{\underset{s=2}{\sum}}\,\Gamma^{(k+h)(\alpha)}_{1(\beta)(s+k-1)(\alpha)}u^{s(\alpha)}\notag\\&\overset{Prop.8.2}{=}-\frac{1}{u^{1(\alpha)}-u^{1(\beta)}}\overset{h+1}{\underset{s=2}{\sum}}\,\Gamma^{(k+h-s+1)(\alpha)}_{1(\beta)k(\alpha)}u^{s(\alpha)}
		\notag
	\end{align}
	which proves $(6.)$.
	
	By taking $k=1$, $j=2$ and $i\geq3$ in \eqref{abc0} we get
	\begin{align}
		0&=\overset{m_\alpha}{\underset{l=1}{\sum}}\,\Gamma^{i(\alpha)}_{2(\alpha)l(\alpha)}u^{l(\alpha)}-\overset{m_\alpha}{\underset{l=2}{\sum}}\,\Gamma^{i(\alpha)}_{1(\alpha)l(\alpha)}u^{(l-1)(\alpha)}\notag\\&\overset{(1.)}{\underset{(4.)}{=}}\overset{i}{\underset{l=1}{\sum}}\,\Gamma^{i(\alpha)}_{2(\alpha)l(\alpha)}u^{l(\alpha)}-\overset{i}{\underset{l=2}{\sum}}\,\Gamma^{i(\alpha)}_{1(\alpha)l(\alpha)}u^{(l-1)(\alpha)}\notag\\&=\cancel{\Gamma^{i(\alpha)}_{2(\alpha)1(\alpha)}u^{1(\alpha)}}+\overset{i}{\underset{l=2}{\sum}}\,\Gamma^{i(\alpha)}_{2(\alpha)l(\alpha)}u^{l(\alpha)}\cancel{-\Gamma^{i(\alpha)}_{1(\alpha)2(\alpha)}u^{1(\alpha)}}-\overset{i}{\underset{l=3}{\sum}}\,\Gamma^{i(\alpha)}_{1(\alpha)l(\alpha)}u^{(l-1)(\alpha)}\notag\\&=\Gamma^{i(\alpha)}_{2(\alpha)2(\alpha)}u^{2(\alpha)}+\overset{i-1}{\underset{l=3}{\sum}}\,\Gamma^{i(\alpha)}_{2(\alpha)l(\alpha)}u^{l(\alpha)}+\Gamma^{i(\alpha)}_{2(\alpha)i(\alpha)}u^{i(\alpha)}-\Gamma^{i(\alpha)}_{1(\alpha)3(\alpha)}u^{2(\alpha)}\notag\\&-\overset{i}{\underset{l=4}{\sum}}\,\Gamma^{i(\alpha)}_{1(\alpha)l(\alpha)}u^{(l-1)(\alpha)}\notag\\&\overset{Prop.8.2}{\underset{(1.)}{=}}\big(\Gamma^{i(\alpha)}_{2(\alpha)2(\alpha)}-\Gamma^{(i-2)(\alpha)}_{1(\alpha)1(\alpha)}\big)u^{2(\alpha)}+\overset{i-1}{\underset{l=3}{\sum}}\,\Gamma^{(i-l+2)(\alpha)}_{2(\alpha)2(\alpha)}u^{l(\alpha)}+\Gamma^{2(\alpha)}_{2(\alpha)2(\alpha)}u^{i(\alpha)}\notag\\&-\overset{i}{\underset{l=4}{\sum}}\,\Gamma^{(i-l+1)(\alpha)}_{1(\alpha)1(\alpha)}u^{(l-1)(\alpha)}.
		\notag
	\end{align}
	By means of changing the variables in summations ($s=i-l$ in the first one, $s=i-l+1$ in the second one) we obtain
	\begin{align}
		&\big(\Gamma^{i(\alpha)}_{2(\alpha)2(\alpha)}-\Gamma^{(i-2)(\alpha)}_{1(\alpha)1(\alpha)}\big)u^{2(\alpha)}+\Gamma^{2(\alpha)}_{2(\alpha)2(\alpha)}u^{i(\alpha)}+\overset{i-3}{\underset{s=1}{\sum}}\big(\Gamma^{(s+2)(\alpha)}_{2(\alpha)2(\alpha)}-\Gamma^{s(\alpha)}_{1(\alpha)1(\alpha)}\big)u^{(i-s)(\alpha)}=0
		\notag
	\end{align}
	thus
	\begin{align}
		\Gamma^{i(\alpha)}_{2(\alpha)2(\alpha)}=&\Gamma^{(i-2)(\alpha)}_{1(\alpha)1(\alpha)}-\frac{1}{u^{2(\alpha)}}\bigg(\Gamma^{2(\alpha)}_{2(\alpha)2(\alpha)}u^{i(\alpha)}+\overset{i-3}{\underset{s=1}{\sum}}\big(\Gamma^{(s+2)(\alpha)}_{2(\alpha)2(\alpha)}-\Gamma^{s(\alpha)}_{1(\alpha)1(\alpha)}\big)u^{(i-s)(\alpha)}\bigg)
		\notag
	\end{align}
	which proves $(7.)$.
\end{proof}
The Christoffel symbols can be divided in four categories:
\begin{itemize}
	\item[$1.$] $\Gamma^{k(\gamma)}_{i(\alpha)j(\beta)}$ with $\alpha\neq\beta\neq\gamma\neq\alpha$
	\item[$2.$] $\Gamma^{k(\alpha)}_{i(\beta)j(\alpha)}$ with $\alpha\neq\beta$
	\item[$3.$] $\Gamma^{k(\alpha)}_{i(\beta)j(\beta)}$ with $\alpha\neq\beta$
	\item[$4.$] $\Gamma^{k(\alpha)}_{i(\alpha)j(\alpha)}$.
\end{itemize}
By means of Propositions 8.1, 8.2 and 8.3 we are able to determine all of them, as the following result shows.
\begin{prop}
	Let $\alpha$, $\beta$, $\gamma$ be pairwise distinct. Then
	\begin{itemize}
		\item[$1.$] $\Gamma^{k(\gamma)}_{i(\alpha)j(\beta)}=0$ for each value of $i,j,k$
		\item[$2.$] $\Gamma^{k(\alpha)}_{i(\beta)j(\alpha)}=0$ for every $j,k$ when $i\geq2$ and
		\begin{align}
			\Gamma^{k(\alpha)}_{1(\beta)j(\alpha)}&=\Gamma^{(k-j+1)(\alpha)}_{1(\beta)1(\alpha)}=\begin{cases}
				0\qquad&\text{if }k<j\\
				\frac{m_\beta\epsilon_\beta}{u^{1(\alpha)}-u^{1(\beta)}}\qquad&\text{if }k=j\\
				-\frac{1}{u^{1(\alpha)}-u^{1(\beta)}}\,\overset{k-j+1}{\underset{s=2}{\sum}}\,\Gamma^{(k-j-s+2)(\alpha)}_{1(\beta)1(\alpha)}\,u^{s(\alpha)}\qquad&\text{if }k>j
			\end{cases}
			\notag
		\end{align}
		when $i=1$
		\item[$3.$] $\Gamma^{k(\alpha)}_{i(\beta)j(\beta)}=0$ for each $k$ when $i+j\geq3$ and
		\begin{align}
			\Gamma^{k(\alpha)}_{1(\beta)1(\beta)}&=-\Gamma^{k(\alpha)}_{1(\beta)1(\alpha)}=
			\begin{cases}
				-\frac{m_\beta\epsilon_\beta}{u^{1(\alpha)}-u^{1(\beta)}}\qquad&\text{if }k=1\\
				\frac{1}{u^{1(\alpha)}-u^{1(\beta)}}\,\overset{k}{\underset{s=2}{\sum}}\,\Gamma^{(k-s+1)(\alpha)}_{1(\beta)1(\alpha)}\,u^{s(\alpha)}\qquad&\text{if }k>1
			\end{cases}
			\notag
		\end{align}
		when $i=j=1$
		\item[$4.$] $\Gamma^{k(\alpha)}_{i(\alpha)j(\alpha)}$ is determinated by
		\begin{align}
			\Gamma^{k(\alpha)}_{i(\alpha)j(\alpha)}&=
			\begin{cases}
				0\qquad&\text{if }k-i-j\leq-3\\
				\Gamma^{(k-i-j+4)(\alpha)}_{2(\alpha)2(\alpha)}\qquad&\text{if }k-i-j\geq-2
			\end{cases}
			\notag
		\end{align}
		when $i,j\geq2$ where $$\Gamma^{2(\alpha)}_{2(\alpha)2(\alpha)}=-\frac{m_\alpha\epsilon_{\alpha}}{u^{2(\alpha)}}$$ and
		\begin{align}
			\Gamma^{n(\alpha)}_{2(\alpha)2(\alpha)}&=\Gamma^{(n-2)(\alpha)}_{1(\alpha)1(\alpha)}-\Gamma^{2(\alpha)}_{2(\alpha)2(\alpha)}\frac{u^{n(\alpha)}}{u^{2(\alpha)}}-\frac{1}{u^{2(\alpha)}}\overset{n-3}{\underset{l=1}{\sum}}\big(\Gamma^{(l+2)(\alpha)}_{2(\alpha)2(\alpha)}-\Gamma^{l(\alpha)}_{1(\alpha)1(\alpha)}\big)u^{(n-l)(\alpha)}
			\notag
		\end{align}
		for $n\ge3$, by
		\begin{align}
			\Gamma^{k(\alpha)}_{1(\alpha)j(\alpha)}&=-\underset{\sigma\neq\alpha}{\sum}\,\Gamma^{k(\alpha)}_{1(\sigma)j(\alpha)}=\begin{cases}
				0\qquad&\text{if }k<j\\
				-\underset{\sigma\neq\alpha}{\sum}\,\frac{m_\sigma\epsilon_\sigma}{u^{1(\alpha)}-u^{1(\sigma)}}\qquad&\text{if }k=j\\
				\underset{\sigma\neq\alpha}{\sum}\,\frac{1}{u^{1(\alpha)}-u^{1(\sigma)}}\,\overset{k-j+1}{\underset{s=2}{\sum}}\,\Gamma^{(k-j-s+2)(\alpha)}_{1(\sigma)1(\alpha)}\,u^{s(\alpha)}\qquad&\text{if }k>j
			\end{cases}
			\notag
		\end{align}
		when $i=1$ and by
		\begin{align}
			\Gamma^{k(\alpha)}_{i(\alpha)1(\alpha)}&=-\underset{\sigma\neq\alpha}{\sum}\,\Gamma^{k(\alpha)}_{1(\sigma)i(\alpha)}=\begin{cases}
				0\qquad&\text{if }k<i\\
				-\underset{\sigma\neq\alpha}{\sum}\,\frac{m_\sigma\epsilon_\sigma}{u^{1(\alpha)}-u^{1(\sigma)}}\qquad&\text{if }k=i\\
				\underset{\sigma\neq\alpha}{\sum}\,\frac{1}{u^{1(\alpha)}-u^{1(\sigma)}}\,\overset{k-i+1}{\underset{s=2}{\sum}}\,\Gamma^{(k-i-s+2)(\alpha)}_{1(\sigma)1(\alpha)}\,u^{s(\alpha)}\qquad&\text{if }k>i
			\end{cases}
			\notag
		\end{align}
		when $j=1$.
	\end{itemize}
\end{prop}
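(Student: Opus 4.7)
My plan is to determine the four categories of Christoffel symbols in turn, using Propositions~8.1, 8.2 and 8.3 as the basic inputs together with the torsion-freeness of $\nabla$ (symmetry in the lower indices). Category~$1$ is immediate: the formula $\Gamma^{k(\gamma)}_{i(\alpha)j(\beta)}=0$ for pairwise distinct $\alpha,\beta,\gamma$ is exactly Proposition~8.2\,$(vi)$.

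For Category~$2$, $\Gamma^{k(\alpha)}_{i(\beta)j(\alpha)}$ with $\alpha\neq\beta$, I first rewrite this as $\Gamma^{k(\alpha)}_{j(\alpha)i(\beta)}$ by symmetry; Proposition~8.3\,$(4)$ then gives the vanishing as soon as $i\geq 2$. For $i=1$, Proposition~8.2\,$(ii)$ yields the shift $\Gamma^{k(\alpha)}_{1(\beta)j(\alpha)}=\Gamma^{(k-j+1)(\alpha)}_{1(\beta)1(\alpha)}$, reducing everything to the case $j=1$. The three subcases $k<j$, $k=j$, $k>j$ in the statement then follow respectively from Proposition~8.3\,$(4)$ (since the resulting upper index is nonpositive, so the symbol lies outside the admissible range), from Proposition~8.3\,$(5)$, and from the recursion of Proposition~8.3\,$(6)$ after reindexing via $h=k-j$.

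Category~$3$, $\Gamma^{k(\alpha)}_{i(\beta)j(\beta)}$ with $\alpha\neq\beta$, splits into two pieces. When $i+j\geq 3$, Proposition~8.2\,$(i)$ (with the two block labels interchanged) shows the symbol depends only on $i+j$ for fixed $k$, so it reduces to $\Gamma^{k(\alpha)}_{1(\beta)m(\beta)}$ with $m=i+j-1\geq 2$; Proposition~8.2\,$(v)$ then rewrites this as $-\Gamma^{(k+1)(\alpha)}_{2(\alpha)m(\beta)}$, which (by lower-index symmetry) is a Category~$2$ symbol with $i\geq 2$ and therefore vanishes. When $i=j=1$, Proposition~8.2\,$(v)$ together with torsion-freeness gives $\Gamma^{k(\alpha)}_{1(\beta)1(\beta)}=-\Gamma^{k(\alpha)}_{1(\beta)1(\alpha)}$, and the right-hand side is the Category~$2$ formula just computed.

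Finally for Category~$4$, $\Gamma^{k(\alpha)}_{i(\alpha)j(\alpha)}$ with $i,j\geq 2$ is supplied entirely by Proposition~8.3\,$(1)$, $(2)$, and $(7)$, while the boundary cases $i=1$ (and, by symmetry, $j=1$) are where Proposition~8.1 enters decisively: it rewrites $\Gamma^{k(\alpha)}_{1(\alpha)j(\alpha)}=-\sum_{\sigma\neq\alpha}\Gamma^{k(\alpha)}_{1(\sigma)j(\alpha)}$, a sum of Category~$2$ symbols whose closed forms are now known, and substitution produces the stated formulas. The main obstacle I anticipate is purely combinatorial book-keeping: one has to verify that the admissible index ranges are treated consistently (out-of-range symbols being interpreted as zero), that the global order of determination (Category~$1\to 2\to 3\to$ Category~$4$ with $i$ or $j=1$ $\to$ Category~$4$ with $i,j\geq 2$) is well-founded, and that the two nested recursions --- the $k>j$ branch of Category~$2$ via 8.3\,$(6)$, and the $n\geq 3$ branch of 8.3\,$(7)$ whose $\Gamma^{(n-2)(\alpha)}_{1(\alpha)1(\alpha)}$ input is itself supplied only through Proposition~8.1 --- terminate correctly at the base cases $\Gamma^{2(\alpha)}_{2(\alpha)2(\alpha)}=-m_\alpha\epsilon_\alpha/u^{2(\alpha)}$ and $\Gamma^{1(\alpha)}_{1(\alpha)1(\beta)}=m_\beta\epsilon_\beta/(u^{1(\alpha)}-u^{1(\beta)})$.
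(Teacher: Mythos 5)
Your proposal is correct and follows essentially the same route as the paper, whose proof of this proposition is simply the observation that it follows directly from Propositions 8.1, 8.2 and 8.3; you have merely made explicit which clause of each proposition handles which category, and your bookkeeping of the index ranges and the order of determination is consistent with the paper's conventions.
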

\begin{proof}
	The proof directly follows from Propositions 8.1, 8.2 and 8.3.
\end{proof}
\begin{remark}
	It is easy to observe that $a_0$ is a flat coordinate for $\nabla$, namely $\nabla(da_0)=0$. Indeed, the $i$-th component of $da_0$ is
	\begin{align}
		(da_0)_i&=\partial_i(da_0)=\partial_i\bigg(\underset{\sigma=1}{\overset{r}{\sum}}\,m_\sigma\epsilon_{1(\sigma)}\,u^{1(\sigma)}\bigg)=\underset{\sigma=1}{\overset{r}{\sum}}\,m_\sigma\epsilon_{1(\sigma)}\,\delta^{1(\sigma)}_i
		\notag
	\end{align}
	thus
	\begin{align}
		\nabla_i(da_0)_j&=\xcancel{\partial_i(da_0)_j}-\Gamma^k_{ij}(da_0)_k=-\Gamma^k_{ij}\,\underset{\sigma=1}{\overset{r}{\sum}}\,m_\sigma\epsilon_{1(\sigma)}\,\delta^{1(\sigma)}_k.
		\notag
	\end{align}
	Given $\alpha,\beta\in\{1,\dots,r\}$ this reads
	\begin{align}
		\nabla_{i(\alpha)}(da_0)_{j(\beta)}&=-\underset{\gamma=1}{\overset{r}{\sum}}\,\Gamma^{k(\gamma)}_{i(\alpha)j(\beta)}\,\underset{\sigma=1}{\overset{r}{\sum}}\,m_\sigma\epsilon_{1(\sigma)}\,\delta^{1(\sigma)}_{k(\gamma)}=-\underset{\sigma=1}{\overset{r}{\sum}}\,\Gamma^{1(\sigma)}_{i(\alpha)j(\beta)}\,m_\sigma\epsilon_{1(\sigma)}.
		\label{a_0cp}
	\end{align}
	Let us first consider the case where $\alpha=\beta$. We get
	\begin{align}
		\nabla_{i(\alpha)}(da_0)_{j(\alpha)}&=-\underset{\sigma=1}{\overset{r}{\sum}}\,\Gamma^{1(\sigma)}_{i(\alpha)j(\alpha)}\,m_\sigma\epsilon_{1(\sigma)}=-\Gamma^{1(\alpha)}_{i(\alpha)j(\alpha)}\,m_\alpha\epsilon_{1(\alpha)}-\underset{\sigma\neq\alpha}{\overset{}{\sum}}\,\Gamma^{1(\sigma)}_{i(\alpha)j(\alpha)}\,m_\sigma\epsilon_{1(\sigma)}
		\notag
	\end{align}
	where (by Proposition 8.4) the terms $\Gamma^{1(\alpha)}_{i(\alpha)j(\alpha)}$ and $\Gamma^{1(\sigma)}_{i(\alpha)j(\alpha)}=-\underset{\sigma\neq\alpha}{\overset{}{\sum}}\,\Gamma^{1(\sigma)}_{(i+j-1)(\alpha)1(\sigma)}$ only survive for $i=j=1$. Therefore $\nabla_{i(\alpha)}(da_0)_{j(\alpha)}=0$ trivially whenever at least one among $i$ and $j$ is greater or equal than $2$. When $i=j=1$ we get
	\begin{align}
		\nabla_{1(\alpha)}(da_0)_{1(\alpha)}&=-\Gamma^{1(\alpha)}_{1(\alpha)1(\alpha)}\,m_\alpha\epsilon_{1(\alpha)}-\underset{\sigma\neq\alpha}{\overset{}{\sum}}\,\Gamma^{1(\sigma)}_{1(\alpha)1(\alpha)}\,m_\sigma\epsilon_{1(\sigma)}
		\notag\\&=\underset{\sigma\neq\alpha}{\overset{}{\sum}}\,\Gamma^{1(\alpha)}_{1(\sigma)1(\alpha)}\,m_\alpha\epsilon_{1(\alpha)}+\underset{\sigma\neq\alpha}{\overset{}{\sum}}\,\Gamma^{1(\sigma)}_{1(\alpha)1(\sigma)}\,m_\sigma\epsilon_{1(\sigma)}
		\notag\\&=\underset{\sigma\neq\alpha}{\overset{}{\sum}}\,\bigg(\frac{m_\sigma\epsilon_{1(\sigma)}}{u^{1(\alpha)}-u^{1(\sigma)}}\,m_\alpha\epsilon_{1(\alpha)}-\frac{m_\alpha\epsilon_{1(\alpha)}}{u^{1(\alpha)}-u^{1(\sigma)}}\,m_\sigma\epsilon_{1(\sigma)}\bigg)=0.
		\notag
	\end{align}
	Let us now consider the case of $\alpha\neq\beta$, where \eqref{a_0cp} becomes
	\begin{align}
		\nabla_{i(\alpha)}(da_0)_{j(\beta)}&=-\underset{\sigma=1}{\overset{r}{\sum}}\,\Gamma^{1(\sigma)}_{i(\alpha)j(\beta)}\,m_\sigma\epsilon_{1(\sigma)}=-\Gamma^{1(\alpha)}_{i(\alpha)j(\beta)}\,m_\alpha\epsilon_{1(\alpha)}-\Gamma^{1(\beta)}_{i(\alpha)j(\beta)}\,m_\beta\epsilon_{1(\beta)}
		\notag
	\end{align}
	which trivially vanishes whenever at least one among $i$ and $j$ is greater or equal than $2$, as both $\Gamma^{1(\alpha)}_{i(\alpha)j(\beta)}$ and $\Gamma^{1(\beta)}_{i(\alpha)j(\beta)}$ only survive for $i=j=1$. In this latter case we get
	\begin{align}
		\nabla_{1(\alpha)}(da_0)_{1(\beta)}&=-\Gamma^{1(\alpha)}_{1(\alpha)1(\beta)}\,m_\alpha\epsilon_{1(\alpha)}-\Gamma^{1(\beta)}_{1(\alpha)1(\beta)}\,m_\beta\epsilon_{1(\beta)}
		\notag\\&=-\frac{m_\beta\epsilon_{1(\beta)}}{u^{1(\alpha)}-u^{1(\beta)}}\,m_\alpha\epsilon_{1(\alpha)}+\frac{m_\alpha\epsilon_{1(\alpha)}}{u^{1(\alpha)}-u^{1(\beta)}}\,m_\beta\epsilon_{1(\beta)}=0.
		\notag
	\end{align}
\end{remark}
\begin{remark}
	It is likewise easy to check that $dd_La_0=0$.
\end{remark}

\subsection{Technical lemmas}
\begin{lemma}
	For every choice of $\alpha$, $\beta$, $\gamma$, $\delta\in\{1,\dots,r\}$ we have
	\begin{equation}
		\label{Lemma7.1eq}
		\frac{\partial\Gamma^{k(\gamma)}_{i(\alpha)j(\beta)}}{\partial u^{l(\delta)}}=\frac{\partial\Gamma^{(k-1)(\gamma)}_{i(\alpha)j(\beta)}}{\partial u^{(l-1)(\delta)}}
	\end{equation}
	for all $k\in\{2,\dots,m_\gamma\}$ and $l\in\{3,\dots,m_\delta\}$.
	Moreover, if $\beta\neq\alpha=\gamma=\delta$ then \eqref{Lemma7.1eq} holds for $l=2$ as well.
\end{lemma}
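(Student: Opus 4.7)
The proof is a case analysis based on the explicit formulas for the Christoffel symbols given in Proposition 8.4. First, when $\alpha$, $\beta$, $\gamma$ are pairwise distinct, Proposition 8.4(1) gives $\Gamma^{k(\gamma)}_{i(\alpha)j(\beta)}=0$, so both sides of \eqref{Lemma7.1eq} vanish; the same happens in every configuration for which the Christoffel symbol is forced to vanish by Proposition 8.4 (in particular whenever $\gamma\notin\{\alpha,\beta\}$, or for $\Gamma^{k(\alpha)}_{i(\beta)j(\alpha)}$ with $i\geq 2$, or for $\Gamma^{k(\alpha)}_{i(\beta)j(\beta)}$ with $i+j\geq 3$). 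In the surviving cases, a direct inspection of the formulas in Proposition 8.4 shows that the nonzero $\Gamma^{k(\gamma)}_{i(\alpha)j(\beta)}$ depends on the coordinates only through the differences $u^{1(\mu)}-u^{1(\nu)}$ (with $\mu\ne\nu$) and through polynomial combinations of $u^{s(\sigma)}$ with $s\geq 2$. Therefore, if $\delta\notin\{\alpha,\beta,\gamma\}$ neither side of \eqref{Lemma7.1eq} depends on $u^{l(\delta)}$ for $l\geq 2$ and the identity is trivial; if $\delta$ coincides with one of the other block labels the dependence on $u^{l(\delta)}$ for $l\geq 3$ enters only through the $u^{s(\sigma)}$ monomials, while the cross-block factors $1/(u^{1(\alpha)}-u^{1(\sigma)})$ do not contribute.

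The main substantive case is $\alpha=\beta=\gamma=\delta$ with $l\geq 3$, which is the multi-block analogue of Lemma 7.1. I would argue by induction on the upper index $k$ using the recursions of Proposition 8.4(4). The base case $k=2$ is trivial because $\Gamma^{2(\alpha)}_{2(\alpha)2(\alpha)}=-m_\alpha\epsilon_\alpha/u^{2(\alpha)}$ has no dependence on $u^{l(\alpha)}$ for $l\geq 3$. The inductive step rests on the recursion
\[
\Gamma^{n(\alpha)}_{2(\alpha)2(\alpha)}=\Gamma^{(n-2)(\alpha)}_{1(\alpha)1(\alpha)}-\Gamma^{2(\alpha)}_{2(\alpha)2(\alpha)}\frac{u^{n(\alpha)}}{u^{2(\alpha)}}-\frac{1}{u^{2(\alpha)}}\sum_{l=1}^{n-3}\bigl(\Gamma^{(l+2)(\alpha)}_{2(\alpha)2(\alpha)}-\Gamma^{l(\alpha)}_{1(\alpha)1(\alpha)}\bigr)u^{(n-l)(\alpha)}
\]
combined with the cross-block reduction $\Gamma^{k(\alpha)}_{1(\alpha)j(\alpha)}=-\sum_{\sigma\ne\alpha}\Gamma^{k(\alpha)}_{1(\sigma)j(\alpha)}$ of Proposition 8.4(4). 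Applying the inductive hypothesis to the subsymbols of smaller upper index, and differentiating the explicit monomial factors $u^{(n-l)(\alpha)}$, one checks that each summand on the left of \eqref{Lemma7.1eq} matches its counterpart on the right after the simultaneous shift $k\mapsto k-1$, $l\mapsto l-1$; this is the same bookkeeping argument already carried out in the proof of Lemma 7.1, now adapted blockwise.

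Finally, the additional $l=2$ statement when $\beta\ne\alpha=\gamma=\delta$ concerns the cross-block symbol $\Gamma^{k(\alpha)}_{i(\alpha)j(\beta)}$, which by torsion-freeness and Proposition 8.4(2) vanishes unless $j=1$; in that case it equals $G_{k-i+1}$, where $G_n:=\Gamma^{n(\alpha)}_{1(\beta)1(\alpha)}$ is defined by $G_1=m_\beta\epsilon_\beta/D$ and $G_n=-D^{-1}\sum_{s=2}^{n}G_{n-s+1}\,u^{s(\alpha)}$ for $n\geq 2$, with $D:=u^{1(\alpha)}-u^{1(\beta)}$. The target reduces to $\partial_{2(\alpha)}G_n=\partial_{1(\alpha)}G_{n-1}$, which I would prove by induction on $n$: differentiating the recursion for $G_n$, the $s=2$ contribution yields $-G_{n-1}/D$, which matches exactly the derivative of the $1/D$ prefactor hidden in $G_{n-1}$, while the remaining sum terms match term-by-term by the inductive hypothesis. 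The main obstacle is the simultaneous bookkeeping of the explicit $u^{2(\alpha)}$ dependence coming from the $s=2$ summand and the implicit $u^{1(\alpha)}$ dependence through $D^{-1}$; the cancellation succeeds precisely because $\partial_{1(\alpha)}D=1$ and the recursion has the self-similar shape above.
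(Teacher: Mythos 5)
Your overall strategy matches the paper's: a case analysis on the block labels $\alpha,\beta,\gamma,\delta$, with the substantive cases handled by induction on the upper index $k$ using the recursions of Proposition 8.4, and a separate induction for the $l=2$ statement. The vanishing cases, the triviality for $\delta\notin\{\alpha,\beta,\gamma\}$, the single-block case $\alpha=\beta=\gamma=\delta$, and the final $l=2$ induction are all set up correctly and agree with the paper's proof.

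There is, however, one genuine gap: the case $\beta\neq\alpha=\gamma=\delta$ with $l\geq 3$ (the paper's Case~2). You dismiss it by saying that for $l\geq 3$ the dependence on $u^{l(\delta)}$ ``enters only through the $u^{s(\sigma)}$ monomials, while the cross-block factors $1/(u^{1(\alpha)}-u^{1(\sigma)})$ do not contribute.'' That observation is true but does not establish the shift identity $\partial_{l(\alpha)}\Gamma^{k(\alpha)}_{i(\alpha)1(\beta)}=\partial_{(l-1)(\alpha)}\Gamma^{(k-1)(\alpha)}_{i(\alpha)1(\beta)}$: the symbol $\Gamma^{(k-i+1)(\alpha)}_{1(\beta)1(\alpha)}$ depends on $u^{l(\alpha)}$ for $3\leq l\leq k-i+1$ both explicitly (through the monomial $u^{l(\alpha)}$ in the recursion of Proposition 8.3(6)) and implicitly (through the nested lower-order symbols), and matching the two sides requires exactly the same induction over $k$ that you carry out for $l=2$ in the ``moreover'' part --- differentiate the recursion, peel off the term where the derivative hits $u^{l(\alpha)}$ directly, and apply the inductive hypothesis to the rest. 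This is not a cosmetic omission: your Case~4 induction for $\alpha=\beta=\gamma=\delta$ uses the shift identity for the terms $\Gamma^{s(\alpha)}_{1(\alpha)1(\alpha)}=-\sum_{\sigma\neq\alpha}\Gamma^{s(\alpha)}_{1(\sigma)1(\alpha)}$ appearing in the recursion for $\Gamma^{n(\alpha)}_{2(\alpha)2(\alpha)}$, so the unproved Case~2 is a prerequisite for the case you do prove. The fix is routine (run your $l=2$ induction verbatim with $2,1$ replaced by $l,l-1$, which is what the paper does), but as written the argument is incomplete.
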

\begin{proof}
	In the wake of the previous result, we consider the following significative cases:
	\begin{itemize}
		\item[$1.$] $\alpha$, $\beta$, $\gamma$ are pairwise distinct
		\item[$2.$] $\alpha=\gamma\neq\beta$
		\item[$3.$] $\alpha=\beta\neq\gamma$
		\item[$4.$] $\alpha=\beta=\gamma$.
	\end{itemize}
	\textbf{Case 1: $\alpha\neq\beta\neq\gamma\neq\alpha$.} Since in this case all the quantities $\Gamma^{k(\gamma)}_{i(\alpha)j(\beta)}$ vanish, \eqref{Lemma7.1eq} holds trivially for each choice of $\delta$.
	\\\textbf{Case 2: $\alpha=\gamma\neq\beta$.} We are going to prove that
	\begin{equation}
		\label{Lemma7.1_2}
		\frac{\partial\Gamma^{k(\alpha)}_{i(\alpha)j(\beta)}}{\partial u^{l(\delta)}}=\frac{\partial\Gamma^{(k-1)(\alpha)}_{i(\alpha)j(\beta)}}{\partial u^{(l-1)(\delta)}}
	\end{equation}
	for all $k\in\{2,\dots,m_\alpha\}$, $l\in\{3,\dots,m_\delta\}$ and $\delta\in\{1,\dots,r\}$. This holds trivially for each $\delta\neq\alpha$, as $\Gamma^{k(\alpha)}_{i(\alpha)j(\beta)}$ does not depend on $u^{l(\delta)}$ and $\Gamma^{(k-1)(\alpha)}_{i(\alpha)j(\beta)}$ does not depend on $u^{(l-1)(\delta)}$ for any $l\geq3$. Moreover, by virtue of Proposition $8.4$, both $\Gamma^{k(\alpha)}_{i(\alpha)j(\beta)}$ and $\Gamma^{(k-1)(\alpha)}_{i(\alpha)j(\beta)}$ vanish if $j\geq2$ or $k<i$. Therefore we are left to show that
	\begin{equation}
		\label{Lemma7.1_2_alpha}
		\frac{\partial\Gamma^{k(\alpha)}_{i(\alpha)1(\beta)}}{\partial u^{l(\alpha)}}=\frac{\partial\Gamma^{(k-1)(\alpha)}_{i(\alpha)1(\beta)}}{\partial u^{(l-1)(\alpha)}}
	\end{equation}
	for $k\geq i$. We are going to prove \eqref{Lemma7.1_2_alpha} by induction over $k$. If $k=i$ we get
	\begin{align}
		\frac{\partial\Gamma^{k(\alpha)}_{i(\alpha)1(\beta)}}{\partial u^{l(\alpha)}}&=\frac{\partial\Gamma^{i(\alpha)}_{i(\alpha)1(\beta)}}{\partial u^{l(\alpha)}}=\frac{\partial\Gamma^{1(\alpha)}_{1(\alpha)1(\beta)}}{\partial u^{l(\alpha)}}=0
		\notag
	\end{align}
	(as $\Gamma^{1(\alpha)}_{1(\alpha)1(\beta)}$ does not depend on $u^{l(\alpha)}$ for any $l\geq3$) and
	\begin{align}
		\frac{\partial\Gamma^{(k-1)(\alpha)}_{i(\alpha)1(\beta)}}{\partial u^{(l-1)(\alpha)}}&=\frac{\partial\Gamma^{(i-1)(\alpha)}_{i(\alpha)1(\beta)}}{\partial u^{(l-1)(\alpha)}}=0
		\notag
	\end{align}
	(as $\Gamma^{(i-1)(\alpha)}_{i(\alpha)1(\beta)}=0$), thus \eqref{Lemma7.1_2_alpha} is verified for $k=i$. Given an integer $h\geq1$, let us suppose that \eqref{Lemma7.1_2_alpha} holds whenever $k\leq i+h-1$. We now want to prove that it holds for $k=i+h$ as well, that is
	\begin{equation}
		\label{Lemma7.1_2_alphaind}
		\frac{\partial\Gamma^{(i+h)(\alpha)}_{i(\alpha)1(\beta)}}{\partial u^{l(\alpha)}}=\frac{\partial\Gamma^{(i+h-1)(\alpha)}_{i(\alpha)1(\beta)}}{\partial u^{(l-1)(\alpha)}}.
	\end{equation}
	The left-hand side term reads
	\begin{align}
		\frac{\partial\Gamma^{(i+h)(\alpha)}_{i(\alpha)1(\beta)}}{\partial u^{l(\alpha)}}&=\frac{\partial}{\partial u^{l(\alpha)}}\bigg(-\frac{1}{u^{1(\alpha)}-u^{1(\beta)}}\,\overset{h+1}{\underset{s=2}{\sum}}\,\Gamma^{(i+h-s+1)(\alpha)}_{i(\alpha)1(\beta)}\,u^{s(\alpha)}\bigg)\notag\\&\overset{l>2}{=}-\frac{1}{u^{1(\alpha)}-u^{1(\beta)}}\,\overset{h+1}{\underset{s=2}{\sum}}\,\frac{\partial}{\partial u^{l(\alpha)}}\bigg(\Gamma^{(i+h-s+1)(\alpha)}_{i(\alpha)1(\beta)}\,u^{s(\alpha)}\bigg)\notag\\&=-\frac{1}{u^{1(\alpha)}-u^{1(\beta)}}\,\overset{h+1}{\underset{s=2}{\sum}}\,\frac{\partial\Gamma^{(i+h-s+1)(\alpha)}_{i(\alpha)1(\beta)}}{\partial u^{l(\alpha)}}\,u^{s(\alpha)}\notag\\&-\frac{1}{u^{1(\alpha)}-u^{1(\beta)}}\,\Gamma^{(i+h-l+1)(\alpha)}_{i(\alpha)1(\beta)}\notag\\&=-\frac{1}{u^{1(\alpha)}-u^{1(\beta)}}\,\overset{h}{\underset{s=2}{\sum}}\,\frac{\partial\Gamma^{(i+h-s+1)(\alpha)}_{i(\alpha)1(\beta)}}{\partial u^{l(\alpha)}}\,u^{s(\alpha)}\notag\\&-\frac{1}{u^{1(\alpha)}-u^{1(\beta)}}\,\frac{\partial\Gamma^{i(\alpha)}_{i(\alpha)1(\beta)}}{\partial u^{l(\alpha)}}\,u^{(h+1)(\alpha)}\notag\\&-\frac{1}{u^{1(\alpha)}-u^{1(\beta)}}\,\Gamma^{(i+h-l+1)(\alpha)}_{i(\alpha)1(\beta)}
		\notag
	\end{align}
	where
	\begin{align}
		\frac{\partial\Gamma^{(i+h-s+1)(\alpha)}_{i(\alpha)1(\beta)}}{\partial u^{l(\alpha)}}&=\frac{\partial\Gamma^{(i+h-s)(\alpha)}_{i(\alpha)1(\beta)}}{\partial u^{(l-1)(\alpha)}}
		\notag
	\end{align}
	by the inductive hypothesis for each $s\geq2$ and
	\begin{align}
		\frac{\partial\Gamma^{i(\alpha)}_{i(\alpha)1(\beta)}}{\partial u^{l(\alpha)}}&=\frac{\partial\Gamma^{1(\alpha)}_{1(\alpha)1(\beta)}}{\partial u^{l(\alpha)}}=0
		\notag
	\end{align}
	for every $l\geq3$. Therefore the left-hand side of \eqref{Lemma7.1_2_alphaind} is
	\begin{align}
		\frac{\partial\Gamma^{(i+h)(\alpha)}_{i(\alpha)1(\beta)}}{\partial u^{l(\alpha)}}&=-\frac{1}{u^{1(\alpha)}-u^{1(\beta)}}\,\overset{h}{\underset{s=2}{\sum}}\,\frac{\partial\Gamma^{(i+h-s)(\alpha)}_{i(\alpha)1(\beta)}}{\partial u^{(l-1)(\alpha)}}\,u^{s(\alpha)}\notag\\&-\frac{1}{u^{1(\alpha)}-u^{1(\beta)}}\,\Gamma^{(i+h-l+1)(\alpha)}_{i(\alpha)1(\beta)}
		\notag
	\end{align}
	which amounts the right-hand side term
	\begin{align}
		\frac{\partial\Gamma^{(i+h-1)(\alpha)}_{i(\alpha)1(\beta)}}{\partial u^{(l-1)(\alpha)}}&=\frac{\partial}{\partial u^{(l-1)(\alpha)}}\bigg(-\frac{1}{u^{1(\alpha)}-u^{1(\beta)}}\,\overset{h}{\underset{s=2}{\sum}}\,\Gamma^{(i+h-s)(\alpha)}_{i(\alpha)1(\beta)}\,u^{s(\alpha)}\bigg)\notag\\&\overset{l>2}{=}-\frac{1}{u^{1(\alpha)}-u^{1(\beta)}}\,\overset{h}{\underset{s=2}{\sum}}\,\frac{\partial\Gamma^{(i+h-s)(\alpha)}_{i(\alpha)1(\beta)}}{\partial u^{(l-1)(\alpha)}}\,u^{s(\alpha)}\notag\\&-\frac{1}{u^{1(\alpha)}-u^{1(\beta)}}\,\Gamma^{(i+h-l+1)(\alpha)}_{i(\alpha)1(\beta)}.
		\notag
	\end{align}
	This proves \eqref{Lemma7.1_2_alpha} for $k\geq i$.
	\\\textbf{Case 3: $\alpha=\beta\neq\gamma$.} For every $k\in\{2,\dots,m_\gamma\}$, $l\in\{3,\dots,m_\delta\}$ and $\delta\in\{1,\dots,r\}$ we have
	\begin{align}
		\frac{\partial\Gamma^{k(\gamma)}_{i(\alpha)j(\alpha)}}{\partial u^{l(\delta)}}&=\frac{\partial\Gamma^{k(\gamma)}_{(i+j-1)(\alpha)1(\alpha)}}{\partial u^{l(\delta)}}=-\frac{\partial\Gamma^{k(\gamma)}_{(i+j-1)(\alpha)1(\gamma)}}{\partial u^{l(\delta)}}\overset{\text{Case }2}{=}-\frac{\partial\Gamma^{(k-1)(\gamma)}_{(i+j-1)(\alpha)1(\gamma)}}{\partial u^{(l-1)(\delta)}}\notag\\&=\frac{\partial\Gamma^{(k-1)(\gamma)}_{(i+j-1)(\alpha)1(\alpha)}}{\partial u^{(l-1)(\delta)}}=\frac{\partial\Gamma^{(k-1)(\gamma)}_{i(\alpha)j(\alpha)}}{\partial u^{(l-1)(\delta)}}.
		\notag
	\end{align}
	\\\textbf{Case 4: $\alpha=\beta=\gamma$.} We are going to show that
	\begin{equation}
		\label{Lemma7.1_4}
		\frac{\partial\Gamma^{k(\alpha)}_{i(\alpha)j(\alpha)}}{\partial u^{l(\delta)}}=\frac{\partial\Gamma^{(k-1)(\alpha)}_{i(\alpha)j(\alpha)}}{\partial u^{(l-1)(\delta)}}
	\end{equation}
	for all $\delta\in\{1,\dots,r\}$, $k\in\{2,\dots,m_\alpha\}$ and $l\in\{3,\dots,m_\delta\}$. If $i=1$ (or equivalently $j=1$) then \eqref{Lemma7.1_4} is verified by means of Case $2$ and \eqref{nablae=0}, as
	\begin{align}
		\frac{\partial\Gamma^{k(\alpha)}_{1(\alpha)j(\alpha)}}{\partial u^{l(\delta)}}&=-\underset{\sigma\neq\alpha}{\sum}\,\frac{\partial\Gamma^{k(\alpha)}_{1(\sigma)j(\alpha)}}{\partial u^{l(\delta)}}=-\underset{\sigma\neq\alpha}{\sum}\,\frac{\partial\Gamma^{(k-1)(\alpha)}_{1(\sigma)j(\alpha)}}{\partial u^{(l-1)(\delta)}}=-\frac{\partial\Gamma^{(k-1)(\alpha)}_{1(\alpha)j(\alpha)}}{\partial u^{(l-1)(\delta)}}
		\notag
	\end{align}
	for every choice of $\delta$ and every $k\in\{2,\dots,m_\alpha\}$, $l\in\{3,\dots,m_\delta\}$. Let us now consider $i,j\geq2$. Without loss of generality, thanks to Proposition $8.4$, we can restrict ourselves to the case where $i=j=2$. If $\delta\neq\alpha$ then
	\begin{equation}
		\frac{\partial\Gamma^{k(\alpha)}_{2(\alpha)2(\alpha)}}{\partial u^{l(\delta)}}=0=\frac{\partial\Gamma^{(k-1)(\alpha)}_{2(\alpha)2(\alpha)}}{\partial u^{(l-1)(\delta)}}
		\notag
	\end{equation}
	as $\Gamma^{k(\alpha)}_{2(\alpha)2(\alpha)}$ only contains the terms $\{u^{s(\alpha)}\,|\,2\leq s\leq k\}$ and $\{u^{1(\sigma)}\,|\,1,\dots,r\}$, where $l(\delta)$ is not included ($l\geq3$). It only remains to prove \eqref{Lemma7.1_4} for $i=j=2$ and $\delta=\alpha$, that is
	\begin{equation}
		\label{Lemma7.1_4_alpha}
		\frac{\partial\Gamma^{k(\alpha)}_{2(\alpha)2(\alpha)}}{\partial u^{l(\alpha)}}=\frac{\partial\Gamma^{(k-1)(\alpha)}_{2(\alpha)2(\alpha)}}{\partial u^{(l-1)(\alpha)}}
	\end{equation}
	for all $k\in\{2,\dots,m_\alpha\}$ and $l\in\{3,\dots,m_\alpha\}$. We will proceed by induction over $k$. For $k=2$ both the left and the right-hand sides vanish, as $\Gamma^{2(\alpha)}_{2(\alpha)2(\alpha)}=-\frac{m_\alpha\epsilon_{\alpha}}{u^{2(\alpha)}}$ does not depend on any of the terms $\{u^{l(\alpha)}\,|\,l\geq3\}$ and $\Gamma^{1(\alpha)}_{2(\alpha)2(\alpha)}=0$. Let us suppose that \eqref{Lemma7.1_4_alpha} holds for all $k\in\{2,\dots,h\}$ (given an integer $2\leq h\leq m_\alpha$) and $l\in\{3,\dots,m_\alpha\}$. We must prove that
	\begin{equation}
		\label{Lemma7.1_4_alpha_thind}
		\frac{\partial\Gamma^{(h+1)(\alpha)}_{2(\alpha)2(\alpha)}}{\partial u^{l(\alpha)}}=\frac{\partial\Gamma^{h(\alpha)}_{2(\alpha)2(\alpha)}}{\partial u^{(l-1)(\alpha)}}
	\end{equation}
	for all $l\in\{3,\dots,m_\alpha\}$. The left-hand side term reads
	\begin{align}
		\frac{\partial\Gamma^{(h+1)(\alpha)}_{2(\alpha)2(\alpha)}}{\partial u^{l(\alpha)}}&=
		\notag\frac{\partial}{\partial u^{l(\alpha)}}\bigg(\Gamma^{(h-1)(\alpha)}_{1(\alpha)1(\alpha)}-\Gamma^{2(\alpha)}_{2(\alpha)2(\alpha)}\frac{u^{(h+1)(\alpha)}}{u^{2(\alpha)}}\notag\\&-\frac{1}{u^{2(\alpha)}}\overset{h-2}{\underset{s=1}{\sum}}\big(\Gamma^{(s+2)(\alpha)}_{2(\alpha)2(\alpha)}-\Gamma^{s(\alpha)}_{1(\alpha)1(\alpha)}\big)u^{(h-s+1)(\alpha)}\bigg)\notag\\&\overset{l>2}{=}\frac{\partial\Gamma^{(h-1)(\alpha)}_{1(\alpha)1(\alpha)}}{\partial u^{l(\alpha)}}-\frac{\Gamma^{2(\alpha)}_{2(\alpha)2(\alpha)}}{u^{2(\alpha)}}\delta^{h+1}_{l}\notag\\&-\frac{1}{u^{2(\alpha)}}\overset{h-2}{\underset{s=1}{\sum}}\bigg(\frac{\partial\Gamma^{(s+2)(\alpha)}_{2(\alpha)2(\alpha)}}{\partial u^{l(\alpha)}}-\frac{\partial\Gamma^{s(\alpha)}_{1(\alpha)1(\alpha)}}{\partial u^{l(\alpha)}}\bigg)u^{(h-s+1)(\alpha)}\notag\\&-\frac{1}{u^{2(\alpha)}}\big(\Gamma^{(h-l+3)(\alpha)}_{2(\alpha)2(\alpha)}-\Gamma^{(h-l+1)(\alpha)}_{1(\alpha)1(\alpha)}\big)
		\notag
	\end{align}
	where by what we said above ($i=1$) we have
	\begin{align}
		\frac{\partial\Gamma^{(h-1)(\alpha)}_{1(\alpha)1(\alpha)}}{\partial u^{l(\alpha)}}&=\frac{\partial\Gamma^{(h-2)(\alpha)}_{1(\alpha)1(\alpha)}}{\partial u^{(l-1)(\alpha)}}\text{,}\notag\\
		\frac{\partial\Gamma^{s(\alpha)}_{1(\alpha)1(\alpha)}}{\partial u^{l(\alpha)}}&=\frac{\partial\Gamma^{(s-1)(\alpha)}_{1(\alpha)1(\alpha)}}{\partial u^{(l-1)(\alpha)}}\qquad\forall\,1\leq s\leq h-2
		\notag
	\end{align}
	and by the inductive hypothesis
	\begin{align}
		\frac{\partial\Gamma^{(s+2)(\alpha)}_{2(\alpha)2(\alpha)}}{\partial u^{l(\alpha)}}=\frac{\partial\Gamma^{(s+1)(\alpha)}_{2(\alpha)2(\alpha)}}{\partial u^{(l-1)(\alpha)}}\qquad\forall\,1\leq s\leq h-2.
		\notag
	\end{align}
	It follows that the left-hand side of \eqref{Lemma7.1_4_alpha_thind} is
	\begin{align}
		\frac{\partial\Gamma^{(h+1)(\alpha)}_{2(\alpha)2(\alpha)}}{\partial u^{l(\alpha)}}&=\frac{\partial\Gamma^{(h-2)(\alpha)}_{1(\alpha)1(\alpha)}}{\partial u^{(l-1)(\alpha)}}-\frac{\Gamma^{2(\alpha)}_{2(\alpha)2(\alpha)}}{u^{2(\alpha)}}\delta^{h+1}_{l}\notag\\&-\frac{1}{u^{2(\alpha)}}\overset{h-2}{\underset{s=1}{\sum}}\bigg(\frac{\partial\Gamma^{(s+1)(\alpha)}_{2(\alpha)2(\alpha)}}{\partial u^{(l-1)(\alpha)}}-\frac{\partial\Gamma^{(s-1)(\alpha)}_{1(\alpha)1(\alpha)}}{\partial u^{(l-1)(\alpha)}}\bigg)u^{(h-s+1)(\alpha)}\notag\\&-\frac{1}{u^{2(\alpha)}}\big(\Gamma^{(h-l+3)(\alpha)}_{2(\alpha)2(\alpha)}-\Gamma^{(h-l+1)(\alpha)}_{1(\alpha)1(\alpha)}\big)\notag\\&=\frac{\partial\Gamma^{(h-2)(\alpha)}_{1(\alpha)1(\alpha)}}{\partial u^{(l-1)(\alpha)}}-\frac{\Gamma^{2(\alpha)}_{2(\alpha)2(\alpha)}}{u^{2(\alpha)}}\delta^{h+1}_{l}\notag\\&-\frac{1}{u^{2(\alpha)}}\frac{\partial\Gamma^{2(\alpha)}_{2(\alpha)2(\alpha)}}{\partial u^{(l-1)(\alpha)}}u^{h(\alpha)}\notag\\&-\frac{1}{u^{2(\alpha)}}\overset{h-2}{\underset{s=2}{\sum}}\bigg(\frac{\partial\Gamma^{(s+1)(\alpha)}_{2(\alpha)2(\alpha)}}{\partial u^{(l-1)(\alpha)}}-\frac{\partial\Gamma^{(s-1)(\alpha)}_{1(\alpha)1(\alpha)}}{\partial u^{(l-1)(\alpha)}}\bigg)u^{(h-s+1)(\alpha)}\notag\\&-\frac{1}{u^{2(\alpha)}}\big(\Gamma^{(h-l+3)(\alpha)}_{2(\alpha)2(\alpha)}-\Gamma^{(h-l+1)(\alpha)}_{1(\alpha)1(\alpha)}\big).
		\notag
	\end{align}
	Since the right-hand side reads,
	\begin{align}
		\frac{\partial\Gamma^{h(\alpha)}_{2(\alpha)2(\alpha)}}{\partial u^{(l-1)(\alpha)}}&=\frac{\partial}{\partial u^{(l-1)(\alpha)}}\bigg(\Gamma^{(h-2)(\alpha)}_{1(\alpha)1(\alpha)}-\Gamma^{2(\alpha)}_{2(\alpha)2(\alpha)}\frac{u^{h(\alpha)}}{u^{2(\alpha)}}\notag\\&-\frac{1}{u^{2(\alpha)}}\overset{h-3}{\underset{s=1}{\sum}}\big(\Gamma^{(s+2)(\alpha)}_{2(\alpha)2(\alpha)}-\Gamma^{s(\alpha)}_{1(\alpha)1(\alpha)}\big)u^{(h-s)(\alpha)}\bigg)\notag\\&\overset{l>2}{=}\frac{\partial\Gamma^{(h-2)(\alpha)}_{1(\alpha)1(\alpha)}}{\partial u^{(l-1)(\alpha)}}-\frac{\Gamma^{2(\alpha)}_{2(\alpha)2(\alpha)}}{u^{2(\alpha)}}\delta^{h}_{l-1}-\frac{u^{h(\alpha)}}{u^{2(\alpha)}}\frac{\partial\Gamma^{2(\alpha)}_{2(\alpha)2(\alpha)}}{\partial u^{(l-1)(\alpha)}}\notag\\&-\frac{1}{u^{2(\alpha)}}\overset{h-3}{\underset{s=1}{\sum}}\bigg(\frac{\partial\Gamma^{(s+2)(\alpha)}_{2(\alpha)2(\alpha)}}{\partial u^{(l-1)(\alpha)}}-\frac{\partial\Gamma^{s(\alpha)}_{1(\alpha)1(\alpha)}}{\partial u^{(l-1)(\alpha)}}\bigg)u^{(h-s)(\alpha)}
		\notag\\&-\frac{1}{u^{2(\alpha)}}\big(\Gamma^{(h-l+3)(\alpha)}_{2(\alpha)2(\alpha)}-\Gamma^{(h-l+1)(\alpha)}_{1(\alpha)1(\alpha)}\big)
		\notag
	\end{align}	
	we get that the difference between the left and the right-hand side terms is
	\begin{align}
		\frac{\partial\Gamma^{(h+1)(\alpha)}_{2(\alpha)2(\alpha)}}{\partial u^{l(\alpha)}}-\frac{\partial\Gamma^{h(\alpha)}_{2(\alpha)2(\alpha)}}{\partial u^{(l-1)(\alpha)}}&=\cancel{\frac{\partial\Gamma^{(h-2)(\alpha)}_{1(\alpha)1(\alpha)}}{\partial u^{(l-1)(\alpha)}}}-\bcancel{\frac{\Gamma^{2(\alpha)}_{2(\alpha)2(\alpha)}}{u^{2(\alpha)}}\delta^{h+1}_{l}}\notag\\&-\cancel{\cancel{\frac{1}{u^{2(\alpha)}}\frac{\partial\Gamma^{2(\alpha)}_{2(\alpha)2(\alpha)}}{\partial u^{(l-1)(\alpha)}}u^{h(\alpha)}}\,}\notag\\&-\frac{1}{u^{2(\alpha)}}\overset{h-2}{\underset{s=2}{\sum}}\bigg(\frac{\partial\Gamma^{(s+1)(\alpha)}_{2(\alpha)2(\alpha)}}{\partial u^{(l-1)(\alpha)}}-\frac{\partial\Gamma^{(s-1)(\alpha)}_{1(\alpha)1(\alpha)}}{\partial u^{(l-1)(\alpha)}}\bigg)u^{(h-s+1)(\alpha)}\notag\\&-\bcancel{\bcancel{\frac{1}{u^{2(\alpha)}}\big(\Gamma^{(h-l+3)(\alpha)}_{2(\alpha)2(\alpha)}-\Gamma^{(h-l+1)(\alpha)}_{1(\alpha)1(\alpha)}\big)}}\notag\\&-\cancel{\frac{\partial\Gamma^{(h-2)(\alpha)}_{1(\alpha)1(\alpha)}}{\partial u^{(l-1)(\alpha)}}}+\bcancel{\frac{\Gamma^{2(\alpha)}_{2(\alpha)2(\alpha)}}{u^{2(\alpha)}}\delta^{h}_{l-1}}+\cancel{\cancel{\frac{u^{h(\alpha)}}{u^{2(\alpha)}}\frac{\partial\Gamma^{2(\alpha)}_{2(\alpha)2(\alpha)}}{\partial u^{(l-1)(\alpha)}}}\,}\notag\\&+\frac{1}{u^{2(\alpha)}}\overset{h-3}{\underset{s=1}{\sum}}\bigg(\frac{\partial\Gamma^{(s+2)(\alpha)}_{2(\alpha)2(\alpha)}}{\partial u^{(l-1)(\alpha)}}-\frac{\partial\Gamma^{s(\alpha)}_{1(\alpha)1(\alpha)}}{\partial u^{(l-1)(\alpha)}}\bigg)u^{(h-s)(\alpha)}
		\notag\\&+\bcancel{\bcancel{\frac{1}{u^{2(\alpha)}}\big(\Gamma^{(h-l+3)(\alpha)}_{2(\alpha)2(\alpha)}-\Gamma^{(h-l+1)(\alpha)}_{1(\alpha)1(\alpha)}\big)}}\notag\\&=-\frac{1}{u^{2(\alpha)}}\overset{h-2}{\underset{s=2}{\sum}}\bigg(\frac{\partial\Gamma^{(s+1)(\alpha)}_{2(\alpha)2(\alpha)}}{\partial u^{(l-1)(\alpha)}}-\frac{\partial\Gamma^{(s-1)(\alpha)}_{1(\alpha)1(\alpha)}}{\partial u^{(l-1)(\alpha)}}\bigg)u^{(h-s+1)(\alpha)}\notag\\&+\frac{1}{u^{2(\alpha)}}\overset{h-3}{\underset{s=1}{\sum}}\bigg(\frac{\partial\Gamma^{(s+2)(\alpha)}_{2(\alpha)2(\alpha)}}{\partial u^{(l-1)(\alpha)}}-\frac{\partial\Gamma^{s(\alpha)}_{1(\alpha)1(\alpha)}}{\partial u^{(l-1)(\alpha)}}\bigg)u^{(h-s)(\alpha)}=0
		\notag
	\end{align}
	by changing the variable in one of the summations. This proves \eqref{Lemma7.1_4_alpha_thind}, thus \eqref{Lemma7.1_4} has been proved for all $\delta\in\{1,\dots,r\}$, $k\in\{2,\dots,m_\alpha\}$ and $l\in\{3,\dots,m_\delta\}$.
	
	Let us now prove that \eqref{Lemma7.1eq} holds for $l=2$ as well, that is
	\begin{equation}
		\frac{\partial\Gamma^{k(\alpha)}_{i(\alpha)j(\beta)}}{\partial u^{2(\alpha)}}=\frac{\partial\Gamma^{(k-1)(\alpha)}_{i(\alpha)j(\beta)}}{\partial u^{1(\alpha)}}
		\notag
	\end{equation}
	for all $k\in\{2,\dots,m_\gamma\}$. As in the proof of Case $2$, this is trivially true when $j\geq2$ or $k<i$. We are going to prove (by induction over $k$) that
	\begin{equation}
		\label{Lemma7.1_Moreover}
		\frac{\partial\Gamma^{k(\alpha)}_{i(\alpha)1(\beta)}}{\partial u^{2(\alpha)}}=\frac{\partial\Gamma^{(k-1)(\alpha)}_{i(\alpha)1(\beta)}}{\partial u^{1(\alpha)}}
	\end{equation}
	for $k\geq i$.
	If $k=i$ we get
	\begin{align}
		\frac{\partial\Gamma^{k(\alpha)}_{i(\alpha)1(\beta)}}{\partial u^{2(\alpha)}}&=\frac{\partial\Gamma^{i(\alpha)}_{i(\alpha)1(\beta)}}{\partial u^{2(\alpha)}}=\frac{\partial\Gamma^{1(\alpha)}_{1(\alpha)1(\beta)}}{\partial u^{2(\alpha)}}=0
		\notag
	\end{align}
	(as $\Gamma^{1(\alpha)}_{1(\alpha)1(\beta)}$ does not depend on $u^{2(\alpha)}$) and
	\begin{align}
		\frac{\partial\Gamma^{(k-1)(\alpha)}_{i(\alpha)1(\beta)}}{\partial u^{1(\alpha)}}&=\frac{\partial\Gamma^{(i-1)(\alpha)}_{i(\alpha)1(\beta)}}{\partial u^{1(\alpha)}}=0
		\notag
	\end{align}
	(as $\Gamma^{(i-1)(\alpha)}_{i(\alpha)1(\beta)}=0$). This proves \eqref{Lemma7.1_Moreover} for $k=i$. Let us suppose that \eqref{Lemma7.1_Moreover} holds whenever $k\leq i+h-1$, for a given integer $h\geq1$. Let us show that it holds when $k=i+h$ as well, that is
	\begin{equation}
		\label{Lemma7.1_Moreover_ind}
		\frac{\partial\Gamma^{(i+h)(\alpha)}_{i(\alpha)1(\beta)}}{\partial u^{2(\alpha)}}=\frac{\partial\Gamma^{(i+h-1)(\alpha)}_{i(\alpha)1(\beta)}}{\partial u^{1(\alpha)}}.
	\end{equation}
	The left-hand side of \eqref{Lemma7.1_Moreover_ind} reads
	\begin{align}
		\frac{\partial\Gamma^{(i+h)(\alpha)}_{i(\alpha)1(\beta)}}{\partial u^{2(\alpha)}}&=\frac{\partial}{\partial u^{2(\alpha)}}\bigg(-\frac{1}{u^{1(\alpha)}-u^{1(\beta)}}\,\overset{h+1}{\underset{s=2}{\sum}}\,\Gamma^{(i+h-s+1)(\alpha)}_{i(\alpha)1(\beta)}\,u^{s(\alpha)}\bigg)\notag\\&=-\frac{1}{u^{1(\alpha)}-u^{1(\beta)}}\,\overset{h+1}{\underset{s=2}{\sum}}\,\frac{\partial\Gamma^{(i+h-s+1)(\alpha)}_{i(\alpha)1(\beta)}}{\partial u^{2(\alpha)}}\,u^{s(\alpha)}\notag\\&-\frac{1}{u^{1(\alpha)}-u^{1(\beta)}}\,\Gamma^{(i+h-1)(\alpha)}_{i(\alpha)1(\beta)}
		\notag
	\end{align}
	where in the first summation only the terms for $s\leq h$ survive, as for $s=h+1$ we get
	\begin{align}
		\frac{\partial\Gamma^{i(\alpha)}_{i(\alpha)1(\beta)}}{\partial u^{2(\alpha)}}&=\frac{\partial\Gamma^{1(\alpha)}_{1(\alpha)1(\beta)}}{\partial u^{2(\alpha)}}=0\text{,}
		\notag
	\end{align}
	and (by the inductive hypothesis)
	\begin{align}
		\frac{\partial\Gamma^{(i+h-s+1)(\alpha)}_{i(\alpha)1(\beta)}}{\partial u^{2(\alpha)}}&=\frac{\partial\Gamma^{(i+h-s)(\alpha)}_{i(\alpha)1(\beta)}}{\partial u^{1(\alpha)}}
		\notag
	\end{align}
	for each $2\leq s\leq h$. Then
	\begin{align}
		\frac{\partial\Gamma^{(i+h)(\alpha)}_{i(\alpha)1(\beta)}}{\partial u^{2(\alpha)}}&=-\frac{1}{u^{1(\alpha)}-u^{1(\beta)}}\,\overset{h}{\underset{s=2}{\sum}}\,\frac{\partial\Gamma^{(i+h-s)(\alpha)}_{i(\alpha)1(\beta)}}{\partial u^{1(\alpha)}}\,u^{s(\alpha)}\notag\\&-\frac{1}{u^{1(\alpha)}-u^{1(\beta)}}\,\Gamma^{(i+h-1)(\alpha)}_{i(\alpha)1(\beta)}.
		\notag
	\end{align}
	The right-hand side of \eqref{Lemma7.1_Moreover_ind} reads
	\begin{align}
		\frac{\partial\Gamma^{(i+h-1)(\alpha)}_{i(\alpha)1(\beta)}}{\partial u^{1(\alpha)}}&=\frac{\partial}{\partial u^{1(\alpha)}}\bigg(-\frac{1}{u^{1(\alpha)}-u^{1(\beta)}}\,\overset{h}{\underset{s=2}{\sum}}\,\Gamma^{(i+h-s)(\alpha)}_{i(\alpha)1(\beta)}\,u^{s(\alpha)}\bigg)\notag\\&=\frac{1}{(u^{1(\alpha)}-u^{1(\beta)})^2}\,\overset{h}{\underset{s=2}{\sum}}\,\Gamma^{(i+h-s)(\alpha)}_{i(\alpha)1(\beta)}\,u^{s(\alpha)}\notag\\&-\frac{1}{u^{1(\alpha)}-u^{1(\beta)}}\,\overset{h}{\underset{s=2}{\sum}}\,\frac{\partial\Gamma^{(i+h-s)(\alpha)}_{i(\alpha)1(\beta)}}{\partial u^{1(\alpha)}}\,u^{s(\alpha)}
		\notag
	\end{align}
	thus their difference is
	\begin{align}
		\frac{\partial\Gamma^{(i+h)(\alpha)}_{i(\alpha)1(\beta)}}{\partial u^{2(\alpha)}}-\frac{\partial\Gamma^{(i+h-1)(\alpha)}_{i(\alpha)1(\beta)}}{\partial u^{1(\alpha)}}&=-\cancel{\frac{1}{u^{1(\alpha)}-u^{1(\beta)}}\,\overset{h}{\underset{s=2}{\sum}}\,\frac{\partial\Gamma^{(i+h-s)(\alpha)}_{i(\alpha)1(\beta)}}{\partial u^{1(\alpha)}}\,u^{s(\alpha)}}\notag\\&-\frac{1}{u^{1(\alpha)}-u^{1(\beta)}}\,\Gamma^{(i+h-1)(\alpha)}_{i(\alpha)1(\beta)}\notag\\&-\frac{1}{(u^{1(\alpha)}-u^{1(\beta)})^2}\,\overset{h}{\underset{s=2}{\sum}}\,\Gamma^{(i+h-s)(\alpha)}_{i(\alpha)1(\beta)}\,u^{s(\alpha)}\notag\\&+\cancel{\frac{1}{u^{1(\alpha)}-u^{1(\beta)}}\,\overset{h}{\underset{s=2}{\sum}}\,\frac{\partial\Gamma^{(i+h-s)(\alpha)}_{i(\alpha)1(\beta)}}{\partial u^{1(\alpha)}}\,u^{s(\alpha)}}\notag\\&=\frac{1}{(u^{1(\alpha)}-u^{1(\beta)})^2}\,\overset{h}{\underset{s=2}{\sum}}\,\Gamma^{(i+h-s)(\alpha)}_{i(\alpha)1(\beta)}\,u^{s(\alpha)}\notag\\&-\frac{1}{(u^{1(\alpha)}-u^{1(\beta)})^2}\,\overset{h}{\underset{s=2}{\sum}}\,\Gamma^{(i+h-s)(\alpha)}_{i(\alpha)1(\beta)}\,u^{s(\alpha)}=0.
		\notag
	\end{align}
\end{proof}
\begin{lemma}
	For each $\alpha$, $\beta\in\{1,\dots,r\}$, $i\in\{1,\dots,m_\alpha\}$, $j\in\{1,\dots,m_\beta\}$ we have
	\begin{align}
		\overset{n}{\underset{k=1}{\sum}}\,\Gamma^{i(\alpha)}_{j(\beta)k}\,u^k=\begin{cases}
			0\qquad&\text{if }i\neq j\\
			-\delta^\alpha_\beta\,\underset{\sigma\neq\alpha}{\sum}\,m_\sigma\epsilon_\sigma+(1-\delta^\alpha_\beta)\,m_\beta\epsilon_\beta\qquad&\text{if }i=j=1\\
			-\delta^\alpha_\beta\,\overset{r}{\underset{\tau=1}{\sum}}\,m_\tau\epsilon_\tau\qquad&\text{if }i=j\neq1.
		\end{cases}
		\notag
	\end{align}
\end{lemma}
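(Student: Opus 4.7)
The quantity in question is essentially $\nabla_{j(\beta)}E^{i(\alpha)} - \delta^{i(\alpha)}_{j(\beta)}$, so rather than slog through the case-by-case Christoffel symbol formulas of Proposition $8.4$, the cleanest route is to read the identity directly off the hypothesis $d_\nabla(L-a_0 I)=0$ in its expanded form \eqref{dnabla0}, which we may still use freely.

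Specialize \eqref{dnabla0} to the value $k=1$: then $\delta^1_k=1$, $\delta^i_k$ becomes $\delta^i_1$, and $u^{(l-k+1)(\gamma)}=u^{l(\gamma)}$, so the third summation collapses to $\sum_{l=1}^{m_\gamma}\Gamma^{i(\alpha)}_{j(\beta)l(\gamma)}u^{l(\gamma)}$. Now sum the resulting identity over $\gamma=1,\dots,r$. The third term assembles precisely into the full contraction $\sum_{k=1}^n\Gamma^{i(\alpha)}_{j(\beta)k}u^k$ appearing in the statement; the first term yields $\delta^\alpha_\beta\delta^i_j\sum_\tau m_\tau\epsilon_\tau$, and the second yields $\delta^i_1\delta^1_j\, m_\beta\epsilon_\beta$.

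The decisive observation is that the fourth term vanishes after summing over $\gamma$. Indeed Proposition $8.1$, which is the coordinate form of $\nabla e=0$, says $\sum_\sigma\Gamma^{i(\alpha)}_{1(\sigma)l(\beta)}=0$ for every $l$, hence $\sum_\gamma\sum_{l=j}^{m_\beta}\Gamma^{i(\alpha)}_{1(\gamma)l(\beta)}u^{(l-j+1)(\beta)}=0$ identically. Rearranging leaves
$$\sum_{k=1}^n \Gamma^{i(\alpha)}_{j(\beta)k}\,u^k \;=\; -\delta^\alpha_\beta\delta^i_j\sum_{\tau=1}^r m_\tau\epsilon_\tau \;+\; \delta^i_1\delta^1_j\, m_\beta\epsilon_\beta,$$
and the three cases of the statement follow by inspection: for $i\ne j$ both Kronecker factors drop (note that $\delta^i_1\delta^1_j\ne 0$ forces $i=j=1$); for $i=j=1$ with $\alpha=\beta$ one combines $m_\beta\epsilon_\beta$ with $-\sum_\tau m_\tau\epsilon_\tau$ to obtain $-\sum_{\sigma\ne\alpha}m_\sigma\epsilon_\sigma$, whereas with $\alpha\ne\beta$ only $(1-\delta^\alpha_\beta)m_\beta\epsilon_\beta$ survives; for $i=j\ne 1$ only $-\delta^\alpha_\beta\sum_\tau m_\tau\epsilon_\tau$ remains. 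There is no real obstacle here: all the genuine work was done in deriving \eqref{dnabla0} and in establishing Proposition $8.1$, so the lemma is simply a clean packaging of those two facts with a judicious choice of the free index $k$.
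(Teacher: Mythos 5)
Your proof is correct, and it takes a genuinely different route from the paper's. You specialize \eqref{dnabla0} to $k=1$ and sum over $\gamma$: the first two terms produce exactly $\delta^\alpha_\beta\delta^i_j\sum_\tau m_\tau\epsilon_\tau-\delta^i_1\delta^1_j m_\beta\epsilon_\beta$, the third assembles into the full contraction $\sum_k\Gamma^{i(\alpha)}_{j(\beta)k}u^k$, and the fourth dies by Proposition $8.1$ (together with the symmetry $\Gamma^{i(\alpha)}_{1(\gamma)l(\beta)}=\Gamma^{i(\alpha)}_{l(\beta)1(\gamma)}$); the resulting closed formula reproduces all three cases of the statement. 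The paper instead proves the lemma by a page-long direct computation from the explicit recursive formulas of Proposition $8.4$, splitting into $\alpha\neq\beta$ and $\alpha=\beta$ and telescoping the sums. What your argument buys is brevity and transparency: the lemma is visibly nothing but the trace of the defining equation $d_\nabla(L-a_0I)=0$ against $E$, combined with $\nabla e=0$. What the paper's computation buys is logical self-containment in the existence direction: once the Christoffel symbols are \emph{defined} by the formulas of Proposition $8.4$ (as they effectively are in Theorem $8.14$), the paper's proof establishes the identity without presupposing that those formulas satisfy \eqref{dnabla0}, whereas your proof leans on \eqref{dnabla0} as a standing hypothesis. Within Section $8$ as written that hypothesis is in force, so your derivation is legitimate; but if one wanted Lemma $8.6$ as a property of the explicitly defined connection, your route would additionally require checking that the defined symbols do satisfy \eqref{dnabla0} (which is true, since Propositions $8.1$--$8.4$ solve that system exactly, but is not spelled out in the paper). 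Worth a remark, not a flaw.
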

\begin{proof}
	Let us first consider $\alpha\neq\beta$ and prove $\overset{n}{\underset{k=1}{\sum}}\,\Gamma^{i(\alpha)}_{j(\beta)k}\,u^k=\delta^i_1\delta_j^1\,m_\beta\epsilon_\beta$. We have
	\begin{align}
		\overset{n}{\underset{k=1}{\sum}}\,\Gamma^{i(\alpha)}_{j(\beta)k}\,u^k&=\overset{r}{\underset{\gamma=1}{\sum}}\,\overset{m_\gamma}{\underset{k=1}{\sum}}\,\Gamma^{i(\alpha)}_{j(\beta)k(\gamma)}\,u^{k(\gamma)}\overset{Prop.8.4}{=}\overset{m_\alpha}{\underset{k=1}{\sum}}\,\Gamma^{i(\alpha)}_{j(\beta)k(\alpha)}\,u^{k(\alpha)}+\overset{m_\beta}{\underset{k=1}{\sum}}\,\Gamma^{i(\alpha)}_{j(\beta)k(\beta)}\,u^{k(\beta)}
		\notag
	\end{align}
	which vanishes automatically when $j\geq2$. Let us then fix $j=1$, thus
	\begin{align}
		\overset{n}{\underset{k=1}{\sum}}\,\Gamma^{i(\alpha)}_{j(\beta)k}\,u^k&=\overset{n}{\underset{k=1}{\sum}}\,\Gamma^{i(\alpha)}_{1(\beta)k}\,u^k=\overset{m_\alpha}{\underset{k=1}{\sum}}\,\Gamma^{i(\alpha)}_{1(\beta)k(\alpha)}\,u^{k(\alpha)}+\overset{m_\beta}{\underset{k=1}{\sum}}\,\Gamma^{i(\alpha)}_{1(\beta)k(\beta)}\,u^{k(\beta)}\notag\\&\overset{Prop.8.4}{=}\overset{i}{\underset{k=1}{\sum}}\,\Gamma^{i(\alpha)}_{1(\beta)k(\alpha)}\,u^{k(\alpha)}-\overset{m_\beta}{\underset{k=1}{\sum}}\,\Gamma^{i(\alpha)}_{1(\alpha)k(\beta)}\,\delta^1_{k}\,u^{k(\beta)}\notag\\&=\overset{i}{\underset{k=1}{\sum}}\,\Gamma^{i(\alpha)}_{1(\beta)k(\alpha)}\,u^{k(\alpha)}-\Gamma^{i(\alpha)}_{1(\alpha)1(\beta)}\,u^{1(\beta)}\notag\\&=\overset{i}{\underset{k=2}{\sum}}\,\Gamma^{i(\alpha)}_{1(\beta)k(\alpha)}\,u^{k(\alpha)}+\Gamma^{i(\alpha)}_{1(\beta)1(\alpha)}(u^{1(\alpha)}-u^{1(\beta)})\notag\\&\overset{Prop.8.4}{=}
		\begin{cases}
			\Gamma^{1(\alpha)}_{1(\beta)1(\alpha)}(u^{1(\alpha)}-u^{1(\beta)})\qquad&\text{if }i=1\\
			\overset{i}{\underset{k=2}{\sum}}\,\Gamma^{(i-k+1)(\alpha)}_{1(\beta)1(\alpha)}\,u^{k(\alpha)}-\overset{i}{\underset{s=2}{\sum}}\,\Gamma^{(i-s+1)(\alpha)}_{1(\beta)1(\alpha)}\,u^{s(\alpha)}\qquad&\text{if }i\geq2
		\end{cases}
		\notag\\&=
		\begin{cases}
			m_\beta\epsilon_\beta\qquad&\text{if }i=1\\
			0\qquad&\text{if }i\geq2
		\end{cases}
		=m_\beta\epsilon_\beta\,\delta^i_1.
		\notag
	\end{align}
	
	In order to complete the proof we must show that
	\begin{align}
		\overset{n}{\underset{k=1}{\sum}}\,\Gamma^{i(\alpha)}_{j(\alpha)k}\,u^k=\begin{cases}
			0\qquad&\text{if }i\neq j\\
			-\underset{\sigma\neq\alpha}{\sum}\,m_\sigma\epsilon_\sigma\qquad&\text{if }i=j=1\\
			-\overset{r}{\underset{\tau=1}{\sum}}\,m_\tau\epsilon_\tau\qquad&\text{if }i=j\neq1.
		\end{cases}
		\notag
	\end{align}
	Let us first consider the case where $i\neq j$. Without loss of generality we assume $i>j$, as $\Gamma^{i(\alpha)}_{j(\alpha)k}=0$ trivially whenever $i<j$ by Proposition 8.4. We have
	\begin{align}
		\overset{n}{\underset{k=1}{\sum}}\,\Gamma^{i(\alpha)}_{j(\alpha)k}\,u^k&=\overset{m_\alpha}{\underset{k=1}{\sum}}\,\Gamma^{i(\alpha)}_{j(\alpha)k(\alpha)}\,u^{k(\alpha)}+\underset{\sigma\neq\alpha}{\sum}\,\overset{m_\sigma}{\underset{k=1}{\sum}}\,\Gamma^{i(\alpha)}_{j(\alpha)k(\sigma)}\,u^{k(\sigma)}\delta^1_k\notag\\&=\Gamma^{i(\alpha)}_{j(\alpha)1(\alpha)}\,u^{1(\alpha)}+\overset{m_\alpha}{\underset{k=2}{\sum}}\,\Gamma^{i(\alpha)}_{j(\alpha)k(\alpha)}\,u^{k(\alpha)}+\underset{\sigma\neq\alpha}{\sum}\,\Gamma^{i(\alpha)}_{j(\alpha)1(\sigma)}\,u^{1(\sigma)}\notag\\&\overset{Prop.8.1}{=}-\underset{\sigma\neq\alpha}{\sum}\,\Gamma^{i(\alpha)}_{j(\alpha)1(\sigma)}\,u^{1(\alpha)}+\overset{m_\alpha}{\underset{k=2}{\sum}}\,\Gamma^{i(\alpha)}_{j(\alpha)k(\alpha)}\,u^{k(\alpha)}+\underset{\sigma\neq\alpha}{\sum}\,\Gamma^{i(\alpha)}_{j(\alpha)1(\sigma)}\,u^{1(\sigma)}\notag\\&\overset{Prop.8.4}{=}-\underset{\sigma\neq\alpha}{\sum}\,\Gamma^{i(\alpha)}_{j(\alpha)1(\sigma)}\,(u^{1(\alpha)}-u^{1(\sigma)})+\overset{i-j+2}{\underset{k=2}{\sum}}\,\Gamma^{(i-j-k+4)(\alpha)}_{2(\alpha)2(\alpha)}\,u^{k(\alpha)}
		\notag
	\end{align}
	where
	\begin{align}
		\overset{i-j+2}{\underset{k=2}{\sum}}\,\Gamma^{i(\alpha)}_{j(\alpha)k(\alpha)}\,u^{k(\alpha)}&=\Gamma^{(i-j+2)(\alpha)}_{2(\alpha)2(\alpha)}\,u^{2(\alpha)}+\overset{i-j+2}{\underset{k=3}{\sum}}\,\Gamma^{(i-j-k+4)(\alpha)}_{2(\alpha)2(\alpha)}\,u^{k(\alpha)}\notag\\&\overset{Prop.8.4}{=}\bigg(\Gamma^{(i-j)(\alpha)}_{1(\alpha)1(\alpha)}-\Gamma^{2(\alpha)}_{2(\alpha)2(\alpha)}\frac{u^{(i-j+2)(\alpha)}}{u^{2(\alpha)}}\notag\\&-\frac{1}{u^{2(\alpha)}}\overset{i-j-1}{\underset{l=1}{\sum}}\big(\Gamma^{(l+2)(\alpha)}_{2(\alpha)2(\alpha)}-\Gamma^{l(\alpha)}_{1(\alpha)1(\alpha)}\big)u^{(i-j+2-l)(\alpha)}\bigg)\,u^{2(\alpha)}\notag\\&+\overset{i-j+2}{\underset{k=3}{\sum}}\,\Gamma^{(i-j-k+4)(\alpha)}_{2(\alpha)2(\alpha)}\,u^{k(\alpha)}\notag\\&=\Gamma^{(i-j)(\alpha)}_{1(\alpha)1(\alpha)}\,u^{2(\alpha)}-\Gamma^{2(\alpha)}_{2(\alpha)2(\alpha)}\,u^{(i-j+2)(\alpha)}\notag\\&-\overset{i-j-1}{\underset{l=1}{\sum}}\Gamma^{(l+2)(\alpha)}_{2(\alpha)2(\alpha)}\,u^{(i-j+2-l)(\alpha)}+\overset{i-j-1}{\underset{l=1}{\sum}}\Gamma^{l(\alpha)}_{1(\alpha)1(\alpha)}\,u^{(i-j+2-l)(\alpha)}\notag\\&+\overset{i-j+2}{\underset{k=3}{\sum}}\,\Gamma^{(i-j-k+4)(\alpha)}_{2(\alpha)2(\alpha)}\,u^{k(\alpha)}
		\notag\\&=\Gamma^{(i-j)(\alpha)}_{1(\alpha)1(\alpha)}\,u^{2(\alpha)}-\Gamma^{2(\alpha)}_{2(\alpha)2(\alpha)}\,u^{(i-j+2)(\alpha)}\notag\\&-\overset{i-j+1}{\underset{k=3}{\sum}}\Gamma^{(i-j-k+4)(\alpha)}_{2(\alpha)2(\alpha)}\,u^{k(\alpha)}+\overset{i-j+1}{\underset{k=3}{\sum}}\Gamma^{(i-j-k+2)(\alpha)}_{1(\alpha)1(\alpha)}\,u^{k(\alpha)}\notag\\&+\overset{i-j+2}{\underset{k=3}{\sum}}\,\Gamma^{(i-j-k+4)(\alpha)}_{2(\alpha)2(\alpha)}\,u^{k(\alpha)}
		\notag\\&=-\cancel{\Gamma^{2(\alpha)}_{2(\alpha)2(\alpha)}\,u^{(i-j+2)(\alpha)}}+\overset{i-j+1}{\underset{k=2}{\sum}}\Gamma^{(i-j-k+2)(\alpha)}_{1(\alpha)1(\alpha)}\,u^{k(\alpha)}\notag\\&+\cancel{\Gamma^{2(\alpha)}_{2(\alpha)2(\alpha)}\,u^{(i-j+2)(\alpha)}}=\overset{i-j+1}{\underset{k=2}{\sum}}\Gamma^{(i-j-k+2)(\alpha)}_{1(\alpha)1(\alpha)}\,u^{k(\alpha)}.
		\notag
	\end{align}
	It follows that
	\begin{align}
		\overset{n}{\underset{k=1}{\sum}}\,\Gamma^{i(\alpha)}_{j(\alpha)k}\,u^k&=-\underset{\sigma\neq\alpha}{\sum}\,\Gamma^{i(\alpha)}_{j(\alpha)1(\sigma)}\,(u^{1(\alpha)}-u^{1(\sigma)})+\overset{i-j+1}{\underset{k=2}{\sum}}\Gamma^{(i-j-k+2)(\alpha)}_{1(\alpha)1(\alpha)}\,u^{k(\alpha)}\notag\\&=\underset{\sigma\neq\alpha}{\sum}\,\overset{i-j+1}{\underset{s=2}{\sum}}\,\Gamma^{(i-j-s+2)(\alpha)}_{1(\alpha)1(\sigma)}\,u^{s(\alpha)}-\underset{\sigma\neq\alpha}{\sum}\,\overset{i-j+1}{\underset{k=2}{\sum}}\Gamma^{(i-j-k+2)(\alpha)}_{1(\alpha)1(\sigma)}\,u^{k(\alpha)}=0.
		\notag
	\end{align}
	Let us now consider the case where $i=j$. We have
	\begin{align}
		\overset{n}{\underset{k=1}{\sum}}\,\Gamma^{i(\alpha)}_{i(\alpha)k}\,u^k&=\overset{m_\alpha}{\underset{k=1}{\sum}}\,\Gamma^{i(\alpha)}_{i(\alpha)k(\alpha)}\,u^{k(\alpha)}+\underset{\sigma\neq\alpha}{\sum}\,\overset{m_\sigma}{\underset{k=1}{\sum}}\,\Gamma^{i(\alpha)}_{i(\alpha)k(\sigma)}\,u^{k(\sigma)}\delta^1_k\notag\\&=\Gamma^{i(\alpha)}_{i(\alpha)1(\alpha)}\,u^{1(\alpha)}+\overset{m_\alpha}{\underset{k=2}{\sum}}\,\Gamma^{i(\alpha)}_{i(\alpha)k(\alpha)}\,u^{k(\alpha)}+\underset{\sigma\neq\alpha}{\sum}\,\Gamma^{i(\alpha)}_{i(\alpha)1(\sigma)}\,u^{1(\sigma)}\notag\\&=-\underset{\sigma\neq\alpha}{\sum}\,\Gamma^{i(\alpha)}_{i(\alpha)1(\sigma)}(u^{1(\alpha)}-u^{1(\sigma)})+\overset{m_\alpha}{\underset{k=2}{\sum}}\,\Gamma^{i(\alpha)}_{i(\alpha)k(\alpha)}\,u^{k(\alpha)}\notag\\&=-\underset{\sigma\neq\alpha}{\sum}\,\Gamma^{1(\alpha)}_{1(\alpha)1(\sigma)}(u^{1(\alpha)}-u^{1(\sigma)})+\overset{m_\alpha}{\underset{k=2}{\sum}}\,\Gamma^{i(\alpha)}_{i(\alpha)k(\alpha)}\,u^{k(\alpha)}.
		\notag
	\end{align}
	If $i=1$ then
	\begin{align}
		\overset{n}{\underset{k=1}{\sum}}\,\Gamma^{1(\alpha)}_{1(\alpha)k}\,u^k&=-\underset{\sigma\neq\alpha}{\sum}\,\Gamma^{1(\alpha)}_{1(\alpha)1(\sigma)}(u^{1(\alpha)}-u^{1(\sigma)})+\overset{m_\alpha}{\underset{k=2}{\sum}}\,\Gamma^{1(\alpha)}_{1(\alpha)k(\alpha)}\,u^{k(\alpha)}
		\notag
	\end{align}
	where
	\begin{align}
		\Gamma^{1(\alpha)}_{1(\alpha)k(\alpha)}&=-\underset{\sigma\neq\alpha}{\sum}\,\Gamma^{1(\alpha)}_{1(\sigma)k(\alpha)}\overset{Prop.8.2}{=}0
		\notag
	\end{align}
	for every $k\geq2$. It follows that
	\begin{align}
		\overset{n}{\underset{k=1}{\sum}}\,\Gamma^{1(\alpha)}_{1(\alpha)k}\,u^k&=-\underset{\sigma\neq\alpha}{\sum}\,\Gamma^{1(\alpha)}_{1(\alpha)1(\sigma)}(u^{1(\alpha)}-u^{1(\sigma)})=-\underset{\sigma\neq\alpha}{\sum}\,m_\sigma\epsilon_\sigma.
		\notag
	\end{align}
	If $i\neq1$ then
	\begin{align}
		\overset{n}{\underset{k=1}{\sum}}\,\Gamma^{i(\alpha)}_{i(\alpha)k}\,u^k&=-\underset{\sigma\neq\alpha}{\sum}\,\Gamma^{1(\alpha)}_{1(\alpha)1(\sigma)}(u^{1(\alpha)}-u^{1(\sigma)})+\overset{m_\alpha}{\underset{k=2}{\sum}}\,\Gamma^{(4-k)(\alpha)}_{2(\alpha)2(\alpha)}\,\delta_k^2\,u^{k(\alpha)}\notag\\&=-\underset{\sigma\neq\alpha}{\sum}\,m_\sigma\epsilon_\sigma+\Gamma^{2(\alpha)}_{2(\alpha)2(\alpha)}\,u^{2(\alpha)}\notag\\&=-\underset{\sigma\neq\alpha}{\sum}\,m_\sigma\epsilon_\sigma-m_\alpha\epsilon_\alpha=-\overset{r}{\underset{\tau=1}{\sum}}\,m_\tau\epsilon_\tau.
		\notag
	\end{align}
\end{proof}
\begin{lemma}
	For each $\alpha$, $\beta\in\{1,\dots,r\}$, $i\in\{1,\dots,m_\alpha\}$, $j\in\{1,\dots,m_\beta\}$ we have
	\begin{align}
		\nabla_{j(\beta)}E^{i(\alpha)}=
		\begin{cases}
		0\qquad&\text{if }i\neq j\\
		\delta^\alpha_\beta\bigg(1-\underset{\sigma\neq\alpha}{\sum}\,m_\sigma\epsilon_\sigma\bigg)+(1-\delta^\alpha_\beta)\,m_\beta\epsilon_\beta\qquad&\text{if }i=j=1\\
		\delta^\alpha_\beta\bigg(1-\overset{r}{\underset{\tau=1}{\sum}}\,m_\tau\epsilon_\tau\bigg)\qquad&\text{if }i=j\neq1.
		\end{cases}
		\notag
	\end{align}
\end{lemma}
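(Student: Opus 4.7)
\textit{Proof proposal.} The plan is to reduce this statement to a direct computation using Lemma $8.7$. In the David--Hertling canonical coordinates we have $E^k = u^k$, so expanding the covariant derivative yields
\begin{equation}
\nabla_{j(\beta)}E^{i(\alpha)} = \partial_{j(\beta)}\,u^{i(\alpha)} + \sum_{k=1}^n \Gamma^{i(\alpha)}_{j(\beta)k}\,u^k = \delta^\alpha_\beta\,\delta^i_j + \sum_{k=1}^n \Gamma^{i(\alpha)}_{j(\beta)k}\,u^k.
\notag
\end{equation}
The first term is immediate, and the second is exactly the sum evaluated in Lemma $8.7$. The proof then splits into the same three cases as that lemma.

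First I would handle the case $i\neq j$: the partial-derivative term vanishes (either $\alpha\ne\beta$, giving $\delta^\alpha_\beta=0$, or $\alpha=\beta$ with $i\ne j$, giving $\delta^i_j=0$), and the Christoffel-symbol sum vanishes by Lemma $8.7$. Next, for $i=j=1$, the partial-derivative term contributes $\delta^\alpha_\beta$, while the second summand contributes $-\delta^\alpha_\beta\sum_{\sigma\neq\alpha}m_\sigma\epsilon_\sigma + (1-\delta^\alpha_\beta)\,m_\beta\epsilon_\beta$; collecting gives the stated expression. Finally, for $i=j\neq 1$, the partial-derivative term contributes $\delta^\alpha_\beta$ and Lemma $8.7$ contributes $-\delta^\alpha_\beta\sum_{\tau=1}^r m_\tau\epsilon_\tau$, yielding $\delta^\alpha_\beta(1-\sum_\tau m_\tau\epsilon_\tau)$.

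There is no real obstacle here: all the difficulty has already been absorbed into Lemma $8.7$, which is where the structural information about the Christoffel symbols is used. The only thing to verify is that the three case distinctions of Lemma $8.7$ match up cleanly with the three cases in the target statement, which is purely bookkeeping. This lemma should be read as the geometric interpretation of Lemma $8.7$: it is the non-semisimple analogue of the formula $\nabla E = (1-\epsilon)I + \epsilon\, e\otimes e^{\flat}$ appearing in Lemma $7.3$, and it will be the key input for showing both $\nabla\nabla E=0$ (linearity of $E$) and the correct transformation to the dual connection $\nabla^*$ in the subsequent step of the proof of the main theorem.
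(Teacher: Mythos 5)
Your proposal is correct and follows exactly the paper's own argument: expand $\nabla_{j(\beta)}E^{i(\alpha)}=\delta^{i(\alpha)}_{j(\beta)}+\sum_{k}\Gamma^{i(\alpha)}_{j(\beta)k}u^k$ and invoke the preceding lemma on $\sum_{k}\Gamma^{i(\alpha)}_{j(\beta)k}u^k$, then match the three cases. The only (cosmetic) discrepancy is the label: the sum lemma you rely on is the one the paper cites as Lemma $8.6$, while the statement being proved is itself Lemma $8.7$.
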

\begin{proof}
	Let us recall that
	\begin{align}
		\nabla_{j(\beta)}E^{i(\alpha)}&=\partial_{j(\beta)}E^{i(\alpha)}+\overset{n}{\underset{k=1}{\sum}}\,\Gamma^{i(\alpha)}_{j(\beta)k}\,E^k=\delta^{i(\alpha)}_{j(\beta)}+\overset{n}{\underset{k=1}{\sum}}\,\Gamma^{i(\alpha)}_{j(\beta)k}\,u^k.
		\notag		
	\end{align}
	If $\alpha\neq\beta$ then
	\begin{align}
		\nabla_{j(\beta)}E^{i(\alpha)}&=\delta^i_1\delta_j^1\,m_\beta\epsilon_\beta
		\notag		
	\end{align}
	by Lemma 8.6. Let us now consider the case where $\alpha=\beta$. Here we have
	\begin{align}
		\nabla_{j(\alpha)}E^{i(\alpha)}&=\delta^i_j+\overset{n}{\underset{k=1}{\sum}}\,\Gamma^{i(\alpha)}_{j(\alpha)k}\,u^k
		\notag		
	\end{align}
	which vanishes if $i\neq j$ by Lemma 8.6. Let now consider $i=j$. By virtue of Lemma 8.6, we get
	\begin{align}
		\nabla_{i(\alpha)}E^{i(\alpha)}&=1+\overset{n}{\underset{k=1}{\sum}}\,\Gamma^{i(\alpha)}_{i(\alpha)k}\,u^k\notag\\&=
		\begin{cases}
			1-\underset{\sigma\neq\alpha}{\sum}\,m_\sigma\epsilon_\sigma\qquad&\text{if }i=1\\
			1-\overset{r}{\underset{\tau=1}{\sum}}\,m_\tau\epsilon_\tau\qquad&\text{if }i\neq1.
		\end{cases}
		\notag		
	\end{align}
\end{proof}

\begin{lemma}
	For each $\alpha\in\{1,\dots,r\}$ we have
	\begin{align}
		A^{l(\alpha)}&:=\Gamma^{2(\alpha)}_{2(\alpha)2(\alpha)}\bigg(\frac{u^{3(\alpha)}}{u^{2(\alpha)}}\,u^{l(\alpha)}-u^{(l+1)(\alpha)}\bigg)-\overset{l-1}{\underset{s=2}{\sum}}\,\big(\Gamma^{(s+2)(\alpha)}_{2(\alpha)2(\alpha)}-\Gamma^{s(\alpha)}_{1(\alpha)1(\alpha)}\big)\,u^{(l-s+1)(\alpha)}=0
		\label{atlas}
	\end{align}
	for every $l\in\{3,\dots,m_\alpha-1\}$.
\end{lemma}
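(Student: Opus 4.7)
My plan is to recognize $A^{l(\alpha)}$ as a simple linear combination of two instances of the recursion from Proposition~$8.4(7)$. To keep the bookkeeping clean, I set
\begin{equation*}
B^{2}:=\Gamma^{2(\alpha)}_{2(\alpha)2(\alpha)},\qquad B^{k}:=\Gamma^{k(\alpha)}_{2(\alpha)2(\alpha)}-\Gamma^{(k-2)(\alpha)}_{1(\alpha)1(\alpha)}\quad\text{for }k\geq 3.
\end{equation*}
Multiplying the recursion in Proposition~$8.4(7)$ by $u^{2(\alpha)}$ and changing the summation index via $k=l+2$ puts it in the uniform form
\begin{equation*}
R(n):\qquad \sum_{k=2}^{n}B^{k}\,u^{(n+2-k)(\alpha)}=0,\qquad n\geq 3,
\end{equation*}
where the $k=n$ endpoint arises from the left-hand side $u^{2(\alpha)}\Gamma^{n(\alpha)}_{2(\alpha)2(\alpha)}-u^{2(\alpha)}\Gamma^{(n-2)(\alpha)}_{1(\alpha)1(\alpha)}=u^{2(\alpha)}B^{n}$.

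Next, the same substitution $k=s+2$ applied to the summation in \eqref{atlas} rewrites the claim as
\begin{equation*}
A^{l(\alpha)}=B^{2}\,\frac{u^{3(\alpha)}}{u^{2(\alpha)}}\,u^{l(\alpha)}-B^{2}\,u^{(l+1)(\alpha)}-\sum_{k=4}^{l+1}B^{k}\,u^{(l+3-k)(\alpha)}.
\end{equation*}
The last two terms are precisely $R(l+1)$ with the $k=3$ contribution missing. Adding and subtracting $B^{3}u^{l(\alpha)}$ therefore produces
\begin{equation*}
A^{l(\alpha)}=\frac{u^{l(\alpha)}}{u^{2(\alpha)}}\bigl(B^{2}\,u^{3(\alpha)}+B^{3}\,u^{2(\alpha)}\bigr)-\sum_{k=2}^{l+1}B^{k}\,u^{(l+3-k)(\alpha)}.
\end{equation*}

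At this point I invoke $R(l+1)$, which eliminates the summation, and $R(3)$, which says exactly $B^{2}u^{3(\alpha)}+B^{3}u^{2(\alpha)}=0$ and so annihilates the remaining bracket. This gives $A^{l(\alpha)}=0$. I do not foresee a serious obstacle beyond a careful check of the index ranges: the hypothesis $l\in\{3,\dots,m_{\alpha}-1\}$ ensures that $l+1\in\{4,\dots,m_{\alpha}\}$, so both $R(l+1)$ and $R(3)$ sit inside the regime $n\geq 3$ where Proposition~$8.4(7)$ applies, and no Christoffel symbol of index exceeding the Jordan-block size is ever invoked.
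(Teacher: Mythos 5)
Your proof is correct. It is, in substance, the same computation as the paper's: both arguments rest on the recursion of Proposition~$8.4(7)$ applied at level $l+1$ (your $R(l+1)$) together with its $n=3$ instance $\Gamma^{3(\alpha)}_{2(\alpha)2(\alpha)}-\Gamma^{1(\alpha)}_{1(\alpha)1(\alpha)}=-\Gamma^{2(\alpha)}_{2(\alpha)2(\alpha)}u^{3(\alpha)}/u^{2(\alpha)}$ (your $R(3)$), and the cancellation of the two index-shifted sums is identical in both. The only real difference is packaging: the paper presents the argument as an induction over $l$, whereas you give a direct derivation. This is more than a cosmetic point in your favour, because in the paper's inductive step the hypothesis \eqref{LemmaA_ind} is stated but never actually invoked --- the displayed computation closes purely by substituting the Proposition~$8.4(7)$ recursion for $\Gamma^{(h+2)(\alpha)}_{2(\alpha)2(\alpha)}-\Gamma^{h(\alpha)}_{1(\alpha)1(\alpha)}$ and telescoping, exactly as you do. So your non-inductive formulation is the more faithful account of what the argument really uses, and your normalization $B^k$ with the uniform relation $\sum_{k=2}^{n}B^{k}u^{(n+2-k)(\alpha)}=0$ makes the two-line conclusion transparent. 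Your check of the index ranges ($l+1\leq m_\alpha$, both instances of $R$ in the regime $n\geq 3$) is also correct.
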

\begin{proof}
	We will proceed by induction over $l$. For $l=3$ we get
	\begin{align}
		A^{3(\alpha)}&=\Gamma^{2(\alpha)}_{2(\alpha)2(\alpha)}\bigg(\frac{u^{3(\alpha)}}{u^{2(\alpha)}}\,u^{3(\alpha)}-u^{4(\alpha)}\bigg)-\big(\Gamma^{4(\alpha)}_{2(\alpha)2(\alpha)}-\Gamma^{2(\alpha)}_{1(\alpha)1(\alpha)}\big)\,u^{2(\alpha)}
		\notag
	\end{align}
	where
	\begin{align}
		\Gamma^{4(\alpha)}_{2(\alpha)2(\alpha)}-\Gamma^{2(\alpha)}_{1(\alpha)1(\alpha)}&=-\Gamma^{2(\alpha)}_{2(\alpha)2(\alpha)}\,\frac{u^{4(\alpha)}}{u^{2(\alpha)}}-\frac{1}{u^{2(\alpha)}}\,\big(\Gamma^{3(\alpha)}_{2(\alpha)2(\alpha)}-\Gamma^{1(\alpha)}_{1(\alpha)1(\alpha)}\big)\,u^{3(\alpha)}\notag\\&=-\Gamma^{2(\alpha)}_{2(\alpha)2(\alpha)}\,\frac{u^{4(\alpha)}}{u^{2(\alpha)}}+\frac{u^{3(\alpha)}}{u^{2(\alpha)}}\,\Gamma^{2(\alpha)}_{2(\alpha)2(\alpha)}\frac{u^{3(\alpha)}}{u^{2(\alpha)}}.
		\notag
	\end{align}
	It follows that
	\begin{align}
		A^{3(\alpha)}&=\Gamma^{2(\alpha)}_{2(\alpha)2(\alpha)}\bigg(\cancel{\frac{u^{3(\alpha)}}{u^{2(\alpha)}}\,u^{3(\alpha)}}-\bcancel{u^{4(\alpha)}}\bigg)-\bigg(-\bcancel{\Gamma^{2(\alpha)}_{2(\alpha)2(\alpha)}\,u^{4(\alpha)}}\notag\\&+\cancel{\frac{u^{3(\alpha)}}{u^{2(\alpha)}}\,\Gamma^{2(\alpha)}_{2(\alpha)2(\alpha)}\,u^{3(\alpha)}}\bigg)=0.
		\notag
	\end{align}
	Given an integer $h\geq1$, $h\leq m_\alpha-2$, let us suppose that
	\begin{align}
		\label{LemmaA_ind}
		A^{l(\alpha)}=0\qquad\text{for all }3\leq l\leq h
	\end{align}
	and show $A^{(h+1)(\alpha)}=0$. We have
	\begin{align}
		A^{(h+1)(\alpha)}&=\Gamma^{2(\alpha)}_{2(\alpha)2(\alpha)}\bigg(\frac{u^{3(\alpha)}}{u^{2(\alpha)}}\,u^{(h+1)(\alpha)}-u^{(h+2)(\alpha)}\bigg)-\overset{h}{\underset{s=2}{\sum}}\,\big(\Gamma^{(s+2)(\alpha)}_{2(\alpha)2(\alpha)}-\Gamma^{s(\alpha)}_{1(\alpha)1(\alpha)}\big)\,u^{(h-s+2)(\alpha)}\notag\\&=\Gamma^{2(\alpha)}_{2(\alpha)2(\alpha)}\bigg(\frac{u^{3(\alpha)}}{u^{2(\alpha)}}\,u^{(h+1)(\alpha)}-u^{(h+2)(\alpha)}\bigg)-\overset{h-1}{\underset{s=2}{\sum}}\,\big(\Gamma^{(s+2)(\alpha)}_{2(\alpha)2(\alpha)}-\Gamma^{s(\alpha)}_{1(\alpha)1(\alpha)}\big)\,u^{(h-s+2)(\alpha)}\notag\\&-\big(\Gamma^{(h+2)(\alpha)}_{2(\alpha)2(\alpha)}-\Gamma^{h(\alpha)}_{1(\alpha)1(\alpha)}\big)\,u^{2(\alpha)}
		\notag
	\end{align}
	where
	\begin{align}
		\Gamma^{(h+2)(\alpha)}_{2(\alpha)2(\alpha)}-\Gamma^{h(\alpha)}_{1(\alpha)1(\alpha)}&=-\Gamma^{2(\alpha)}_{2(\alpha)2(\alpha)}\,\frac{u^{(h+2)(\alpha)}}{u^{2(\alpha)}}\notag\\&-\frac{1}{u^{2(\alpha)}}\,\overset{h-1}{\underset{l=1}{\sum}}\,\big(\Gamma^{(l+2)(\alpha)}_{2(\alpha)2(\alpha)}-\Gamma^{l(\alpha)}_{1(\alpha)1(\alpha)}\big)\,u^{(h-l+2)(\alpha)}.
		\notag
	\end{align}
	It follows that
	\begin{align}
		A^{(h+1)(\alpha)}&=\Gamma^{2(\alpha)}_{2(\alpha)2(\alpha)}\bigg(\frac{u^{3(\alpha)}}{u^{2(\alpha)}}\,u^{(h+1)(\alpha)}-\cancel{u^{(h+2)(\alpha)}}\bigg)-\overset{h-1}{\underset{s=2}{\sum}}\,\big(\Gamma^{(s+2)(\alpha)}_{2(\alpha)2(\alpha)}-\Gamma^{s(\alpha)}_{1(\alpha)1(\alpha)}\big)\,u^{(h-s+2)(\alpha)}\notag\\&+\cancel{\Gamma^{2(\alpha)}_{2(\alpha)2(\alpha)}\,u^{(h+2)(\alpha)}}+\overset{h-1}{\underset{l=1}{\sum}}\,\big(\Gamma^{(l+2)(\alpha)}_{2(\alpha)2(\alpha)}-\Gamma^{l(\alpha)}_{1(\alpha)1(\alpha)}\big)\,u^{(h-l+2)(\alpha)}\notag\\&=\Gamma^{2(\alpha)}_{2(\alpha)2(\alpha)}\frac{u^{3(\alpha)}}{u^{2(\alpha)}}\,u^{(h+1)(\alpha)}-\cancel{\overset{h-1}{\underset{s=2}{\sum}}\,\big(\Gamma^{(s+2)(\alpha)}_{2(\alpha)2(\alpha)}-\Gamma^{s(\alpha)}_{1(\alpha)1(\alpha)}\big)\,u^{(h-s+2)(\alpha)}}\notag\\&+\big(\Gamma^{3(\alpha)}_{2(\alpha)2(\alpha)}-\Gamma^{1(\alpha)}_{1(\alpha)1(\alpha)}\big)\,u^{(h+1)(\alpha)}+\cancel{\overset{h-1}{\underset{l=2}{\sum}}\,\big(\Gamma^{(l+2)(\alpha)}_{2(\alpha)2(\alpha)}-\Gamma^{l(\alpha)}_{1(\alpha)1(\alpha)}\big)\,u^{(h-l+2)(\alpha)}}\notag\\&=\Gamma^{2(\alpha)}_{2(\alpha)2(\alpha)}\frac{u^{3(\alpha)}}{u^{2(\alpha)}}\,u^{(h+1)(\alpha)}-\Gamma^{2(\alpha)}_{2(\alpha)2(\alpha)}\frac{u^{3(\alpha)}}{u^{2(\alpha)}}\,u^{(h+1)(\alpha)}=0.
		\notag
	\end{align}
\end{proof}
\begin{lemma}
	For each $\alpha,\sigma\in\{1,\dots,r\}$ with $\alpha\neq\sigma$ we have
	\begin{align}
		\partial_{1(\sigma)}\big(\Gamma^{(l+2)(\alpha)}_{2(\alpha)2(\alpha)}-\Gamma^{l(\alpha)}_{1(\alpha)1(\alpha)}\big)&=0
		\label{ascendente}
	\end{align}
	for every $l\in\{1,\dots,m_\alpha-2\}$.
\end{lemma}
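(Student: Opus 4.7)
The plan is to use induction on $l$ together with the closed recursive formula provided by Proposition 8.3, item 7. Setting
\[
D_l \;:=\; \Gamma^{(l+2)(\alpha)}_{2(\alpha)2(\alpha)}-\Gamma^{l(\alpha)}_{1(\alpha)1(\alpha)},
\]
Proposition 8.3 (7) with $n=l+2$ can be rewritten by moving $\Gamma^{l(\alpha)}_{1(\alpha)1(\alpha)}$ to the left-hand side and relabelling the dummy index in the sum, giving
\[
D_l \;=\; -\,\Gamma^{2(\alpha)}_{2(\alpha)2(\alpha)}\,\frac{u^{(l+2)(\alpha)}}{u^{2(\alpha)}}\;-\;\frac{1}{u^{2(\alpha)}}\sum_{s=1}^{l-1} D_s\, u^{(l-s+2)(\alpha)}.
\]
So every $D_l$ is expressed, by a finite recursion, in terms of $\Gamma^{2(\alpha)}_{2(\alpha)2(\alpha)}=-m_\alpha\epsilon_\alpha/u^{2(\alpha)}$ and the coordinates $u^{2(\alpha)},u^{3(\alpha)},\dots,u^{(l+2)(\alpha)}$ attached to the Jordan block of $\alpha$.

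The base case $l=1$ gives $D_1=-\Gamma^{2(\alpha)}_{2(\alpha)2(\alpha)}\,u^{3(\alpha)}/u^{2(\alpha)}=m_\alpha\epsilon_\alpha\,u^{3(\alpha)}/(u^{2(\alpha)})^2$, which manifestly does not depend on $u^{1(\sigma)}$ for any $\sigma\ne\alpha$. For the inductive step, assume $\partial_{1(\sigma)}D_s=0$ for all $s\le l-1$. Since $u^{k(\alpha)}$ and $\Gamma^{2(\alpha)}_{2(\alpha)2(\alpha)}$ depend only on coordinates indexed by the $\alpha$-th Jordan block, $\partial_{1(\sigma)}$ annihilates each factor appearing in the recursion for $D_l$; hence $\partial_{1(\sigma)}D_l=0$, which is exactly \eqref{ascendente}.

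There is essentially no obstacle: the recursion already isolates the $\alpha$-dependence of the quantities of interest, so the only ingredient is the observation that the coordinate $u^{1(\sigma)}$ with $\sigma\ne\alpha$ is independent of all the building blocks entering the recursion. The only thing worth double-checking while writing the proof is that the summation range $s\in\{1,\dots,l-1\}$ is correctly empty for $l=1$ (providing the clean base case) and that $l\le m_\alpha-2$ ensures the upper index $l+2\le m_\alpha$ remains admissible, so that the formula of Proposition 8.3 (7) is applicable throughout the induction.
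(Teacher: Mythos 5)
Your proof is correct and follows essentially the same route as the paper's: both arguments induct on $l$, rewrite Proposition 8.3(7) with $n=l+2$ as a recursion for the difference $\Gamma^{(l+2)(\alpha)}_{2(\alpha)2(\alpha)}-\Gamma^{l(\alpha)}_{1(\alpha)1(\alpha)}$, and observe that every ingredient ($\Gamma^{2(\alpha)}_{2(\alpha)2(\alpha)}$, the coordinates $u^{k(\alpha)}$, and the lower-order differences via the inductive hypothesis) is independent of $u^{1(\sigma)}$ for $\sigma\neq\alpha$. No gaps.
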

\begin{proof}
	We will proceed by induction over $l$. For $l=1$ we get
	\begin{align}
		\partial_{1(\sigma)}\big(\Gamma^{3(\alpha)}_{2(\alpha)2(\alpha)}-\Gamma^{1(\alpha)}_{1(\alpha)1(\alpha)}\big)&=\partial_{1(\sigma)}\bigg(-\Gamma^{2(\alpha)}_{2(\alpha)2(\alpha)}\,\frac{u^{3(\alpha)}}{u^{2(\alpha)}}\bigg)=0
		\notag
	\end{align}
	as $\Gamma^{2(\alpha)}_{2(\alpha)2(\alpha)}$ does not depend on $u^{1(\sigma)}$ for any choice of $\sigma$. Given an integer $h\geq2$, $h\leq m_\alpha-2$, let us suppose
	\begin{align}
		\label{LemmaAsc_ind}
		\partial_{1(\sigma)}\big(\Gamma^{(l+2)(\alpha)}_{2(\alpha)2(\alpha)}-\Gamma^{l(\alpha)}_{1(\alpha)1(\alpha)}\big)&=0\qquad\text{for all }l\leq h-1
	\end{align}
	and show $\partial_{1(\sigma)}\big(\Gamma^{(h+2)(\alpha)}_{2(\alpha)2(\alpha)}-\Gamma^{h(\alpha)}_{1(\alpha)1(\alpha)}\big)=0$. We have
	\begin{align}
		\Gamma^{(h+2)(\alpha)}_{2(\alpha)2(\alpha)}-\Gamma^{h(\alpha)}_{1(\alpha)1(\alpha)}&=-\Gamma^{2(\alpha)}_{2(\alpha)2(\alpha)}\,\frac{u^{(h+2)(\alpha)}}{u^{2(\alpha)}}\notag\\&-\frac{1}{u^{2(\alpha)}}\,\overset{h-1}{\underset{l=1}{\sum}}\,\big(\Gamma^{(l+2)(\alpha)}_{2(\alpha)2(\alpha)}-\Gamma^{l(\alpha)}_{1(\alpha)1(\alpha)}\big)\,u^{(h-l+2)(\alpha)}
		\notag
	\end{align}
	where neither $\Gamma^{2(\alpha)}_{2(\alpha)2(\alpha)}\,\frac{u^{(h+2)(\alpha)}}{u^{2(\alpha)}}$ nor
	\begin{align}
		\big(\Gamma^{(l+2)(\alpha)}_{2(\alpha)2(\alpha)}-\Gamma^{l(\alpha)}_{1(\alpha)1(\alpha)}\big)\,u^{(h-l+2)(\alpha)}\qquad\text{for }1\leq l\leq h-1
		\notag
	\end{align}
	(by means of \eqref{LemmaAsc_ind} and of the requirement $\sigma\neq\alpha$) depend on $u^{1(\sigma)}$.
\end{proof}
\begin{lemma}
	Given $\alpha,\beta,\epsilon\in\{1,\dots,r\}$ with $\alpha\neq\beta\neq\epsilon\neq\alpha$ we have
	\begin{align}
		B^{s(\alpha)}_{\beta\epsilon}&:=-\overset{s+1}{\underset{t=1}{\sum}}\,\Gamma^{(s-t+2)(\alpha)}_{1(\epsilon)1(\alpha)}\,\Gamma^{t(\alpha)}_{1(\beta)1(\alpha)}+\Gamma^{(s+1)(\alpha)}_{1(\beta)1(\alpha)}\,\Gamma^{1(\beta)}_{1(\epsilon)1(\beta)}+\Gamma^{(s+1)(\alpha)}_{1(\epsilon)1(\alpha)}\,\Gamma^{1(\epsilon)}_{1(\beta)1(\epsilon)}=0
		\label{Blemma}
	\end{align}
	for every $s\in\{1,\dots,m_\alpha-1\}$.
\end{lemma}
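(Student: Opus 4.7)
My plan is to reduce the combinatorial identity \eqref{Blemma} to a single formal power series identity that drops out of the recursion in Proposition~8.4 together with partial fractions. Introduce the shorthand $A^\gamma_k:=\Gamma^{k(\alpha)}_{1(\gamma)1(\alpha)}$ and $X_\gamma:=u^{1(\alpha)}-u^{1(\gamma)}$ for each $\gamma\neq\alpha$. Proposition~8.4 then states $X_\gamma A^\gamma_1=m_\gamma\epsilon_\gamma$ together with the recursion
\[
X_\gamma A^\gamma_k = -\sum_{j=1}^{k-1} u^{(k-j+1)(\alpha)}\,A^\gamma_j \qquad (2\le k\le m_\alpha).
\]
Writing $Z:=u^{1(\beta)}-u^{1(\epsilon)}$, one has $\Gamma^{1(\beta)}_{1(\epsilon)1(\beta)}=m_\epsilon\epsilon_\epsilon/Z$ and $\Gamma^{1(\epsilon)}_{1(\beta)1(\epsilon)}=-m_\beta\epsilon_\beta/Z$, so the claim $B^{s(\alpha)}_{\beta\epsilon}=0$ is equivalent to the convolution identity
\[
\sum_{t=1}^{s+1} A^\epsilon_{s-t+2}\,A^\beta_t \;=\; \frac{m_\epsilon\epsilon_\epsilon\,A^\beta_{s+1}-m_\beta\epsilon_\beta\,A^\epsilon_{s+1}}{Z}.
\]

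Next I encode the recursions into generating functions. Define
\[
F_\gamma(z):=\sum_{k=1}^{m_\alpha} A^\gamma_k\,z^{k-1},\qquad P_\gamma(z):=X_\gamma+\sum_{k=2}^{m_\alpha} u^{k(\alpha)}\,z^{k-1}.
\]
Matching coefficients shows that the recursion and the boundary value above are jointly equivalent to $P_\gamma(z)\,F_\gamma(z)\equiv m_\gamma\epsilon_\gamma\pmod{z^{m_\alpha}}$. The key structural observation is that $P_\gamma$ depends on $\gamma$ only through its constant term $X_\gamma$, so $P_\epsilon(z)-P_\beta(z)=Z$ is an honest nonzero scalar (regularity forces $u^{1(\beta)}\neq u^{1(\epsilon)}$), and each $P_\gamma$ is a unit in $\mathbb{C}[[z]]$ because $P_\gamma(0)=X_\gamma\neq 0$. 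Hence in $\mathbb{C}[[z]]/(z^{m_\alpha})$ we may invert and compute
\[
F_\beta F_\epsilon \equiv \frac{m_\beta\epsilon_\beta\,m_\epsilon\epsilon_\epsilon}{P_\beta\,P_\epsilon}=\frac{m_\beta\epsilon_\beta\,m_\epsilon\epsilon_\epsilon}{Z}\!\left(\frac{1}{P_\beta}-\frac{1}{P_\epsilon}\right)\equiv \frac{m_\epsilon\epsilon_\epsilon\,F_\beta-m_\beta\epsilon_\beta\,F_\epsilon}{Z}\pmod{z^{m_\alpha}},
\]
using elementary partial fractions. Extracting the coefficient of $z^s$ for any $s\in\{1,\dots,m_\alpha-1\}$ reproduces exactly the displayed convolution identity, hence $B^{s(\alpha)}_{\beta\epsilon}=0$.

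Everything is a finite polynomial manipulation, so no convergence issue arises and the bookkeeping of the convolution (the substitution $t=i+1$ identifying $[z^s](F_\beta F_\epsilon)$ with $\sum_{t=1}^{s+1} A^\epsilon_{s-t+2}A^\beta_t$) is immediate. The only mild obstacle is psychological: the statement looks like an intricate sum of triple products of Christoffel symbols indexed by three different Jordan blocks, but once one writes down the generating functions the role of the second and third block becomes completely symmetric and the vanishing is forced by a single partial-fraction decomposition. If one wanted to avoid generating functions, the same identity could be obtained by induction on $s$, substituting the recursion for $X_\beta A^\beta_{s+1}$ and $X_\epsilon A^\epsilon_{s+1}$ inside the convolution; this works but is noticeably heavier and, more importantly, hides the partial-fraction mechanism that actually drives the cancellation.
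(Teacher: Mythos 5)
Your proof is correct, and it takes a genuinely different route from the paper's. The paper proves the lemma by induction over $s$: the base case $s=1$ is an explicit computation combining the closed forms of $\Gamma^{1(\alpha)}_{1(\gamma)1(\alpha)}$ and $\Gamma^{2(\alpha)}_{1(\gamma)1(\alpha)}$ with the partial-fraction identity
$\frac{1}{(u^{1(\beta)}-u^{1(\epsilon)})(u^{1(\alpha)}-u^{1(\epsilon)})}-\frac{1}{(u^{1(\beta)}-u^{1(\epsilon)})(u^{1(\alpha)}-u^{1(\beta)})}$ type cancellations, and the inductive step substitutes the recursion for $\Gamma^{(h+1)(\alpha)}_{1(\beta)1(\alpha)}$ and $\Gamma^{(h+1)(\alpha)}_{1(\epsilon)1(\alpha)}$, reindexes a double sum, and invokes the inductive hypothesis. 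Your argument packages the same recursion into the single congruence $P_\gamma F_\gamma\equiv m_\gamma\epsilon_\gamma \pmod{z^{m_\alpha}}$ and then applies the partial-fraction identity $\frac{1}{P_\beta P_\epsilon}=\frac{1}{Z}\bigl(\frac{1}{P_\beta}-\frac{1}{P_\epsilon}\bigr)$ exactly once, at the level of generating functions, exploiting the observation that $P_\beta$ and $P_\epsilon$ differ only in their constant terms. I checked the translation steps: the recursion in Proposition 8.4(2) with $j=1$ does read $X_\gamma A^\gamma_k=-\sum_{j=1}^{k-1}u^{(k-j+1)(\alpha)}A^\gamma_j$; the coefficient extraction $[z^{k-1}](P_\gamma F_\gamma)=X_\gamma A^\gamma_k+\sum_{j=1}^{k-1}u^{(k-j+1)(\alpha)}A^\gamma_j$ matches; the values $\Gamma^{1(\beta)}_{1(\epsilon)1(\beta)}=m_\epsilon\epsilon_\epsilon/Z$ and $\Gamma^{1(\epsilon)}_{1(\beta)1(\epsilon)}=-m_\beta\epsilon_\beta/Z$ are right; and the convolution $[z^s](F_\beta F_\epsilon)=\sum_{t=1}^{s+1}A^\beta_t A^\epsilon_{s-t+2}$ agrees with the sum in the lemma for all $s\le m_\alpha-1$. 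The regularity assumptions of the paper guarantee $X_\gamma\ne0$ and $Z\ne0$, so the required inversions are legitimate over the coefficient ring of functions of $u$. What your approach buys is brevity and a transparent explanation of why the cancellation happens; what the paper's term-by-term induction buys is uniformity of style with the neighbouring Lemmas 8.8--8.11, which are all proved by the same kind of index-shuffling induction. Both proofs are driven by the same underlying partial-fraction mechanism, which you correctly identify as the essential point.
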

\begin{proof}
	We will proceed by induction over $s$. For $s=1$ we get
	\begin{align}
		B^{1(\alpha)}_{\beta\epsilon}&=-\overset{2}{\underset{t=1}{\sum}}\,\Gamma^{(3-t)(\alpha)}_{1(\epsilon)1(\alpha)}\,\Gamma^{t(\alpha)}_{1(\beta)1(\alpha)}+\Gamma^{2(\alpha)}_{1(\beta)1(\alpha)}\,\Gamma^{1(\beta)}_{1(\epsilon)1(\beta)}+\Gamma^{2(\alpha)}_{1(\epsilon)1(\alpha)}\,\Gamma^{1(\epsilon)}_{1(\beta)1(\epsilon)}\notag\\&=-\Gamma^{2(\alpha)}_{1(\epsilon)1(\alpha)}\,\Gamma^{1(\alpha)}_{1(\beta)1(\alpha)}-\Gamma^{1(\alpha)}_{1(\epsilon)1(\alpha)}\,\Gamma^{2(\alpha)}_{1(\beta)1(\alpha)}+\Gamma^{2(\alpha)}_{1(\beta)1(\alpha)}\,\Gamma^{1(\beta)}_{1(\epsilon)1(\beta)}+\Gamma^{2(\alpha)}_{1(\epsilon)1(\alpha)}\,\Gamma^{1(\epsilon)}_{1(\beta)1(\epsilon)}
		\notag\\&=\Gamma^{2(\alpha)}_{1(\epsilon)1(\alpha)}\big(\Gamma^{1(\epsilon)}_{1(\beta)1(\epsilon)}-\Gamma^{1(\alpha)}_{1(\beta)1(\alpha)}\big)+\Gamma^{2(\alpha)}_{1(\beta)1(\alpha)}\big(\Gamma^{1(\beta)}_{1(\epsilon)1(\beta)}-\Gamma^{1(\alpha)}_{1(\epsilon)1(\alpha)}\big)
		\notag\\&=\Gamma^{2(\alpha)}_{1(\epsilon)1(\alpha)}\,m_\beta\epsilon_\beta\bigg(\frac{1}{u^{1(\epsilon)}-u^{1(\beta)}}-\frac{1}{u^{1(\alpha)}-u^{1(\beta)}}\bigg)\notag\\&+\Gamma^{2(\alpha)}_{1(\beta)1(\alpha)}\,m_\epsilon\epsilon_\epsilon\bigg(\frac{1}{u^{1(\beta)}-u^{1(\epsilon)}}-\frac{1}{u^{1(\alpha)}-u^{1(\epsilon)}}\bigg)
		\notag\\&=-\frac{\Gamma^{1(\alpha)}_{1(\epsilon)1(\alpha)}}{\cancel{u^{1(\alpha)}-u^{1(\epsilon)}}}\,u^{2(\alpha)}\,m_\beta\epsilon_\beta\,\frac{-\cancel{(u^{1(\alpha)}-u^{1(\epsilon)})}}{(u^{1(\beta)}-u^{1(\epsilon)})(u^{1(\alpha)}-u^{1(\beta)})}\notag\\&-\frac{\Gamma^{1(\alpha)}_{1(\beta)1(\alpha)}}{\bcancel{u^{1(\alpha)}-u^{1(\beta)}}}\,u^{2(\alpha)}\,m_\epsilon\epsilon_\epsilon\,\frac{\bcancel{u^{1(\alpha)}-u^{1(\beta)}}}{(u^{1(\beta)}-u^{1(\epsilon)})(u^{1(\alpha)}-u^{1(\epsilon)})}
		\notag\\&=\frac{m_\epsilon\epsilon_\epsilon}{u^{1(\alpha)}-u^{1(\epsilon)}}\,u^{2(\alpha)}\,\frac{m_\beta\epsilon_\beta}{(u^{1(\beta)}-u^{1(\epsilon)})(u^{1(\alpha)}-u^{1(\beta)})}\notag\\&-\frac{m_\beta\epsilon_\beta}{u^{1(\alpha)}-u^{1(\beta)}}\,u^{2(\alpha)}\,\frac{m_\epsilon\epsilon_\epsilon}{(u^{1(\beta)}-u^{1(\epsilon)})(u^{1(\alpha)}-u^{1(\epsilon)})}=0.
		\notag
	\end{align}
	Given an integer $h\geq2$, $h\leq m_\alpha-1$, let us suppose that
	\begin{align}
		\label{LemmaF_ind}
		B^{s(\alpha)}_{\beta\epsilon}=0\qquad\text{for all }s\leq h-1
	\end{align}
	an show $B^{h(\alpha)}_{\beta\epsilon}=0$. We have
	\begin{align}
		B^{h(\alpha)}_{\beta\epsilon}&=-\overset{h+1}{\underset{t=1}{\sum}}\,\Gamma^{(h-t+2)(\alpha)}_{1(\epsilon)1(\alpha)}\,\Gamma^{t(\alpha)}_{1(\beta)1(\alpha)}+\Gamma^{(h+1)(\alpha)}_{1(\beta)1(\alpha)}\,\Gamma^{1(\beta)}_{1(\epsilon)1(\beta)}+\Gamma^{(h+1)(\alpha)}_{1(\epsilon)1(\alpha)}\,\Gamma^{1(\epsilon)}_{1(\beta)1(\epsilon)}
		\notag\\&=\Gamma^{(h+1)(\alpha)}_{1(\beta)1(\alpha)}\,\big(\Gamma^{1(\beta)}_{1(\epsilon)1(\beta)}-\Gamma^{1(\alpha)}_{1(\epsilon)1(\alpha)}\big)+\Gamma^{(h+1)(\alpha)}_{1(\epsilon)1(\alpha)}\,\big(\Gamma^{1(\epsilon)}_{1(\beta)1(\epsilon)}-\Gamma^{1(\alpha)}_{1(\beta)1(\alpha)}\big)\notag\\&-\overset{h}{\underset{t=2}{\sum}}\,\Gamma^{(h-t+2)(\alpha)}_{1(\epsilon)1(\alpha)}\,\Gamma^{t(\alpha)}_{1(\beta)1(\alpha)}
		\notag\\&=-\frac{1}{u^{1(\alpha)}-u^{1(\beta)}}\,\overset{h+1}{\underset{s=2}{\sum}}\,\Gamma^{(h-s+2)(\alpha)}_{1(\beta)1(\alpha)}\,u^{s(\alpha)}\,\big(\Gamma^{1(\beta)}_{1(\epsilon)1(\beta)}-\Gamma^{1(\alpha)}_{1(\epsilon)1(\alpha)}\big)\notag\\&-\frac{1}{u^{1(\alpha)}-u^{1(\epsilon)}}\,\overset{h+1}{\underset{s=2}{\sum}}\,\Gamma^{(h-s+2)(\alpha)}_{1(\epsilon)1(\alpha)}\,u^{s(\alpha)}\,\big(\Gamma^{1(\epsilon)}_{1(\beta)1(\epsilon)}-\Gamma^{1(\alpha)}_{1(\beta)1(\alpha)}\big)\notag\\&-\overset{h}{\underset{t=2}{\sum}}\,\Gamma^{(h-t+2)(\alpha)}_{1(\epsilon)1(\alpha)}\,\Gamma^{t(\alpha)}_{1(\beta)1(\alpha)}
		\notag
	\end{align}
	where in the first two summations only the terms corresponding to $s\leq h$ survive, as the sum of their two $(s=h+1)$-th terms is
	\begin{align}
		&-\frac{1}{u^{1(\alpha)}-u^{1(\beta)}}\,\Gamma^{1(\alpha)}_{1(\beta)1(\alpha)}\,u^{(h+1)(\alpha)}\,\big(\Gamma^{1(\beta)}_{1(\epsilon)1(\beta)}-\Gamma^{1(\alpha)}_{1(\epsilon)1(\alpha)}\big)\notag\\&-\frac{1}{u^{1(\alpha)}-u^{1(\epsilon)}}\,\Gamma^{1(\alpha)}_{1(\epsilon)1(\alpha)}\,u^{(h+1)(\alpha)}\,\big(\Gamma^{1(\epsilon)}_{1(\beta)1(\epsilon)}-\Gamma^{1(\alpha)}_{1(\beta)1(\alpha)}\big)
		\notag\\&=-\frac{m_\beta\epsilon_\beta\,u^{(h+1)(\alpha)}}{(u^{1(\alpha)}-u^{1(\beta)})^2}\,\big(\Gamma^{1(\beta)}_{1(\epsilon)1(\beta)}-\Gamma^{1(\alpha)}_{1(\epsilon)1(\alpha)}\big)\notag\\&-\frac{m_\epsilon\epsilon_\epsilon\,u^{(h+1)(\alpha)}}{(u^{1(\alpha)}-u^{1(\epsilon)})^2}\,\big(\Gamma^{1(\epsilon)}_{1(\beta)1(\epsilon)}-\Gamma^{1(\alpha)}_{1(\beta)1(\alpha)}\big)
		\notag\\&=-\frac{m_\beta\epsilon_\beta\,u^{(h+1)(\alpha)}}{(u^{1(\alpha)}-u^{1(\beta)})^2}\,m_\epsilon\epsilon_\epsilon\,\bigg(\frac{1}{u^{1(\beta)}-u^{1(\epsilon)}}-\frac{1}{u^{1(\alpha)}-u^{1(\epsilon)}}\bigg)\notag\\&-\frac{m_\epsilon\epsilon_\epsilon\,u^{(h+1)(\alpha)}}{(u^{1(\alpha)}-u^{1(\epsilon)})^2}\,m_\beta\epsilon_\beta\bigg(\frac{1}{u^{1(\epsilon)}-u^{1(\beta)}}-\frac{1}{u^{1(\alpha)}-u^{1(\beta)}}\bigg)
		\notag\\&=-\frac{m_\beta\epsilon_\beta\,u^{(h+1)(\alpha)}}{(u^{1(\alpha)}-u^{1(\beta)})^{\cancel{2}}}\,m_\epsilon\epsilon_\epsilon\,\frac{\cancel{u^{1(\alpha)}-u^{1(\beta)}}}{(u^{1(\beta)}-u^{1(\epsilon)})(u^{1(\alpha)}-u^{1(\epsilon)})}\notag\\&-\frac{m_\epsilon\epsilon_\epsilon\,u^{(h+1)(\alpha)}}{(u^{1(\alpha)}-u^{1(\epsilon)})^{\bcancel{2}}}\,m_\beta\epsilon_\beta\,\frac{-\bcancel{(u^{1(\alpha)}-u^{1(\epsilon)})}}{(u^{1(\beta)}-u^{1(\epsilon)})(u^{1(\alpha)}-u^{1(\beta)})}=0.
		\notag
	\end{align}
	It follows that
	\begin{align}
		B^{h(\alpha)}_{\beta\epsilon}&=-\frac{1}{u^{1(\alpha)}-u^{1(\beta)}}\,\overset{h}{\underset{t=2}{\sum}}\,\Gamma^{(h-t+2)(\alpha)}_{1(\beta)1(\alpha)}\,u^{t(\alpha)}\,\big(\Gamma^{1(\beta)}_{1(\epsilon)1(\beta)}-\Gamma^{1(\alpha)}_{1(\epsilon)1(\alpha)}\big)\notag\\&-\frac{1}{u^{1(\alpha)}-u^{1(\epsilon)}}\,\overset{h}{\underset{t=2}{\sum}}\,\Gamma^{(h-t+2)(\alpha)}_{1(\epsilon)1(\alpha)}\,u^{t(\alpha)}\,\big(\Gamma^{1(\epsilon)}_{1(\beta)1(\epsilon)}-\Gamma^{1(\alpha)}_{1(\beta)1(\alpha)}\big)\notag\\&-\overset{h}{\underset{t=2}{\sum}}\,\Gamma^{(h-t+2)(\alpha)}_{1(\epsilon)1(\alpha)}\,\Gamma^{t(\alpha)}_{1(\beta)1(\alpha)}
		\notag
	\end{align}
	where
	\begin{align}
		-\overset{h}{\underset{t=2}{\sum}}\,\Gamma^{(h-t+2)(\alpha)}_{1(\epsilon)1(\alpha)}\,\Gamma^{t(\alpha)}_{1(\beta)1(\alpha)}&=-\overset{h}{\underset{t=2}{\sum}}\,\bigg(-\frac{1}{u^{1(\alpha)}-u^{1(\epsilon)}}\,\overset{h-t+2}{\underset{l=2}{\sum}}\,\Gamma^{(h-t-l+3)(\alpha)}_{1(\epsilon)1(\alpha)}\,u^{l(\alpha)}\bigg)\,\Gamma^{t(\alpha)}_{1(\beta)1(\alpha)}
		\notag\\&=\frac{1}{u^{1(\alpha)}-u^{1(\epsilon)}}\,\overset{h}{\underset{t=2}{\sum}}\,\overset{h-t+2}{\underset{l=2}{\sum}}\,\Gamma^{(h-t-l+3)(\alpha)}_{1(\epsilon)1(\alpha)}\,u^{l(\alpha)}\,\Gamma^{t(\alpha)}_{1(\beta)1(\alpha)}
		\notag\\&\overset{s=h-l+1}{=}\frac{1}{u^{1(\alpha)}-u^{1(\epsilon)}}\,\overset{h}{\underset{t=2}{\sum}}\,\overset{h-1}{\underset{s=t-1}{\sum}}\,\Gamma^{(s-t+2)(\alpha)}_{1(\epsilon)1(\alpha)}\,u^{(h-s+1)(\alpha)}\,\Gamma^{t(\alpha)}_{1(\beta)1(\alpha)}
		\notag\\&=\frac{1}{u^{1(\alpha)}-u^{1(\epsilon)}}\,\overset{h-1}{\underset{s=1}{\sum}}\,\overset{s+1}{\underset{t=2}{\sum}}\,\Gamma^{(s-t+2)(\alpha)}_{1(\epsilon)1(\alpha)}\,u^{(h-s+1)(\alpha)}\,\Gamma^{t(\alpha)}_{1(\beta)1(\alpha)}
		\notag\\&=\frac{1}{u^{1(\alpha)}-u^{1(\epsilon)}}\,\overset{h-1}{\underset{s=1}{\sum}}\,\Bigg(\overset{s+1}{\underset{t=1}{\sum}}\,\Gamma^{(s-t+2)(\alpha)}_{1(\epsilon)1(\alpha)}\,\Gamma^{t(\alpha)}_{1(\beta)1(\alpha)}\notag\\&-\Gamma^{(s+1)(\alpha)}_{1(\epsilon)1(\alpha)}\,\Gamma^{1(\alpha)}_{1(\beta)1(\alpha)}\Bigg)\,u^{(h-s+1)(\alpha)}
		\notag\\&\overset{\eqref{LemmaF_ind}}{=}\frac{1}{u^{1(\alpha)}-u^{1(\epsilon)}}\,\overset{h-1}{\underset{s=1}{\sum}}\,\Bigg(\Gamma^{(s+1)(\alpha)}_{1(\beta)1(\alpha)}\,\Gamma^{1(\beta)}_{1(\epsilon)1(\beta)}+\Gamma^{(s+1)(\alpha)}_{1(\epsilon)1(\alpha)}\,\Gamma^{1(\epsilon)}_{1(\beta)1(\epsilon)}\notag\\&-\Gamma^{(s+1)(\alpha)}_{1(\epsilon)1(\alpha)}\,\Gamma^{1(\alpha)}_{1(\beta)1(\alpha)}\Bigg)\,u^{(h-s+1)(\alpha)}
		\notag\\&\overset{t=h-s+1}{=}\frac{1}{u^{1(\alpha)}-u^{1(\epsilon)}}\,\overset{h}{\underset{t=2}{\sum}}\,\Bigg(\Gamma^{(h-t+2)(\alpha)}_{1(\beta)1(\alpha)}\,\Gamma^{1(\beta)}_{1(\epsilon)1(\beta)}\notag\\&+\Gamma^{(h-t+2)(\alpha)}_{1(\epsilon)1(\alpha)}\,\Gamma^{1(\epsilon)}_{1(\beta)1(\epsilon)}-\Gamma^{(h-t+2)(\alpha)}_{1(\epsilon)1(\alpha)}\,\Gamma^{1(\alpha)}_{1(\beta)1(\alpha)}\Bigg)\,u^{t(\alpha)}.
		\notag
	\end{align}
	Thus
	\begin{align}
		B^{h(\alpha)}_{\beta\epsilon}&=\overset{h}{\underset{t=2}{\sum}}\,\Bigg(-\frac{1}{u^{1(\alpha)}-u^{1(\beta)}}\,\Gamma^{(h-t+2)(\alpha)}_{1(\beta)1(\alpha)}\,\big(\Gamma^{1(\beta)}_{1(\epsilon)1(\beta)}-\Gamma^{1(\alpha)}_{1(\epsilon)1(\alpha)}\big)\notag\\&-\frac{1}{u^{1(\alpha)}-u^{1(\epsilon)}}\,\Gamma^{(h-t+2)(\alpha)}_{1(\epsilon)1(\alpha)}\,\big(\cancel{\Gamma^{1(\epsilon)}_{1(\beta)1(\epsilon)}}-\bcancel{\Gamma^{1(\alpha)}_{1(\beta)1(\alpha)}}\big)\notag\\&+\frac{1}{u^{1(\alpha)}-u^{1(\epsilon)}}\,\Gamma^{(h-t+2)(\alpha)}_{1(\beta)1(\alpha)}\,\Gamma^{1(\beta)}_{1(\epsilon)1(\beta)}+\frac{1}{u^{1(\alpha)}-u^{1(\epsilon)}}\,\Gamma^{(h-t+2)(\alpha)}_{1(\epsilon)1(\alpha)}\,\cancel{\Gamma^{1(\epsilon)}_{1(\beta)1(\epsilon)}}\notag\\&-\frac{1}{u^{1(\alpha)}-u^{1(\epsilon)}}\,\Gamma^{(h-t+2)(\alpha)}_{1(\epsilon)1(\alpha)}\,\bcancel{\Gamma^{1(\alpha)}_{1(\beta)1(\alpha)}}\Bigg)\,u^{t(\alpha)}
		\notag\\&=\overset{h}{\underset{t=2}{\sum}}\,\Gamma^{(h-t+2)(\alpha)}_{1(\beta)1(\alpha)}\,\Bigg(-\frac{1}{u^{1(\alpha)}-u^{1(\beta)}}\,\big(\Gamma^{1(\beta)}_{1(\epsilon)1(\beta)}-\Gamma^{1(\alpha)}_{1(\epsilon)1(\alpha)}\big)\notag\\&+\frac{1}{u^{1(\alpha)}-u^{1(\epsilon)}}\,\Gamma^{1(\beta)}_{1(\epsilon)1(\beta)}\Bigg)\,u^{t(\alpha)}
		\notag\\&=\overset{h}{\underset{t=2}{\sum}}\,\Gamma^{(h-t+2)(\alpha)}_{1(\beta)1(\alpha)}\,\Bigg(-\frac{1}{\cancel{u^{1(\alpha)}-u^{1(\beta)}}}\,\frac{m_\epsilon\epsilon_\epsilon\,\cancel{(u^{1(\alpha)}-u^{1(\beta)})}}{(u^{1(\beta)}-u^{1(\epsilon)})(u^{1(\alpha)}-u^{1(\epsilon)})}\notag\\&+\frac{1}{u^{1(\alpha)}-u^{1(\epsilon)}}\,\frac{m_\epsilon\epsilon_\epsilon}{u^{1(\beta)}-u^{1(\epsilon)}}\Bigg)\,u^{t(\alpha)}=0.
		\notag
	\end{align}
\end{proof}
\begin{lemma}
	Given $\alpha,\beta\in\{1,\dots,r\}$ with $\alpha\neq\beta$ we have
	\begin{align}
		C^{s(\alpha)}_{\beta}&:=\overset{s+1}{\underset{l=2}{\sum}}\,\bigg(\Gamma^{(s-l+4)(\alpha)}_{2(\alpha)2(\alpha)}-\Gamma^{(s-l+2)(\alpha)}_{1(\alpha)1(\alpha)}\bigg)\,\Gamma^{l(\alpha)}_{1(\beta)1(\alpha)}+\Gamma^{2(\alpha)}_{2(\alpha)2(\alpha)}\,\Gamma^{(s+2)(\alpha)}_{1(\beta)1(\alpha)}+\Gamma^{(s+1)(\alpha)}_{1(\beta)1(\alpha)}\,\Gamma^{1(\beta)}_{1(\alpha)1(\beta)}=0
		\label{fulm}
	\end{align}
	for every $s\in\{0,\dots,m_\alpha-2\}$\footnote{The summation is intended to be non-zero when $s\geq1$.}.
\end{lemma}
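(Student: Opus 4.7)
The plan is to proceed by induction on $s$, in the same style as the proofs of Lemmas 8.9 and 8.10.

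\textbf{Base case} ($s=0$). The summation is empty, so $C^{0(\alpha)}_{\beta}$ reduces to the two-term expression
$\Gamma^{2(\alpha)}_{2(\alpha)2(\alpha)}\,\Gamma^{2(\alpha)}_{1(\beta)1(\alpha)} + \Gamma^{1(\alpha)}_{1(\beta)1(\alpha)}\,\Gamma^{1(\beta)}_{1(\alpha)1(\beta)}$.
Substituting the explicit closed forms from Proposition 8.4, namely
$\Gamma^{2(\alpha)}_{2(\alpha)2(\alpha)}=-m_\alpha\epsilon_\alpha/u^{2(\alpha)}$,
$\Gamma^{1(\alpha)}_{1(\beta)1(\alpha)}=m_\beta\epsilon_\beta/(u^{1(\alpha)}-u^{1(\beta)})$,
$\Gamma^{1(\beta)}_{1(\alpha)1(\beta)}=-m_\alpha\epsilon_\alpha/(u^{1(\alpha)}-u^{1(\beta)})$ and
$\Gamma^{2(\alpha)}_{1(\beta)1(\alpha)}=-m_\beta\epsilon_\beta\,u^{2(\alpha)}/(u^{1(\alpha)}-u^{1(\beta)})^2$,
the two terms equal $\pm\,m_\alpha\epsilon_\alpha\,m_\beta\epsilon_\beta/(u^{1(\alpha)}-u^{1(\beta)})^2$ with opposite signs, so they cancel.

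\textbf{Inductive step.} Assume $C^{t(\alpha)}_{\beta}=0$ for all $0\le t\le s-1$, and set $\Delta=u^{1(\alpha)}-u^{1(\beta)}$ for brevity. The strategy is twofold: first, expand the ``highest-order'' Christoffel symbol appearing in $C^{s(\alpha)}_{\beta}$ by using the recursive formula of Proposition 8.4(2),
\[
\Gamma^{(s+2)(\alpha)}_{1(\beta)1(\alpha)}
= -\frac{1}{\Delta}\sum_{t=2}^{s+2}\Gamma^{(s-t+3)(\alpha)}_{1(\beta)1(\alpha)}\,u^{t(\alpha)};
\]
second, rewrite each difference $\Gamma^{(s-l+4)(\alpha)}_{2(\alpha)2(\alpha)}-\Gamma^{(s-l+2)(\alpha)}_{1(\alpha)1(\alpha)}$ with $l\le s+1$ by means of the closed recursion of Proposition 8.4(4). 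Then swap the order of the resulting double summations (introducing $t=s-l+2$ as outer index) and collect terms according to the factor $u^{t(\alpha)}$. The plan is to check that each coefficient so obtained is exactly $C^{t'(\alpha)}_{\beta}/\Delta$ for some $t'\le s-1$ and therefore vanishes by the inductive hypothesis, while the finitely many boundary contributions (coming from the extreme values $l=2,\, l=s+1,\, t=s+2$) cancel against the third summand $\Gamma^{(s+1)(\alpha)}_{1(\beta)1(\alpha)}\,\Gamma^{1(\beta)}_{1(\alpha)1(\beta)}$, using the identity $\Gamma^{1(\beta)}_{1(\alpha)1(\beta)}\,\Delta=-m_\alpha\epsilon_\alpha$ that already powered the base case.

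\textbf{Main obstacle.} The genuine difficulty lies in the combinatorial bookkeeping: one must simultaneously peel off two different recursions (the ``$1(\beta)1(\alpha)$'' recursion from Proposition 8.4(2) and the ``$2(\alpha)2(\alpha)/1(\alpha)1(\alpha)$'' recursion from Proposition 8.4(4)), and then argue that after reindexing, every interior coefficient reassembles into a full copy of $C^{t'(\alpha)}_{\beta}$ rather than a truncated one. A useful intermediate step to control the boundary terms and avoid a quadratic proliferation of subcases is to invoke Lemma 8.9 (the identity $A^{l(\alpha)}=0$), which allows one to repackage sums of the form $\sum_{s}\bigl(\Gamma^{(s+2)(\alpha)}_{2(\alpha)2(\alpha)}-\Gamma^{s(\alpha)}_{1(\alpha)1(\alpha)}\bigr)u^{(l-s+1)(\alpha)}$ as a single multiple of $\Gamma^{2(\alpha)}_{2(\alpha)2(\alpha)}$ times a coordinate difference, cleanly isolating the pieces that the inductive hypothesis acts upon.
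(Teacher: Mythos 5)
Your base case matches the paper's. For the general step, however, the paper does \emph{not} induct on $s$ at all: it substitutes the recursion $\Gamma^{l(\alpha)}_{1(\beta)1(\alpha)}=-\frac{1}{u^{1(\alpha)}-u^{1(\beta)}}\sum_{k=1}^{l-1}\Gamma^{k(\alpha)}_{1(\beta)1(\alpha)}u^{(l-k+1)(\alpha)}$ into the first sum only, swaps the order of summation so that the inner sum becomes $\sum_{l=k+1}^{s+1}\big(\Gamma^{(s-l+4)(\alpha)}_{2(\alpha)2(\alpha)}-\Gamma^{(s-l+2)(\alpha)}_{1(\alpha)1(\alpha)}\big)u^{(l-k+1)(\alpha)}$, collapses that inner sum to $-\Gamma^{2(\alpha)}_{2(\alpha)2(\alpha)}u^{(s-k+3)(\alpha)}$ via Lemma 8.9, and then the result telescopes against the expanded $\Gamma^{2(\alpha)}_{2(\alpha)2(\alpha)}\Gamma^{(s+2)(\alpha)}_{1(\beta)1(\alpha)}$ term, leaving $\Gamma^{(s+1)(\alpha)}_{1(\beta)1(\alpha)}\big(\frac{m_\alpha\epsilon_\alpha}{u^{1(\alpha)}-u^{1(\beta)}}-\frac{m_\alpha\epsilon_\alpha}{u^{1(\alpha)}-u^{1(\beta)}}\big)=0$. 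So the identity follows directly from Lemma 8.9 without ever invoking $C^{t'}=0$ for smaller $t'$; your inductive scaffolding is heavier than what is needed.

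More importantly, the central claim of your inductive step is not correct as stated. If one expands $\Gamma^{(s+2)(\alpha)}_{1(\beta)1(\alpha)}$ and each $\Gamma^{l(\alpha)}_{1(\beta)1(\alpha)}$ ($2\le l\le s+1$) and collects the coefficient of $u^{t(\alpha)}$, one obtains, with $s'=s-t+1$, the quantity $\sum_{j=1}^{s'+1}\big(\Gamma^{(s'-j+4)(\alpha)}_{2(\alpha)2(\alpha)}-\Gamma^{(s'-j+2)(\alpha)}_{1(\alpha)1(\alpha)}\big)\Gamma^{j(\alpha)}_{1(\beta)1(\alpha)}+\Gamma^{2(\alpha)}_{2(\alpha)2(\alpha)}\Gamma^{(s'+2)(\alpha)}_{1(\beta)1(\alpha)}$, which is \emph{not} $C^{s'(\alpha)}_{\beta}$: it contains the extra $j=1$ term $\big(\Gamma^{(s'+3)(\alpha)}_{2(\alpha)2(\alpha)}-\Gamma^{(s'+1)(\alpha)}_{1(\alpha)1(\alpha)}\big)\Gamma^{1(\alpha)}_{1(\beta)1(\alpha)}$ and is missing the summand $\Gamma^{(s'+1)(\alpha)}_{1(\beta)1(\alpha)}\Gamma^{1(\beta)}_{1(\alpha)1(\beta)}$. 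These discrepancies occur for \emph{every} $t$, not only at the extreme values $l=2$, $l=s+1$, $t=s+2$ as you assert, so they cannot be dismissed as finitely many boundary contributions. The argument can still be rescued: after applying the inductive hypothesis, the residual terms $\frac{1}{u^{1(\alpha)}-u^{1(\beta)}}\sum_t\Gamma^{(s-t+2)(\alpha)}_{1(\beta)1(\alpha)}u^{t(\alpha)}\,\Gamma^{1(\beta)}_{1(\alpha)1(\beta)}$ reassemble by the same recursion into $-\Gamma^{(s+1)(\alpha)}_{1(\beta)1(\alpha)}\Gamma^{1(\beta)}_{1(\alpha)1(\beta)}$, cancelling the unexpanded third summand, while the remaining family $\sum_t\big(\Gamma^{(s-t+4)(\alpha)}_{2(\alpha)2(\alpha)}-\Gamma^{(s-t+2)(\alpha)}_{1(\alpha)1(\alpha)}\big)u^{t(\alpha)}+\Gamma^{2(\alpha)}_{2(\alpha)2(\alpha)}u^{(s+2)(\alpha)}$ vanishes by Lemma 8.9 together with $\Gamma^{3(\alpha)}_{2(\alpha)2(\alpha)}-\Gamma^{1(\alpha)}_{1(\alpha)1(\alpha)}=-\Gamma^{2(\alpha)}_{2(\alpha)2(\alpha)}u^{3(\alpha)}/u^{2(\alpha)}$. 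Until those two resummations are carried out explicitly, the proposal has a genuine gap exactly where the work lies; and the additional proposal to also expand the differences $\Gamma^{(s-l+4)(\alpha)}_{2(\alpha)2(\alpha)}-\Gamma^{(s-l+2)(\alpha)}_{1(\alpha)1(\alpha)}$ via Proposition 8.4 introduces $1/u^{2(\alpha)}$ prefactors that do not fit the $u^{t(\alpha)}$-collection scheme you describe.
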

\begin{proof}
	For $s=0$ we get
	\begin{align}
		C^{0(\alpha)}_{\beta}&=\Gamma^{2(\alpha)}_{2(\alpha)2(\alpha)}\,\Gamma^{2(\alpha)}_{1(\beta)1(\alpha)}+\Gamma^{1(\alpha)}_{1(\beta)1(\alpha)}\,\Gamma^{1(\beta)}_{1(\alpha)1(\beta)}
		\notag\\&=-\Gamma^{2(\alpha)}_{2(\alpha)2(\alpha)}\,\frac{\Gamma^{1(\alpha)}_{1(\beta)1(\alpha)}}{u^{1(\alpha)}-u^{1(\beta)}}\,u^{2(\alpha)}+\Gamma^{1(\alpha)}_{1(\beta)1(\alpha)}\,\Gamma^{1(\beta)}_{1(\alpha)1(\beta)}
		\notag\\&=\Gamma^{1(\alpha)}_{1(\beta)1(\alpha)}\bigg(-\Gamma^{2(\alpha)}_{2(\alpha)2(\alpha)}\,\frac{1}{u^{1(\alpha)}-u^{1(\beta)}}\,u^{2(\alpha)}+\Gamma^{1(\beta)}_{1(\alpha)1(\beta)}\bigg)
		\notag\\&=\Gamma^{1(\alpha)}_{1(\beta)1(\alpha)}\bigg(\frac{m_\alpha\epsilon_\alpha}{u^{1(\alpha)}-u^{1(\beta)}}-\frac{m_\alpha\epsilon_\alpha}{u^{1(\alpha)}-u^{1(\beta)}}\bigg)=0.
		\notag
	\end{align}
	For $s=1$ we get
	\begin{align}
		C^{1(\alpha)}_{\beta}&=\bigg(\Gamma^{3(\alpha)}_{2(\alpha)2(\alpha)}-\Gamma^{1(\alpha)}_{1(\alpha)1(\alpha)}\bigg)\,\Gamma^{2(\alpha)}_{1(\beta)1(\alpha)}+\Gamma^{2(\alpha)}_{2(\alpha)2(\alpha)}\,\Gamma^{3(\alpha)}_{1(\beta)1(\alpha)}+\Gamma^{2(\alpha)}_{1(\beta)1(\alpha)}\,\Gamma^{1(\beta)}_{1(\alpha)1(\beta)}
		\notag\\&=\cancel{\Gamma^{2(\alpha)}_{2(\alpha)2(\alpha)}\,\frac{u^{3(\alpha)}}{u^{2(\alpha)}}\,\frac{\Gamma^{1(\alpha)}_{1(\beta)1(\alpha)}}{u^{1(\alpha)}-u^{1(\beta)}}\,u^{2(\alpha)}}-\Gamma^{2(\alpha)}_{2(\alpha)2(\alpha)}\,\frac{\Gamma^{2(\alpha)}_{1(\beta)1(\alpha)}}{u^{1(\alpha)}-u^{1(\beta)}}\,u^{2(\alpha)}\notag\\&-\cancel{\Gamma^{2(\alpha)}_{2(\alpha)2(\alpha)}\,\frac{\Gamma^{1(\alpha)}_{1(\beta)1(\alpha)}}{u^{1(\alpha)}-u^{1(\beta)}}\,u^{3(\alpha)}}+\Gamma^{2(\alpha)}_{1(\beta)1(\alpha)}\,\Gamma^{1(\beta)}_{1(\alpha)1(\beta)}
		\notag\\&=\Gamma^{2(\alpha)}_{1(\beta)1(\alpha)}\bigg(-\Gamma^{2(\alpha)}_{2(\alpha)2(\alpha)}\,\frac{1}{u^{1(\alpha)}-u^{1(\beta)}}\,u^{2(\alpha)}+\Gamma^{1(\beta)}_{1(\alpha)1(\beta)}\bigg)
		\notag\\&=\Gamma^{2(\alpha)}_{1(\beta)1(\alpha)}\bigg(\frac{m_\alpha\epsilon_\alpha}{u^{1(\alpha)}-u^{1(\beta)}}-\frac{m_\alpha\epsilon_\alpha}{u^{1(\alpha)}-u^{1(\beta)}}\bigg)=0.
		\notag
	\end{align}
	Let us now consider an integer $s\in\{1,\dots,m_\alpha-2\}$. Since
	\begin{align}
		\Gamma^{l(\alpha)}_{1(\beta)1(\alpha)}&=-\frac{1}{u^{1(\alpha)}-u^{1(\beta)}}\,\overset{l}{\underset{t=2}{\sum}}\,\Gamma^{(l-t+1)(\alpha)}_{1(\beta)1(\alpha)}\,u^{t(\alpha)}
		\notag\\&\overset{k=l-t+1}{=}-\frac{1}{u^{1(\alpha)}-u^{1(\beta)}}\,\overset{l-1}{\underset{k=1}{\sum}}\,\Gamma^{k(\alpha)}_{1(\beta)1(\alpha)}\,u^{(l-k+1)(\alpha)}
		\notag
	\end{align}
	for each $l\in\{2,\dots,s+1\}$, we have
	\begin{align}
		C^{s(\alpha)}_{\beta}&=-\frac{1}{u^{1(\alpha)}-u^{1(\beta)}}\,\overset{s+1}{\underset{l=2}{\sum}}\,\bigg(\Gamma^{(s-l+4)(\alpha)}_{2(\alpha)2(\alpha)}-\Gamma^{(s-l+2)(\alpha)}_{1(\alpha)1(\alpha)}\bigg)\,\overset{l-1}{\underset{k=1}{\sum}}\,\Gamma^{k(\alpha)}_{1(\beta)1(\alpha)}\,u^{(l-k+1)(\alpha)}\notag\\&+\Gamma^{2(\alpha)}_{2(\alpha)2(\alpha)}\,\Gamma^{(s+2)(\alpha)}_{1(\beta)1(\alpha)}+\Gamma^{(s+1)(\alpha)}_{1(\beta)1(\alpha)}\,\Gamma^{1(\beta)}_{1(\alpha)1(\beta)}
		\notag\\&=-\frac{1}{u^{1(\alpha)}-u^{1(\beta)}}\,\overset{s}{\underset{k=1}{\sum}}\,\Gamma^{k(\alpha)}_{1(\beta)1(\alpha)}\,\overset{s+1}{\underset{l=k+1}{\sum}}\,\bigg(\Gamma^{(s-l+4)(\alpha)}_{2(\alpha)2(\alpha)}-\Gamma^{(s-l+2)(\alpha)}_{1(\alpha)1(\alpha)}\bigg)\,u^{(l-k+1)(\alpha)}\notag\\&+\Gamma^{2(\alpha)}_{2(\alpha)2(\alpha)}\,\Gamma^{(s+2)(\alpha)}_{1(\beta)1(\alpha)}+\Gamma^{(s+1)(\alpha)}_{1(\beta)1(\alpha)}\,\Gamma^{1(\beta)}_{1(\alpha)1(\beta)}
		\notag
	\end{align}
	where (by taking $\overset{\sim}{s}:=s-l+2$ and $\overset{\sim}{l}:=s-k+2$)
	\begin{align}
		\overset{s+1}{\underset{l=k+1}{\sum}}\,\bigg(\Gamma^{(s-l+4)(\alpha)}_{2(\alpha)2(\alpha)}-\Gamma^{(s-l+2)(\alpha)}_{1(\alpha)1(\alpha)}\bigg)\,u^{(l-k+1)(\alpha)}&=\overset{\overset{\sim}{l}-1}{\underset{\overset{\sim}{s}=1}{\sum}}\,\bigg(\Gamma^{(\overset{\sim}{s}+2)(\alpha)}_{2(\alpha)2(\alpha)}-\Gamma^{\overset{\sim}{s}(\alpha)}_{1(\alpha)1(\alpha)}\bigg)\,u^{(\overset{\sim}{l}-\overset{\sim}{s}+1)(\alpha)}
		\notag\\&=\bigg(\Gamma^{3(\alpha)}_{2(\alpha)2(\alpha)}-\Gamma^{1(\alpha)}_{1(\alpha)1(\alpha)}\bigg)\,u^{\overset{\sim}{l}(\alpha)}\notag\\&+\overset{\overset{\sim}{l}-1}{\underset{\overset{\sim}{s}=2}{\sum}}\,\bigg(\Gamma^{(\overset{\sim}{s}+2)(\alpha)}_{2(\alpha)2(\alpha)}-\Gamma^{\overset{\sim}{s}(\alpha)}_{1(\alpha)1(\alpha)}\bigg)\,u^{(\overset{\sim}{l}-\overset{\sim}{s}+1)(\alpha)}
		\notag\\&=-\Gamma^{2(\alpha)}_{2(\alpha)2(\alpha)}\,\frac{u^{3(\alpha)}}{u^{2(\alpha)}}\,u^{\overset{\sim}{l}(\alpha)}\notag\\&+\overset{\overset{\sim}{l}-1}{\underset{\overset{\sim}{s}=2}{\sum}}\,\bigg(\Gamma^{(\overset{\sim}{s}+2)(\alpha)}_{2(\alpha)2(\alpha)}-\Gamma^{\overset{\sim}{s}(\alpha)}_{1(\alpha)1(\alpha)}\bigg)\,u^{(\overset{\sim}{l}-\overset{\sim}{s}+1)(\alpha)}
		\notag\\&\overset{Lemma\,8.9}{=}-\Gamma^{2(\alpha)}_{2(\alpha)2(\alpha)}\,\cancel{\frac{u^{3(\alpha)}}{u^{2(\alpha)}}\,u^{\overset{\sim}{l}(\alpha)}}\notag\\&+\Gamma^{2(\alpha)}_{2(\alpha)2(\alpha)}\bigg(\cancel{\frac{u^{3(\alpha)}}{u^{2(\alpha)}}\,u^{\overset{\sim}{l}(\alpha)}}-u^{(\overset{\sim}{l}+1)(\alpha)}\bigg)
		\notag\\&=-\Gamma^{2(\alpha)}_{2(\alpha)2(\alpha)}\,u^{(\overset{\sim}{l}+1)(\alpha)}
		\notag\\&=-\Gamma^{2(\alpha)}_{2(\alpha)2(\alpha)}\,u^{(s-k+3)(\alpha)}.
		\notag
	\end{align}
	It follows that
	\begin{align}
		C^{s(\alpha)}_{\beta}&=\frac{\Gamma^{2(\alpha)}_{2(\alpha)2(\alpha)}}{u^{1(\alpha)}-u^{1(\beta)}}\,\overset{s}{\underset{k=1}{\sum}}\,\Gamma^{k(\alpha)}_{1(\beta)1(\alpha)}\,u^{(s-k+3)(\alpha)}\notag\\&+\Gamma^{2(\alpha)}_{2(\alpha)2(\alpha)}\,\Gamma^{(s+2)(\alpha)}_{1(\beta)1(\alpha)}+\Gamma^{(s+1)(\alpha)}_{1(\beta)1(\alpha)}\,\Gamma^{1(\beta)}_{1(\alpha)1(\beta)}
		\notag
	\end{align}
	where
	\begin{align}
		\Gamma^{(s+2)(\alpha)}_{1(\beta)1(\alpha)}&=-\frac{1}{u^{1(\alpha)}-u^{1(\beta)}}\,\overset{s+2}{\underset{t=2}{\sum}}\,\Gamma^{(s-t+3)(\alpha)}_{1(\beta)1(\alpha)}\,u^{t(\alpha)}
		\notag\\&\overset{k=s-t+3}{=}-\frac{1}{u^{1(\alpha)}-u^{1(\beta)}}\,\overset{s+1}{\underset{k=1}{\sum}}\,\Gamma^{k(\alpha)}_{1(\beta)1(\alpha)}\,u^{(s-k+3)(\alpha)}
		\notag
	\end{align}
	thus
	\begin{align}
		C^{s(\alpha)}_{\beta}&=\frac{\Gamma^{2(\alpha)}_{2(\alpha)2(\alpha)}}{u^{1(\alpha)}-u^{1(\beta)}}\,\overset{s}{\underset{k=1}{\sum}}\,\Gamma^{k(\alpha)}_{1(\beta)1(\alpha)}\,u^{(s-k+3)(\alpha)}\notag\\&-\frac{\Gamma^{2(\alpha)}_{2(\alpha)2(\alpha)}}{u^{1(\alpha)}-u^{1(\beta)}}\,\overset{s+1}{\underset{k=1}{\sum}}\,\Gamma^{k(\alpha)}_{1(\beta)1(\alpha)}\,u^{(s-k+3)(\alpha)}\notag\\&+\Gamma^{(s+1)(\alpha)}_{1(\beta)1(\alpha)}\,\Gamma^{1(\beta)}_{1(\alpha)1(\beta)}
		\notag\\&=-\frac{\Gamma^{2(\alpha)}_{2(\alpha)2(\alpha)}}{u^{1(\alpha)}-u^{1(\beta)}}\,\Gamma^{(s+1)(\alpha)}_{1(\beta)1(\alpha)}\,u^{2(\alpha)}+\Gamma^{(s+1)(\alpha)}_{1(\beta)1(\alpha)}\,\Gamma^{1(\beta)}_{1(\alpha)1(\beta)}
		\notag\\&=\Gamma^{(s+1)(\alpha)}_{1(\beta)1(\alpha)}\,\bigg(-\frac{\Gamma^{2(\alpha)}_{2(\alpha)2(\alpha)}}{u^{1(\alpha)}-u^{1(\beta)}}\,u^{2(\alpha)}+\Gamma^{1(\beta)}_{1(\alpha)1(\beta)}\bigg)
		\notag\\&=\Gamma^{(s+1)(\alpha)}_{1(\beta)1(\alpha)}\,\bigg(\frac{m_\alpha\epsilon_\alpha}{u^{1(\alpha)}-u^{1(\beta)}}-\frac{m_\alpha\epsilon_\alpha}{u^{1(\alpha)}-u^{1(\beta)}}\bigg)=0.
		\notag
	\end{align}
\end{proof}

\subsection{The dual structure}
\begin{prop}
	For each $\alpha\in\{1,\dots,r\}$ the components of the inverse of the Euler vector field $E$ are given by
	\begin{align}
		&(E^{-1})^{1(\alpha)}=\frac{1}{u^{1(\alpha)}}\notag\\
		&(E^{-1})^{(k+1)(\alpha)}=-\frac{1}{u^{1(\alpha)}}\,\overset{k}{\underset{s=1}{\sum}}\,(E^{-1})^{(k-s+1)(\alpha)}\,u^{(s+1)(\alpha)}\qquad\text{for }1\leq k\leq m_\alpha-1.
		\notag
	\end{align}
\end{prop}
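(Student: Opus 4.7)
The strategy is to reduce the computation to the defining relation $E^{-1}\circ E = e$ and exploit the fact that, in the David--Hertling canonical coordinates, the product $\circ$ is block-diagonal with respect to the decomposition into Jordan blocks, so that each block can be treated independently and the result reduces to the single-block Lemma $7.4$.

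\textbf{Step 1: reduction to a single block.} I first recall that the structure constants have the form $c^{k(\gamma)}_{i(\alpha)j(\beta)}=\delta^\gamma_\alpha\delta^\gamma_\beta\delta^k_{i+j-1}$, and that $E^{i(\alpha)}=u^{i(\alpha)}$, $e^{k(\gamma)}=\delta^k_1$. Writing the condition $(E^{-1}\circ E)^{k(\gamma)}=e^{k(\gamma)}$ in these coordinates gives
\begin{equation}
\sum_{\alpha,\beta}\sum_{i,j}c^{k(\gamma)}_{i(\alpha)j(\beta)}\,u^{i(\alpha)}\,(E^{-1})^{j(\beta)} = \delta^k_1,\notag
\end{equation}
and because of the two Kronecker deltas on the Jordan-block indices this collapses to
\begin{equation}
\sum_{i+j=k+1}u^{i(\gamma)}(E^{-1})^{j(\gamma)}=\delta^k_1,\qquad 1\le k\le m_\gamma.\notag
\end{equation}
Thus $E^{-1}$ is block-diagonal as well, and inside each block $\gamma$ the coefficients $(E^{-1})^{j(\gamma)}$ are determined by an equation formally identical to the one appearing in the single-Jordan-block case of Section~7.

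\textbf{Step 2: solving the per-block recursion.} Fixing $\alpha$ and taking $k=1$ in the above relation isolates $(E^{-1})^{1(\alpha)}u^{1(\alpha)}=1$, giving $(E^{-1})^{1(\alpha)}=1/u^{1(\alpha)}$. For $2\le k\le m_\alpha$ I split off the $i=1$ term:
\begin{equation}
u^{1(\alpha)}(E^{-1})^{k(\alpha)} + \sum_{i=2}^{k}u^{i(\alpha)}(E^{-1})^{(k-i+1)(\alpha)}=0,\notag
\end{equation}
and solving for $(E^{-1})^{k(\alpha)}$ and relabelling $k\mapsto k+1$, $s=i-1$ yields exactly
\begin{equation}
(E^{-1})^{(k+1)(\alpha)}=-\frac{1}{u^{1(\alpha)}}\sum_{s=1}^{k}(E^{-1})^{(k-s+1)(\alpha)}u^{(s+1)(\alpha)}\notag
\end{equation}
for $1\le k\le m_\alpha-1$, which is the announced formula.

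\textbf{Step 3: well-posedness.} The regularity assumption guarantees $u^{1(\alpha)}\neq u^{1(\beta)}$ for $\alpha\neq\beta$ and, in particular, $u^{1(\alpha)}\neq 0$ at the points under consideration, so the division by $u^{1(\alpha)}$ is legitimate and the recursion defines $(E^{-1})^{j(\alpha)}$ unambiguously for all $1\le j\le m_\alpha$. No step in this argument requires any nontrivial new estimate; the only conceptual point is the block-diagonality, which is automatic from the form of $c^{k(\gamma)}_{i(\alpha)j(\beta)}$, so there is essentially no obstacle beyond careful bookkeeping of the index ranges.
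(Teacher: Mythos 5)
Your proof is correct and follows essentially the same route as the paper: both write $E^{-1}\circ E=e$ in the David--Hertling coordinates, use the block-diagonality of $c^{k(\gamma)}_{i(\alpha)j(\beta)}$ to reduce to the per-block convolution identity $\sum_{j}(E^{-1})^{(k-j+1)(\alpha)}u^{j(\alpha)}=\delta^k_1$, and then solve it by isolating the $j=1$ term. The only cosmetic difference is that you justify $u^{1(\alpha)}\neq 0$ via regularity, whereas it is really the assumed invertibility of $E\circ$ that guarantees this; either way the division is legitimate.
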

\begin{proof}
	By definition we have
	\begin{align}
		E^{-1}\,\circ\,E&=e=\overset{r}{\underset{\tau=1}{\sum}}\,\partial_{1(\tau)}
		\notag
	\end{align}
	which (by taking the $k(\alpha)$-th component) yields
	\begin{align}
		\overset{r}{\underset{\tau=1}{\sum}}\,\delta^{k(\alpha)}_{1(\tau)}&=(E^{-1}\,\circ\,E)^{k(\alpha)}=\overset{n}{\underset{i,j=1}{\sum}}\,(E^{-1})^i\,c^{k(\alpha)}_{ij}\,E^j\notag\\&=\overset{r}{\underset{\sigma,\tau=1}{\sum}}\,\overset{m_\sigma}{\underset{i=1}{\sum}}\,\overset{m_\tau}{\underset{j=1}{\sum}}\,(E^{-1})^{i(\sigma)}\,c^{k(\alpha)}_{i(\sigma)j(\tau)}\,E^{j(\tau)}\notag\\&=\overset{m_\alpha}{\underset{i,j=1}{\sum}}\,(E^{-1})^{i(\alpha)}\,c^{k(\alpha)}_{i(\alpha)j(\alpha)}\,E^{j(\alpha)}\notag\\&=\overset{m_\alpha}{\underset{i,j=1}{\sum}}\,(E^{-1})^{i(\alpha)}\,\delta^{k}_{i+j-1}\,E^{j(\alpha)}\notag\\&=\overset{m_\alpha}{\underset{j=1}{\sum}}\,(E^{-1})^{(k-j+1)(\alpha)}\,E^{j(\alpha)}.
		\notag
	\end{align}
	Since
	\begin{align}
		\overset{r}{\underset{\tau=1}{\sum}}\,\delta^{k(\alpha)}_{1(\tau)}&=\delta^k_1
		\notag
	\end{align}
	we obtain
	\begin{align}
		\label{Lemma7.4_k}
		\delta^k_1&=\overset{m_\alpha}{\underset{j=1}{\sum}}\,(E^{-1})^{(k-j+1)(\alpha)}\,E^{j(\alpha)}.
	\end{align}
	By taking $k=1$ we get
	\begin{align}
		1&=\overset{m_\alpha}{\underset{j=1}{\sum}}\,(E^{-1})^{(2-j)(\alpha)}\,E^{j(\alpha)}
		\notag
	\end{align}
	where the quantity $(E^{-1})^{(2-j)(\alpha)}$ only makes sense for $j=1$, thus
	\begin{align}
		1&=(E^{-1})^{1(\alpha)}\,E^{1(\alpha)}=(E^{-1})^{1(\alpha)}\,u^{1(\alpha)}
		\notag
	\end{align}
	which yields
	\begin{align}
		(E^{-1})^{1(\alpha)}&=\frac{1}{u^1(\alpha)}.
		\notag
	\end{align}
	By taking $k\geq2$ in \eqref{Lemma7.4_k} we get
	\begin{align}
		0&=\overset{m_\alpha}{\underset{j=1}{\sum}}\,(E^{-1})^{(k-j+1)(\alpha)}\,E^{j(\alpha)}
		\notag
	\end{align}
	where the quantity $(E^{-1})^{(k-j+1)(\alpha)}$ only makes sens for $j\leq k$, thus
	\begin{align}
		0&=\overset{k}{\underset{j=1}{\sum}}\,(E^{-1})^{(k-j+1)(\alpha)}\,E^{j(\alpha)}=\overset{k}{\underset{j=1}{\sum}}\,(E^{-1})^{(k-j+1)(\alpha)}\,u^{j(\alpha)}\notag\\&=(E^{-1})^{k(\alpha)}\,u^{1(\alpha)}+\overset{k}{\underset{j=2}{\sum}}\,(E^{-1})^{(k-j+1)(\alpha)}\,u^{j(\alpha)}\notag\\&=(E^{-1})^{k(\alpha)}\,u^{1(\alpha)}+\overset{k-1}{\underset{s=1}{\sum}}\,(E^{-1})^{(k-s)(\alpha)}\,u^{(s+1)(\alpha)}
		\notag
	\end{align}
	which yields
	\begin{align}
		(E^{-1})^{k(\alpha)}&=-\frac{1}{u^{1(\alpha)}}\,\overset{k-1}{\underset{s=1}{\sum}}\,(E^{-1})^{(k-s)(\alpha)}\,u^{(s+1)(\alpha)}.
		\notag
	\end{align}
	In particular, for $k=h+1$ this becomes
	\begin{align}
		(E^{-1})^{(h+1)(\alpha)}&=-\frac{1}{u^{1(\alpha)}}\,\overset{h}{\underset{s=1}{\sum}}\,(E^{-1})^{(h-s+1)(\alpha)}\,u^{(s+1)(\alpha)}.
		\notag
	\end{align}
\end{proof}
\begin{remark}
	By definition, the dual product $*$ must verify the following relation:
	\begin{align}
		\label{dualproductdef}
		X*Y&=E^{-1}\circ X\circ Y
	\end{align}
	for $X$, $Y$ arbitrary vector fields. This means that
	\begin{align}
		X^j{c^*}^i_{jk}Y^k&=(E^{-1})^a c^i_{ab} X^j c^b_{jk}Y^k\qquad\forall\,X\text{, }Y
		\notag
	\end{align}
	i.e.
	\begin{align}
		{c^*}^i_{jk}&=(E^{-1})^a c^i_{ab} c^b_{jk}.
		\notag
	\end{align}
	Therefore
	\begin{align}
		{c^*}^{i(\alpha)}_{j(\beta)k(\gamma)}&=\overset{r}{\underset{\sigma,\tau=1}{\sum}}\,\overset{m_\sigma}{\underset{a=1}{\sum}}\,\overset{m_\tau}{\underset{b=1}{\sum}}\,(E^{-1})^{a(\sigma)} c^{i(\alpha)}_{a(\sigma)b(\tau)} c^{b(\tau)}_{j(\beta)k(\gamma)}
		\notag\\&=\overset{m_\alpha}{\underset{a,b=1}{\sum}}\,(E^{-1})^{a(\alpha)} c^{i(\alpha)}_{a(\alpha)b(\alpha)} c^{b(\alpha)}_{j(\beta)k(\gamma)}
		\notag\\&=\overset{m_\alpha}{\underset{a,b=1}{\sum}}\,(E^{-1})^{a(\alpha)} \delta^i_{a+b-1}\,\delta^\alpha_\beta\delta^\alpha_\gamma\delta^b_{j+k-1}
		\notag\\&=\delta^\alpha_\beta\delta^\alpha_\gamma\,\overset{m_\alpha}{\underset{b=1}{\sum}}\,(E^{-1})^{(i-b+1)(\alpha)}\delta^b_{j+k-1}
		\notag\\&=\delta^\alpha_\beta\delta^\alpha_\gamma\,(E^{-1})^{(i-j-k+2)(\alpha)}.
		\label{dualproductexplicit}
	\end{align}
\end{remark}
\begin{prop}
	The Christoffel symbols of the dual connection $\nabla^*$ are given by
	\begin{align}
		{\Gamma^*}^{k(\alpha)}_{i(\beta)j(\gamma)}&={\Gamma}^{k(\alpha)}_{i(\beta)j(\gamma)}-\delta^\alpha_\beta\delta^\alpha_\gamma\,(E^{-1})^{(k-i-j+2)(\alpha)}\,\bigg[\delta^k_1\bigg(1-\underset{\sigma\neq\alpha}{\sum}\,m_\sigma\epsilon_\sigma\bigg)\notag\\&+(1-\delta^k_1)\bigg(1-\overset{r}{\underset{\tau=1}{\sum}}\,m_\tau\epsilon_\tau\bigg)\bigg]-(1-\delta^\alpha_\beta)\,\delta_{\beta\gamma}\,\delta^1_i\delta^1_j\delta^k_1\,\frac{m_\beta\epsilon_\beta}{u^{1(\beta)}}
		\notag
	\end{align}
	for every choice of $\alpha$, $\beta$, $\gamma\in\{1,\dots,r\}$ and every $k\in\{1,\dots,m_\alpha\}$, $i\in\{1,\dots,m_\beta\}$, $j\in\{1,\dots,m_\gamma\}$.
\end{prop}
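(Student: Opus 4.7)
The strategy is to start from the defining formula \eqref{dualfromnatural}, namely
\[
{\Gamma^*}^{k(\alpha)}_{i(\beta)j(\gamma)} \;=\; \Gamma^{k(\alpha)}_{i(\beta)j(\gamma)} \;-\; \sum_{\delta,l} c^{*l(\delta)}_{j(\gamma)i(\beta)}\,\nabla_{l(\delta)} E^{k(\alpha)},
\]
and then to collapse the double sum on the right-hand side using the two ingredients that have already been established: the explicit form of the dual structure constants from Remark~8.12 and the expression for $\nabla E$ from Lemma~8.7. All that is left after that is bookkeeping.

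First I would substitute
\[
c^{*l(\delta)}_{j(\gamma)i(\beta)} \;=\; \delta^{\delta}_{\beta}\,\delta^{\delta}_{\gamma}\,(E^{-1})^{(l-i-j+2)(\delta)},
\]
so that the sum over blocks $\delta$ collapses, forcing $\beta=\gamma$ and $\delta=\beta$. This immediately explains the factor $\delta_{\beta\gamma}$ that appears in both correction terms of the claimed formula and makes the $\beta\ne\gamma$ case trivial (both corrections vanish, in agreement with the fact that the product $*$ itself vanishes across distinct blocks). Next I would invoke Lemma~8.7: the quantity $\nabla_{l(\beta)}E^{k(\alpha)}$ vanishes unless the inner indices agree, i.e.\ unless $l=k$, so the sum over $l$ collapses to a single term. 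Using the convention that $(E^{-1})^{m(\beta)}=0$ for $m\leq 0$ (as in Proposition~8.11), we are reduced to
\[
{\Gamma^*}^{k(\alpha)}_{i(\beta)j(\gamma)} \;=\; \Gamma^{k(\alpha)}_{i(\beta)j(\gamma)} \;-\; \delta_{\beta\gamma}\,(E^{-1})^{(k-i-j+2)(\beta)}\,\nabla_{k(\beta)}E^{k(\alpha)}.
\]

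At this point I would split into the two remaining cases dictated by Lemma~8.7. In the case $\alpha=\beta=\gamma$, the factor $\nabla_{k(\alpha)}E^{k(\alpha)}$ equals $1-\sum_{\sigma\ne\alpha} m_\sigma\epsilon_\sigma$ when $k=1$ and $1-\sum_\tau m_\tau\epsilon_\tau$ when $k\ne 1$; packaging these two subcases with $\delta^k_1$ and $1-\delta^k_1$ produces exactly the first correction in the claimed formula, with the prefactor $\delta^{\alpha}_{\beta}\delta^{\alpha}_{\gamma}$. In the case $\alpha\ne\beta=\gamma$, Lemma~8.7 only gives a nonzero $\nabla_{k(\beta)}E^{k(\alpha)}$ when $k=1$, with value $m_\beta\epsilon_\beta$; combined with $(E^{-1})^{(3-i-j)(\beta)}$, which vanishes unless $i=j=1$ and then equals $1/u^{1(\beta)}$, this yields the second correction term with its prefactor $(1-\delta^{\alpha}_{\beta})\delta_{\beta\gamma}\delta^{1}_i\delta^{1}_j\delta^{k}_1$.

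There is no real obstacle here: the entire argument is a direct, case-by-case specialization of \eqref{dualfromnatural}, with the two structural simplifications (the collapse of the sum over $\delta$ to $\beta=\gamma$ via $c^*$, and the collapse of the sum over $l$ to $l=k$ via $\nabla E$) doing all the work. The only point requiring some care is the convention governing the vanishing of $(E^{-1})^{m(\alpha)}$ for non-positive $m$: in the case $\alpha\ne\beta$ one must verify that the a priori possible contribution at $k=1$ automatically collapses to $i=j=1$ because $(E^{-1})^{(3-i-j)(\beta)}$ is zero otherwise, which is exactly what produces the rather rigid pattern of Kronecker deltas in the second correction term.
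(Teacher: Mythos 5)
Your proposal is correct and follows essentially the same route as the paper's own proof: both start from \eqref{dualfromnatural}, collapse the sum using the explicit dual structure constants ${c^*}^{l(\delta)}_{i(\beta)j(\gamma)}=\delta^\delta_\beta\delta^\delta_\gamma(E^{-1})^{(l-i-j+2)(\delta)}$ together with Lemma 8.7, and then read off the three cases $\beta\neq\gamma$, $\alpha=\beta=\gamma$, and $\alpha\neq\beta=\gamma$. The only (immaterial) difference is the order in which the two sums are collapsed, and your remark about the vanishing convention for $(E^{-1})^{(3-i-j)(\beta)}$ forcing $i=j=1$ is exactly the observation the paper makes.
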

\begin{proof}
	By means of \eqref{dualfromnatural} we have
	\begin{align}
		{\Gamma^*}^{k(\alpha)}_{i(\beta)j(\gamma)}&={\Gamma}^{k(\alpha)}_{i(\beta)j(\gamma)}-\overset{n}{\underset{l=1}{\sum}}\,{c^*}^{l}_{i(\beta)j(\gamma)}\,\nabla_{l}E^{k(\alpha)}
		\notag\\&={\Gamma}^{k(\alpha)}_{i(\beta)j(\gamma)}-\overset{r}{\underset{\tau=1}{\sum}}\,\overset{m_\tau}{\underset{l=1}{\sum}}\,{c^*}^{l(\tau)}_{i(\beta)j(\gamma)}\,\nabla_{l(\tau)}E^{k(\alpha)}
		\notag
	\end{align}
	where $\nabla_{l(\tau)}E^{k(\alpha)}\neq0$ only for $\tau=\alpha$ and for $\tau\neq\alpha$ and $l=k=1$ by Lemma 8.7. It follows that
	\begin{align}
		{\Gamma^*}^{k(\alpha)}_{i(\beta)j(\gamma)}&={\Gamma}^{k(\alpha)}_{i(\beta)j(\gamma)}-\overset{m_\alpha}{\underset{l=1}{\sum}}\,{c^*}^{l(\alpha)}_{i(\beta)j(\gamma)}\,\nabla_{l(\alpha)}E^{k(\alpha)}
		\notag\\&-\overset{}{\underset{\tau\neq\alpha}{\sum}}\,{c^*}^{1(\tau)}_{i(\beta)j(\gamma)}\,\nabla_{1(\tau)}E^{1(\alpha)}\,\delta^k_1
		\notag\\&={\Gamma}^{k(\alpha)}_{i(\beta)j(\gamma)}-\overset{m_\alpha}{\underset{l=1}{\sum}}\,\delta^\alpha_\beta\delta^\alpha_\gamma\,(E^{-1})^{(l-i-j+2)(\alpha)}\,\nabla_{l(\alpha)}E^{k(\alpha)}
		\notag\\&-\overset{}{\underset{\tau\neq\alpha}{\sum}}\,\delta^\tau_\beta\delta^\tau_\gamma\,(E^{-1})^{(3-i-j)(\tau)}\,\nabla_{1(\tau)}E^{1(\alpha)}\,\delta^k_1.
		\notag
	\end{align}
	which immediately implies
	\begin{align}
		{\Gamma^*}^{k(\alpha)}_{i(\beta)j(\gamma)}&={\Gamma}^{k(\alpha)}_{i(\beta)j(\gamma)}
		\notag
	\end{align}
	whenever $\beta\neq\gamma$. If $\alpha=\beta=\gamma$ then we get
	\begin{align}
		{\Gamma^*}^{k(\alpha)}_{i(\alpha)j(\alpha)}&={\Gamma}^{k(\alpha)}_{i(\alpha)j(\alpha)}-\overset{m_\alpha}{\underset{l=1}{\sum}}\,(E^{-1})^{(l-i-j+2)(\alpha)}\,\nabla_{l(\alpha)}E^{k(\alpha)}
		\notag
	\end{align}
	as $\delta^\tau_\beta=\delta^\tau_\alpha=0$ for every $\tau\neq\alpha$, where
	\begin{align}
		\nabla_{l(\alpha)}E^{k(\alpha)}&=\delta^k_l\bigg[\delta^k_1\bigg(1-\underset{\sigma\neq\alpha}{\sum}\,m_\sigma\epsilon_\sigma\bigg)+(1-\delta^k_1)\bigg(1-\overset{r}{\underset{\tau=1}{\sum}}\,m_\tau\epsilon_\tau\bigg)\bigg]
		\notag
	\end{align}
	by Lemma 8.7. It follows that
	\begin{align}
		{\Gamma^*}^{k(\alpha)}_{i(\alpha)j(\alpha)}&={\Gamma}^{k(\alpha)}_{i(\alpha)j(\alpha)}-(E^{-1})^{(k-i-j+2)(\alpha)}\,\bigg[\delta^k_1\bigg(1-\underset{\sigma\neq\alpha}{\sum}\,m_\sigma\epsilon_\sigma\bigg)\notag\\&+(1-\delta^k_1)\bigg(1-\overset{r}{\underset{\tau=1}{\sum}}\,m_\tau\epsilon_\tau\bigg)\bigg].
		\notag
	\end{align}
	If $\alpha\neq\beta=\gamma$ then we have
	\begin{align}
		{\Gamma^*}^{k(\alpha)}_{i(\beta)j(\beta)}&={\Gamma}^{k(\alpha)}_{i(\beta)j(\beta)}-\overset{}{\underset{\tau\neq\alpha}{\sum}}\,\delta^\tau_\beta\,(E^{-1})^{(3-i-j)(\tau)}\,\nabla_{1(\tau)}E^{1(\alpha)}\,\delta^k_1
		\notag\\&={\Gamma}^{k(\alpha)}_{i(\beta)j(\beta)}-(E^{-1})^{(3-i-j)(\beta)}\,\nabla_{1(\beta)}E^{1(\alpha)}\,\delta^k_1
		\notag\\&={\Gamma}^{k(\alpha)}_{i(\beta)j(\beta)}-(E^{-1})^{(3-i-j)(\beta)}\,m_\beta\epsilon_\beta\,\delta^k_1
		\notag
	\end{align}
	by Lemma 8.7, where $(E^{-1})^{(3-i-j)(\beta)}$ only makes sense for $3-i-j\geq1$ that is $i+j\leq2$ i.e. $i=j=1$. Thus $(E^{-1})^{(3-i-j)(\beta)}=\delta_i^1\delta_j^1\,(E^{-1})^{1(\beta)}=\delta_i^1\delta_j^1\,\frac{1}{u^{1(\beta)}}$ yielding
	\begin{align}
		{\Gamma^*}^{k(\alpha)}_{i(\beta)j(\beta)}&={\Gamma}^{k(\alpha)}_{i(\beta)j(\beta)}-\delta^1_i\delta^1_j\delta^k_1\,\frac{m_\beta\epsilon_\beta}{u^{1(\beta)}}.
		\notag
	\end{align}
\end{proof}

\subsection{The proof of the theorem}
\begin{theorem}
	The connection $\nabla$ associated with the product $\circ$ with structure constants $c^{k(\gamma)}_{i(\alpha)j(\beta)}=\delta^\gamma_\alpha\delta^\gamma_\beta\delta^k_{i+j-1}$, the vector field $e$ of components $e^i=\overset{r}{\underset{\alpha=1}{\sum}}\,\delta^i_{1(\alpha)}$, the dual connection $\nabla^*$ defined by the Cristoffel symbols
	\begin{align}
		{\Gamma^*}^{k(\alpha)}_{i(\beta)j(\gamma)}&={\Gamma}^{k(\alpha)}_{i(\beta)j(\gamma)}-\delta^\alpha_\beta\delta^\alpha_\gamma\,(E^{-1})^{(k-i-j+2)(\alpha)}\,\bigg[\delta^k_1\bigg(1-\underset{\sigma\neq\alpha}{\sum}\,m_\sigma\epsilon_\sigma\bigg)\notag\\&+(1-\delta^k_1)\bigg(1-\overset{r}{\underset{\tau=1}{\sum}}\,m_\tau\epsilon_\tau\bigg)\bigg]-(1-\delta^\alpha_\beta)\,\delta_{\beta\gamma}\,\delta^1_i\delta^1_j\delta^k_1\,\frac{m_\beta\epsilon_\beta}{u^{1(\beta)}},
		\label{Christoffel*}
	\end{align}
	the dual product $*$ with structure constants
	\begin{align}
		{c^*}^{i(\alpha)}_{j(\beta)k(\gamma)}&=\delta^\alpha_\beta\delta^\alpha_\gamma\,(E^{-1})^{(i-j-k+2)(\alpha)}
		\notag
	\end{align}
	and the vector field $E$ of components $E^i=u^i$ define a unique bi-flat structure.	
\end{theorem}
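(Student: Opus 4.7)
The plan is to follow the six-step strategy of Theorem 7.6 from the single Jordan block case. The axioms $\nabla e=0$ and $\nabla_i c^l_{jk}=\nabla_j c^l_{ik}$ are built into the construction: they were the defining assumptions used in Propositions 8.1 and 8.2 to derive the Christoffel symbols, and they are automatically satisfied by the explicit formulas in Proposition 8.4. The condition $d_\nabla(L-a_0 I)=0$ is likewise built in by construction. What remains to check is the flatness of $\nabla$, the linearity of the Euler vector field $E$, the properties of the dual structure, and uniqueness.

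The main obstacle will be the flatness of $\nabla$. To establish $R^{k(\gamma)}_{i(\alpha)\,j(\beta)\,l(\delta)}=0$, I would split into cases according to which of the $r$ Jordan blocks the four Greek indices belong to. When all four blocks coincide ($\alpha=\beta=\gamma=\delta$), the computation reduces, via Proposition 8.2, to the single-block situation treated in Section 7, but with the extra feature that the inhomogeneity in $\Gamma^{k(\alpha)}_{1(\alpha)\,j(\alpha)}$ coming from $a_0=\sum_\sigma \epsilon_\sigma\,\Tr(L_\sigma)$ produces additional contributions involving blocks $\sigma\neq\alpha$; these are precisely what Lemmas 8.9 and 8.10 are designed to absorb. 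For the mixed cases (indices in two, three, or four distinct blocks), items (1)--(3) of Proposition 8.4 show that the vast majority of Christoffel symbols vanish, and what remains reduces to combinations of products of the form $\Gamma^{\cdot(\alpha)}_{1(\beta)\,\cdot(\alpha)}\,\Gamma^{\cdot(\alpha)}_{1(\epsilon)\,\cdot(\alpha)}$ whose vanishing is the content of Lemma 8.11. Lemmas 8.5 (derivative shift) and 8.6 (the $\sum_k \Gamma^i_{jk}u^k$ identities) provide the combinatorial glue throughout the induction on the Christoffel index.

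For the linearity of $E$, I would compute $\nabla_i\nabla_j E^k = \Gamma^k_{il}\,\nabla_j E^l - \Gamma^l_{ij}\,\nabla_l E^k$ and substitute the explicit form of $\nabla_j E^i$ from Lemma 8.7. Since $\nabla E$ is diagonal on each block with only two possible scalar values, plus the off-block entry $m_\beta\epsilon_\beta$ at $(1(\alpha),1(\beta))$, the expression collapses by invoking the identity $\sum_\sigma \Gamma^{i(\alpha)}_{1(\sigma)\,j(\beta)}=0$ of Proposition 8.1, exactly as in Step 4 of Theorem 7.6. The remaining dual-structure axioms are then essentially automatic: flatness of $\nabla^*$ follows from the linearity of $E$ via the general result recalled after equation \eqref{dualfromnatural}; the compatibility $(d_\nabla-d_{\nabla^*})(X\circ)=0$ is equivalent to the defining formula \eqref{Christoffel*}; and the conditions $[e,E]=e$ and $\mathrm{Lie}_E\circ=\circ$ follow directly from the David--Hertling normal forms for $e$, $E$ and the structure constants.

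Uniqueness is already encoded in Proposition 8.4: once $\nabla e=0$ and $d_\nabla(L-a_0 I)=0$ are imposed, each Christoffel symbol is determined recursively. The only genuinely non-obvious point of uniqueness is the consistency of the resulting overdetermined system, but this consistency is precisely what was verified, case by case, in the statement of Proposition 8.4 itself. Thus, modulo the multi-block bookkeeping, the architecture of the proof mirrors Section 7 exactly, with the technical lemmas 8.5--8.11 playing the role of the structural identities \eqref{mainide}--\eqref{mainide_bis} in the single-block analysis.
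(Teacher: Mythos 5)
Your plan follows the paper's proof of this theorem essentially step for step: the same decomposition into compatibility of $\nabla$ with $\circ$, flatness of $\nabla$, linearity of $E$, the dual-structure axioms and uniqueness; the same case analysis by Jordan-block membership of the four indices of the curvature tensor (with the all-one-block case handled by induction on the block size as in Section 7); and the same reliance on the technical Lemmas 8.5--8.11, although your attribution of which lemma absorbs which case is slightly off (the single-block curvature ultimately rests on Lemma 8.8, while Lemmas 8.10 and 8.11 enter in the mixed two- and three-block cases). The one point where you are too quick is the assertion that $\nabla_i c^l_{jk}=\nabla_j c^l_{ik}$ is ``automatically satisfied by the explicit formulas in Proposition 8.4'': Propositions 8.1--8.3 only extract \emph{necessary} consequences of that condition in order to pin the Christoffel symbols down, so one must still verify that the resulting explicit symbols satisfy every instance of the overdetermined system --- the paper does exactly this as point A.2 of its proof, through a nine-case check based on Proposition 8.2, and an analogous sufficiency check (rather than an appeal to Proposition 8.4) is what actually closes the uniqueness-plus-existence argument.
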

\begin{proof}
	In order to prove Theorem $8.13$ we have to show the following statings:
	\begin{itemize}
		\item[A.] $\nabla-\lambda\,\circ$ is flat and torsionless for each choice of $\lambda$ i.e.
		\begin{itemize}
			\item[A.1] $\nabla$ is flat and torsionless
			\item[A.2] $\nabla_i\,c^l_{jk}=\nabla_j\,c^l_{ik}$ for all of the indices
			\item[A.3] $\circ$ is commutative and associative
		\end{itemize}
		\item[B.] $X\,\circ\,e=X$ for every vector field $X$
		\item[C.] $\nabla e=0$
		\item[D.] $\nabla^*-\lambda\,*$ is flat and torsionless for each choice of $\lambda$ i.e.
		\begin{itemize}
			\item[D.1] $\nabla^*$ is flat and torsionless
			\item[D.2] ${\nabla^*}_i\,{c^*}^l_{jk}={\nabla^*}_j\,{c^*}^l_{ik}$ for all of the indices
			\item[D.3] $*$ is commutative and associative
		\end{itemize}
		\item[E.] $X\,*\,E=X$ for every vector field $X$
		\item[F.] $\nabla^* E=0$
		\item[G.] $[e,E]=e$, ${\rm Lie}_E \circ=\circ$
		\item[H.] $(d_{\nabla}-d_{\nabla^{*}})(X\,\circ)=0$
		\item[I.] $\nabla$ and $\nabla^*$ are uniquely determined.
	\end{itemize}
	Since the Christoffel symbols are such that $\Gamma^k_{ij}=\Gamma^k_{ji}$, the connection $\nabla$ is trivially proved to be torsionless. By Lemma 8.8, the connection $\nabla^*$ is torsionless as well. We are now going to prove point A.2, namely
	\begin{align}
		\nabla_{i(\alpha)}\,c^{l(\epsilon)}_{{j(\beta)}{k(\gamma)}}&=\nabla_{j(\beta)}\,c^{l(\epsilon)}_{{i(\alpha)}{k(\gamma)}}
		\notag
	\end{align}
	which is equivalent to
	\begin{align}
		\Gamma_{i(\alpha)s(\sigma)}^{l(\epsilon)}c^{s(\sigma)}_{{j(\beta)}{k(\gamma)}}-\Gamma_{i(\alpha)k(\gamma)}^{s(\sigma)}c^{l(\epsilon)}_{{j(\beta)}{s(\sigma)}}&=\Gamma_{j(\beta)s(\sigma)}^{l(\epsilon)}c^{s(\sigma)}_{{i(\alpha)}{k(\gamma)}}-\Gamma_{j(\beta)k(\gamma)}^{s(\sigma)}c^{l(\epsilon)}_{{i(\alpha)}{s(\sigma)}}
		\notag
	\end{align}
	and
	\begin{align}
		\Gamma_{i(\alpha)(j+k-1)(\beta)}^{l(\epsilon)}\delta_{\beta\gamma}-\Gamma_{i(\alpha)k(\gamma)}^{(l-j+1)(\beta)}\delta^{\epsilon}_{\beta}&=\Gamma_{j(\beta)(i+k-1)(\alpha)}^{l(\epsilon)}\delta_{\alpha\gamma}-\Gamma_{j(\beta)k(\gamma)}^{(l-i+1)(\alpha)}\delta^\epsilon_\alpha
		\label{A.2}
	\end{align}
	for all $\alpha$, $\beta$, $\gamma$, $\epsilon\in\{1,\dots,r\}$ and any suitable choice of the indices $i$, $j$, $k$, $l$. The possible cases are the following ones:
	\begin{itemize}
		\item[$1.$] $\alpha=\beta=\gamma=\epsilon$
		\item[$2.$] $\alpha=\beta=\gamma\neq\epsilon$
		\item[$3.$] $\alpha=\beta=\epsilon\neq\gamma$
		\item[$4.$] $\alpha=\gamma=\epsilon\neq\beta$
		\item[$5.$] $\beta=\gamma=\epsilon\neq\alpha$
		\item[$6.$] $\alpha=\beta\neq\gamma=\epsilon$
		\item[$7.$] $\alpha=\gamma\neq\beta=\epsilon$
		\item[$8.$] $\alpha=\epsilon\neq\beta=\gamma$
		\item[$9.$] at least three among $\alpha$, $\beta$, $\gamma$, $\epsilon$ are pairwise distinct.
	\end{itemize}
	\textbf{Case 1: $\alpha=\beta=\gamma=\epsilon$.} \eqref{A.2} becomes
	\begin{align}
		\Gamma_{i(\alpha)(j+k-1)(\alpha)}^{l(\alpha)}-\Gamma_{i(\alpha)k(\alpha)}^{(l-j+1)(\alpha)}&=\Gamma_{j(\alpha)(i+k-1)(\alpha)}^{l(\alpha)}-\Gamma_{j(\alpha)k(\alpha)}^{(l-i+1)(\alpha)}
		\label{A.2_case1}
	\end{align}
	If $i=1$ (or equivalently $j=1$) then this is
	\begin{align}
		\Gamma_{1(\alpha)(j+k-1)(\alpha)}^{l(\alpha)}-\Gamma_{1(\alpha)k(\alpha)}^{(l-j+1)(\alpha)}&=\Gamma_{j(\alpha)k(\alpha)}^{l(\alpha)}-\Gamma_{j(\alpha)k(\alpha)}^{l(\alpha)}
		\notag
	\end{align}
	where both the left and the right-hand sides vanish, as
	\begin{align}
			\Gamma_{1(\alpha)(j+k-1)(\alpha)}^{l(\alpha)}-\Gamma_{1(\alpha)k(\alpha)}^{(l-j+1)(\alpha)}&=-\underset{\sigma\neq\alpha}{\sum}\,\bigg(\Gamma_{1(\sigma)(j+k-1)(\alpha)}^{l(\alpha)}-\Gamma_{1(\sigma)k(\alpha)}^{(l-j+1)(\alpha)}\bigg)
			\notag\\&=-\underset{\sigma\neq\alpha}{\sum}\,\bigg(\Gamma_{1(\sigma)k(\alpha)}^{(l-j+1)(\alpha)}-\Gamma_{1(\sigma)k(\alpha)}^{(l-j+1)(\alpha)}\bigg)=0.
			\notag
	\end{align}
	If $k=1$ then \eqref{A.2_case1} reads
	\begin{align}
		\Gamma_{i(\alpha)j(\alpha)}^{l(\alpha)}-\Gamma_{i(\alpha)1(\alpha)}^{(l-j+1)(\alpha)}&=\Gamma_{j(\alpha)i(\alpha)}^{l(\alpha)}-\Gamma_{j(\alpha)1(\alpha)}^{(l-i+1)(\alpha)}
		\notag
	\end{align}
	that holds true, as
	\begin{align}
		\Gamma_{i(\alpha)1(\alpha)}^{(l-j+1)(\alpha)}&=-\underset{\sigma\neq\alpha}{\sum}\,\Gamma_{i(\alpha)1(\sigma)}^{(l-j+1)(\alpha)}=-\underset{\sigma\neq\alpha}{\sum}\,\Gamma_{1(\alpha)1(\sigma)}^{(l-i-j+2)(\alpha)}
		\notag\\&=-\underset{\sigma\neq\alpha}{\sum}\,\Gamma_{j(\alpha)1(\sigma)}^{(l-i+1)(\alpha)}=\Gamma_{j(\alpha)1(\alpha)}^{(l-i+1)(\alpha)}.
		\notag
	\end{align}
	If all of $i$, $j$ and $k$ are greater or equal then $2$ then \eqref{A.2_case1} reads
	\begin{align}
		\Gamma_{2(\alpha)2(\alpha)}^{(l-i-j-k+5)(\alpha)}-\Gamma_{2(\alpha)2(\alpha)}^{(l-i-j-k+5)(\alpha)}&=\Gamma_{2(\alpha)2(\alpha)}^{(l-i-j-k+5)(\alpha)}-\Gamma_{2(\alpha)2(\alpha)}^{(l-i-j-k+5)(\alpha)}
		\notag
	\end{align}
	which is trivially verified.
	\\\textbf{Case 2: $\alpha=\beta=\gamma\neq\epsilon$.} \eqref{A.2} becomes
	\begin{align}
		\Gamma_{i(\alpha)(j+k-1)(\alpha)}^{l(\epsilon)}&=\Gamma_{j(\alpha)(i+k-1)(\alpha)}^{l(\epsilon)}
		\notag
	\end{align}
	which is true by means of Proposition 8.2.
	\\\textbf{Case 3: $\alpha=\beta=\epsilon\neq\gamma$.} \eqref{A.2} becomes
	\begin{align}
		-\Gamma_{i(\alpha)k(\gamma)}^{(l-j+1)(\alpha)}&=-\Gamma_{j(\alpha)k(\gamma)}^{(l-i+1)(\alpha)}
		\notag
	\end{align}
	which is true by means of Proposition 8.2.
	\\\textbf{Case 4: $\alpha=\gamma=\epsilon\neq\beta$.} \eqref{A.2} becomes
	\begin{align}
		0&=\Gamma_{j(\beta)(i+k-1)(\alpha)}^{l(\alpha)}-\Gamma_{j(\beta)k(\alpha)}^{(l-i+1)(\alpha)}
		\notag
	\end{align}
	which is true by means of Proposition 8.2.
	\\\textbf{Case 5: $\beta=\gamma=\epsilon\neq\alpha$.} \eqref{A.2} becomes
	\begin{align}
		\Gamma_{i(\alpha)(j+k-1)(\beta)}^{l(\beta)}-\Gamma_{i(\alpha)k(\beta)}^{(l-j+1)(\beta)}&=0
		\notag
	\end{align}
	which is true by means of Proposition 8.2.
	\\\textbf{Case 6: $\alpha=\beta\neq\gamma=\epsilon$.} \eqref{A.2} becomes $0=0$.
	\\\textbf{Case 7: $\alpha=\gamma\neq\beta=\epsilon$.} \eqref{A.2} becomes
	\begin{align}
		-\Gamma_{i(\alpha)k(\alpha)}^{(l-j+1)(\beta)}&=\Gamma_{j(\beta)(i+k-1)(\alpha)}^{l(\beta)}
		\notag
	\end{align}
	which is true by means of Proposition 8.2.
	\\\textbf{Case 8: $\alpha=\epsilon\neq\beta=\gamma$.} \eqref{A.2} becomes
	\begin{align}
		\Gamma_{i(\alpha)(j+k-1)(\beta)}^{l(\alpha)}&=-\Gamma_{j(\beta)k(\beta)}^{(l-i+1)(\alpha)}
		\notag
	\end{align}
	which is true by means of Proposition 8.2.
	\\\textbf{Case 9: }{at least three among $\alpha$, $\beta$, $\gamma$, $\epsilon$ are pairwise distinct.} \eqref{A.2} becomes trivially $0=0$, as all the Dirac deltas vanish.
	\\This proves point A.2. The product $\circ$ is commutative and associative, as
	\begin{align}
		c^{i(\alpha)}_{j(\beta)k(\gamma)}&=\delta^\alpha_\beta\delta^\alpha_\gamma\delta^i_{j+k-1}=\delta^\alpha_\gamma\delta^\alpha_\beta\delta^i_{k+j-1}=c^{i(\alpha)}_{k(\gamma)j(\beta)}
		\notag
	\end{align}
	and
	\begin{align}
		c^{i(\alpha)}_{j(\beta)l(\epsilon)}c^{l(\epsilon)}_{k(\gamma)m(\mu)}&=\delta^\alpha_\beta\delta^\alpha_\epsilon\delta^i_{j+l-1}\delta^\epsilon_\gamma\delta^\epsilon_\mu\delta^l_{k+m-1}=\mathds{1}_{\{\alpha=\beta=\gamma=\mu\}}\,\delta^i_{j+k+m-2}
		\notag\\&=\delta^\alpha_\gamma\delta^\alpha_\epsilon\delta^i_{k+l-1}\delta^\epsilon_\beta\delta^\epsilon_\mu\delta^l_{j+m-1}=c^{i(\alpha)}_{k(\gamma)l(\epsilon)}c^{l(\epsilon)}_{j(\beta)m(\mu)}.
		\notag
	\end{align}
	This proves point A.3. Since $e$ is the unit of $\circ$, point B holds automatically. However, this can be seen explicitly by
	\begin{align}
		\big(X\circ e\big)^{i(\alpha)}&=X^{j(\beta)}c^{i(\alpha)}_{j(\beta)k(\gamma)}e^{k(\gamma)}=X^{j(\alpha)}\delta^{i}_{j+k-1}e^{k(\alpha)}=X^{j(\alpha)}\delta^{i}_{j+k-1}\delta^{k}_1=X^{i(\alpha)}
		\notag
	\end{align}
	for an arbitrary vector field $X$ and every $\alpha\in\{1,\dots,r\}$, $i\in\{1,\dots,m_\alpha\}$. Clearly point C holds trivially as well, since it is equivalent to the condition
	\begin{equation}
		\notag
		\overset{r}{\underset{\sigma=1}{\sum}}\,\Gamma^{i(\alpha)}_{1(\sigma)j(\beta)}=0
	\end{equation}
	expressed in Proposition 8.1. Since
	\begin{align}
		\notag
		X*Y&=E^{-1}\circ X\circ Y\qquad\forall X,Y
	\end{align}
	by definition, the product $*$ inherits the commutativity and associativity properties from the product $\circ$, proving point D.3. Given an arbitrary vector field $X$, we have
	\begin{align}
		\notag
		X*E&=E^{-1}\circ X\circ E=E^{-1}\circ E\circ X=e\circ X=X
	\end{align}
	which proves E. We are now going to prove point F, that is $\nabla_{j(\beta)}^*E^{i(\alpha)}=0$ given $\alpha$, $\beta\in\{1,\dots,r\}$ and $i\in\{1,\dots,m_\alpha\}$, $j\in\{1,\dots,m_\beta\}$. We have
	\begin{align}
		\nabla_{j(\beta)}^*E^{i(\alpha)}&=\partial_{j(\beta)}E^{i(\alpha)}+{\Gamma^*}^{i(\alpha)}_{j(\beta)k(\gamma)}\,E^{k(\gamma)}=\delta^\alpha_\beta\,\delta^i_j+{\Gamma^*}^{i(\alpha)}_{j(\beta)k(\gamma)}\,u^{k(\gamma)}
		\notag
	\end{align}
	which by means of Proposition 8.13 is
	\begin{align}
		\nabla_{j(\beta)}^*E^{i(\alpha)}&=\delta^\alpha_\beta\,\delta^i_j+{\Gamma}^{i(\alpha)}_{j(\beta)k(\gamma)}\,u^{k(\gamma)}-\delta^\alpha_\beta\delta^\alpha_\gamma\,(E^{-1})^{(i-j-k+2)(\alpha)}\,\bigg[\delta^i_1\bigg(1-\underset{\sigma\neq\alpha}{\sum}\,m_\sigma\epsilon_\sigma\bigg)\notag\\&+(1-\delta^i_1)\bigg(1-\overset{r}{\underset{\tau=1}{\sum}}\,m_\tau\epsilon_\tau\bigg)\bigg]\,u^{k(\gamma)}-(1-\delta^\alpha_\beta)\,\delta_{\beta\gamma}\,\delta^1_j\delta^1_k\delta^i_1\,\frac{m_\beta\epsilon_\beta}{u^{1(\beta)}}\,u^{k(\gamma)}
		\notag\\&=\delta^\alpha_\beta\,\delta^i_j+{\Gamma}^{i(\alpha)}_{j(\beta)k(\gamma)}\,u^{k(\gamma)}-\delta^\alpha_\beta\,(E^{-1})^{(i-j-k+2)(\alpha)}\,\bigg[\delta^i_1\bigg(1-\underset{\sigma\neq\alpha}{\sum}\,m_\sigma\epsilon_\sigma\bigg)\notag\\&+(1-\delta^i_1)\bigg(1-\overset{r}{\underset{\tau=1}{\sum}}\,m_\tau\epsilon_\tau\bigg)\bigg]\,u^{k(\alpha)}-(1-\delta^\alpha_\beta)\,\delta^1_j\delta^1_k\delta^i_1\,\frac{m_\beta\epsilon_\beta}{u^{1(\beta)}}\,u^{k(\beta)}.
		\notag
	\end{align}
	Since
	\begin{align}
		\Gamma^{i(\alpha)}_{j(\beta)k(\gamma)}\,u^{k(\gamma)}=\bigg(-\delta^\alpha_\beta\,\underset{\sigma\neq\alpha}{\sum}\,m_\sigma\epsilon_\sigma+(1-\delta^\alpha_\beta)\,m_\beta\epsilon_\beta\bigg)\delta^i_j\delta^i_1+\bigg(-\delta^\alpha_\beta\,\overset{r}{\underset{\tau=1}{\sum}}\,m_\tau\epsilon_\tau\bigg)\delta^i_j(1-\delta^i_1)
		\notag
	\end{align}
	by Lemma 8.6, we get
	\begin{align}
		\nabla_{j(\beta)}^*E^{i(\alpha)}&=\delta^\alpha_\beta\,\delta^i_j+\bigg(-\delta^\alpha_\beta\,\underset{\sigma\neq\alpha}{\sum}\,m_\sigma\epsilon_\sigma+(1-\delta^\alpha_\beta)\,m_\beta\epsilon_\beta\bigg)\delta^i_j\delta^i_1\notag\\&-\delta^\alpha_\beta\,\overset{r}{\underset{\tau=1}{\sum}}\,m_\tau\epsilon_\tau\,\delta^i_j(1-\delta^i_1)-\delta^\alpha_\beta\,(E^{-1})^{(i-j-k+2)(\alpha)}\,\bigg[\delta^i_1\bigg(1-\underset{\sigma\neq\alpha}{\sum}\,m_\sigma\epsilon_\sigma\bigg)\notag\\&+(1-\delta^i_1)\bigg(1-\overset{r}{\underset{\tau=1}{\sum}}\,m_\tau\epsilon_\tau\bigg)\bigg]\,u^{k(\alpha)}-(1-\delta^\alpha_\beta)\,\delta^1_j\delta^1_k\delta^i_1\,\frac{m_\beta\epsilon_\beta}{u^{1(\beta)}}\,u^{k(\beta)}.
		\notag
	\end{align}
	If $\alpha\neq\beta$ then
	\begin{align}
		\nabla_{j(\beta)}^*E^{i(\alpha)}&=m_\beta\epsilon_\beta\,\delta^i_j\delta^i_1-\delta^1_j\delta^1_k\delta^i_1\,\frac{m_\beta\epsilon_\beta}{u^{1(\beta)}}\,u^{k(\beta)}=m_\beta\epsilon_\beta\,\delta^i_1(\delta^i_j-\delta^1_j)=0.
		\notag
	\end{align}
	If $\alpha=\beta$ then
	\begin{align}
		\nabla_{j(\beta)}^*E^{i(\alpha)}&=\delta^i_j-\underset{\sigma\neq\alpha}{\sum}\,m_\sigma\epsilon_\sigma\,\delta^i_j\delta^i_1-\overset{r}{\underset{\tau=1}{\sum}}\,m_\tau\epsilon_\tau\,\delta^i_j(1-\delta^i_1)\label{thmF_ij}\\&-(E^{-1})^{(i-j-k+2)(\alpha)}\,\bigg[\delta^i_1\bigg(1-\underset{\sigma\neq\alpha}{\sum}\,m_\sigma\epsilon_\sigma\bigg)+(1-\delta^i_1)\bigg(1-\overset{r}{\underset{\tau=1}{\sum}}\,m_\tau\epsilon_\tau\bigg)\bigg]\,u^{k(\alpha)}
		\notag
	\end{align}
	which vanishes trivially if $i<j$. If $i=j$ then \eqref{thmF_ij} becomes
	\begin{align}
		\nabla_{j(\beta)}^*E^{i(\alpha)}&=1-\underset{\sigma\neq\alpha}{\sum}\,m_\sigma\epsilon_\sigma\delta^i_1-\overset{r}{\underset{\tau=1}{\sum}}\,m_\tau\epsilon_\tau(1-\delta^i_1)\notag\\&-(E^{-1})^{(2-k)(\alpha)}\delta^1_k\,\bigg[\delta^i_1\bigg(1-\underset{\sigma\neq\alpha}{\sum}\,m_\sigma\epsilon_\sigma\bigg)+(1-\delta^i_1)\bigg(1-\overset{r}{\underset{\tau=1}{\sum}}\,m_\tau\epsilon_\tau\bigg)\bigg]\,u^{k(\alpha)}
		\notag\\&=1-\delta^i_1\underset{\sigma\neq\alpha}{\sum}\,m_\sigma\epsilon_\sigma-(1-\delta^i_1)\overset{r}{\underset{\tau=1}{\sum}}\,m_\tau\epsilon_\tau\notag\\&-\cancel{(E^{-1})^{1(\alpha)}}\bigg[\delta^i_1\bigg(1-\underset{\sigma\neq\alpha}{\sum}\,m_\sigma\epsilon_\sigma\bigg)+(1-\delta^i_1)\bigg(1-\overset{r}{\underset{\tau=1}{\sum}}\,m_\tau\epsilon_\tau\bigg)\bigg]\,\cancel{u^{1(\alpha)}}
		\notag\\&=1-\cancel{\delta^i_1\underset{\sigma\neq\alpha}{\sum}\,m_\sigma\epsilon_\sigma}-\bcancel{(1-\delta^i_1)\overset{r}{\underset{\tau=1}{\sum}}\,m_\tau\epsilon_\tau}\notag\\&-\delta^i_1+\cancel{\delta^i_1\underset{\sigma\neq\alpha}{\sum}\,m_\sigma\epsilon_\sigma}-(1-\delta^i_1)+\bcancel{(1-\delta^i_1)\overset{r}{\underset{\tau=1}{\sum}}\,m_\tau\epsilon_\tau}
		\notag\\&=1-\delta^i_1-1+\delta^i_1=0.
		\notag
	\end{align}
	Let us introduce the constant $C_\alpha:=\delta^i_1\bigg(1-\underset{\sigma\neq\alpha}{\sum}\,m_\sigma\epsilon_\sigma\bigg)+(1-\delta^i_1)\bigg(1-\overset{r}{\underset{\tau=1}{\sum}}\,m_\tau\epsilon_\tau\bigg)$. If $i>j$ then \eqref{thmF_ij} becomes
	\begin{align}
		\nabla_{j(\beta)}^*E^{i(\alpha)}&=-(E^{-1})^{(i-j-k+2)(\alpha)}\,\bigg[\delta^i_1\bigg(1-\underset{\sigma\neq\alpha}{\sum}\,m_\sigma\epsilon_\sigma\bigg)+(1-\delta^i_1)\bigg(1-\overset{r}{\underset{\tau=1}{\sum}}\,m_\tau\epsilon_\tau\bigg)\bigg]\,u^{k(\alpha)}
		\notag\\&=-C_\alpha\,\overset{i-j+1}{\underset{k=1}{\sum}}\,(E^{-1})^{(i-j-k+2)(\alpha)}\,u^{k(\alpha)}
		\notag\\&=-C_\alpha\,(E^{-1})^{(i-j+1)(\alpha)}\,u^{1(\alpha)}-C_\alpha\,\overset{i-j+1}{\underset{k=2}{\sum}}\,(E^{-1})^{(i-j-k+2)(\alpha)}\,u^{k(\alpha)}
		\notag
	\end{align}
	where
	\begin{align}
		&(E^{-1})^{(i-j+1)(\alpha)}=-\frac{1}{u^{1(\alpha)}}\,\overset{i-j}{\underset{s=1}{\sum}}\,(E^{-1})^{(i-j-s+1)(\alpha)}\,u^{(s+1)(\alpha)}
		\notag
	\end{align}
	by Proposition 8.12. Thus
	\begin{align}
		\nabla_{j(\beta)}^*E^{i(\alpha)}&=C_\alpha\,\overset{i-j}{\underset{s=1}{\sum}}\,(E^{-1})^{(i-j-s+1)(\alpha)}\,u^{(s+1)(\alpha)}-C_\alpha\,\overset{i-j+1}{\underset{k=2}{\sum}}\,(E^{-1})^{(i-j-k+2)(\alpha)}\,u^{k(\alpha)}=0.
		\notag
	\end{align}
	This proves point F. Point G is proved by the following two computations:
	\begin{align}
		[e,E]&=e(E)-E(e)=\overset{r}{\underset{\alpha=1}{\sum}}\,\partial_{1(\alpha)}\big(u^s\,\partial_s\big)-u^s\,\partial_s\bigg(\overset{r}{\underset{\alpha=1}{\sum}}\,\partial_{1(\alpha)}\bigg)
		\notag\\&=\overset{r}{\underset{\alpha=1}{\sum}}\,\partial_{1(\alpha)}+\cancel{\overset{r}{\underset{\alpha=1}{\sum}}\,u^{s(\beta)}\,\partial_{1(\alpha)}\partial_{s(\beta)}}-\cancel{\overset{r}{\underset{\alpha=1}{\sum}}\,u^{s(\beta)}\,\partial_{s(\beta)}\partial_{1(\alpha)}}
		\notag\\&=\overset{r}{\underset{\alpha=1}{\sum}}\,\partial_{1(\alpha)}=e
		\notag
	\end{align}
	and
	\begin{align}
		{\rm Lie}_E\,c^{i(\alpha)}_{j(\beta)k(\gamma)}&=E^s\,\xcancel{\partial_s\,c^{i(\alpha)}_{j(\beta)k(\gamma)}}-c^{s(\sigma)}_{j(\beta)k(\gamma)}\,\partial_{s(\sigma)}\,E^{i(\alpha)}\notag\\&+c^{i(\alpha)}_{s(\sigma)k(\gamma)}\,\partial_{j(\beta)}\,E^{s(\sigma)}+c^{i(\alpha)}_{j(\beta)s(\sigma)}\,\partial_{k(\gamma)}\,E^{s(\sigma)}
		\notag\\&=-c^{i(\alpha)}_{j(\beta)k(\gamma)}+c^{i(\alpha)}_{j(\beta)k(\gamma)}+c^{i(\alpha)}_{j(\beta)k(\gamma)}=c^{i(\alpha)}_{j(\beta)k(\gamma)}.
		\notag
	\end{align}
	Given an arbitrary vector field $X$, we have
	\begin{align}
		(d_{\nabla}-d_{\nabla^{*}})(X\,*)^i_{jk}&=\nabla_j(X\,*)^i_k-\nabla_k(X\,*)^i_j-{\nabla^*}_j(X\,*)^i_k+{\nabla^*}_k(X\,*)^i_j
		\notag\\&=\cancel{\partial_j(X\,*)^i_k}+{\Gamma}^i_{jl}(X\,*)^l_k-\cancel{\cancel{{\Gamma}^l_{jk}(X\,*)^i_l}}\notag\\&-\bcancel{\partial_k(X\,*)^i_j}-{\Gamma}^i_{kl}(X\,*)^l_j+\cancel{\cancel{{\Gamma}^l_{kj}(X\,*)^i_l}}\notag\\&-\cancel{\partial_j(X\,*)^i_k}-{\Gamma^*}^i_{jl}(X\,*)^l_k+\bcancel{\bcancel{{\Gamma^*}^l_{jk}(X\,*)^i_l}}\notag\\&+\bcancel{\partial_k(X\,*)^i_j}+{\Gamma^*}^i_{kl}(X\,*)^l_j-\bcancel{\bcancel{{\Gamma^*}^l_{kj}(X\,*)^i_l}}
		\notag\\&=\big({\Gamma}^i_{jl}-{\Gamma^*}^i_{jl}\big)(X\,*)^l_k-\big({\Gamma}^i_{kl}-{\Gamma^*}^i_{kl}\big)(X\,*)^l_j
		\notag\\&\overset{\eqref{dualfromnatural}}{=}{c^*}^m_{jl}\,\nabla_m\,E^i\,(X\,*)^l_k-{c^*}^m_{kl}\,\nabla_m\,E^i(X\,*)^l_j
		\notag\\&={c^*}^m_{jl}\,\nabla_m\,E^i\,X^s\,{c^*}^l_{sk}-{c^*}^m_{kl}\,\nabla_m\,E^i\,X^s\,{c^*}^l_{sj}
		\notag\\&=\big({c^*}^m_{jl}\,{c^*}^l_{sk}-{c^*}^m_{kl}\,{c^*}^l_{sj}\big)\,X^s\,\nabla_m\,E^i
		\notag
	\end{align}
	where the quantity inside the brackets vanishes due to the associativity property of $*$. Therefore $(d_{\nabla}-d_{\nabla^{*}})(X\,*)^i_{jk}=0$, which is equivalent to $(d_{\nabla}-d_{\nabla^{*}})(X\,\circ)^i_{jk}=0$, This proves point H. Since all of the Christoffel symbols $\Gamma^{i(\alpha)}_{j(\beta)k(\gamma)}$ are uniquely determined by means of Proposition 8.4 and the Christoffel symbols ${\Gamma^*}^{i(\alpha)}_{j(\beta)k(\gamma)}$ are uniquely determined by \eqref{Christoffel*}, point I is proved as well.
	
	In order to complete the proof we are left to show that $\nabla$, $\nabla^*$ are flat connections. We know that if we take the flatness of $\nabla$ as already verified and assume $\nabla\nabla E=0$, then we deduce that  $(\nabla,\nabla^*,\circ,*,e,E)$ define a bi-flat structure on $M$. It is then enough for us to only prove the flatness of $\nabla$ and to verify the condition $\nabla\nabla E=0$.
	
	Let us start by proving $\nabla\nabla E=0$. We have
	\begin{align}
		\nabla_{i(\alpha)}\nabla_{j(\beta)}E^{k(\gamma)}&=\partial_{i(\alpha)}\nabla_{j(\beta)}E^{k(\gamma)}+\Gamma^{k(\gamma)}_{i(\alpha)l(\sigma)}\nabla_{j(\beta)}E^{l(\sigma)}-\Gamma^{l(\sigma)}_{i(\alpha)j(\beta)}\nabla_{l(\sigma)}E^{k(\gamma)}
		\notag
	\end{align}
	where, by means of Lemma 8.7, $\nabla_{j(\beta)}E^{k(\gamma)}$ is constant and $\nabla_{j(\beta)}E^{l(\sigma)}$, $\nabla_{l(\sigma)}E^{k(\gamma)}$ vanish respectively whenever $l\neq j$, $l\neq k$. Thus
	\begin{align}
		\nabla_{i(\alpha)}\nabla_{j(\beta)}E^{k(\gamma)}&=\Gamma^{k(\gamma)}_{i(\alpha)j(\sigma)}\nabla_{j(\beta)}E^{j(\sigma)}-\Gamma^{k(\sigma)}_{i(\alpha)j(\beta)}\nabla_{k(\sigma)}E^{k(\gamma)}.
		\label{nablanablaEdim}
	\end{align}
	The possible cases are:
	\begin{itemize}
		\item[$1.$] $\alpha=\beta=\gamma$
		\item[$2.$] $\alpha=\beta\neq\gamma$
		\item[$3.$] $\alpha=\gamma\neq\beta$
		\item[$4.$] $\beta=\gamma\neq\alpha$
		\item[$5.$] $\alpha\neq\beta\neq\gamma\neq\alpha$.
	\end{itemize}
	\textbf{Case 1: $\alpha=\beta=\gamma$.} 
	\begin{align}
		\nabla_{i(\alpha)}\nabla_{j(\beta)}E^{k(\gamma)}&=\Gamma^{k(\alpha)}_{i(\alpha)j(\sigma)}\nabla_{j(\alpha)}E^{j(\sigma)}-\Gamma^{k(\sigma)}_{i(\alpha)j(\alpha)}\nabla_{k(\sigma)}E^{k(\alpha)}
		\notag\\&=\Gamma^{k(\alpha)}_{i(\alpha)j(\alpha)}\big(\nabla_{j(\alpha)}E^{j(\alpha)}-\nabla_{k(\alpha)}E^{k(\alpha)}\big)\notag\\&+\underset{\sigma\neq\alpha}{\sum}\bigg(\Gamma^{k(\alpha)}_{i(\alpha)j(\sigma)}\nabla_{j(\alpha)}E^{j(\sigma)}-\Gamma^{k(\sigma)}_{i(\alpha)j(\alpha)}\nabla_{k(\sigma)}E^{k(\alpha)}\bigg).
		\label{nnE_case1}
	\end{align}
	If $j=k=1$ then
	\begin{align}
		\nabla_{i(\alpha)}\nabla_{j(\beta)}E^{k(\gamma)}&=\Gamma^{1(\alpha)}_{i(\alpha)1(\alpha)}\big(\nabla_{1(\alpha)}E^{1(\alpha)}-\nabla_{1(\alpha)}E^{1(\alpha)}\big)\notag\\&+\underset{\sigma\neq\alpha}{\sum}\bigg(\Gamma^{1(\alpha)}_{i(\alpha)1(\sigma)}\nabla_{1(\alpha)}E^{1(\sigma)}-\Gamma^{1(\sigma)}_{i(\alpha)1(\alpha)}\nabla_{1(\sigma)}E^{1(\alpha)}\bigg)
		\notag
	\end{align}
	which trivially vanishes if $i\geq2$ and becomes
	\begin{align}
		\nabla_{i(\alpha)}\nabla_{j(\beta)}E^{k(\gamma)}&=\Gamma^{1(\alpha)}_{1(\alpha)1(\alpha)}\big(\cancel{\nabla_{1(\alpha)}E^{1(\alpha)}}-\cancel{\nabla_{1(\alpha)}E^{1(\alpha)}}\big)\notag\\&+\underset{\sigma\neq\alpha}{\sum}\bigg(\Gamma^{1(\alpha)}_{1(\alpha)1(\sigma)}\nabla_{1(\alpha)}E^{1(\sigma)}-\Gamma^{1(\sigma)}_{1(\alpha)1(\alpha)}\nabla_{1(\sigma)}E^{1(\alpha)}\bigg)
		\notag\\&=\underset{\sigma\neq\alpha}{\sum}\bigg(\Gamma^{1(\alpha)}_{1(\alpha)1(\sigma)}\nabla_{1(\alpha)}E^{1(\sigma)}+\Gamma^{1(\sigma)}_{1(\alpha)1(\sigma)}\nabla_{1(\sigma)}E^{1(\alpha)}\bigg)
		\notag\\&=\underset{\sigma\neq\alpha}{\sum}\bigg(\frac{m_\sigma\epsilon_\sigma}{u^{1(\alpha)}-u^{1(\sigma)}}\,m_\alpha\epsilon_\alpha-\frac{m_\alpha\epsilon_\alpha}{u^{1(\alpha)}-u^{1(\sigma)}}\,m_\sigma\epsilon_\sigma\bigg)=0
		\notag
	\end{align}
	if $i=1$. If both $j$ and $k$ are both greater or equal then $2$ then \eqref{nnE_case1} becomes
	\begin{align}
		\nabla_{i(\alpha)}\nabla_{j(\beta)}E^{k(\gamma)}&=\Gamma^{k(\alpha)}_{i(\alpha)j(\alpha)}\big(\nabla_{j(\alpha)}E^{j(\alpha)}-\nabla_{k(\alpha)}E^{k(\alpha)}\big)\notag\\&+\underset{\sigma\neq\alpha}{\sum}\bigg(\Gamma^{k(\alpha)}_{i(\alpha)j(\sigma)}\nabla_{j(\alpha)}E^{j(\sigma)}-\Gamma^{k(\sigma)}_{i(\alpha)j(\alpha)}\nabla_{k(\sigma)}E^{k(\alpha)}\bigg)
		\notag\\&=\Gamma^{k(\alpha)}_{i(\alpha)j(\alpha)}\bigg(\cancel{1-\overset{r}{\underset{\tau=1}{\sum}}\,m_\tau\epsilon_\tau}\cancel{-1+\overset{r}{\underset{\tau=1}{\sum}}\,m_\tau\epsilon_\tau}\bigg)\notag\\&+\underset{\sigma\neq\alpha}{\sum}\bigg(\Gamma^{k(\alpha)}_{i(\alpha)j(\sigma)}\xcancel{\nabla_{j(\alpha)}E^{j(\sigma)}}-\Gamma^{k(\sigma)}_{i(\alpha)j(\alpha)}\xcancel{\nabla_{k(\sigma)}E^{k(\alpha)}}\bigg)=0.
		\notag
	\end{align}
	If $j=1$ and $k\geq2$ then \eqref{nnE_case1} becomes
	\begin{align}
		\nabla_{i(\alpha)}\nabla_{j(\beta)}E^{k(\gamma)}&=\Gamma^{k(\alpha)}_{i(\alpha)1(\alpha)}\big(\nabla_{1(\alpha)}E^{1(\alpha)}-\nabla_{k(\alpha)}E^{k(\alpha)}\big)\notag\\&+\underset{\sigma\neq\alpha}{\sum}\bigg(\Gamma^{k(\alpha)}_{i(\alpha)1(\sigma)}\nabla_{1(\alpha)}E^{1(\sigma)}-\Gamma^{k(\sigma)}_{i(\alpha)1(\alpha)}\nabla_{k(\sigma)}E^{k(\alpha)}\bigg)
		\notag\\&=\Gamma^{k(\alpha)}_{i(\alpha)1(\alpha)}\bigg(1-\underset{\sigma\neq\alpha}{\sum}\,m_\sigma\epsilon_\sigma-1+\overset{r}{\underset{\tau=1}{\sum}}\,m_\tau\epsilon_\tau\bigg)\notag\\&+\underset{\sigma\neq\alpha}{\sum}\bigg(\Gamma^{k(\alpha)}_{i(\alpha)1(\sigma)}\,m_\alpha\epsilon_\alpha-\Gamma^{k(\sigma)}_{i(\alpha)1(\alpha)}\xcancel{\nabla_{k(\sigma)}E^{k(\alpha)}}\bigg)
		\notag\\&=\Gamma^{k(\alpha)}_{i(\alpha)1(\alpha)}\,m_\alpha\epsilon_\alpha+\underset{\sigma\neq\alpha}{\sum}\,\Gamma^{k(\alpha)}_{i(\alpha)1(\sigma)}\,m_\alpha\epsilon_\alpha
		\notag\\&=-\underset{\sigma\neq\alpha}{\sum}\,\Gamma^{k(\alpha)}_{i(\alpha)1(\sigma)}\,m_\alpha\epsilon_\alpha+\underset{\sigma\neq\alpha}{\sum}\,\Gamma^{k(\alpha)}_{i(\alpha)1(\sigma)}\,m_\alpha\epsilon_\alpha=0.
		\notag
	\end{align}
	If $j\geq2$ and $k=1$ then \eqref{nnE_case1} becomes
	\begin{align}
		\nabla_{i(\alpha)}\nabla_{j(\beta)}E^{k(\gamma)}&=\Gamma^{1(\alpha)}_{i(\alpha)j(\alpha)}\big(\nabla_{j(\alpha)}E^{j(\alpha)}-\nabla_{1(\alpha)}E^{1(\alpha)}\big)\notag\\&+\underset{\sigma\neq\alpha}{\sum}\bigg(\Gamma^{1(\alpha)}_{i(\alpha)j(\sigma)}\nabla_{j(\alpha)}E^{j(\sigma)}-\Gamma^{1(\sigma)}_{i(\alpha)j(\alpha)}\nabla_{1(\sigma)}E^{1(\alpha)}\bigg)
		\notag\\&=-\Gamma^{1(\alpha)}_{i(\alpha)j(\alpha)}\,m_\alpha\epsilon_\alpha+\underset{\sigma\neq\alpha}{\sum}\bigg(\Gamma^{1(\alpha)}_{i(\alpha)j(\sigma)}\xcancel{\nabla_{j(\alpha)}E^{j(\sigma)}}-\Gamma^{1(\sigma)}_{i(\alpha)j(\alpha)}\,m_\sigma\epsilon_\sigma\bigg)
		\notag\\&=-\Gamma^{1(\alpha)}_{i(\alpha)j(\alpha)}\,m_\alpha\epsilon_\alpha-\underset{\sigma\neq\alpha}{\sum}\,\Gamma^{1(\sigma)}_{i(\alpha)j(\alpha)}\,m_\sigma\epsilon_\sigma
		\notag
	\end{align}
	which is
	\begin{align}
		\nabla_{i(\alpha)}\nabla_{j(\beta)}E^{k(\gamma)}&=-\Gamma^{1(\alpha)}_{1(\alpha)j(\alpha)}\,m_\alpha\epsilon_\alpha-\underset{\sigma\neq\alpha}{\sum}\,\Gamma^{1(\sigma)}_{1(\alpha)j(\alpha)}\,m_\sigma\epsilon_\sigma
		\notag\\&=\underset{\sigma\neq\alpha}{\sum}\,\xcancel{\Gamma^{1(\alpha)}_{1(\sigma)j(\alpha)}}\,m_\alpha\epsilon_\alpha+\underset{\sigma\neq\alpha}{\sum}\,\xcancel{\Gamma^{1(\sigma)}_{1(\sigma)j(\alpha)}}\,m_\sigma\epsilon_\sigma=0
		\notag
	\end{align}
	if $i=1$ and
	\begin{align}
		\nabla_{i(\alpha)}\nabla_{j(\beta)}E^{k(\gamma)}&=-\Gamma^{1(\alpha)}_{i(\alpha)j(\alpha)}\,m_\alpha\epsilon_\alpha-\underset{\sigma\neq\alpha}{\sum}\,\xcancel{\Gamma^{1(\sigma)}_{i(\alpha)j(\alpha)}}\,m_\sigma\epsilon_\sigma=0
		\notag
	\end{align}
	(as $1-i-j\leq1-2-2=-3$) if $i\geq2$.
	\\\textbf{Case 2: $\alpha=\beta\neq\gamma$.} 
	\begin{align}
		\nabla_{i(\alpha)}\nabla_{j(\alpha)}E^{k(\gamma)}&=\Gamma^{k(\gamma)}_{i(\alpha)j(\sigma)}\nabla_{j(\alpha)}E^{j(\sigma)}-\Gamma^{k(\sigma)}_{i(\alpha)j(\alpha)}\nabla_{k(\sigma)}E^{k(\gamma)}
		\notag\\&=\Gamma^{k(\gamma)}_{i(\alpha)j(\alpha)}\nabla_{j(\alpha)}E^{j(\alpha)}-\Gamma^{k(\alpha)}_{i(\alpha)j(\alpha)}\nabla_{k(\alpha)}E^{k(\gamma)}
		\notag\\&+\Gamma^{k(\gamma)}_{i(\alpha)j(\gamma)}\nabla_{j(\alpha)}E^{j(\gamma)}-\Gamma^{k(\gamma)}_{i(\alpha)j(\alpha)}\nabla_{k(\gamma)}E^{k(\gamma)}
		\notag\\&+\underset{\sigma\neq\alpha,\gamma}{\sum}\,\bigg(\xcancel{\Gamma^{k(\gamma)}_{i(\alpha)j(\sigma)}}\nabla_{j(\alpha)}E^{j(\sigma)}-\Gamma^{k(\sigma)}_{i(\alpha)j(\alpha)}\nabla_{k(\sigma)}E^{k(\gamma)}\bigg).
		\label{nnE_caso2}
	\end{align}
	If both $j$ and $k$ are greater or equal than $2$ then it becomes
	\begin{align}
		\nabla_{i(\alpha)}\nabla_{j(\alpha)}E^{k(\gamma)}&=\Gamma^{k(\gamma)}_{i(\alpha)j(\alpha)}\nabla_{j(\alpha)}E^{j(\alpha)}-\Gamma^{k(\alpha)}_{i(\alpha)j(\alpha)}\xcancel{\nabla_{k(\alpha)}E^{k(\gamma)}}
		\notag\\&+\Gamma^{k(\gamma)}_{i(\alpha)j(\gamma)}\xcancel{\nabla_{j(\alpha)}E^{j(\gamma)}}-\Gamma^{k(\gamma)}_{i(\alpha)j(\alpha)}\nabla_{k(\gamma)}E^{k(\gamma)}
		\notag\\&-\underset{\sigma\neq\alpha,\gamma}{\sum}\,\Gamma^{k(\sigma)}_{i(\alpha)j(\alpha)}\xcancel{\nabla_{k(\sigma)}E^{k(\gamma)}}
		\notag\\&=\Gamma^{k(\gamma)}_{i(\alpha)j(\alpha)}\big(\cancel{\nabla_{j(\alpha)}E^{j(\alpha)}}-\cancel{\nabla_{k(\gamma)}E^{k(\gamma)}}\big)=0.
		\notag
	\end{align}
	If $j=k=1$ then \eqref{nnE_caso2} becomes
	\begin{align}
		\nabla_{i(\alpha)}\nabla_{j(\alpha)}E^{k(\gamma)}&=\Gamma^{1(\gamma)}_{i(\alpha)1(\alpha)}\nabla_{1(\alpha)}E^{1(\alpha)}-\Gamma^{1(\alpha)}_{i(\alpha)1(\alpha)}\nabla_{1(\alpha)}E^{1(\gamma)}
		\notag\\&+\Gamma^{1(\gamma)}_{i(\alpha)1(\gamma)}\nabla_{1(\alpha)}E^{1(\gamma)}-\Gamma^{1(\gamma)}_{i(\alpha)1(\alpha)}\nabla_{1(\gamma)}E^{1(\gamma)}
		\notag\\&-\underset{\sigma\neq\alpha,\gamma}{\sum}\,\Gamma^{1(\sigma)}_{i(\alpha)1(\alpha)}\nabla_{1(\sigma)}E^{1(\gamma)}
		\notag\\&=-\Gamma^{1(\gamma)}_{i(\alpha)1(\gamma)}\nabla_{1(\alpha)}E^{1(\alpha)}\notag\\&+\Gamma^{1(\alpha)}_{i(\alpha)1(\gamma)}\nabla_{1(\alpha)}E^{1(\gamma)}+\underset{\sigma\neq\alpha,\gamma}{\sum}\,\Gamma^{1(\alpha)}_{i(\alpha)1(\sigma)}\nabla_{1(\alpha)}E^{1(\gamma)}
		\notag\\&+\Gamma^{1(\gamma)}_{i(\alpha)1(\gamma)}\nabla_{1(\alpha)}E^{1(\gamma)}+\Gamma^{1(\gamma)}_{i(\alpha)1(\gamma)}\nabla_{1(\gamma)}E^{1(\gamma)}
		\notag\\&+\underset{\sigma\neq\alpha,\gamma}{\sum}\,\Gamma^{1(\sigma)}_{i(\alpha)1(\sigma)}\nabla_{1(\sigma)}E^{1(\gamma)}
		\notag\\&=-\Gamma^{1(\gamma)}_{i(\alpha)1(\gamma)}\bigg(1-\underset{\sigma\neq\alpha}{\sum}\,m_\sigma\epsilon_\sigma\bigg)\notag\\&+\Gamma^{1(\alpha)}_{i(\alpha)1(\gamma)}\,m_\alpha\epsilon_\alpha+\underset{\sigma\neq\alpha,\gamma}{\sum}\,\Gamma^{1(\alpha)}_{i(\alpha)1(\sigma)}\,m_\alpha\epsilon_\alpha
		\notag\\&+\Gamma^{1(\gamma)}_{i(\alpha)1(\gamma)}\,m_\alpha\epsilon_\alpha+\Gamma^{1(\gamma)}_{i(\alpha)1(\gamma)}\bigg(1-\underset{\sigma\neq\gamma}{\sum}\,m_\sigma\epsilon_\sigma\bigg)
		\notag\\&+\underset{\sigma\neq\alpha,\gamma}{\sum}\,\Gamma^{1(\sigma)}_{i(\alpha)1(\sigma)}\,m_\sigma\epsilon_\sigma
		\notag\\&=\Gamma^{1(\gamma)}_{i(\alpha)1(\gamma)}\,m_\gamma\epsilon_\gamma+\Gamma^{1(\alpha)}_{i(\alpha)1(\gamma)}\,m_\alpha\epsilon_\alpha+\underset{\sigma\neq\alpha,\gamma}{\sum}\,\Gamma^{1(\alpha)}_{i(\alpha)1(\sigma)}\,m_\alpha\epsilon_\alpha
		\notag\\&+\underset{\sigma\neq\alpha,\gamma}{\sum}\,\Gamma^{1(\sigma)}_{i(\alpha)1(\sigma)}\,m_\sigma\epsilon_\sigma
		\notag
	\end{align}
	which trivially vanishes if $i\geq2$ and becomes
	\begin{align}
		\nabla_{i(\alpha)}\nabla_{j(\alpha)}E^{k(\gamma)}&=\Gamma^{1(\gamma)}_{1(\alpha)1(\gamma)}\,m_\gamma\epsilon_\gamma+\Gamma^{1(\alpha)}_{1(\alpha)1(\gamma)}\,m_\alpha\epsilon_\alpha
		\notag\\&+\underset{\sigma\neq\alpha,\gamma}{\sum}\,\big(\Gamma^{1(\alpha)}_{1(\alpha)1(\sigma)}\,m_\alpha\epsilon_\alpha+\Gamma^{1(\sigma)}_{1(\alpha)1(\sigma)}\,m_\sigma\epsilon_\sigma\big)
		\notag\\&=-\frac{m_\alpha\epsilon_\alpha}{u^{1(\alpha)}-u^{1(\gamma)}}\,m_\gamma\epsilon_\gamma+\frac{m_\gamma\epsilon_\gamma}{u^{1(\alpha)}-u^{1(\gamma)}}\,m_\alpha\epsilon_\alpha
		\notag\\&+\underset{\sigma\neq\alpha,\gamma}{\sum}\,\bigg(\frac{m_\sigma\epsilon_\sigma}{u^{1(\alpha)}-u^{1(\sigma)}}\,m_\alpha\epsilon_\alpha-\frac{m_\alpha\epsilon_\alpha}{u^{1(\alpha)}-u^{1(\sigma)}}\,m_\sigma\epsilon_\sigma\bigg)=0
		\notag
	\end{align}
	if $i=1$. If $j=1$ and $k\geq2$ then \eqref{nnE_caso2} becomes
	\begin{align}
		\nabla_{i(\alpha)}\nabla_{j(\alpha)}E^{k(\gamma)}&=\Gamma^{k(\gamma)}_{i(\alpha)1(\alpha)}\nabla_{1(\alpha)}E^{1(\alpha)}-\Gamma^{k(\alpha)}_{i(\alpha)1(\alpha)}\xcancel{\nabla_{k(\alpha)}E^{k(\gamma)}}
		\notag\\&+\Gamma^{k(\gamma)}_{i(\alpha)1(\gamma)}\nabla_{1(\alpha)}E^{1(\gamma)}-\Gamma^{k(\gamma)}_{i(\alpha)1(\alpha)}\nabla_{k(\gamma)}E^{k(\gamma)}
		\notag\\&-\underset{\sigma\neq\alpha,\gamma}{\sum}\,\Gamma^{k(\sigma)}_{i(\alpha)1(\alpha)}\xcancel{\nabla_{k(\sigma)}E^{k(\gamma)}}
		\notag\\&=-\Gamma^{k(\gamma)}_{i(\alpha)1(\gamma)}\bigg(1-\underset{\sigma\neq\alpha}{\sum}\,m_\sigma\epsilon_\sigma\bigg)
		\notag\\&+\Gamma^{k(\gamma)}_{i(\alpha)1(\gamma)}\,m_\alpha\epsilon_\alpha+\Gamma^{k(\gamma)}_{i(\alpha)1(\gamma)}\bigg(1-\overset{r}{\underset{\tau=1}{\sum}}\,m_\tau\epsilon_\tau\bigg)=0.
		\notag
	\end{align}
	If $j\geq2$ and $k=1$ then \eqref{nnE_caso2} becomes
	\begin{align}
		\nabla_{i(\alpha)}\nabla_{j(\alpha)}E^{k(\gamma)}&=\xcancel{\Gamma^{1(\gamma)}_{i(\alpha)j(\alpha)}}\nabla_{j(\alpha)}E^{j(\alpha)}-\xcancel{\Gamma^{1(\alpha)}_{i(\alpha)j(\alpha)}}\nabla_{1(\alpha)}E^{1(\gamma)}
		\notag\\&+\xcancel{\Gamma^{1(\gamma)}_{i(\alpha)j(\gamma)}}\nabla_{j(\alpha)}E^{j(\gamma)}-\xcancel{\Gamma^{1(\gamma)}_{i(\alpha)j(\alpha)}}\nabla_{1(\gamma)}E^{1(\gamma)}
		\notag\\&-\underset{\sigma\neq\alpha,\gamma}{\sum}\,\xcancel{\Gamma^{1(\sigma)}_{i(\alpha)j(\alpha)}}\nabla_{1(\sigma)}E^{1(\gamma)}=0.
		\notag
	\end{align}
	\\\textbf{Case 3: $\alpha=\gamma\neq\beta$.} 
	\begin{align}
		\nabla_{i(\alpha)}\nabla_{j(\beta)}E^{k(\gamma)}&=\Gamma^{k(\alpha)}_{i(\alpha)j(\sigma)}\nabla_{j(\beta)}E^{j(\sigma)}-\Gamma^{k(\sigma)}_{i(\alpha)j(\beta)}\nabla_{k(\sigma)}E^{k(\alpha)}
		\notag\\&=\Gamma^{k(\alpha)}_{i(\alpha)j(\alpha)}\nabla_{j(\beta)}E^{j(\alpha)}-\Gamma^{k(\alpha)}_{i(\alpha)j(\beta)}\nabla_{k(\alpha)}E^{k(\alpha)}
		\notag\\&+\Gamma^{k(\alpha)}_{i(\alpha)j(\beta)}\nabla_{j(\beta)}E^{j(\beta)}-\Gamma^{k(\beta)}_{i(\alpha)j(\beta)}\nabla_{k(\beta)}E^{k(\alpha)}
		\notag\\&+\underset{\sigma\neq\alpha,\beta}{\sum}\,\bigg(\Gamma^{k(\alpha)}_{i(\alpha)j(\sigma)}\nabla_{j(\beta)}E^{j(\sigma)}-\xcancel{\Gamma^{k(\sigma)}_{i(\alpha)j(\beta)}}\nabla_{k(\sigma)}E^{k(\alpha)}\bigg).
		\label{nnE_caso3}
	\end{align}
	If both $j$ and $k$ are greater or equal than $2$ then it becomes
	\begin{align}
		\nabla_{i(\alpha)}\nabla_{j(\beta)}E^{k(\gamma)}&=\Gamma^{k(\alpha)}_{i(\alpha)j(\alpha)}\xcancel{\nabla_{j(\beta)}E^{j(\alpha)}}-\Gamma^{k(\alpha)}_{i(\alpha)j(\beta)}\nabla_{k(\alpha)}E^{k(\alpha)}
		\notag\\&+\Gamma^{k(\alpha)}_{i(\alpha)j(\beta)}\nabla_{j(\beta)}E^{j(\beta)}-\Gamma^{k(\beta)}_{i(\alpha)j(\beta)}\xcancel{\nabla_{k(\beta)}E^{k(\alpha)}}
		\notag\\&+\underset{\sigma\neq\alpha,\beta}{\sum}\,\Gamma^{k(\alpha)}_{i(\alpha)j(\sigma)}\xcancel{\nabla_{j(\beta)}E^{j(\sigma)}}
		\notag\\&=\Gamma^{k(\alpha)}_{i(\alpha)j(\beta)}\big(\cancel{\nabla_{j(\beta)}E^{j(\beta)}}-\cancel{\nabla_{k(\alpha)}E^{k(\alpha)}}\big)=0.
		\notag
	\end{align}
	If $j=k=1$ then \eqref{nnE_caso3} becomes
	\begin{align}
		\nabla_{i(\alpha)}\nabla_{j(\beta)}E^{k(\gamma)}&=\Gamma^{1(\alpha)}_{i(\alpha)1(\alpha)}\nabla_{1(\beta)}E^{1(\alpha)}-\Gamma^{1(\alpha)}_{i(\alpha)1(\beta)}\nabla_{1(\alpha)}E^{1(\alpha)}
		\notag\\&+\Gamma^{1(\alpha)}_{i(\alpha)1(\beta)}\nabla_{1(\beta)}E^{1(\beta)}-\Gamma^{1(\beta)}_{i(\alpha)1(\beta)}\nabla_{1(\beta)}E^{1(\alpha)}
		\notag\\&+\underset{\sigma\neq\alpha,\beta}{\sum}\,\Gamma^{1(\alpha)}_{i(\alpha)1(\sigma)}\nabla_{1(\beta)}E^{1(\sigma)}
		\notag
	\end{align}
	which trivially vanishes if $i\geq2$ and is
	\begin{align}
		\nabla_{i(\alpha)}\nabla_{j(\beta)}E^{k(\gamma)}&=\Gamma^{1(\alpha)}_{1(\alpha)1(\alpha)}\nabla_{1(\beta)}E^{1(\alpha)}-\Gamma^{1(\alpha)}_{1(\alpha)1(\beta)}\nabla_{1(\alpha)}E^{1(\alpha)}
		\notag\\&+\Gamma^{1(\alpha)}_{1(\alpha)1(\beta)}\nabla_{1(\beta)}E^{1(\beta)}-\Gamma^{1(\beta)}_{1(\alpha)1(\beta)}\nabla_{1(\beta)}E^{1(\alpha)}
		\notag\\&+\underset{\sigma\neq\alpha,\beta}{\sum}\,\Gamma^{1(\alpha)}_{1(\alpha)1(\sigma)}\nabla_{1(\beta)}E^{1(\sigma)}
		\notag\\&=-\Gamma^{1(\alpha)}_{1(\beta)1(\alpha)}\,m_\beta\epsilon_\beta-\cancel{\underset{\sigma\neq\alpha,\beta}{\sum}\,\Gamma^{1(\alpha)}_{1(\sigma)1(\alpha)}\,m_\beta\epsilon_\beta}\notag\\&-\Gamma^{1(\alpha)}_{1(\alpha)1(\beta)}\bigg(\bcancel{1}-\underset{\sigma\neq\alpha}{\sum}\,m_\sigma\epsilon_\sigma\bigg)
		\notag\\&+\Gamma^{1(\alpha)}_{1(\alpha)1(\beta)}\bigg(\bcancel{1}-\underset{\sigma\neq\beta}{\sum}\,m_\sigma\epsilon_\sigma\bigg)-\Gamma^{1(\beta)}_{1(\alpha)1(\beta)}\,m_\beta\epsilon_\beta
		\notag\\&+\cancel{\underset{\sigma\neq\alpha,\beta}{\sum}\,\Gamma^{1(\alpha)}_{1(\alpha)1(\sigma)}\,m_\beta\epsilon_\beta}
		\notag\\&=\Gamma^{1(\alpha)}_{1(\beta)1(\alpha)}\bigg(-m_\beta\epsilon_\beta+\underset{\sigma\neq\alpha}{\sum}\,m_\sigma\epsilon_\sigma-\underset{\sigma\neq\beta}{\sum}\,m_\sigma\epsilon_\sigma\bigg)-\Gamma^{1(\beta)}_{1(\alpha)1(\beta)}\,m_\beta\epsilon_\beta
		\notag\\&=-\Gamma^{1(\alpha)}_{1(\beta)1(\alpha)}\,m_\alpha\epsilon_\alpha-\Gamma^{1(\beta)}_{1(\alpha)1(\beta)}\,m_\beta\epsilon_\beta
		\notag\\&=-\frac{m_\beta\epsilon_\beta}{u^{1(\alpha)}-u^{1(\beta)}}\,m_\alpha\epsilon_\alpha+\frac{m_\alpha\epsilon_\alpha}{u^{1(\alpha)}-u^{1(\beta)}}\,m_\beta\epsilon_\beta=0
		\notag
	\end{align}
	if $i=1$. If $j=1$ and $k\geq2$ then \eqref{nnE_caso3} becomes
	\begin{align}
		\nabla_{i(\alpha)}\nabla_{j(\beta)}E^{k(\gamma)}&=\Gamma^{k(\alpha)}_{i(\alpha)1(\alpha)}\nabla_{1(\beta)}E^{1(\alpha)}-\Gamma^{k(\alpha)}_{i(\alpha)1(\beta)}\nabla_{k(\alpha)}E^{k(\alpha)}
		\notag\\&+\Gamma^{k(\alpha)}_{i(\alpha)1(\beta)}\nabla_{1(\beta)}E^{1(\beta)}-\Gamma^{k(\beta)}_{i(\alpha)1(\beta)}\xcancel{\nabla_{k(\beta)}E^{k(\alpha)}}
		\notag\\&+\underset{\sigma\neq\alpha,\beta}{\sum}\,\Gamma^{k(\alpha)}_{i(\alpha)1(\sigma)}\nabla_{1(\beta)}E^{1(\sigma)}
		\notag\\&=-\Gamma^{k(\alpha)}_{i(\alpha)1(\beta)}\,m_\beta\epsilon_\beta-\cancel{\underset{\sigma\neq\alpha,\beta}{\sum}\,\Gamma^{k(\alpha)}_{i(\alpha)1(\sigma)}\,m_\beta\epsilon_\beta}\notag\\&-\Gamma^{k(\alpha)}_{i(\alpha)1(\beta)}\bigg(\bcancel{1}-\overset{r}{\underset{\tau=1}{\sum}}\,m_\tau\epsilon_\tau\bigg)+\Gamma^{k(\alpha)}_{i(\alpha)1(\beta)}\bigg(\bcancel{1}-\underset{\sigma\neq\beta}{\sum}\,m_\sigma\epsilon_\sigma\bigg)
		\notag\\&+\cancel{\underset{\sigma\neq\alpha,\beta}{\sum}\,\Gamma^{k(\alpha)}_{i(\alpha)1(\sigma)}\,m_\beta\epsilon_\beta}
		\notag\\&=\Gamma^{k(\alpha)}_{i(\alpha)1(\beta)}\bigg(-m_\beta\epsilon_\beta+\overset{r}{\underset{\tau=1}{\sum}}\,m_\tau\epsilon_\tau-\underset{\sigma\neq\beta}{\sum}\,m_\sigma\epsilon_\sigma\bigg)=0.
		\notag
	\end{align}
	If $j\geq2$ and $k=1$ then \eqref{nnE_caso3} becomes
	\begin{align}
		\nabla_{i(\alpha)}\nabla_{j(\beta)}E^{k(\gamma)}&=\Gamma^{1(\alpha)}_{i(\alpha)j(\alpha)}\xcancel{\nabla_{j(\beta)}E^{j(\alpha)}}-\xcancel{\Gamma^{1(\alpha)}_{i(\alpha)j(\beta)}}\nabla_{1(\alpha)}E^{1(\alpha)}
		\notag\\&+\xcancel{\Gamma^{1(\alpha)}_{i(\alpha)j(\beta)}}\nabla_{j(\beta)}E^{j(\beta)}-\xcancel{\Gamma^{1(\beta)}_{i(\alpha)j(\beta)}}\nabla_{1(\beta)}E^{1(\alpha)}
		\notag\\&+\underset{\sigma\neq\alpha,\beta}{\sum}\,\Gamma^{1(\alpha)}_{i(\alpha)j(\sigma)}\xcancel{\nabla_{j(\beta)}E^{j(\sigma)}}=0.
		\notag
	\end{align}
	\\\textbf{Case 4: $\beta=\gamma\neq\alpha$.} 
	\begin{align}
		\nabla_{i(\alpha)}\nabla_{j(\beta)}E^{k(\gamma)}&=\Gamma^{k(\beta)}_{i(\alpha)j(\sigma)}\nabla_{j(\beta)}E^{j(\sigma)}-\Gamma^{k(\sigma)}_{i(\alpha)j(\beta)}\nabla_{k(\sigma)}E^{k(\beta)}
		\notag\\&=\Gamma^{k(\beta)}_{i(\alpha)j(\alpha)}\nabla_{j(\beta)}E^{j(\alpha)}-\Gamma^{k(\alpha)}_{i(\alpha)j(\beta)}\nabla_{k(\alpha)}E^{k(\beta)}
		\notag\\&+\Gamma^{k(\beta)}_{i(\alpha)j(\beta)}\nabla_{j(\beta)}E^{j(\beta)}-\Gamma^{k(\beta)}_{i(\alpha)j(\beta)}\nabla_{k(\beta)}E^{k(\beta)}
		\notag\\&+\underset{\sigma\neq\alpha,\beta}{\sum}\,\big(\xcancel{\Gamma^{k(\beta)}_{i(\alpha)j(\sigma)}}\nabla_{j(\beta)}E^{j(\sigma)}-\xcancel{\Gamma^{k(\sigma)}_{i(\alpha)j(\beta)}}\nabla_{k(\sigma)}E^{k(\beta)}\big).
		\label{nnE_caso4}
	\end{align}
	If both $j$ and $k$ are greater or equal than $2$ then it becomes
	\begin{align}
		\nabla_{i(\alpha)}\nabla_{j(\beta)}E^{k(\gamma)}&=\Gamma^{k(\beta)}_{i(\alpha)j(\alpha)}\xcancel{\nabla_{j(\beta)}E^{j(\alpha)}}-\Gamma^{k(\alpha)}_{i(\alpha)j(\beta)}\xcancel{\nabla_{k(\alpha)}E^{k(\beta)}}
		\notag\\&+\Gamma^{k(\beta)}_{i(\alpha)j(\beta)}\nabla_{j(\beta)}E^{j(\beta)}-\Gamma^{k(\beta)}_{i(\alpha)j(\beta)}\nabla_{k(\beta)}E^{k(\beta)}
		\notag\\&=\Gamma^{k(\beta)}_{i(\alpha)j(\beta)}\bigg(1-\overset{r}{\underset{\tau=1}{\sum}}\,m_\tau\epsilon_\tau-1+\overset{r}{\underset{\tau=1}{\sum}}\,m_\tau\epsilon_\tau\bigg)=0.
		\notag
	\end{align}
	If $j=k=1$ then \eqref{nnE_caso4} becomes
	\begin{align}
		\nabla_{i(\alpha)}\nabla_{j(\beta)}E^{k(\gamma)}&=\Gamma^{1(\beta)}_{i(\alpha)1(\alpha)}\nabla_{1(\beta)}E^{1(\alpha)}-\Gamma^{1(\alpha)}_{i(\alpha)1(\beta)}\nabla_{1(\alpha)}E^{1(\beta)}
		\notag\\&+\Gamma^{1(\beta)}_{i(\alpha)1(\beta)}\nabla_{1(\beta)}E^{1(\beta)}-\Gamma^{1(\beta)}_{i(\alpha)1(\beta)}\nabla_{1(\beta)}E^{1(\beta)}
		\notag\\&=-\Gamma^{1(\beta)}_{i(\alpha)1(\beta)}\,m_\beta\epsilon_\beta-\Gamma^{1(\alpha)}_{i(\alpha)1(\beta)}\,m_\alpha\epsilon_\alpha
		\notag\\&+\Gamma^{1(\beta)}_{i(\alpha)1(\beta)}\bigg(\bcancel{1}-\cancel{\underset{\sigma\neq\beta}{\sum}\,m_\sigma\epsilon_\sigma}\bigg)-\Gamma^{1(\beta)}_{i(\alpha)1(\beta)}\bigg(\bcancel{1}-\cancel{\underset{\sigma\neq\beta}{\sum}\,m_\sigma\epsilon_\sigma}\bigg)
		\notag\\&=\frac{m_\alpha\epsilon_\alpha}{u^{1(\alpha)}-u^{1(\beta)}}\,m_\beta\epsilon_\beta-\frac{m_\beta\epsilon_\beta}{u^{1(\alpha)}-u^{1(\beta)}}\,m_\alpha\epsilon_\alpha=0.
		\notag
	\end{align}
	If $j=1$ and $k\geq2$ then \eqref{nnE_caso4} becomes
	\begin{align}
		\nabla_{i(\alpha)}\nabla_{j(\beta)}E^{k(\gamma)}&=\Gamma^{k(\beta)}_{i(\alpha)1(\alpha)}\nabla_{1(\beta)}E^{1(\alpha)}-\Gamma^{k(\alpha)}_{i(\alpha)1(\beta)}\xcancel{\nabla_{k(\alpha)}E^{k(\beta)}}
		\notag\\&+\Gamma^{k(\beta)}_{i(\alpha)1(\beta)}\nabla_{1(\beta)}E^{1(\beta)}-\Gamma^{k(\beta)}_{i(\alpha)1(\beta)}\nabla_{k(\beta)}E^{k(\beta)}
		\notag\\&=\Gamma^{k(\beta)}_{i(\alpha)1(\beta)}\bigg(-m_\beta\epsilon_\beta+1-\underset{\sigma\neq\beta}{\sum}\,m_\sigma\epsilon_\sigma-1+\overset{r}{\underset{\tau=1}{\sum}}\,m_\tau\epsilon_\tau\bigg)=0.
		\notag
	\end{align}
	If $j\geq2$ and $k=1$ then \eqref{nnE_caso4} becomes
	\begin{align}
		\nabla_{i(\alpha)}\nabla_{j(\beta)}E^{k(\gamma)}&=\Gamma^{1(\beta)}_{i(\alpha)j(\alpha)}\xcancel{\nabla_{j(\beta)}E^{j(\alpha)}}-\xcancel{\Gamma^{1(\alpha)}_{i(\alpha)j(\beta)}}\nabla_{1(\alpha)}E^{1(\beta)}
		\notag\\&+\xcancel{\Gamma^{1(\beta)}_{i(\alpha)j(\beta)}}\nabla_{j(\beta)}E^{j(\beta)}-\xcancel{\Gamma^{1(\beta)}_{i(\alpha)j(\beta)}}\nabla_{1(\beta)}E^{1(\beta)}=0.
		\notag
	\end{align}
	\\\textbf{Case 5: $\alpha\neq\beta\neq\gamma\neq\alpha$.} 
	\begin{align}
		\nabla_{i(\alpha)}\nabla_{j(\beta)}E^{k(\gamma)}&=\Gamma^{k(\gamma)}_{i(\alpha)j(\sigma)}\nabla_{j(\beta)}E^{j(\sigma)}-\Gamma^{k(\sigma)}_{i(\alpha)j(\beta)}\nabla_{k(\sigma)}E^{k(\gamma)}
		\notag\\&=\Gamma^{k(\gamma)}_{i(\alpha)j(\alpha)}\nabla_{j(\beta)}E^{j(\alpha)}-\Gamma^{k(\alpha)}_{i(\alpha)j(\beta)}\nabla_{k(\alpha)}E^{k(\gamma)}
		\notag\\&+\xcancel{\Gamma^{k(\gamma)}_{i(\alpha)j(\beta)}}\nabla_{j(\beta)}E^{j(\beta)}-\Gamma^{k(\beta)}_{i(\alpha)j(\beta)}\nabla_{k(\beta)}E^{k(\gamma)}
		\notag\\&+\Gamma^{k(\gamma)}_{i(\alpha)j(\gamma)}\nabla_{j(\beta)}E^{j(\gamma)}-\xcancel{\Gamma^{k(\gamma)}_{i(\alpha)j(\beta)}}\nabla_{k(\gamma)}E^{k(\gamma)}.
		\label{nnE_caso5}
	\end{align}
	If both $j$ and $k$ are greater or equal than $2$ then it becomes
	\begin{align}
		\nabla_{i(\alpha)}\nabla_{j(\beta)}E^{k(\gamma)}&=\Gamma^{k(\gamma)}_{i(\alpha)j(\alpha)}\xcancel{\nabla_{j(\beta)}E^{j(\alpha)}}-\Gamma^{k(\alpha)}_{i(\alpha)j(\beta)}\xcancel{\nabla_{k(\alpha)}E^{k(\gamma)}}
		\notag\\&-\Gamma^{k(\beta)}_{i(\alpha)j(\beta)}\xcancel{\nabla_{k(\beta)}E^{k(\gamma)}}+\Gamma^{k(\gamma)}_{i(\alpha)j(\gamma)}\xcancel{\nabla_{j(\beta)}E^{j(\gamma)}}=0.
		\notag
	\end{align}
	If $j=k=1$ then \eqref{nnE_caso5} becomes
	\begin{align}
		\nabla_{i(\alpha)}\nabla_{j(\beta)}E^{k(\gamma)}&=\Gamma^{1(\gamma)}_{i(\alpha)1(\alpha)}\nabla_{1(\beta)}E^{1(\alpha)}-\Gamma^{1(\alpha)}_{i(\alpha)1(\beta)}\nabla_{1(\alpha)}E^{1(\gamma)}
		\notag\\&-\Gamma^{1(\beta)}_{i(\alpha)1(\beta)}\nabla_{1(\beta)}E^{1(\gamma)}+\Gamma^{1(\gamma)}_{i(\alpha)1(\gamma)}\nabla_{1(\beta)}E^{1(\gamma)}
		\notag\\&=-\cancel{\Gamma^{1(\gamma)}_{i(\alpha)1(\gamma)}\,m_\beta\epsilon_\beta}-\Gamma^{1(\alpha)}_{i(\alpha)1(\beta)}\,m_\alpha\epsilon_\alpha
		\notag\\&-\Gamma^{1(\beta)}_{i(\alpha)1(\beta)}\,m_\beta\epsilon_\beta+\cancel{\Gamma^{1(\gamma)}_{i(\alpha)1(\gamma)}\,m_\beta\epsilon_\beta}
		\notag\\&=-\frac{m_\beta\epsilon_\beta}{u^{1(\alpha)}-u^{1(\beta)}}\,m_\alpha\epsilon_\alpha+\frac{m_\alpha\epsilon_\alpha}{u^{1(\alpha)}-u^{1(\beta)}}\,m_\beta\epsilon_\beta=0.
		\notag
	\end{align}
	If $j=1$ and $k\geq2$ then \eqref{nnE_caso5} becomes
	\begin{align}
		\nabla_{i(\alpha)}\nabla_{j(\beta)}E^{k(\gamma)}&=\Gamma^{k(\gamma)}_{i(\alpha)1(\alpha)}\nabla_{1(\beta)}E^{1(\alpha)}-\Gamma^{k(\alpha)}_{i(\alpha)1(\beta)}\xcancel{\nabla_{k(\alpha)}E^{k(\gamma)}}
		\notag\\&-\Gamma^{k(\beta)}_{i(\alpha)1(\beta)}\xcancel{\nabla_{k(\beta)}E^{k(\gamma)}}+\Gamma^{k(\gamma)}_{i(\alpha)1(\gamma)}\nabla_{1(\beta)}E^{1(\gamma)}
		\notag\\&=-\Gamma^{k(\gamma)}_{i(\alpha)1(\gamma)}\,m_\beta\epsilon_\beta+\Gamma^{k(\gamma)}_{i(\alpha)1(\gamma)}\,m_\beta\epsilon_\beta=0.
		\notag
	\end{align}
	If $j\geq2$ and $k=1$ then \eqref{nnE_caso5} becomes
	\begin{align}
		\nabla_{i(\alpha)}\nabla_{j(\beta)}E^{k(\gamma)}&=\Gamma^{1(\gamma)}_{i(\alpha)j(\alpha)}\xcancel{\nabla_{j(\beta)}E^{j(\alpha)}}-\xcancel{\Gamma^{1(\alpha)}_{i(\alpha)j(\beta)}}\nabla_{1(\alpha)}E^{1(\gamma)}
		\notag\\&-\xcancel{\Gamma^{1(\beta)}_{i(\alpha)j(\beta)}}\nabla_{1(\beta)}E^{1(\gamma)}+\Gamma^{1(\gamma)}_{i(\alpha)j(\gamma)}\xcancel{\nabla_{j(\beta)}E^{j(\gamma)}}=0.
		\notag
	\end{align}
	This proves that $\nabla\nabla E=0$.
	
	We are now left with proving the flatness of $\nabla$, that is $R=0$. Due to the simmetries of
	\begin{align}
		\label{Rdef}
		R^{i(\alpha)}_{h(\epsilon)k(\gamma)j(\beta)}&=\partial_{k(\gamma)}\Gamma^{i(\alpha)}_{h(\epsilon)j(\beta)}-\partial_{h(\epsilon)}\Gamma^{i(\alpha)}_{k(\gamma)j(\beta)}\notag\\&+\overset{r}{\underset{\sigma=1}{\sum}}\,\overset{m_\sigma}{\underset{l=1}{\sum}}\,\bigg(\Gamma^{i(\alpha)}_{k(\gamma)l(\sigma)}\Gamma^{l(\sigma)}_{h(\epsilon)j(\beta)}-\Gamma^{i(\alpha)}_{h(\epsilon)l(\sigma)}\Gamma^{l(\sigma)}_{k(\gamma)j(\beta)}\bigg)
	\end{align}
	the cases to be considered are the following ones:
	\begin{itemize}
		\item[$1.$] $\alpha=\beta=\gamma=\epsilon$
		\item[$2.$] $\alpha=\beta=\gamma\neq\epsilon$
		\item[$3.$] $\alpha=\gamma=\epsilon\neq\beta$
		\item[$4.$] $\beta=\gamma=\epsilon\neq\alpha$
		\item[$5.$] $\alpha=\beta\neq\gamma=\epsilon$
		\item[$6.$] $\alpha=\gamma\neq\beta=\epsilon$
		\item[$7.$] $\alpha=\beta\notin\{\gamma,\epsilon\}$, $\gamma\neq\epsilon$
		\item[$8.$] $\alpha=\gamma\notin\{\beta,\epsilon\}$, $\beta\neq\epsilon$
		\item[$9.$] $\beta=\gamma\notin\{\alpha,\epsilon\}$, $\alpha\neq\epsilon$
		\item[$10.$] $\gamma=\epsilon\notin\{\alpha,\beta\}$, $\alpha\neq\beta$
		\item[$11.$] $\alpha$, $\beta$, $\gamma$ and $\epsilon$ are pairwise distinct.
	\end{itemize}
	\textbf{Case 1: $\alpha=\beta=\gamma=\epsilon$.} Our goal is to prove that
	\begin{align}
		\label{Rdef_caso1}
		R^{i(\alpha)}_{h(\alpha)k(\alpha)j(\alpha)}&=\partial_{k(\alpha)}\Gamma^{i(\alpha)}_{h(\alpha)j(\alpha)}-\partial_{h(\alpha)}\Gamma^{i(\alpha)}_{k(\alpha)j(\alpha)}\notag\\&+\overset{r}{\underset{\sigma=1}{\sum}}\,\overset{m_\sigma}{\underset{l=1}{\sum}}\,\bigg(\Gamma^{i(\alpha)}_{k(\alpha)l(\sigma)}\Gamma^{l(\sigma)}_{h(\alpha)j(\alpha)}-\Gamma^{i(\alpha)}_{h(\alpha)l(\sigma)}\Gamma^{l(\sigma)}_{k(\alpha)j(\alpha)}\bigg)
		\notag\\&=\partial_{k(\alpha)}\Gamma^{i(\alpha)}_{h(\alpha)j(\alpha)}-\partial_{h(\alpha)}\Gamma^{i(\alpha)}_{k(\alpha)j(\alpha)}\notag\\&+\overset{m_\alpha}{\underset{l=1}{\sum}}\,\bigg(\Gamma^{i(\alpha)}_{k(\alpha)l(\alpha)}\Gamma^{l(\alpha)}_{h(\alpha)j(\alpha)}-\Gamma^{i(\alpha)}_{h(\alpha)l(\alpha)}\Gamma^{l(\alpha)}_{k(\alpha)j(\alpha)}\bigg)\notag\\&+\overset{}{\underset{\sigma\neq\alpha}{\sum}}\,\overset{m_\sigma}{\underset{l=1}{\sum}}\,\bigg(\Gamma^{i(\alpha)}_{k(\alpha)l(\sigma)}\Gamma^{l(\sigma)}_{h(\alpha)j(\alpha)}-\Gamma^{i(\alpha)}_{h(\alpha)l(\sigma)}\Gamma^{l(\sigma)}_{k(\alpha)j(\alpha)}\bigg)
	\end{align}
	vanishes. Let us first note that for each integer $n\geq2$ it is possible to recover part of the Christoffel symbols for the case where $m_\alpha=n+1$ starting from the ones for the case where $m_\alpha=n$. More precisely, let us denote by $(\Gamma^{n+1})^k_{ij}$ the Christoffel symbols in the case where $m_\alpha=n+1$ and by $(\Gamma^{n})^k_{ij}$ the Christoffel symbols in the case where $m_\alpha=n$, where the sizes $m_\sigma$ of the remaining blocks $\sigma\neq\alpha$ are the same and where the constant $\epsilon_{1(\alpha)}$ has been replaced by $\frac{n+1}{n}\,\epsilon_{1(\alpha)}$. Then
	\begin{align}
		(\Gamma^{n+1})^k_{ij}&=(\Gamma^{n})^k_{ij}
		\notag
	\end{align}
	whenever the indices $i,j,k$ are different from $(n+1)(\alpha)$. In the wake of this property, we will proceed by induction over $m_\alpha$. Let us first consider the case\footnote{The case where $m_\alpha=1$ is trivial, as $k=h=1$ directly implies $R^{i(\alpha)}_{h(\alpha)k(\alpha)j(\alpha)}=0$.} where $m_\alpha=2$, so that the indices $i$, $j$, $k$ and $h$ run from $1$ to $2$. In particular, since $R^{i(\alpha)}_{h(\alpha)k(\alpha)j(\alpha)}$ automatically vanishes when $k=h$, the only relevant cases are the one where $k=1$, $h=2$ and the one where $k=2$, $k=1$. By using the simmetries of $R$, we only consider the case where $k=1$ and $h=2$, hence obtaining
	\begin{align}
		\notag
		R^{i(\alpha)}_{2(\alpha)1(\alpha)j(\alpha)}&=\partial_{1(\alpha)}\Gamma^{i(\alpha)}_{2(\alpha)j(\alpha)}-\partial_{2(\alpha)}\Gamma^{i(\alpha)}_{1(\alpha)j(\alpha)}\notag\\&+\overset{2}{\underset{l=1}{\sum}}\,\bigg(\Gamma^{i(\alpha)}_{1(\alpha)l(\alpha)}\Gamma^{l(\alpha)}_{2(\alpha)j(\alpha)}-\Gamma^{i(\alpha)}_{2(\alpha)l(\alpha)}\Gamma^{l(\alpha)}_{1(\alpha)j(\alpha)}\bigg)\notag\\&+\overset{}{\underset{\sigma\neq\alpha}{\sum}}\,\overset{m_\sigma}{\underset{l=1}{\sum}}\,\bigg(\Gamma^{i(\alpha)}_{1(\alpha)l(\sigma)}\Gamma^{l(\sigma)}_{2(\alpha)j(\alpha)}-\Gamma^{i(\alpha)}_{2(\alpha)l(\sigma)}\Gamma^{l(\sigma)}_{1(\alpha)j(\alpha)}\bigg)
	\end{align}
	where both $\Gamma^{i(\alpha)}_{1(\alpha)l(\sigma)}$ and $\Gamma^{i(\alpha)}_{2(\alpha)l(\sigma)}$ survive only for $l=1$. This yields
	\begin{align}
		R^{i(\alpha)}_{2(\alpha)1(\alpha)j(\alpha)}&=\partial_{1(\alpha)}\Gamma^{i(\alpha)}_{2(\alpha)j(\alpha)}-\partial_{2(\alpha)}\Gamma^{i(\alpha)}_{1(\alpha)j(\alpha)}\notag\\&+\Gamma^{i(\alpha)}_{1(\alpha)1(\alpha)}\Gamma^{1(\alpha)}_{2(\alpha)j(\alpha)}-\Gamma^{i(\alpha)}_{2(\alpha)1(\alpha)}\Gamma^{1(\alpha)}_{1(\alpha)j(\alpha)}\notag\\&+\Gamma^{i(\alpha)}_{1(\alpha)2(\alpha)}\Gamma^{2(\alpha)}_{2(\alpha)j(\alpha)}-\Gamma^{i(\alpha)}_{2(\alpha)2(\alpha)}\Gamma^{2(\alpha)}_{1(\alpha)j(\alpha)}\notag\\&+\overset{}{\underset{\sigma\neq\alpha}{\sum}}\,\bigg(\Gamma^{i(\alpha)}_{1(\alpha)1(\sigma)}\Gamma^{1(\sigma)}_{2(\alpha)j(\alpha)}-\Gamma^{i(\alpha)}_{2(\alpha)1(\sigma)}\Gamma^{1(\sigma)}_{1(\alpha)j(\alpha)}\bigg).\label{Rdef_caso1_k1h2}
	\end{align}
	If $i=1$ we get
	\begin{align}
		\notag
		R^{1(\alpha)}_{2(\alpha)1(\alpha)j(\alpha)}&=\partial_{1(\alpha)}\Gamma^{1(\alpha)}_{2(\alpha)j(\alpha)}-\partial_{2(\alpha)}\Gamma^{1(\alpha)}_{1(\alpha)j(\alpha)}\notag\\&+\Gamma^{1(\alpha)}_{1(\alpha)1(\alpha)}\Gamma^{1(\alpha)}_{2(\alpha)j(\alpha)}-\Gamma^{1(\alpha)}_{2(\alpha)1(\alpha)}\Gamma^{1(\alpha)}_{1(\alpha)j(\alpha)}\notag\\&+\Gamma^{1(\alpha)}_{1(\alpha)2(\alpha)}\Gamma^{2(\alpha)}_{2(\alpha)j(\alpha)}-\Gamma^{1(\alpha)}_{2(\alpha)2(\alpha)}\Gamma^{2(\alpha)}_{1(\alpha)j(\alpha)}\notag\\&+\overset{}{\underset{\sigma\neq\alpha}{\sum}}\,\bigg(\Gamma^{1(\alpha)}_{1(\alpha)1(\sigma)}\Gamma^{1(\sigma)}_{2(\alpha)j(\alpha)}-\xcancel{\Gamma^{1(\alpha)}_{2(\alpha)1(\sigma)}}\Gamma^{1(\sigma)}_{1(\alpha)j(\alpha)}\bigg)\notag
	\end{align}
	that becomes
	\begin{align}
		\notag
		R^{1(\alpha)}_{2(\alpha)1(\alpha)1(\alpha)}&=\partial_{1(\alpha)}\Gamma^{1(\alpha)}_{2(\alpha)1(\alpha)}-\partial_{2(\alpha)}\Gamma^{1(\alpha)}_{1(\alpha)1(\alpha)}\notag\\&+\Gamma^{1(\alpha)}_{1(\alpha)1(\alpha)}\Gamma^{1(\alpha)}_{2(\alpha)1(\alpha)}-\Gamma^{1(\alpha)}_{2(\alpha)1(\alpha)}\Gamma^{1(\alpha)}_{1(\alpha)1(\alpha)}\notag\\&+\Gamma^{1(\alpha)}_{1(\alpha)2(\alpha)}\Gamma^{2(\alpha)}_{2(\alpha)1(\alpha)}-\Gamma^{1(\alpha)}_{2(\alpha)2(\alpha)}\Gamma^{2(\alpha)}_{1(\alpha)1(\alpha)}\notag\\&+\overset{}{\underset{\sigma\neq\alpha}{\sum}}\,\Gamma^{1(\alpha)}_{1(\alpha)1(\sigma)}\Gamma^{1(\sigma)}_{2(\alpha)1(\alpha)}
		\notag\\&=-\underset{\sigma\neq\alpha}{\sum}\,\partial_{1(\alpha)}\xcancel{\Gamma^{1(\alpha)}_{2(\alpha)1(\sigma)}}-\partial_{2(\alpha)}\Gamma^{1(\alpha)}_{1(\alpha)1(\alpha)}\notag\\&-\underset{\sigma\neq\alpha}{\sum}\,\Gamma^{1(\alpha)}_{1(\alpha)1(\alpha)}\xcancel{\Gamma^{1(\alpha)}_{2(\alpha)1(\sigma)}}+\underset{\sigma\neq\alpha}{\sum}\,\xcancel{\Gamma^{1(\alpha)}_{2(\alpha)1(\sigma)}}\Gamma^{1(\alpha)}_{1(\alpha)1(\alpha)}\notag\\&-\underset{\sigma\neq\alpha}{\sum}\,\xcancel{\Gamma^{1(\alpha)}_{1(\sigma)2(\alpha)}}\Gamma^{2(\alpha)}_{2(\alpha)1(\alpha)}-\xcancel{\Gamma^{1(\alpha)}_{2(\alpha)2(\alpha)}}\Gamma^{2(\alpha)}_{1(\alpha)1(\alpha)}\notag\\&-\overset{}{\underset{\sigma\neq\alpha}{\sum}}\,\Gamma^{1(\alpha)}_{1(\alpha)1(\sigma)}\xcancel{\Gamma^{1(\sigma)}_{2(\alpha)1(\sigma)}}
		\notag\\&=\underset{\sigma\neq\alpha}{\sum}\,\partial_{2(\alpha)}\Gamma^{1(\alpha)}_{1(\sigma)1(\alpha)}=\underset{\sigma\neq\alpha}{\sum}\,\partial_{2(\alpha)}\bigg(\frac{m_\sigma\epsilon_\sigma}{u^{1(\alpha)}-u^{1(\sigma)}}\bigg)=0\notag
	\end{align}
	when $j=1$ and
	\begin{align}
		\notag
		R^{1(\alpha)}_{2(\alpha)1(\alpha)2(\alpha)}&=\partial_{1(\alpha)}\xcancel{\Gamma^{1(\alpha)}_{2(\alpha)2(\alpha)}}-\partial_{2(\alpha)}\xcancel{\Gamma^{1(\alpha)}_{1(\alpha)2(\alpha)}}\notag\\&+\Gamma^{1(\alpha)}_{1(\alpha)1(\alpha)}\xcancel{\Gamma^{1(\alpha)}_{2(\alpha)2(\alpha)}}-\xcancel{\Gamma^{1(\alpha)}_{2(\alpha)1(\alpha)}}\Gamma^{1(\alpha)}_{1(\alpha)2(\alpha)}\notag\\&+\xcancel{\Gamma^{1(\alpha)}_{1(\alpha)2(\alpha)}}\Gamma^{2(\alpha)}_{2(\alpha)2(\alpha)}-\xcancel{\Gamma^{1(\alpha)}_{2(\alpha)2(\alpha)}}\Gamma^{2(\alpha)}_{1(\alpha)2(\alpha)}\notag\\&+\overset{}{\underset{\sigma\neq\alpha}{\sum}}\,\Gamma^{1(\alpha)}_{1(\alpha)1(\sigma)}\xcancel{\Gamma^{1(\sigma)}_{2(\alpha)2(\alpha)}}=0\notag
	\end{align}
	when $j=2$. If $i=2$ then \eqref{Rdef_caso1_k1h2} reads
	\begin{align}
		R^{2(\alpha)}_{2(\alpha)1(\alpha)j(\alpha)}&=\partial_{1(\alpha)}\Gamma^{2(\alpha)}_{2(\alpha)j(\alpha)}-\partial_{2(\alpha)}\Gamma^{2(\alpha)}_{1(\alpha)j(\alpha)}\notag\\&+\Gamma^{2(\alpha)}_{1(\alpha)1(\alpha)}\Gamma^{1(\alpha)}_{2(\alpha)j(\alpha)}-\Gamma^{2(\alpha)}_{2(\alpha)1(\alpha)}\Gamma^{1(\alpha)}_{1(\alpha)j(\alpha)}\notag\\&+\Gamma^{2(\alpha)}_{1(\alpha)2(\alpha)}\Gamma^{2(\alpha)}_{2(\alpha)j(\alpha)}-\Gamma^{2(\alpha)}_{2(\alpha)2(\alpha)}\Gamma^{2(\alpha)}_{1(\alpha)j(\alpha)}\notag\\&+\overset{}{\underset{\sigma\neq\alpha}{\sum}}\,\bigg(\Gamma^{2(\alpha)}_{1(\alpha)1(\sigma)}\Gamma^{1(\sigma)}_{2(\alpha)j(\alpha)}-\Gamma^{2(\alpha)}_{2(\alpha)1(\sigma)}\Gamma^{1(\sigma)}_{1(\alpha)j(\alpha)}\bigg)\notag
	\end{align}
	that becomes
	\begin{align}
		R^{2(\alpha)}_{2(\alpha)1(\alpha)1(\alpha)}&=\partial_{1(\alpha)}\Gamma^{2(\alpha)}_{2(\alpha)1(\alpha)}-\partial_{2(\alpha)}\Gamma^{2(\alpha)}_{1(\alpha)1(\alpha)}\notag\\&+\Gamma^{2(\alpha)}_{1(\alpha)1(\alpha)}\xcancel{\Gamma^{1(\alpha)}_{2(\alpha)1(\alpha)}}-\cancel{\Gamma^{2(\alpha)}_{2(\alpha)1(\alpha)}\Gamma^{1(\alpha)}_{1(\alpha)1(\alpha)}}\notag\\&+\cancel{\Gamma^{2(\alpha)}_{1(\alpha)2(\alpha)}\Gamma^{2(\alpha)}_{2(\alpha)1(\alpha)}}-\Gamma^{2(\alpha)}_{2(\alpha)2(\alpha)}\Gamma^{2(\alpha)}_{1(\alpha)1(\alpha)}\notag\\&+\overset{}{\underset{\sigma\neq\alpha}{\sum}}\,\bigg(\Gamma^{2(\alpha)}_{1(\alpha)1(\sigma)}\xcancel{\Gamma^{1(\sigma)}_{2(\alpha)1(\alpha)}}-\Gamma^{2(\alpha)}_{2(\alpha)1(\sigma)}\Gamma^{1(\sigma)}_{1(\alpha)1(\alpha)}\bigg)
		\notag\\&=\partial_{1(\alpha)}\Gamma^{2(\alpha)}_{2(\alpha)1(\alpha)}-\partial_{2(\alpha)}\Gamma^{2(\alpha)}_{1(\alpha)1(\alpha)}\notag\\&+\overset{}{\underset{\sigma\neq\alpha}{\sum}}\bigg(\Gamma^{2(\alpha)}_{2(\alpha)2(\alpha)}\Gamma^{2(\alpha)}_{1(\sigma)1(\alpha)}+\Gamma^{2(\alpha)}_{2(\alpha)1(\sigma)}\Gamma^{1(\sigma)}_{1(\alpha)1(\sigma)}\bigg)
		\notag\\&=\partial_{1(\alpha)}\Gamma^{2(\alpha)}_{2(\alpha)1(\alpha)}-\partial_{2(\alpha)}\Gamma^{2(\alpha)}_{1(\alpha)1(\alpha)}\notag\\&+\overset{}{\underset{\sigma\neq\alpha}{\sum}}\bigg(\cancel{\frac{m_\alpha\epsilon_\alpha}{u^{2(\alpha)}}\,\frac{1}{u^{1(\alpha)}-u^{1(\sigma)}}\,\Gamma^{1(\alpha)}_{1(\sigma)1(\alpha)}\,u^{2(\alpha)}}\notag\\&-\cancel{\Gamma^{1(\alpha)}_{1(\alpha)1(\sigma)}\,\frac{m_\alpha\epsilon_\alpha}{u^{1(\alpha)}-u^{1(\sigma)}}}\bigg)
		\notag\\&=\partial_{1(\alpha)}\Gamma^{2(\alpha)}_{2(\alpha)1(\alpha)}-\partial_{2(\alpha)}\Gamma^{2(\alpha)}_{1(\alpha)1(\alpha)}
		\notag\\&=\underset{\sigma\neq\alpha}{\sum}\bigg(-\partial_{1(\alpha)}\Gamma^{2(\alpha)}_{2(\alpha)1(\sigma)}+\partial_{2(\alpha)}\Gamma^{2(\alpha)}_{1(\sigma)1(\alpha)}\bigg)
		\notag\\&=\underset{\sigma\neq\alpha}{\sum}\bigg[-\partial_{1(\alpha)}\bigg(\frac{m_\sigma\epsilon_\sigma}{u^{1(\alpha)}-u^{1(\sigma)}}\bigg)+\partial_{2(\alpha)}\bigg(-\frac{1}{u^{1(\alpha)}-u^{1(\sigma)}}\,\Gamma^{1(\alpha)}_{1(\sigma)1(\alpha)}\,u^{2(\alpha)}\bigg)\bigg]
		\notag\\&=\underset{\sigma\neq\alpha}{\sum}\bigg[\frac{m_\sigma\epsilon_\sigma}{(u^{1(\alpha)}-u^{1(\sigma)})^2}-\frac{1}{u^{1(\alpha)}-u^{1(\sigma)}}\,\Gamma^{1(\alpha)}_{1(\sigma)1(\alpha)}\bigg]
		\notag\\&=\underset{\sigma\neq\alpha}{\sum}\bigg[\frac{m_\sigma\epsilon_\sigma}{(u^{1(\alpha)}-u^{1(\sigma)})^2}-\frac{1}{u^{1(\alpha)}-u^{1(\sigma)}}\,\frac{m_\sigma\epsilon_\sigma}{u^{1(\alpha)}-u^{1(\sigma)}}\bigg]=0
		\notag
	\end{align}
	when $j=1$ and
	\begin{align}
		R^{2(\alpha)}_{2(\alpha)1(\alpha)2(\alpha)}&=\partial_{1(\alpha)}\Gamma^{2(\alpha)}_{2(\alpha)2(\alpha)}-\partial_{2(\alpha)}\Gamma^{2(\alpha)}_{1(\alpha)2(\alpha)}\notag\\&+\Gamma^{2(\alpha)}_{1(\alpha)1(\alpha)}\xcancel{\Gamma^{1(\alpha)}_{2(\alpha)2(\alpha)}}-\Gamma^{2(\alpha)}_{2(\alpha)1(\alpha)}\xcancel{\Gamma^{1(\alpha)}_{1(\alpha)2(\alpha)}}\notag\\&+\cancel{\Gamma^{2(\alpha)}_{1(\alpha)2(\alpha)}\Gamma^{2(\alpha)}_{2(\alpha)2(\alpha)}}-\cancel{\Gamma^{2(\alpha)}_{2(\alpha)2(\alpha)}\Gamma^{2(\alpha)}_{1(\alpha)2(\alpha)}}\notag\\&+\overset{}{\underset{\sigma\neq\alpha}{\sum}}\,\bigg(\Gamma^{2(\alpha)}_{1(\alpha)1(\sigma)}\xcancel{\Gamma^{1(\sigma)}_{2(\alpha)2(\alpha)}}-\Gamma^{2(\alpha)}_{2(\alpha)1(\sigma)}\xcancel{\Gamma^{1(\sigma)}_{1(\alpha)2(\alpha)}}\bigg)\notag
		\notag\\&=\xcancel{\partial_{1(\alpha)}\bigg(-\frac{m_\alpha\epsilon_\alpha}{u^{2(\alpha)}}\bigg)}+\underset{\sigma\neq\alpha}{\sum}\,\partial_{2(\alpha)}\Gamma^{2(\alpha)}_{1(\sigma)2(\alpha)}
		\notag\\&=\underset{\sigma\neq\alpha}{\sum}\,\partial_{2(\alpha)}\bigg(\frac{m_\sigma\epsilon_\sigma}{u^{1(\alpha)}-u^{1(\sigma)}}\bigg)=0\notag
	\end{align}
	when $j=2$. Therefore we proved that \eqref{Rdef_caso1} vanishes when $m_\alpha=2$. Given an integer $n\geq2$, let us now suppose that \eqref{Rdef_caso1} vanishes for $m_\alpha=n$ and show it vanishes for $m_\alpha=n+1$ as well. In other words, we are supposing that
	\begin{align}
		(R^n)^{i(\alpha)}_{h(\alpha)k(\alpha)j(\alpha)}&:=\partial_{k(\alpha)}\Gamma^{i(\alpha)}_{h(\alpha)j(\alpha)}-\partial_{h(\alpha)}\Gamma^{i(\alpha)}_{k(\alpha)j(\alpha)}\notag\\&+\overset{n}{\underset{l=1}{\sum}}\,\bigg(\Gamma^{i(\alpha)}_{k(\alpha)l(\alpha)}\Gamma^{l(\alpha)}_{h(\alpha)j(\alpha)}-\Gamma^{i(\alpha)}_{h(\alpha)l(\alpha)}\Gamma^{l(\alpha)}_{k(\alpha)j(\alpha)}\bigg)\notag\\&+\overset{}{\underset{\sigma\neq\alpha}{\sum}}\,\overset{m_\sigma}{\underset{l=1}{\sum}}\,\bigg(\Gamma^{i(\alpha)}_{k(\alpha)l(\sigma)}\Gamma^{l(\sigma)}_{h(\alpha)j(\alpha)}-\Gamma^{i(\alpha)}_{h(\alpha)l(\sigma)}\Gamma^{l(\sigma)}_{k(\alpha)j(\alpha)}\bigg)=0
		\label{Rdef_caso1_n}
	\end{align}
	for every $i$, $j$, $k$, $h\in\{1,\dots,m_\alpha\}$ for each $m_\alpha\leq n$ and we want to prove that
	\begin{align}
		(R^{n+1})^{i(\alpha)}_{h(\alpha)k(\alpha)j(\alpha)}&:=\partial_{k(\alpha)}\Gamma^{i(\alpha)}_{h(\alpha)j(\alpha)}-\partial_{h(\alpha)}\Gamma^{i(\alpha)}_{k(\alpha)j(\alpha)}\notag\\&+\overset{n+1}{\underset{l=1}{\sum}}\,\bigg(\Gamma^{i(\alpha)}_{k(\alpha)l(\alpha)}\Gamma^{l(\alpha)}_{h(\alpha)j(\alpha)}-\Gamma^{i(\alpha)}_{h(\alpha)l(\alpha)}\Gamma^{l(\alpha)}_{k(\alpha)j(\alpha)}\bigg)\notag\\&+\overset{}{\underset{\sigma\neq\alpha}{\sum}}\,\overset{m_\sigma}{\underset{l=1}{\sum}}\,\bigg(\Gamma^{i(\alpha)}_{k(\alpha)l(\sigma)}\Gamma^{l(\sigma)}_{h(\alpha)j(\alpha)}-\Gamma^{i(\alpha)}_{h(\alpha)l(\sigma)}\Gamma^{l(\sigma)}_{k(\alpha)j(\alpha)}\bigg)=0
		\label{Rdef_caso1_n+1}
	\end{align}
	for every $i$, $j$, $k$, $h\in\{1,\dots,n+1\}$ for $m_\alpha=n+1$. Let us start by considering the case where $i\leq n$ and observe that
	\begin{align}
		\Gamma^{i(\alpha)}_{j(\alpha)(n+1)(\alpha)}&=0\qquad\forall j\in\{1,\dots,n+1\}
		\label{helpful_ileqn}
	\end{align}
	as $i-j-(n+1)\leq n-2-n-1=-3$ for $j\geq 2$ and \[\Gamma^{i(\alpha)}_{j(\alpha)(n+1)(\alpha)}=-\underset{\sigma\neq\alpha}{\sum}\,\Gamma^{i(\alpha)}_{1(\sigma)(n+1)(\alpha)}=0\]
	($i\leq n<n+1$) for $j=1$. If all of $j$, $k$, $h$ are less or equal than $n$ then
	\begin{align}
		(R^{n+1})^{i(\alpha)}_{h(\alpha)k(\alpha)j(\alpha)}&=\partial_{k(\alpha)}\Gamma^{i(\alpha)}_{h(\alpha)j(\alpha)}-\partial_{h(\alpha)}\Gamma^{i(\alpha)}_{k(\alpha)j(\alpha)}\notag\\&+\overset{n+1}{\underset{l=1}{\sum}}\,\bigg(\Gamma^{i(\alpha)}_{k(\alpha)l(\alpha)}\Gamma^{l(\alpha)}_{h(\alpha)j(\alpha)}-\Gamma^{i(\alpha)}_{h(\alpha)l(\alpha)}\Gamma^{l(\alpha)}_{k(\alpha)j(\alpha)}\bigg)\notag\\&+\overset{}{\underset{\sigma\neq\alpha}{\sum}}\,\overset{m_\sigma}{\underset{l=1}{\sum}}\,\bigg(\Gamma^{i(\alpha)}_{k(\alpha)l(\sigma)}\Gamma^{l(\sigma)}_{h(\alpha)j(\alpha)}-\Gamma^{i(\alpha)}_{h(\alpha)l(\sigma)}\Gamma^{l(\sigma)}_{k(\alpha)j(\alpha)}\bigg)
		\notag
	\end{align}
	where in the first summation only the terms for $l\leq n$ survive, as both $\Gamma^{i(\alpha)}_{k(\alpha)(n+1)(\alpha)}$ and $\Gamma^{i(\alpha)}_{h(\alpha)(n+1)(\alpha)}$ vanish due to \eqref{helpful_ileqn}. This yields
	\begin{align}
		(R^{n+1})^{i(\alpha)}_{h(\alpha)k(\alpha)j(\alpha)}&=\partial_{k(\alpha)}\Gamma^{i(\alpha)}_{h(\alpha)j(\alpha)}-\partial_{h(\alpha)}\Gamma^{i(\alpha)}_{k(\alpha)j(\alpha)}\notag\\&+\overset{n}{\underset{l=1}{\sum}}\,\bigg(\Gamma^{i(\alpha)}_{k(\alpha)l(\alpha)}\Gamma^{l(\alpha)}_{h(\alpha)j(\alpha)}-\Gamma^{i(\alpha)}_{h(\alpha)l(\alpha)}\Gamma^{l(\alpha)}_{k(\alpha)j(\alpha)}\bigg)\notag\\&+\overset{}{\underset{\sigma\neq\alpha}{\sum}}\,\overset{m_\sigma}{\underset{l=1}{\sum}}\,\bigg(\Gamma^{i(\alpha)}_{k(\alpha)l(\sigma)}\Gamma^{l(\sigma)}_{h(\alpha)j(\alpha)}-\Gamma^{i(\alpha)}_{h(\alpha)l(\sigma)}\Gamma^{l(\sigma)}_{k(\alpha)j(\alpha)}\bigg)
		\notag\\&=(R^{n})^{i(\alpha)}_{h(\alpha)k(\alpha)j(\alpha)}\overset{\eqref{Rdef_caso1_n}}{=}0.
		\notag
	\end{align}
	If $k$, $h\leq n$ and $j=n+1$ then
	\begin{align}
		(R^{n+1})^{i(\alpha)}_{h(\alpha)k(\alpha)(n+1)(\alpha)}&=\partial_{k(\alpha)}\xcancel{\Gamma^{i(\alpha)}_{h(\alpha)(n+1)(\alpha)}}-\partial_{h(\alpha)}\xcancel{\Gamma^{i(\alpha)}_{k(\alpha)(n+1)(\alpha)}}\notag\\&+\overset{n}{\underset{l=1}{\sum}}\,\bigg(\Gamma^{i(\alpha)}_{k(\alpha)l(\alpha)}\xcancel{\Gamma^{l(\alpha)}_{h(\alpha)(n+1)(\alpha)}}-\Gamma^{i(\alpha)}_{h(\alpha)l(\alpha)}\xcancel{\Gamma^{l(\alpha)}_{k(\alpha)(n+1)(\alpha)}}\bigg)\notag\\&+\xcancel{\Gamma^{i(\alpha)}_{k(\alpha)(n+1)(\alpha)}}\Gamma^{(n+1)(\alpha)}_{h(\alpha)(n+1)(\alpha)}-\xcancel{\Gamma^{i(\alpha)}_{h(\alpha)(n+1)(\alpha)}}\Gamma^{(n+1)(\alpha)}_{k(\alpha)(n+1)(\alpha)}\notag\\&+\overset{}{\underset{\sigma\neq\alpha}{\sum}}\,\overset{m_\sigma}{\underset{l=1}{\sum}}\,\bigg(\Gamma^{i(\alpha)}_{k(\alpha)l(\sigma)}\Gamma^{l(\sigma)}_{h(\alpha)(n+1)(\alpha)}-\Gamma^{i(\alpha)}_{h(\alpha)l(\sigma)}\Gamma^{l(\sigma)}_{k(\alpha)(n+1)(\alpha)}\bigg)
		\notag
	\end{align}
	where the erased terms vanish because of \eqref{helpful_ileqn}. This yields
	\begin{align}
		(R^{n+1})^{i(\alpha)}_{h(\alpha)k(\alpha)(n+1)(\alpha)}&=\overset{}{\underset{\sigma\neq\alpha}{\sum}}\,\overset{m_\sigma}{\underset{l=1}{\sum}}\,\bigg(\Gamma^{i(\alpha)}_{k(\alpha)l(\sigma)}\xcancel{\Gamma^{l(\sigma)}_{h(\alpha)(n+1)(\alpha)}}-\Gamma^{i(\alpha)}_{h(\alpha)l(\sigma)}\xcancel{\Gamma^{l(\sigma)}_{k(\alpha)(n+1)(\alpha)}}\bigg)=0.
		\notag
	\end{align}
	If $k$, $j\leq n$ and $h=n+1$ (due to the simmetries of $R$, this covers the case where $h$, $j\leq n$ and $k=n+1$ as well) then
	\begin{align}
		(R^{n+1})^{i(\alpha)}_{(n+1)(\alpha)k(\alpha)j(\alpha)}&=\partial_{k(\alpha)}\xcancel{\Gamma^{i(\alpha)}_{(n+1)(\alpha)j(\alpha)}}-\partial_{(n+1)(\alpha)}\Gamma^{i(\alpha)}_{k(\alpha)j(\alpha)}\notag\\&+\overset{n}{\underset{l=1}{\sum}}\,\bigg(\Gamma^{i(\alpha)}_{k(\alpha)l(\alpha)}\xcancel{\Gamma^{l(\alpha)}_{(n+1)(\alpha)j(\alpha)}}-\xcancel{\Gamma^{i(\alpha)}_{(n+1)(\alpha)l(\alpha)}}\Gamma^{l(\alpha)}_{k(\alpha)j(\alpha)}\bigg)\notag\\&+\xcancel{\Gamma^{i(\alpha)}_{k(\alpha)(n+1)(\alpha)}}\Gamma^{(n+1)(\alpha)}_{(n+1)(\alpha)j(\alpha)}-\xcancel{\Gamma^{i(\alpha)}_{(n+1)(\alpha)(n+1)(\alpha)}}\Gamma^{(n+1)(\alpha)}_{k(\alpha)j(\alpha)}\notag\\&+\overset{}{\underset{\sigma\neq\alpha}{\sum}}\,\overset{m_\sigma}{\underset{l=1}{\sum}}\,\bigg(\Gamma^{i(\alpha)}_{k(\alpha)l(\sigma)}\Gamma^{l(\sigma)}_{(n+1)(\alpha)j(\alpha)}-\Gamma^{i(\alpha)}_{(n+1)(\alpha)l(\sigma)}\Gamma^{l(\sigma)}_{k(\alpha)j(\alpha)}\bigg)
		\notag
	\end{align}
	where the erased terms vanish because of \eqref{helpful_ileqn}. This yields
	\begin{align}
		(R^{n+1})^{i(\alpha)}_{(n+1)(\alpha)k(\alpha)j(\alpha)}&=-\partial_{(n+1)(\alpha)}\Gamma^{i(\alpha)}_{k(\alpha)j(\alpha)}\notag\\&+\overset{}{\underset{\sigma\neq\alpha}{\sum}}\,\overset{m_\sigma}{\underset{l=1}{\sum}}\,\bigg(\Gamma^{i(\alpha)}_{k(\alpha)l(\sigma)}\xcancel{\Gamma^{l(\sigma)}_{(n+1)(\alpha)j(\alpha)}}-\Gamma^{i(\alpha)}_{(n+1)(\alpha)l(\sigma)}\xcancel{\Gamma^{l(\sigma)}_{k(\alpha)j(\alpha)}}\bigg)
		\notag\\&=-\partial_{(n+1)(\alpha)}\Gamma^{i(\alpha)}_{k(\alpha)j(\alpha)}
		\notag
	\end{align}
	that becomes
	\begin{align}
		(R^{n+1})^{i(\alpha)}_{(n+1)(\alpha)k(\alpha)1(\alpha)}&=-\partial_{(n+1)(\alpha)}\Gamma^{i(\alpha)}_{k(\alpha)1(\alpha)}=0
		\notag
	\end{align}
	($\Gamma^{i(\alpha)}_{k(\alpha)1(\alpha)}=-\underset{\sigma\neq\alpha}{\sum}\,\Gamma^{i(\alpha)}_{k(\alpha)1(\sigma)}$ only depends on $\{u^{1(\alpha)}-u^{1(\sigma)}\,|\,\sigma\neq\alpha\}$ and $\{u^{s(\alpha)}\,|\,2\leq s\leq i-k+1\}$, thus it does not depend on $u^{(n+1)(\alpha)}$ as $i-k+1\leq n-1+1=n<n+1$) when $j=1$,
	\begin{align}
		(R^{n+1})^{i(\alpha)}_{(n+1)(\alpha)1(\alpha)j(\alpha)}&=-\partial_{(n+1)(\alpha)}\Gamma^{i(\alpha)}_{1(\alpha)j(\alpha)}=0
		\notag
	\end{align}
	(analogously) when $k=1$ and
	\begin{align}
		(R^{n+1})^{i(\alpha)}_{(n+1)(\alpha)k(\alpha)j(\alpha)}&=-\partial_{(n+1)(\alpha)}\Gamma^{i(\alpha)}_{k(\alpha)j(\alpha)}=0
		\notag
	\end{align}
	when both $j$ and $k$ are greater or equal than $2$, as\footnote{Without loss of generality we assume $i-j-k\geq-2$, as $\Gamma^{i(\alpha)}_{k(\alpha)j(\alpha)}=0$ automatically when $i-j-k\leq-3$.}
	\begin{align}
		\Gamma^{i(\alpha)}_{k(\alpha)j(\alpha)}&=\Gamma^{(i-j-k+4)(\alpha)}_{2(\alpha)2(\alpha)}=\Gamma^{(i-j-k+2)(\alpha)}_{1(\alpha)1(\alpha)}-\Gamma^{2(\alpha)}_{2(\alpha)2(\alpha)}\frac{u^{(i-j-k+4)(\alpha)}}{u^{2(\alpha)}}\notag\\&-\frac{1}{u^{2(\alpha)}}\overset{i-j-k+1}{\underset{l=1}{\sum}}\big(\Gamma^{(l+2)(\alpha)}_{2(\alpha)2(\alpha)}-\Gamma^{l(\alpha)}_{1(\alpha)1(\alpha)}\big)u^{(i-j-k+4-l)(\alpha)}
		\notag
	\end{align}
	does not depend on $u^{(n+1)(\alpha)}$ ($\Gamma^{(i-j-k+2)(\alpha)}_{1(\alpha)1(\alpha)}$ only depends on $\{u^{1(\alpha)}-u^{1(\sigma)}\,|\,\sigma\neq\alpha\}$ and $\{u^{s(\alpha)}\,|\,2\leq s\leq i-j-k+2\}$ where $i-j-k+2\leq n-2-2+2=n-2<n+1$, $\Gamma^{2(\alpha)}_{2(\alpha)2(\alpha)}$ only depends on $u^{2(\alpha)}$, $i-j-k+4\leq n-2-2+4=n<n+1$ and for every $1\leq l\leq i-j-k+1$ we have $i-j-k+4-l<i-j-k+4<n+1$ and $\Gamma^{(l+2)(\alpha)}_{2(\alpha)2(\alpha)}-\Gamma^{l(\alpha)}_{1(\alpha)1(\alpha)}$ only depends on quantities that correspond to lower indices so it does not depend on $u^{(n+1)(\alpha)}$ a fortiori). If $k=h=n+1$ then $(R^{n+1})^{i(\alpha)}_{h(\alpha)k(\alpha)j(\alpha)}=0$ for every value of $j$ due to the simmetries of $R$. If $k\leq n$ and $j=h=n+1$ (due to the simmetries of $R$, this covers the case where $h\leq n$ and $j=k=n+1$ as well) then
	\begin{align}
		(R^{n+1})^{i(\alpha)}_{(n+1)(\alpha)k(\alpha)(n+1)(\alpha)}&:=\partial_{k(\alpha)}\xcancel{\Gamma^{i(\alpha)}_{(n+1)(\alpha)(n+1)(\alpha)}}-\partial_{(n+1)(\alpha)}\xcancel{\Gamma^{i(\alpha)}_{k(\alpha)(n+1)(\alpha)}}\notag\\&+\overset{n}{\underset{l=1}{\sum}}\,\bigg(\Gamma^{i(\alpha)}_{k(\alpha)l(\alpha)}\xcancel{\Gamma^{l(\alpha)}_{(n+1)(\alpha)(n+1)(\alpha)}}-\xcancel{\Gamma^{i(\alpha)}_{(n+1)(\alpha)l(\alpha)}}\Gamma^{l(\alpha)}_{k(\alpha)(n+1)(\alpha)}\bigg)\notag\\&+\xcancel{\Gamma^{i(\alpha)}_{k(\alpha)(n+1)(\alpha)}}\Gamma^{(n+1)(\alpha)}_{(n+1)(\alpha)(n+1)(\alpha)}-\xcancel{\Gamma^{i(\alpha)}_{(n+1)(\alpha)(n+1)(\alpha)}}\Gamma^{(n+1)(\alpha)}_{k(\alpha)(n+1)(\alpha)}\notag\\&+\overset{}{\underset{\sigma\neq\alpha}{\sum}}\,\overset{m_\sigma}{\underset{l=1}{\sum}}\,\bigg(\Gamma^{i(\alpha)}_{k(\alpha)l(\sigma)}\Gamma^{l(\sigma)}_{(n+1)(\alpha)(n+1)(\alpha)}-\Gamma^{i(\alpha)}_{(n+1)(\alpha)l(\sigma)}\Gamma^{l(\sigma)}_{k(\alpha)(n+1)(\alpha)}\bigg)
		\notag
	\end{align}
	where the erased terms vanish because of \eqref{helpful_ileqn}. This yields
	\begin{align}
		(R^{n+1})^{i(\alpha)}_{(n+1)(\alpha)k(\alpha)(n+1)(\alpha)}&:=\overset{}{\underset{\sigma\neq\alpha}{\sum}}\,\overset{m_\sigma}{\underset{l=1}{\sum}}\,\bigg(\Gamma^{i(\alpha)}_{k(\alpha)l(\sigma)}\xcancel{\Gamma^{l(\sigma)}_{(n+1)(\alpha)(n+1)(\alpha)}}-\Gamma^{i(\alpha)}_{(n+1)(\alpha)l(\sigma)}\xcancel{\Gamma^{l(\sigma)}_{k(\alpha)(n+1)(\alpha)}}\bigg)\notag\\&=0.
		\notag
	\end{align}
	We have therefore proved \eqref{Rdef_caso1_n+1} under the assumption that $i\leq n$. Let us now fix $i=n+1$. We have
	\begin{align}
		(R^{n+1})^{(n+1)(\alpha)}_{h(\alpha)k(\alpha)j(\alpha)}&=\partial_{k(\alpha)}\Gamma^{(n+1)(\alpha)}_{h(\alpha)j(\alpha)}-\partial_{h(\alpha)}\Gamma^{(n+1)(\alpha)}_{k(\alpha)j(\alpha)}\notag\\&+\overset{n+1}{\underset{l=1}{\sum}}\,\bigg(\Gamma^{(n+1)(\alpha)}_{k(\alpha)l(\alpha)}\Gamma^{l(\alpha)}_{h(\alpha)j(\alpha)}-\Gamma^{(n+1)(\alpha)}_{h(\alpha)l(\alpha)}\Gamma^{l(\alpha)}_{k(\alpha)j(\alpha)}\bigg)\notag\\&+\overset{}{\underset{\sigma\neq\alpha}{\sum}}\,\overset{m_\sigma}{\underset{l=1}{\sum}}\,\bigg(\Gamma^{(n+1)(\alpha)}_{k(\alpha)l(\sigma)}\Gamma^{l(\sigma)}_{h(\alpha)j(\alpha)}-\Gamma^{(n+1)(\alpha)}_{h(\alpha)l(\sigma)}\Gamma^{l(\sigma)}_{k(\alpha)j(\alpha)}\bigg)
		\notag
	\end{align}
	where in the last summation only survive the terms for $l=1$, yielding
	\begin{align}
		(R^{n+1})^{(n+1)(\alpha)}_{h(\alpha)k(\alpha)j(\alpha)}&=\partial_{k(\alpha)}\Gamma^{(n+1)(\alpha)}_{h(\alpha)j(\alpha)}-\partial_{h(\alpha)}\Gamma^{(n+1)(\alpha)}_{k(\alpha)j(\alpha)}\notag\\&+\overset{n+1}{\underset{l=1}{\sum}}\,\bigg(\Gamma^{(n+1)(\alpha)}_{k(\alpha)l(\alpha)}\Gamma^{l(\alpha)}_{h(\alpha)j(\alpha)}-\Gamma^{(n+1)(\alpha)}_{h(\alpha)l(\alpha)}\Gamma^{l(\alpha)}_{k(\alpha)j(\alpha)}\bigg)\notag\\&+\overset{}{\underset{\sigma\neq\alpha}{\sum}}\,\bigg(\Gamma^{(n+1)(\alpha)}_{k(\alpha)1(\sigma)}\Gamma^{1(\sigma)}_{h(\alpha)j(\alpha)}-\Gamma^{(n+1)(\alpha)}_{h(\alpha)1(\sigma)}\Gamma^{1(\sigma)}_{k(\alpha)j(\alpha)}\bigg).
		\label{Rdel_caso1_n+1_in+1}
	\end{align}
	We distinguish between the following subcases:
	\begin{itemize}
		\item[$a.$] both $k$ and $h$ are greater or equal than $3$
		\item[$b.$] $k=1$, $h\geq3$ (this covers $h=1$, $k\geq3$ as well)
		\item[$c.$] $k=2$, $h\geq3$ (this covers $h=2$, $k\geq3$ as well)
		\item[$d.$] $k=1$, $h=2$ (this covers $h=1$, $k=2$ as well)
	\end{itemize}
	observing that $(R^{n+1})^{(n+1)(\alpha)}_{h(\alpha)k(\alpha)j(\alpha)}=0$ automatically whenever $k=h$.
	\\\textbf{Subcase a:} both $k$ and $h$ are greater or equal than $3$. We have
	\begin{align}
		(R^{n+1})^{(n+1)(\alpha)}_{h(\alpha)k(\alpha)j(\alpha)}&=\partial_{k(\alpha)}\Gamma^{(n+1)(\alpha)}_{h(\alpha)j(\alpha)}-\partial_{h(\alpha)}\Gamma^{(n+1)(\alpha)}_{k(\alpha)j(\alpha)}\notag\\&+\overset{n+1}{\underset{l=1}{\sum}}\,\bigg(\Gamma^{(n+1)(\alpha)}_{k(\alpha)l(\alpha)}\Gamma^{l(\alpha)}_{h(\alpha)j(\alpha)}-\Gamma^{(n+1)(\alpha)}_{h(\alpha)l(\alpha)}\Gamma^{l(\alpha)}_{k(\alpha)j(\alpha)}\bigg)\notag\\&+\overset{}{\underset{\sigma\neq\alpha}{\sum}}\,\bigg(\Gamma^{(n+1)(\alpha)}_{k(\alpha)1(\sigma)}\Gamma^{1(\sigma)}_{h(\alpha)j(\alpha)}-\Gamma^{(n+1)(\alpha)}_{h(\alpha)1(\sigma)}\Gamma^{1(\sigma)}_{k(\alpha)j(\alpha)}\bigg)
		\notag\\&\overset{\eqref{Lemma7.1eq}}{=}\partial_{(k-1)(\alpha)}\Gamma^{n(\alpha)}_{h(\alpha)j(\alpha)}-\partial_{(h-1)(\alpha)}\Gamma^{n(\alpha)}_{k(\alpha)j(\alpha)}\notag\\&+\Gamma^{n(\alpha)}_{(k-1)(\alpha)1(\alpha)}\xcancel{\Gamma^{1(\alpha)}_{h(\alpha)j(\alpha)}}-\Gamma^{n(\alpha)}_{(h-1)(\alpha)1(\alpha)}\xcancel{\Gamma^{1(\alpha)}_{k(\alpha)j(\alpha)}}\notag\\&+\overset{n+1}{\underset{l=2}{\sum}}\,\bigg(\Gamma^{n(\alpha)}_{(k-1)(\alpha)l(\alpha)}\Gamma^{l(\alpha)}_{h(\alpha)j(\alpha)}-\Gamma^{n(\alpha)}_{(h-1)(\alpha)l(\alpha)}\Gamma^{l(\alpha)}_{k(\alpha)j(\alpha)}\bigg)\notag\\&+\overset{}{\underset{\sigma\neq\alpha}{\sum}}\,\bigg(\Gamma^{n(\alpha)}_{(k-1)(\alpha)1(\sigma)}\Gamma^{1(\sigma)}_{h(\alpha)j(\alpha)}-\Gamma^{n(\alpha)}_{(h-1)(\alpha)1(\sigma)}\Gamma^{1(\sigma)}_{k(\alpha)j(\alpha)}\bigg).
		\label{Rdel_caso1_n+1_in+1_subcasea}
	\end{align}
	If $j\leq n+1$ we get
	\begin{align}
		(R^{n+1})^{(n+1)(\alpha)}_{h(\alpha)k(\alpha)j(\alpha)}&=\partial_{(k-1)(\alpha)}\Gamma^{n(\alpha)}_{(h-1)(\alpha)(j+1)(\alpha)}-\partial_{(h-1)(\alpha)}\Gamma^{n(\alpha)}_{(k-1)(\alpha)(j+1)(\alpha)}\notag\\&+\overset{n+1}{\underset{l=\cancelto{1}{2}}{\sum}}\,\bigg(\Gamma^{n(\alpha)}_{(k-1)(\alpha)l(\alpha)}\Gamma^{l(\alpha)}_{(h-1)(\alpha)(j+1)(\alpha)}-\Gamma^{n(\alpha)}_{(h-1)(\alpha)l(\alpha)}\Gamma^{l(\alpha)}_{(k-1)(\alpha)(j+1)(\alpha)}\bigg)\notag\\&+\overset{}{\underset{\sigma\neq\alpha}{\sum}}\,\bigg(\Gamma^{n(\alpha)}_{(k-1)(\alpha)1(\sigma)}\Gamma^{1(\sigma)}_{(h-1)(\alpha)(j+1)(\alpha)}-\Gamma^{n(\alpha)}_{(h-1)(\alpha)1(\sigma)}\Gamma^{1(\sigma)}_{(k-1)(\alpha)(j+1)(\alpha)}\bigg)
		\notag\\&=(R^{n})^{n(\alpha)}_{(h-1)(\alpha)(k-1)(\alpha)(j+1)(\alpha)}\notag\\&+\Gamma^{n(\alpha)}_{(k-1)(\alpha)(n+1)(\alpha)}\Gamma^{(n+1)(\alpha)}_{(h-1)(\alpha)(j+1)(\alpha)}-\Gamma^{n(\alpha)}_{(h-1)(\alpha)(n+1)(\alpha)}\Gamma^{(n+1)(\alpha)}_{(k-1)(\alpha)(j+1)(\alpha)}=0
		\notag
	\end{align}
	as $(R^{n})^{n(\alpha)}_{(h-1)(\alpha)(k-1)(\alpha)(j+1)(\alpha)}$ vanishes because of \eqref{Rdef_caso1_n} and $\Gamma^{n(\alpha)}_{(k-1)(\alpha)(n+1)(\alpha)}$, $\Gamma^{n(\alpha)}_{(h-1)(\alpha)(n+1)(\alpha)}$ vanish because of \eqref{helpful_ileqn}. If $j=n$ then \eqref{Rdel_caso1_n+1_in+1_subcasea} becomes
	\begin{align}
		(R^{n+1})^{(n+1)(\alpha)}_{h(\alpha)k(\alpha)n(\alpha)}&=\partial_{(k-1)(\alpha)}\xcancel{\Gamma^{n(\alpha)}_{h(\alpha)n(\alpha)}}-\partial_{(h-1)(\alpha)}\xcancel{\Gamma^{n(\alpha)}_{k(\alpha)n(\alpha)}}\notag\\&+\overset{n}{\underset{l=2}{\sum}}\,\bigg(\Gamma^{n(\alpha)}_{(k-1)(\alpha)l(\alpha)}\Gamma^{l(\alpha)}_{(h-1)(\alpha)(n+1)(\alpha)}-\Gamma^{n(\alpha)}_{(h-1)(\alpha)l(\alpha)}\Gamma^{l(\alpha)}_{(k-1)(\alpha)(n+1)(\alpha)}\bigg)\notag\\&+\Gamma^{n(\alpha)}_{(k-1)(\alpha)(n+1)(\alpha)}\Gamma^{(n+1)(\alpha)}_{h(\alpha)n(\alpha)}-\Gamma^{n(\alpha)}_{(h-1)(\alpha)(n+1)(\alpha)}\Gamma^{(n+1)(\alpha)}_{k(\alpha)n(\alpha)}\notag\\&+\overset{}{\underset{\sigma\neq\alpha}{\sum}}\,\bigg(\Gamma^{n(\alpha)}_{(k-1)(\alpha)1(\sigma)}\xcancel{\Gamma^{1(\sigma)}_{h(\alpha)n(\alpha)}}-\Gamma^{n(\alpha)}_{(h-1)(\alpha)1(\sigma)}\xcancel{\Gamma^{1(\sigma)}_{k(\alpha)n(\alpha)}}\bigg)
		\notag\\&\overset{\eqref{helpful_ileqn}}{=}\overset{n}{\underset{l=2}{\sum}}\,\bigg(\Gamma^{n(\alpha)}_{(k-1)(\alpha)l(\alpha)}\xcancel{\Gamma^{l(\alpha)}_{(h-1)(\alpha)(n+1)(\alpha)}}-\Gamma^{n(\alpha)}_{(h-1)(\alpha)l(\alpha)}\xcancel{\Gamma^{l(\alpha)}_{(k-1)(\alpha)(n+1)(\alpha)}}\bigg)\notag\\&+\xcancel{\Gamma^{n(\alpha)}_{(k-1)(\alpha)(n+1)(\alpha)}}\Gamma^{(n+1)(\alpha)}_{h(\alpha)n(\alpha)}-\xcancel{\Gamma^{n(\alpha)}_{(h-1)(\alpha)(n+1)(\alpha)}}\Gamma^{(n+1)(\alpha)}_{k(\alpha)n(\alpha)}=0.
		\notag
	\end{align}
	If $j=n+1$ then \eqref{Rdel_caso1_n+1_in+1_subcasea} becomes
	\begin{align}
		(R^{n+1})^{(n+1)(\alpha)}_{h(\alpha)k(\alpha)(n+1)(\alpha)}&=\partial_{(k-1)(\alpha)}\Gamma^{n(\alpha)}_{h(\alpha)(n+1)(\alpha)}-\partial_{(h-1)(\alpha)}\Gamma^{n(\alpha)}_{k(\alpha)(n+1)(\alpha)}\notag\\&+\overset{n}{\underset{l=2}{\sum}}\,\bigg(\Gamma^{n(\alpha)}_{(k-1)(\alpha)l(\alpha)}\Gamma^{l(\alpha)}_{h(\alpha)(n+1)(\alpha)}-\Gamma^{n(\alpha)}_{(h-1)(\alpha)l(\alpha)}\Gamma^{l(\alpha)}_{k(\alpha)(n+1)(\alpha)}\bigg)\notag\\&+\Gamma^{n(\alpha)}_{(k-1)(\alpha)(n+1)(\alpha)}\Gamma^{(n+1)(\alpha)}_{h(\alpha)(n+1)(\alpha)}-\Gamma^{n(\alpha)}_{(h-1)(\alpha)(n+1)(\alpha)}\Gamma^{(n+1)(\alpha)}_{k(\alpha)(n+1)(\alpha)}\notag\\&+\overset{}{\underset{\sigma\neq\alpha}{\sum}}\,\bigg(\Gamma^{n(\alpha)}_{(k-1)(\alpha)1(\sigma)}\xcancel{\Gamma^{1(\sigma)}_{h(\alpha)(n+1)(\alpha)}}-\Gamma^{n(\alpha)}_{(h-1)(\alpha)1(\sigma)}\xcancel{\Gamma^{1(\sigma)}_{k(\alpha)(n+1)(\alpha)}}\bigg)
		\notag\\&\overset{\eqref{helpful_ileqn}}{=}\partial_{(k-1)(\alpha)}\xcancel{\Gamma^{n(\alpha)}_{h(\alpha)(n+1)(\alpha)}}-\partial_{(h-1)(\alpha)}\xcancel{\Gamma^{n(\alpha)}_{k(\alpha)(n+1)(\alpha)}}\notag\\&+\overset{n}{\underset{l=2}{\sum}}\,\bigg(\Gamma^{n(\alpha)}_{(k-1)(\alpha)l(\alpha)}\xcancel{\Gamma^{l(\alpha)}_{h(\alpha)(n+1)(\alpha)}}-\Gamma^{n(\alpha)}_{(h-1)(\alpha)l(\alpha)}\xcancel{\Gamma^{l(\alpha)}_{k(\alpha)(n+1)(\alpha)}}\bigg)\notag\\&+\xcancel{\Gamma^{n(\alpha)}_{(k-1)(\alpha)(n+1)(\alpha)}}\Gamma^{(n+1)(\alpha)}_{h(\alpha)(n+1)(\alpha)}-\xcancel{\Gamma^{n(\alpha)}_{(h-1)(\alpha)(n+1)(\alpha)}}\Gamma^{(n+1)(\alpha)}_{k(\alpha)(n+1)(\alpha)}=0.
		\notag
	\end{align}
	\\\textbf{Subcase b:} $k=1$, $h\geq3$. We have
	\begin{align}
		(R^{n+1})^{(n+1)(\alpha)}_{h(\alpha)1(\alpha)j(\alpha)}&=\partial_{1(\alpha)}\Gamma^{(n+1)(\alpha)}_{h(\alpha)j(\alpha)}-\partial_{h(\alpha)}\Gamma^{(n+1)(\alpha)}_{1(\alpha)j(\alpha)}\notag\\&+\overset{n+1}{\underset{l=1}{\sum}}\,\bigg(\Gamma^{(n+1)(\alpha)}_{1(\alpha)l(\alpha)}\Gamma^{l(\alpha)}_{h(\alpha)j(\alpha)}-\Gamma^{(n+1)(\alpha)}_{h(\alpha)l(\alpha)}\Gamma^{l(\alpha)}_{1(\alpha)j(\alpha)}\bigg)\notag\\&+\overset{}{\underset{\sigma\neq\alpha}{\sum}}\,\bigg(\Gamma^{(n+1)(\alpha)}_{1(\alpha)1(\sigma)}\Gamma^{1(\sigma)}_{h(\alpha)j(\alpha)}-\Gamma^{(n+1)(\alpha)}_{h(\alpha)1(\sigma)}\Gamma^{1(\sigma)}_{1(\alpha)j(\alpha)}\bigg)
		\notag\\&\overset{\eqref{Lemma7.1eq}}{=}\partial_{1(\alpha)}\Gamma^{n(\alpha)}_{(h-1)(\alpha)j(\alpha)}-\partial_{(h-1)(\alpha)}\Gamma^{n(\alpha)}_{1(\alpha)j(\alpha)}\notag\\&+\overset{n+1}{\underset{l=1}{\sum}}\,\Gamma^{(n+1)(\alpha)}_{1(\alpha)l(\alpha)}\Gamma^{l(\alpha)}_{h(\alpha)j(\alpha)}-\overset{n+1}{\underset{l=1}{\sum}}\,\Gamma^{(n+1)(\alpha)}_{h(\alpha)l(\alpha)}\Gamma^{l(\alpha)}_{1(\alpha)j(\alpha)}\notag\\&+\overset{}{\underset{\sigma\neq\alpha}{\sum}}\,\bigg(\Gamma^{(n+1)(\alpha)}_{1(\alpha)1(\sigma)}\Gamma^{1(\sigma)}_{h(\alpha)j(\alpha)}-\Gamma^{(n+1)(\alpha)}_{h(\alpha)1(\sigma)}\Gamma^{1(\sigma)}_{1(\alpha)j(\alpha)}\bigg)\notag\\&\underline{+\overset{}{\underset{\sigma\neq\alpha}{\sum}}\,\bigg(\Gamma^{n(\alpha)}_{1(\alpha)1(\sigma)}\Gamma^{1(\sigma)}_{(h-1)(\alpha)j(\alpha)}-\Gamma^{n(\alpha)}_{(h-1)(\alpha)1(\sigma)}\Gamma^{1(\sigma)}_{1(\alpha)j(\alpha)}\bigg)}\notag\\&\underline{-\overset{}{\underset{\sigma\neq\alpha}{\sum}}\,\bigg(\Gamma^{n(\alpha)}_{1(\alpha)1(\sigma)}\Gamma^{1(\sigma)}_{(h-1)(\alpha)j(\alpha)}-\Gamma^{n(\alpha)}_{(h-1)(\alpha)1(\sigma)}\Gamma^{1(\sigma)}_{1(\alpha)j(\alpha)}\bigg)}
		\notag
	\end{align}
	where the underlined terms cancel out and where the terms $$\partial_{1(\alpha)}\Gamma^{n(\alpha)}_{(h-1)(\alpha)j(\alpha)}\text{,}\,-\partial_{(h-1)(\alpha)}\Gamma^{n(\alpha)}_{1(\alpha)j(\alpha)}\text{,}$$
	\begin{align}
		\overset{n+1}{\underset{l=2}{\sum}}\,\Gamma^{(n+1)(\alpha)}_{1(\alpha)l(\alpha)}\Gamma^{l(\alpha)}_{h(\alpha)j(\alpha)}&=\overset{n+1}{\underset{l=2}{\sum}}\,\Gamma^{n(\alpha)}_{1(\alpha)(l-1)(\alpha)}\Gamma^{(l-1)(\alpha)}_{(h-1)(\alpha)j(\alpha)}
		\notag\\&=\overset{n}{\underset{l=1}{\sum}}\,\Gamma^{n(\alpha)}_{1(\alpha)l(\alpha)}\Gamma^{l(\alpha)}_{(h-1)(\alpha)j(\alpha)}\text{,}
		\notag
	\end{align}
	\begin{align}
		-\overset{n}{\underset{l=1}{\sum}}\,\Gamma^{(n+1)(\alpha)}_{h(\alpha)l(\alpha)}\Gamma^{l(\alpha)}_{1(\alpha)j(\alpha)}&=-\overset{n}{\underset{l=1}{\sum}}\,\Gamma^{n(\alpha)}_{(h-1)(\alpha)l(\alpha)}\Gamma^{l(\alpha)}_{1(\alpha)j(\alpha)}
		\notag
	\end{align}
	and
	\begin{align}
		\overset{}{\underset{\sigma\neq\alpha}{\sum}}\,\bigg(\Gamma^{n(\alpha)}_{1(\alpha)1(\sigma)}\Gamma^{1(\sigma)}_{(h-1)(\alpha)j(\alpha)}-\Gamma^{n(\alpha)}_{(h-1)(\alpha)1(\sigma)}\Gamma^{1(\sigma)}_{1(\alpha)j(\alpha)}\bigg)
		\notag
	\end{align}
	combine to form $(R^{n})^{n(\alpha)}_{(h-1)(\alpha)1(\alpha)j(\alpha)}$, which vanishes by \eqref{Rdef_caso1_n}. Thus
	\begin{align}
		(R^{n+1})^{(n+1)(\alpha)}_{h(\alpha)1(\alpha)j(\alpha)}&=\Gamma^{(n+1)(\alpha)}_{1(\alpha)1(\alpha)}\xcancel{\Gamma^{1(\alpha)}_{h(\alpha)j(\alpha)}}-\xcancel{\Gamma^{(n+1)(\alpha)}_{h(\alpha)(n+1)(\alpha)}}\Gamma^{(n+1)(\alpha)}_{1(\alpha)j(\alpha)}\notag\\&+\overset{}{\underset{\sigma\neq\alpha}{\sum}}\,\bigg(\Gamma^{(n+1)(\alpha)}_{1(\alpha)1(\sigma)}\xcancel{\Gamma^{1(\sigma)}_{h(\alpha)j(\alpha)}}-\Gamma^{(n+1)(\alpha)}_{h(\alpha)1(\sigma)}\Gamma^{1(\sigma)}_{1(\alpha)j(\alpha)}\bigg)\notag\\&-\overset{}{\underset{\sigma\neq\alpha}{\sum}}\,\bigg(\Gamma^{n(\alpha)}_{1(\alpha)1(\sigma)}\xcancel{\Gamma^{1(\sigma)}_{(h-1)(\alpha)j(\alpha)}}-\Gamma^{n(\alpha)}_{(h-1)(\alpha)1(\sigma)}\Gamma^{1(\sigma)}_{1(\alpha)j(\alpha)}\bigg)
		\notag\\&=\overset{}{\underset{\sigma\neq\alpha}{\sum}}\,\bigg(-\cancel{\Gamma^{n(\alpha)}_{(h-1)(\alpha)1(\sigma)}\Gamma^{1(\sigma)}_{1(\alpha)j(\alpha)}}+\cancel{\Gamma^{n(\alpha)}_{(h-1)(\alpha)1(\sigma)}\Gamma^{1(\sigma)}_{1(\alpha)j(\alpha)}}\bigg)=0.
		\notag
	\end{align}
	\textbf{Subcase c:} $k=2$, $h\geq3$. We have
	\begin{align}
		(R^{n+1})^{(n+1)(\alpha)}_{h(\alpha)2(\alpha)j(\alpha)}&=\partial_{2(\alpha)}\Gamma^{(n+1)(\alpha)}_{h(\alpha)j(\alpha)}-\partial_{h(\alpha)}\Gamma^{(n+1)(\alpha)}_{2(\alpha)j(\alpha)}\notag\\&+\overset{n+1}{\underset{l=1}{\sum}}\,\bigg(\Gamma^{(n+1)(\alpha)}_{2(\alpha)l(\alpha)}\Gamma^{l(\alpha)}_{h(\alpha)j(\alpha)}-\Gamma^{(n+1)(\alpha)}_{h(\alpha)l(\alpha)}\Gamma^{l(\alpha)}_{2(\alpha)j(\alpha)}\bigg)\notag\\&+\overset{}{\underset{\sigma\neq\alpha}{\sum}}\,\bigg(\Gamma^{(n+1)(\alpha)}_{2(\alpha)1(\sigma)}\Gamma^{1(\sigma)}_{h(\alpha)j(\alpha)}-\Gamma^{(n+1)(\alpha)}_{h(\alpha)1(\sigma)}\Gamma^{1(\sigma)}_{2(\alpha)j(\alpha)}\bigg)
		\notag\\&\overset{\eqref{Lemma7.1eq}}{=}\partial_{2(\alpha)}\Gamma^{n(\alpha)}_{(h-1)(\alpha)j(\alpha)}-\partial_{(h-1)(\alpha)}\Gamma^{n(\alpha)}_{2(\alpha)j(\alpha)}\notag\\&+\overset{n+1}{\underset{l=1}{\sum}}\,\Gamma^{(n+1)(\alpha)}_{2(\alpha)l(\alpha)}\Gamma^{l(\alpha)}_{h(\alpha)j(\alpha)}-\overset{n+1}{\underset{l=1}{\sum}}\,\Gamma^{(n+1)(\alpha)}_{h(\alpha)l(\alpha)}\Gamma^{l(\alpha)}_{2(\alpha)j(\alpha)}\notag\\&+\overset{}{\underset{\sigma\neq\alpha}{\sum}}\,\bigg(\Gamma^{(n+1)(\alpha)}_{2(\alpha)1(\sigma)}\Gamma^{1(\sigma)}_{h(\alpha)j(\alpha)}-\Gamma^{(n+1)(\alpha)}_{h(\alpha)1(\sigma)}\Gamma^{1(\sigma)}_{2(\alpha)j(\alpha)}\bigg)\notag\\&\underline{+\overset{}{\underset{\sigma\neq\alpha}{\sum}}\,\bigg(\Gamma^{n(\alpha)}_{2(\alpha)1(\sigma)}\Gamma^{1(\sigma)}_{(h-1)(\alpha)j(\alpha)}-\Gamma^{n(\alpha)}_{(h-1)(\alpha)1(\sigma)}\Gamma^{1(\sigma)}_{2(\alpha)j(\alpha)}\bigg)}\notag\\&\underline{-\overset{}{\underset{\sigma\neq\alpha}{\sum}}\,\bigg(\Gamma^{n(\alpha)}_{2(\alpha)1(\sigma)}\Gamma^{1(\sigma)}_{(h-1)(\alpha)j(\alpha)}-\Gamma^{n(\alpha)}_{(h-1)(\alpha)1(\sigma)}\Gamma^{1(\sigma)}_{2(\alpha)j(\alpha)}\bigg)}
		\notag
	\end{align}
	where the underlined terms cancel out and where the terms $$\partial_{2(\alpha)}\Gamma^{n(\alpha)}_{(h-1)(\alpha)j(\alpha)}\text{,}\,-\partial_{(h-1)(\alpha)}\Gamma^{n(\alpha)}_{2(\alpha)j(\alpha)}\text{,}$$
	\begin{align}
		\overset{n+1}{\underset{l=2}{\sum}}\,\Gamma^{(n+1)(\alpha)}_{2(\alpha)l(\alpha)}\Gamma^{l(\alpha)}_{h(\alpha)j(\alpha)}&=\overset{n+1}{\underset{l=2}{\sum}}\,\Gamma^{n(\alpha)}_{2(\alpha)(l-1)(\alpha)}\Gamma^{(l-1)(\alpha)}_{(h-1)(\alpha)j(\alpha)}
		\notag\\&=\overset{n}{\underset{l=1}{\sum}}\,\Gamma^{n(\alpha)}_{2(\alpha)l(\alpha)}\Gamma^{l(\alpha)}_{(h-1)(\alpha)j(\alpha)}
		\text{,}\notag
	\end{align}
	\begin{align}
		-\overset{n}{\underset{l=1}{\sum}}\,\Gamma^{(n+1)(\alpha)}_{h(\alpha)l(\alpha)}\Gamma^{l(\alpha)}_{2(\alpha)j(\alpha)}&=-\overset{n}{\underset{l=1}{\sum}}\,\Gamma^{n(\alpha)}_{(h-1)(\alpha)l(\alpha)}\Gamma^{l(\alpha)}_{2(\alpha)j(\alpha)}
		\notag
	\end{align}
	and
	\begin{align}
		\overset{}{\underset{\sigma\neq\alpha}{\sum}}\,\bigg(\Gamma^{n(\alpha)}_{2(\alpha)1(\sigma)}\Gamma^{1(\sigma)}_{(h-1)(\alpha)j(\alpha)}-\Gamma^{n(\alpha)}_{(h-1)(\alpha)1(\sigma)}\Gamma^{1(\sigma)}_{2(\alpha)j(\alpha)}\bigg)
		\notag
	\end{align}
	combine to form $(R^{n})^{n(\alpha)}_{(h-1)(\alpha)2(\alpha)j(\alpha)}$, which vanishes by \eqref{Rdef_caso1_n}. Thus
	\begin{align}
		(R^{n+1})^{(n+1)(\alpha)}_{h(\alpha)2(\alpha)j(\alpha)}&=\Gamma^{(n+1)(\alpha)}_{2(\alpha)1(\alpha)}\xcancel{\Gamma^{1(\alpha)}_{h(\alpha)j(\alpha)}}-\xcancel{\Gamma^{(n+1)(\alpha)}_{h(\alpha)(n+1)(\alpha)}}\Gamma^{(n+1)(\alpha)}_{2(\alpha)j(\alpha)}\notag\\&+\overset{}{\underset{\sigma\neq\alpha}{\sum}}\,\bigg(\Gamma^{(n+1)(\alpha)}_{2(\alpha)1(\sigma)}\xcancel{\Gamma^{1(\sigma)}_{h(\alpha)j(\alpha)}}-\Gamma^{(n+1)(\alpha)}_{h(\alpha)1(\sigma)}\xcancel{\Gamma^{1(\sigma)}_{2(\alpha)j(\alpha)}}\bigg)\notag\\&-\overset{}{\underset{\sigma\neq\alpha}{\sum}}\,\bigg(\Gamma^{n(\alpha)}_{2(\alpha)1(\sigma)}\xcancel{\Gamma^{1(\sigma)}_{(h-1)(\alpha)j(\alpha)}}-\Gamma^{n(\alpha)}_{(h-1)(\alpha)1(\sigma)}\xcancel{\Gamma^{1(\sigma)}_{2(\alpha)j(\alpha)}}\bigg)=0.
		\notag
	\end{align}
	\textbf{Subcase d:} $k=1$, $h=2$. We have
	\begin{align}
		(R^{n+1})^{(n+1)(\alpha)}_{2(\alpha)1(\alpha)j(\alpha)}&=\partial_{1(\alpha)}\Gamma^{(n+1)(\alpha)}_{2(\alpha)j(\alpha)}-\partial_{2(\alpha)}\Gamma^{(n+1)(\alpha)}_{1(\alpha)j(\alpha)}\notag\\&+\overset{n+1}{\underset{l=1}{\sum}}\,\bigg(\Gamma^{(n+1)(\alpha)}_{1(\alpha)l(\alpha)}\Gamma^{l(\alpha)}_{2(\alpha)j(\alpha)}-\Gamma^{(n+1)(\alpha)}_{2(\alpha)l(\alpha)}\Gamma^{l(\alpha)}_{1(\alpha)j(\alpha)}\bigg)\notag\\&+\overset{}{\underset{\sigma\neq\alpha}{\sum}}\,\bigg(\Gamma^{(n+1)(\alpha)}_{1(\alpha)1(\sigma)}\Gamma^{1(\sigma)}_{2(\alpha)j(\alpha)}-\Gamma^{(n+1)(\alpha)}_{2(\alpha)1(\sigma)}\Gamma^{1(\sigma)}_{1(\alpha)j(\alpha)}\bigg).
		\label{Rdel_caso1_n+1_in+1_subcased}
	\end{align}
	If $j\geq3$ we get
	\begin{align}
		(R^{n+1})^{(n+1)(\alpha)}_{2(\alpha)1(\alpha)j(\alpha)}&=\partial_{1(\alpha)}\Gamma^{(n+1)(\alpha)}_{2(\alpha)j(\alpha)}-\partial_{2(\alpha)}\Gamma^{(n+1)(\alpha)}_{1(\alpha)j(\alpha)}\notag\\&+\overset{n+1}{\underset{l=1}{\sum}}\,\bigg(\Gamma^{(n+1)(\alpha)}_{1(\alpha)l(\alpha)}\Gamma^{l(\alpha)}_{2(\alpha)j(\alpha)}-\Gamma^{(n+1)(\alpha)}_{2(\alpha)l(\alpha)}\Gamma^{l(\alpha)}_{1(\alpha)j(\alpha)}\bigg)\notag\\&+\overset{}{\underset{\sigma\neq\alpha}{\sum}}\,\bigg(\Gamma^{(n+1)(\alpha)}_{1(\alpha)1(\sigma)}\Gamma^{1(\sigma)}_{2(\alpha)j(\alpha)}-\Gamma^{(n+1)(\alpha)}_{2(\alpha)1(\sigma)}\Gamma^{1(\sigma)}_{1(\alpha)j(\alpha)}\bigg)
		\notag\\&=\partial_{1(\alpha)}\Gamma^{n(\alpha)}_{2(\alpha)(j-1)(\alpha)}-\partial_{2(\alpha)}\Gamma^{n(\alpha)}_{1(\alpha)(j-1)(\alpha)}\notag\\&+\overset{n+1}{\underset{l=1}{\sum}}\,\Gamma^{(n+1)(\alpha)}_{1(\alpha)l(\alpha)}\Gamma^{l(\alpha)}_{2(\alpha)j(\alpha)}-\overset{n+1}{\underset{l=1}{\sum}}\,\Gamma^{(n+1)(\alpha)}_{2(\alpha)l(\alpha)}\Gamma^{l(\alpha)}_{1(\alpha)j(\alpha)}\notag\\&+\overset{}{\underset{\sigma\neq\alpha}{\sum}}\,\bigg(\Gamma^{(n+1)(\alpha)}_{1(\alpha)1(\sigma)}\Gamma^{1(\sigma)}_{2(\alpha)j(\alpha)}-\Gamma^{(n+1)(\alpha)}_{2(\alpha)1(\sigma)}\Gamma^{1(\sigma)}_{1(\alpha)j(\alpha)}\bigg)\notag\\&\underline{+\overset{}{\underset{\sigma\neq\alpha}{\sum}}\,\bigg(\Gamma^{n(\alpha)}_{1(\alpha)1(\sigma)}\Gamma^{1(\sigma)}_{2(\alpha)(j-1)(\alpha)}-\Gamma^{n(\alpha)}_{2(\alpha)1(\sigma)}\Gamma^{1(\sigma)}_{1(\alpha)(j-1)(\alpha)}\bigg)}\notag\\&\underline{-\overset{}{\underset{\sigma\neq\alpha}{\sum}}\,\bigg(\Gamma^{n(\alpha)}_{1(\alpha)1(\sigma)}\Gamma^{1(\sigma)}_{2(\alpha)(j-1)(\alpha)}-\Gamma^{n(\alpha)}_{2(\alpha)1(\sigma)}\Gamma^{1(\sigma)}_{1(\alpha)(j-1)(\alpha)}\bigg)}
		\notag
	\end{align}
	where the underlined terms cancel out and where the terms $$\partial_{1(\alpha)}\Gamma^{n(\alpha)}_{2(\alpha)(j-1)(\alpha)}\text{,}\,-\partial_{2(\alpha)}\Gamma^{n(\alpha)}_{1(\alpha)(j-1)(\alpha)}\text{,}$$
	\begin{align}
		\overset{n+1}{\underset{l=2}{\sum}}\,\Gamma^{(n+1)(\alpha)}_{1(\alpha)l(\alpha)}\Gamma^{l(\alpha)}_{2(\alpha)j(\alpha)}&=\overset{n+1}{\underset{l=2}{\sum}}\,\Gamma^{n(\alpha)}_{1(\alpha)(l-1)(\alpha)}\Gamma^{(l-1)(\alpha)}_{2(\alpha)(j-1)(\alpha)}
		=\overset{n}{\underset{l=1}{\sum}}\,\Gamma^{n(\alpha)}_{1(\alpha)l(\alpha)}\Gamma^{l(\alpha)}_{2(\alpha)(j-1)(\alpha)}\text{,}
		\notag
	\end{align}
	\begin{align}
		-\overset{n+1}{\underset{l=2}{\sum}}\,\Gamma^{(n+1)(\alpha)}_{2(\alpha)l(\alpha)}\Gamma^{l(\alpha)}_{1(\alpha)j(\alpha)}&=-\overset{n+1}{\underset{l=2}{\sum}}\,\Gamma^{n(\alpha)}_{2(\alpha)(l-1)(\alpha)}\Gamma^{(l-1)(\alpha)}_{1(\alpha)(j-1)(\alpha)}
		=-\overset{n}{\underset{l=1}{\sum}}\,\Gamma^{n(\alpha)}_{2(\alpha)l(\alpha)}\Gamma^{l(\alpha)}_{1(\alpha)(j-1)(\alpha)}
		\notag
	\end{align}
	and
	\begin{align}
		\overset{}{\underset{\sigma\neq\alpha}{\sum}}\,\bigg(\Gamma^{n(\alpha)}_{1(\alpha)1(\sigma)}\Gamma^{1(\sigma)}_{2(\alpha)(j-1)(\alpha)}-\Gamma^{n(\alpha)}_{2(\alpha)1(\sigma)}\Gamma^{1(\sigma)}_{1(\alpha)(j-1)(\alpha)}\bigg)
		\notag
	\end{align}
	combine to form $(R^{n})^{n(\alpha)}_{2(\alpha)1(\alpha)(j-1)(\alpha)}$, which vanishes by \eqref{Rdef_caso1_n}. Thus
	\begin{align}
		(R^{n+1})^{(n+1)(\alpha)}_{2(\alpha)1(\alpha)j(\alpha)}&=\Gamma^{(n+1)(\alpha)}_{1(\alpha)1(\alpha)}\xcancel{\Gamma^{1(\alpha)}_{2(\alpha)j(\alpha)}}-\Gamma^{(n+1)(\alpha)}_{2(\alpha)1(\alpha)}\xcancel{\Gamma^{1(\alpha)}_{1(\alpha)j(\alpha)}}\notag\\&+\overset{}{\underset{\sigma\neq\alpha}{\sum}}\,\bigg(\Gamma^{(n+1)(\alpha)}_{1(\alpha)1(\sigma)}\xcancel{\Gamma^{1(\sigma)}_{2(\alpha)j(\alpha)}}-\Gamma^{(n+1)(\alpha)}_{2(\alpha)1(\sigma)}\xcancel{\Gamma^{1(\sigma)}_{1(\alpha)j(\alpha)}}\bigg)\notag\\&-\overset{}{\underset{\sigma\neq\alpha}{\sum}}\,\bigg(\Gamma^{n(\alpha)}_{1(\alpha)1(\sigma)}\xcancel{\Gamma^{1(\sigma)}_{2(\alpha)(j-1)(\alpha)}}-\Gamma^{n(\alpha)}_{2(\alpha)1(\sigma)}\xcancel{\Gamma^{1(\sigma)}_{1(\alpha)(j-1)(\alpha)}}\bigg)=0.
		\notag
	\end{align}
	If $j=2$ then \eqref{Rdel_caso1_n+1_in+1_subcased} becomes
	\begin{align}
		(R^{n+1})^{(n+1)(\alpha)}_{2(\alpha)1(\alpha)2(\alpha)}&=\partial_{1(\alpha)}\Gamma^{(n+1)(\alpha)}_{2(\alpha)2(\alpha)}-\partial_{2(\alpha)}\Gamma^{(n+1)(\alpha)}_{1(\alpha)2(\alpha)}\notag\\&+\overset{n+1}{\underset{l=1}{\sum}}\,\bigg(\Gamma^{(n+1)(\alpha)}_{1(\alpha)l(\alpha)}\Gamma^{l(\alpha)}_{2(\alpha)2(\alpha)}-\Gamma^{(n+1)(\alpha)}_{2(\alpha)l(\alpha)}\Gamma^{l(\alpha)}_{1(\alpha)2(\alpha)}\bigg)\notag\\&+\overset{}{\underset{\sigma\neq\alpha}{\sum}}\,\bigg(\Gamma^{(n+1)(\alpha)}_{1(\alpha)1(\sigma)}\Gamma^{1(\sigma)}_{2(\alpha)2(\alpha)}-\Gamma^{(n+1)(\alpha)}_{2(\alpha)1(\sigma)}\Gamma^{1(\sigma)}_{1(\alpha)2(\alpha)}\bigg)
		\notag
	\end{align}
	where in the first summation only the terms for $l\geq2$ survive, as $\Gamma^{1(\alpha)}_{2(\alpha)2(\alpha)}=0$ and $\Gamma^{1(\alpha)}_{1(\alpha)2(\alpha)}=-\underset{\sigma\neq\alpha}{\sum}\,\Gamma^{1(\alpha)}_{1(\sigma)2(\alpha)}=0$. This yields
	\begin{align}
		(R^{n+1})^{(n+1)(\alpha)}_{2(\alpha)1(\alpha)2(\alpha)}&=\partial_{1(\alpha)}\Gamma^{(n+1)(\alpha)}_{2(\alpha)2(\alpha)}-\partial_{2(\alpha)}\Gamma^{(n+1)(\alpha)}_{1(\alpha)2(\alpha)}\notag\\&+\overset{n+1}{\underset{l=2}{\sum}}\,\bigg(\Gamma^{(n+1)(\alpha)}_{1(\alpha)l(\alpha)}\Gamma^{l(\alpha)}_{2(\alpha)2(\alpha)}-\Gamma^{(n+1)(\alpha)}_{2(\alpha)l(\alpha)}\Gamma^{l(\alpha)}_{1(\alpha)2(\alpha)}\bigg)\notag\\&+\overset{}{\underset{\sigma\neq\alpha}{\sum}}\,\bigg(\Gamma^{(n+1)(\alpha)}_{1(\alpha)1(\sigma)}\xcancel{\Gamma^{1(\sigma)}_{2(\alpha)2(\alpha)}}-\Gamma^{(n+1)(\alpha)}_{2(\alpha)1(\sigma)}\xcancel{\Gamma^{1(\sigma)}_{1(\alpha)2(\alpha)}}\bigg)
		\notag\\&=\partial_{1(\alpha)}\Gamma^{(n+1)(\alpha)}_{2(\alpha)2(\alpha)}-\partial_{2(\alpha)}\Gamma^{n(\alpha)}_{1(\alpha)1(\alpha)}\notag\\&-\underset{\sigma\neq\alpha}{\sum}\,\overset{n+1}{\underset{l=2}{\sum}}\,\Gamma^{(n-l+2)(\alpha)}_{1(\sigma)1(\alpha)}\Gamma^{l(\alpha)}_{2(\alpha)2(\alpha)}-\overset{n+1}{\underset{l=2}{\sum}}\,\Gamma^{(n-l+3)(\alpha)}_{2(\alpha)2(\alpha)}\Gamma^{l(\alpha)}_{1(\alpha)2(\alpha)}
		\notag
	\end{align}
	where
	\begin{align}
		-\overset{n+1}{\underset{l=2}{\sum}}\,\Gamma^{(n-l+3)(\alpha)}_{2(\alpha)2(\alpha)}\Gamma^{l(\alpha)}_{1(\alpha)2(\alpha)}&=\underset{\sigma\neq\alpha}{\sum}\,\overset{n+1}{\underset{l=2}{\sum}}\,\Gamma^{(n-l+3)(\alpha)}_{2(\alpha)2(\alpha)}\Gamma^{(l-1)(\alpha)}_{1(\sigma)1(\alpha)}
		\notag\\&\overset{t:=n-l+3}{=}\underset{\sigma\neq\alpha}{\sum}\,\overset{n+1}{\underset{t=2}{\sum}}\,\Gamma^{t(\alpha)}_{2(\alpha)2(\alpha)}\Gamma^{(n-t+2)(\alpha)}_{1(\sigma)1(\alpha)}.
		\notag
	\end{align}
	Thus
	\begin{align}
		(R^{n+1})^{(n+1)(\alpha)}_{2(\alpha)1(\alpha)2(\alpha)}&=\partial_{1(\alpha)}\Gamma^{(n+1)(\alpha)}_{2(\alpha)2(\alpha)}-\partial_{2(\alpha)}\Gamma^{n(\alpha)}_{1(\alpha)1(\alpha)}\notag\\&-\cancel{\underset{\sigma\neq\alpha}{\sum}\,\overset{n+1}{\underset{l=2}{\sum}}\,\Gamma^{(n-l+2)(\alpha)}_{1(\sigma)1(\alpha)}\Gamma^{l(\alpha)}_{2(\alpha)2(\alpha)}}+\cancel{\underset{\sigma\neq\alpha}{\sum}\,\overset{n+1}{\underset{t=2}{\sum}}\,\Gamma^{t(\alpha)}_{2(\alpha)2(\alpha)}\Gamma^{(n-t+2)(\alpha)}_{1(\sigma)1(\alpha)}}
		\notag\\&=\partial_{1(\alpha)}\big[\underline{\Gamma^{(n-1)(\alpha)}_{1(\alpha)1(\alpha)}}+\big(\Gamma^{(n+1)(\alpha)}_{2(\alpha)2(\alpha)}\underline{-\Gamma^{(n-1)(\alpha)}_{1(\alpha)1(\alpha)}}\big)\big]-\partial_{2(\alpha)}\Gamma^{n(\alpha)}_{1(\alpha)1(\alpha)}
		\notag\\&=\partial_{1(\alpha)}\Gamma^{(n-1)(\alpha)}_{1(\alpha)1(\alpha)}+\partial_{1(\alpha)}\big(\Gamma^{(n+1)(\alpha)}_{2(\alpha)2(\alpha)}-\Gamma^{(n-1)(\alpha)}_{1(\alpha)1(\alpha)}\big)-\partial_{2(\alpha)}\Gamma^{n(\alpha)}_{1(\alpha)1(\alpha)}
		\notag\\&=\partial_{1(\alpha)}\Gamma^{(n-1)(\alpha)}_{1(\alpha)1(\alpha)}+\partial_{1(\alpha)}\big(\Gamma^{(n+1)(\alpha)}_{2(\alpha)2(\alpha)}-\Gamma^{(n-1)(\alpha)}_{1(\alpha)1(\alpha)}\big)-\partial_{2(\alpha)}\Gamma^{n(\alpha)}_{1(\alpha)1(\alpha)}
		\notag
	\end{align}
	where the underlined terms cancel out and where
	\begin{align}
		\partial_{2(\alpha)}\Gamma^{n(\alpha)}_{1(\alpha)1(\alpha)}&=-\underset{\sigma\neq\alpha}{\sum}\,\partial_{2(\alpha)}\Gamma^{n(\alpha)}_{1(\sigma)1(\alpha)}\overset{\eqref{Lemma7.1eq}}{=}-\underset{\sigma\neq\alpha}{\sum}\,\partial_{1(\alpha)}\Gamma^{(n-1)(\alpha)}_{1(\sigma)1(\alpha)}=\partial_{1(\alpha)}\Gamma^{(n-1)(\alpha)}_{1(\alpha)1(\alpha)}.
		\notag
	\end{align}
	This implies
	\begin{align}
		(R^{n+1})^{(n+1)(\alpha)}_{2(\alpha)1(\alpha)2(\alpha)}&=\cancel{\partial_{1(\alpha)}\Gamma^{(n-1)(\alpha)}_{1(\alpha)1(\alpha)}}+\partial_{1(\alpha)}\big(\Gamma^{(n+1)(\alpha)}_{2(\alpha)2(\alpha)}-\Gamma^{(n-1)(\alpha)}_{1(\alpha)1(\alpha)}\big)-\cancel{\partial_{2(\alpha)}\Gamma^{n(\alpha)}_{1(\alpha)1(\alpha)}}
		\notag\\&=\partial_{1(\alpha)}\big(\Gamma^{(n+1)(\alpha)}_{2(\alpha)2(\alpha)}-\Gamma^{(n-1)(\alpha)}_{1(\alpha)1(\alpha)}\big)=0
		\notag
	\end{align}
	because
	\begin{align}
		\Gamma^{(n+1)(\alpha)}_{2(\alpha)2(\alpha)}-\Gamma^{(n-1)(\alpha)}_{1(\alpha)1(\alpha)}&=-\Gamma^{2(\alpha)}_{2(\alpha)2(\alpha)}\frac{u^{(n+1)(\alpha)}}{u^{2(\alpha)}}-\frac{1}{u^{2(\alpha)}}\overset{n-2}{\underset{l=1}{\sum}}\big(\Gamma^{(l+2)(\alpha)}_{2(\alpha)2(\alpha)}-\Gamma^{l(\alpha)}_{1(\alpha)1(\alpha)}\big)u^{(n-l+1)(\alpha)}
		\notag
	\end{align}
	does not depend on $u^{1(\alpha)}$ (it only depends on $\{u^{1(\sigma)}\,|\,\sigma\neq\alpha\}$ and $\{u^{2(\alpha)}\,|\,2\leq s\leq n\}$). If $j=1$ then \eqref{Rdel_caso1_n+1_in+1_subcased} becomes
	\begin{align}
		(R^{n+1})^{(n+1)(\alpha)}_{2(\alpha)1(\alpha)1(\alpha)}&=\partial_{1(\alpha)}\Gamma^{(n+1)(\alpha)}_{2(\alpha)1(\alpha)}-\partial_{2(\alpha)}\Gamma^{(n+1)(\alpha)}_{1(\alpha)1(\alpha)}\notag\\&+\overset{n+1}{\underset{l=1}{\sum}}\,\bigg(\Gamma^{(n+1)(\alpha)}_{1(\alpha)l(\alpha)}\Gamma^{l(\alpha)}_{2(\alpha)1(\alpha)}-\Gamma^{(n+1)(\alpha)}_{2(\alpha)l(\alpha)}\Gamma^{l(\alpha)}_{1(\alpha)1(\alpha)}\bigg)\notag\\&+\overset{}{\underset{\sigma\neq\alpha}{\sum}}\,\bigg(\Gamma^{(n+1)(\alpha)}_{1(\alpha)1(\sigma)}\xcancel{\Gamma^{1(\sigma)}_{2(\alpha)1(\alpha)}}-\Gamma^{(n+1)(\alpha)}_{2(\alpha)1(\sigma)}\Gamma^{1(\sigma)}_{1(\alpha)1(\alpha)}\bigg)
		\notag
	\end{align}
	where
	\begin{align}
		\partial_{1(\alpha)}\Gamma^{(n+1)(\alpha)}_{2(\alpha)1(\alpha)}&=\partial_{1(\alpha)}\Gamma^{n(\alpha)}_{1(\alpha)1(\alpha)}
		\notag
	\end{align}
	and
	\begin{align}
		\partial_{2(\alpha)}\Gamma^{(n+1)(\alpha)}_{1(\alpha)1(\alpha)}&=-\underset{\sigma\neq\alpha}{\sum}\,\partial_{2(\alpha)}\Gamma^{(n+1)(\alpha)}_{1(\sigma)1(\alpha)}
		\overset{\eqref{Lemma7.1eq}}{=}-\underset{\sigma\neq\alpha}{\sum}\,\partial_{1(\alpha)}\Gamma^{n(\alpha)}_{1(\sigma)1(\alpha)}
		=\partial_{1(\alpha)}\Gamma^{n(\alpha)}_{1(\alpha)1(\alpha)}
		\notag
	\end{align}
	mutually cancel out. This yields
	\begin{align}
		(R^{n+1})^{(n+1)(\alpha)}_{2(\alpha)1(\alpha)1(\alpha)}&=\overset{n+1}{\underset{l=1}{\sum}}\,\bigg(\Gamma^{(n+1)(\alpha)}_{1(\alpha)l(\alpha)}\Gamma^{l(\alpha)}_{2(\alpha)1(\alpha)}-\Gamma^{(n+1)(\alpha)}_{2(\alpha)l(\alpha)}\Gamma^{l(\alpha)}_{1(\alpha)1(\alpha)}\bigg)\notag\\&-\overset{}{\underset{\sigma\neq\alpha}{\sum}}\,\Gamma^{(n+1)(\alpha)}_{2(\alpha)1(\sigma)}\Gamma^{1(\sigma)}_{1(\alpha)1(\alpha)}
		\notag\\&=\Gamma^{(n+1)(\alpha)}_{1(\alpha)1(\alpha)}\xcancel{\Gamma^{1(\alpha)}_{2(\alpha)1(\alpha)}}-\Gamma^{(n+1)(\alpha)}_{2(\alpha)1(\alpha)}\Gamma^{1(\alpha)}_{1(\alpha)1(\alpha)}\notag\\&+\overset{n+1}{\underset{l=2}{\sum}}\,\Gamma^{n(\alpha)}_{1(\alpha)(l-1)(\alpha)}\Gamma^{(l-1)(\alpha)}_{1(\alpha)1(\alpha)}-\overset{n+1}{\underset{l=2}{\sum}}\,\Gamma^{(n-l+3)(\alpha)}_{2(\alpha)2(\alpha)}\Gamma^{l(\alpha)}_{1(\alpha)1(\alpha)}\notag\\&-\overset{}{\underset{\sigma\neq\alpha}{\sum}}\,\Gamma^{n(\alpha)}_{1(\alpha)1(\sigma)}\Gamma^{1(\sigma)}_{1(\alpha)1(\alpha)}
		\notag\\&=-\Gamma^{(n+1)(\alpha)}_{2(\alpha)1(\alpha)}\Gamma^{1(\alpha)}_{1(\alpha)1(\alpha)}+\overset{n}{\underset{l=1}{\sum}}\,\Gamma^{n(\alpha)}_{1(\alpha)l(\alpha)}\Gamma^{l(\alpha)}_{1(\alpha)1(\alpha)}\notag\\&-\overset{n+1}{\underset{l=2}{\sum}}\,\Gamma^{(n-l+3)(\alpha)}_{2(\alpha)2(\alpha)}\Gamma^{l(\alpha)}_{1(\alpha)1(\alpha)}-\overset{}{\underset{\sigma\neq\alpha}{\sum}}\,\Gamma^{n(\alpha)}_{1(\alpha)1(\sigma)}\Gamma^{1(\sigma)}_{1(\alpha)1(\alpha)}
		\notag\\&=-\cancel{\Gamma^{n(\alpha)}_{1(\alpha)1(\alpha)}\Gamma^{1(\alpha)}_{1(\alpha)1(\alpha)}}+\cancel{\Gamma^{n(\alpha)}_{1(\alpha)1(\alpha)}\Gamma^{1(\alpha)}_{1(\alpha)1(\alpha)}}+\overset{n}{\underset{l=2}{\sum}}\,\Gamma^{n(\alpha)}_{1(\alpha)l(\alpha)}\Gamma^{l(\alpha)}_{1(\alpha)1(\alpha)}\notag\\&-\overset{n}{\underset{l=2}{\sum}}\,\Gamma^{(n-l+3)(\alpha)}_{2(\alpha)2(\alpha)}\Gamma^{l(\alpha)}_{1(\alpha)1(\alpha)}-\Gamma^{2(\alpha)}_{2(\alpha)2(\alpha)}\Gamma^{(n+1)(\alpha)}_{1(\alpha)1(\alpha)}-\overset{}{\underset{\sigma\neq\alpha}{\sum}}\,\Gamma^{n(\alpha)}_{1(\alpha)1(\sigma)}\Gamma^{1(\sigma)}_{1(\alpha)1(\alpha)}
		\notag\\&=\overset{n}{\underset{l=2}{\sum}}\,\big(\Gamma^{n(\alpha)}_{1(\alpha)l(\alpha)}-\Gamma^{(n-l+3)(\alpha)}_{2(\alpha)2(\alpha)}\big)\Gamma^{l(\alpha)}_{1(\alpha)1(\alpha)}\notag\\&-\Gamma^{2(\alpha)}_{2(\alpha)2(\alpha)}\Gamma^{(n+1)(\alpha)}_{1(\alpha)1(\alpha)}+\overset{}{\underset{\sigma\neq\alpha}{\sum}}\,\Gamma^{n(\alpha)}_{1(\alpha)1(\sigma)}\Gamma^{1(\sigma)}_{1(\alpha)1(\sigma)}
		\notag\\&=\overset{}{\underset{\sigma\neq\alpha}{\sum}}\big[\Gamma^{n(\alpha)}_{1(\alpha)1(\sigma)}\Gamma^{1(\sigma)}_{1(\alpha)1(\sigma)}+\Gamma^{2(\alpha)}_{2(\alpha)2(\alpha)}\Gamma^{(n+1)(\alpha)}_{1(\sigma)1(\alpha)}\notag\\&+\overset{n}{\underset{l=2}{\sum}}\,\big(\Gamma^{(n-l+3)(\alpha)}_{2(\alpha)2(\alpha)}-\Gamma^{(n-l+1)(\alpha)}_{1(\alpha)1(\alpha)}\big)\Gamma^{l(\alpha)}_{1(\sigma)1(\alpha)}\big]=0
		\notag
	\end{align}
	because for each $\sigma\neq\alpha$ we have
	\begin{align}
		&\Gamma^{n(\alpha)}_{1(\alpha)1(\sigma)}\Gamma^{1(\sigma)}_{1(\alpha)1(\sigma)}+\Gamma^{2(\alpha)}_{2(\alpha)2(\alpha)}\Gamma^{(n+1)(\alpha)}_{1(\sigma)1(\alpha)}+\overset{n}{\underset{l=2}{\sum}}\,\big(\Gamma^{(n-l+3)(\alpha)}_{2(\alpha)2(\alpha)}-\Gamma^{(n-l+1)(\alpha)}_{1(\alpha)1(\alpha)}\big)\Gamma^{l(\alpha)}_{1(\sigma)1(\alpha)}
		\notag\\&=\Gamma^{n(\alpha)}_{1(\alpha)1(\sigma)}\bigg[-\frac{m_\alpha\epsilon_\alpha}{u^{1(\alpha)}-u^{1(\sigma)}}\bigg]+\Gamma^{2(\alpha)}_{2(\alpha)2(\alpha)}\bigg[-\frac{1}{u^{1(\alpha)}-u^{1(\sigma)}}\,\overset{n+1}{\underset{s=2}{\sum}}\,\Gamma^{(n-s+2)(\alpha)}_{1(\sigma)1(\alpha)}\,u^{s(\alpha)}\bigg]\notag\\&+\big(\Gamma^{(n+1)(\alpha)}_{2(\alpha)2(\alpha)}-\Gamma^{(n-1)(\alpha)}_{1(\alpha)1(\alpha)}\big)\Gamma^{2(\alpha)}_{1(\sigma)1(\alpha)}+\overset{n}{\underset{l=3}{\sum}}\,\big(\Gamma^{(n-l+3)(\alpha)}_{2(\alpha)2(\alpha)}-\Gamma^{(n-l+1)(\alpha)}_{1(\alpha)1(\alpha)}\big)\Gamma^{l(\alpha)}_{1(\sigma)1(\alpha)}
		\notag\\&=\cancel{\Gamma^{n(\alpha)}_{1(\alpha)1(\sigma)}\bigg[\frac{u^{2(\alpha)}\,\Gamma^{2(\alpha)}_{2(\alpha)2(\alpha)}}{u^{1(\alpha)}-u^{1(\sigma)}}\bigg]}+\Gamma^{2(\alpha)}_{2(\alpha)2(\alpha)}\bigg[\cancel{-\frac{1}{u^{1(\alpha)}-u^{1(\sigma)}}\,\Gamma^{n(\alpha)}_{1(\sigma)1(\alpha)}\,u^{2(\alpha)}}\notag\\&-\frac{1}{u^{1(\alpha)}-u^{1(\sigma)}}\,\overset{n}{\underset{s=3}{\sum}}\,\Gamma^{(n-s+2)(\alpha)}_{1(\sigma)1(\alpha)}\,u^{s(\alpha)}-\frac{1}{u^{1(\alpha)}-u^{1(\sigma)}}\,\Gamma^{1(\alpha)}_{1(\sigma)1(\alpha)}\,u^{(n+1)(\alpha)}\bigg]\notag\\&+\big(\Gamma^{(n+1)(\alpha)}_{2(\alpha)2(\alpha)}-\Gamma^{(n-1)(\alpha)}_{1(\alpha)1(\alpha)}\big)\Gamma^{2(\alpha)}_{1(\sigma)1(\alpha)}+\overset{n}{\underset{l=3}{\sum}}\,\big(\Gamma^{(n-l+3)(\alpha)}_{2(\alpha)2(\alpha)}-\Gamma^{(n-l+1)(\alpha)}_{1(\alpha)1(\alpha)}\big)\Gamma^{l(\alpha)}_{1(\sigma)1(\alpha)}
		\notag\\&=-\frac{\Gamma^{2(\alpha)}_{2(\alpha)2(\alpha)}}{u^{1(\alpha)}-u^{1(\sigma)}}\,\overset{n}{\underset{s=3}{\sum}}\,\Gamma^{(n-s+2)(\alpha)}_{1(\sigma)1(\alpha)}\,u^{s(\alpha)}-\cancel{\frac{\Gamma^{2(\alpha)}_{2(\alpha)2(\alpha)}}{u^{1(\alpha)}-u^{1(\sigma)}}\,\Gamma^{1(\alpha)}_{1(\sigma)1(\alpha)}\,u^{(n+1)(\alpha)}}\notag\\&+\bigg(\cancel{-\Gamma^{2(\alpha)}_{2(\alpha)2(\alpha)}\frac{u^{(n+1)(\alpha)}}{u^{2(\alpha)}}}-\frac{1}{u^{2(\alpha)}}\overset{n-2}{\underset{l=1}{\sum}}\big(\Gamma^{(l+2)(\alpha)}_{2(\alpha)2(\alpha)}-\Gamma^{l(\alpha)}_{1(\alpha)1(\alpha)}\big)u^{(n-l+1)(\alpha)}\bigg)\bigg[-\frac{\Gamma^{1(\alpha)}_{1(\sigma)1(\alpha)}\,u^{2(\alpha)}}{u^{1(\alpha)}-u^{1(\sigma)}}\bigg]\notag\\&+\overset{n-2}{\underset{s=1}{\sum}}\,\big(\Gamma^{(s+2)(\alpha)}_{2(\alpha)2(\alpha)}-\Gamma^{s(\alpha)}_{1(\alpha)1(\alpha)}\big)\Gamma^{(n-s+1)(\alpha)}_{1(\sigma)1(\alpha)}
		\notag\\&=-\frac{\Gamma^{2(\alpha)}_{2(\alpha)2(\alpha)}}{u^{1(\alpha)}-u^{1(\sigma)}}\,\overset{n-1}{\underset{s=3}{\sum}}\,\Gamma^{(n-s+2)(\alpha)}_{1(\sigma)1(\alpha)}\,u^{s(\alpha)}-\frac{\Gamma^{2(\alpha)}_{2(\alpha)2(\alpha)}}{u^{1(\alpha)}-u^{1(\sigma)}}\,\Gamma^{2(\alpha)}_{1(\sigma)1(\alpha)}\,u^{n(\alpha)}\notag\\&+\overset{n-2}{\underset{l=1}{\sum}}\big(\Gamma^{(l+2)(\alpha)}_{2(\alpha)2(\alpha)}-\Gamma^{l(\alpha)}_{1(\alpha)1(\alpha)}\big)u^{(n-l+1)(\alpha)}\bigg[\frac{\Gamma^{1(\alpha)}_{1(\sigma)1(\alpha)}}{u^{1(\alpha)}-u^{1(\sigma)}}\bigg]\notag\\&+\overset{n-2}{\underset{s=1}{\sum}}\,\big(\Gamma^{(s+2)(\alpha)}_{2(\alpha)2(\alpha)}-\Gamma^{s(\alpha)}_{1(\alpha)1(\alpha)}\big)\Gamma^{(n-s+1)(\alpha)}_{1(\sigma)1(\alpha)}
		\notag\\&=-\frac{\Gamma^{2(\alpha)}_{2(\alpha)2(\alpha)}}{u^{1(\alpha)}-u^{1(\sigma)}}\,\overset{n-1}{\underset{s=3}{\sum}}\,\Gamma^{(n-s+2)(\alpha)}_{1(\sigma)1(\alpha)}\,u^{s(\alpha)}-\frac{\Gamma^{2(\alpha)}_{2(\alpha)2(\alpha)}}{u^{1(\alpha)}-u^{1(\sigma)}}\,\Gamma^{2(\alpha)}_{1(\sigma)1(\alpha)}\,u^{n(\alpha)}\notag\\&+\overset{n-3}{\underset{s=1}{\sum}}\,\big(\Gamma^{(s+2)(\alpha)}_{2(\alpha)2(\alpha)}-\Gamma^{s(\alpha)}_{1(\alpha)1(\alpha)}\big)\bigg[\Gamma^{(n-s+1)(\alpha)}_{1(\sigma)1(\alpha)}+\frac{\Gamma^{1(\alpha)}_{1(\sigma)1(\alpha)}}{u^{1(\alpha)}-u^{1(\sigma)}}\,u^{(n-s+1)(\alpha)}\bigg]\notag\\&+\big(\Gamma^{n(\alpha)}_{2(\alpha)2(\alpha)}-\Gamma^{(n-2)(\alpha)}_{1(\alpha)1(\alpha)}\big)\bigg[\Gamma^{3(\alpha)}_{1(\sigma)1(\alpha)}+\frac{\Gamma^{1(\alpha)}_{1(\sigma)1(\alpha)}}{u^{1(\alpha)}-u^{1(\sigma)}}\,u^{3(\alpha)}\bigg]
		\notag\\&=-\frac{\Gamma^{2(\alpha)}_{2(\alpha)2(\alpha)}}{u^{1(\alpha)}-u^{1(\sigma)}}\,\overset{n-1}{\underset{s=3}{\sum}}\,\Gamma^{(n-s+2)(\alpha)}_{1(\sigma)1(\alpha)}\,u^{s(\alpha)}-\cancel{\frac{\Gamma^{2(\alpha)}_{2(\alpha)2(\alpha)}}{u^{1(\alpha)}-u^{1(\sigma)}}\,\Gamma^{2(\alpha)}_{1(\sigma)1(\alpha)}\,u^{n(\alpha)}}\notag\\&+\overset{n-3}{\underset{s=1}{\sum}}\,\big(\Gamma^{(s+2)(\alpha)}_{2(\alpha)2(\alpha)}-\Gamma^{s(\alpha)}_{1(\alpha)1(\alpha)}\big)\bigg[-\frac{1}{u^{1(\alpha)}-u^{1(\sigma)}}\overset{n-s}{\underset{l=2}{\sum}}\,\Gamma^{(n-s-l+2)(\alpha)}_{1(\sigma)1(\alpha)}\,u^{l(\alpha)}\bigg]\notag\\&+\bigg(-\cancel{\Gamma^{2(\alpha)}_{2(\alpha)2(\alpha)}\frac{u^{n(\alpha)}}{u^{2(\alpha)}}}-\frac{1}{u^{2(\alpha)}}\overset{n-3}{\underset{l=1}{\sum}}\big(\Gamma^{(l+2)(\alpha)}_{2(\alpha)2(\alpha)}-\Gamma^{l(\alpha)}_{1(\alpha)1(\alpha)}\big)u^{(n-l)(\alpha)}\bigg)\bigg[-\frac{\Gamma^{2(\alpha)}_{1(\sigma)1(\alpha)}}{u^{1(\alpha)}-u^{1(\sigma)}}\,u^{2(\alpha)}\bigg]
		\notag\\&=-\frac{\Gamma^{2(\alpha)}_{2(\alpha)2(\alpha)}}{u^{1(\alpha)}-u^{1(\sigma)}}\,\overset{n-1}{\underset{s=3}{\sum}}\,\Gamma^{(n-s+2)(\alpha)}_{1(\sigma)1(\alpha)}\,u^{s(\alpha)}\notag\\&+\overset{n-3}{\underset{s=1}{\sum}}\,\big(\Gamma^{(s+2)(\alpha)}_{2(\alpha)2(\alpha)}-\Gamma^{s(\alpha)}_{1(\alpha)1(\alpha)}\big)\bigg[-\frac{1}{u^{1(\alpha)}-u^{1(\sigma)}}\overset{n-s-1}{\underset{l=2}{\sum}}\,\Gamma^{(n-s-l+2)(\alpha)}_{1(\sigma)1(\alpha)}\,u^{l(\alpha)}-\cancel{\frac{\Gamma^{2(\alpha)}_{1(\sigma)1(\alpha)}\,u^{(n-s)(\alpha)}}{u^{1(\alpha)}-u^{1(\sigma)}}}\bigg]\notag\\&+\bigg(\overset{n-3}{\underset{l=1}{\sum}}\big(\Gamma^{(l+2)(\alpha)}_{2(\alpha)2(\alpha)}-\Gamma^{l(\alpha)}_{1(\alpha)1(\alpha)}\big)u^{(n-l)(\alpha)}\bigg)\bigg[\cancel{\frac{\Gamma^{2(\alpha)}_{1(\sigma)1(\alpha)}}{u^{1(\alpha)}-u^{1(\sigma)}}}\bigg]
		\notag\\&=-\frac{\Gamma^{2(\alpha)}_{2(\alpha)2(\alpha)}}{u^{1(\alpha)}-u^{1(\sigma)}}\,\overset{n-1}{\underset{s=3}{\sum}}\,\Gamma^{(n-s+2)(\alpha)}_{1(\sigma)1(\alpha)}\,u^{s(\alpha)}\notag\\&-\frac{1}{u^{1(\alpha)}-u^{1(\sigma)}}\,\overset{n-3}{\underset{s=1}{\sum}}\,\big(\Gamma^{(s+2)(\alpha)}_{2(\alpha)2(\alpha)}-\Gamma^{s(\alpha)}_{1(\alpha)1(\alpha)}\big)\,\overset{n-s-1}{\underset{l=2}{\sum}}\,\Gamma^{(n-s-l+2)(\alpha)}_{1(\sigma)1(\alpha)}\,u^{l(\alpha)}
		\notag\\&=-\frac{\Gamma^{2(\alpha)}_{2(\alpha)2(\alpha)}}{u^{1(\alpha)}-u^{1(\sigma)}}\,\Gamma^{(n-1)(\alpha)}_{1(\sigma)1(\alpha)}\,u^{3(\alpha)}-\frac{\Gamma^{2(\alpha)}_{2(\alpha)2(\alpha)}}{u^{1(\alpha)}-u^{1(\sigma)}}\,\overset{n-1}{\underset{s=4}{\sum}}\,\Gamma^{(n-s+2)(\alpha)}_{1(\sigma)1(\alpha)}\,u^{s(\alpha)}\notag\\&-\frac{1}{u^{1(\alpha)}-u^{1(\sigma)}}\,\big(\Gamma^{3(\alpha)}_{2(\alpha)2(\alpha)}-\Gamma^{1(\alpha)}_{1(\alpha)1(\alpha)}\big)\,\overset{n-2}{\underset{l=2}{\sum}}\,\Gamma^{(n-l+1)(\alpha)}_{1(\sigma)1(\alpha)}\,u^{l(\alpha)}\notag\\&-\frac{1}{u^{1(\alpha)}-u^{1(\sigma)}}\,\overset{n-3}{\underset{s=2}{\sum}}\,\big(\Gamma^{(s+2)(\alpha)}_{2(\alpha)2(\alpha)}-\Gamma^{s(\alpha)}_{1(\alpha)1(\alpha)}\big)\,\overset{n-s-1}{\underset{l=2}{\sum}}\,\Gamma^{(n-s-l+2)(\alpha)}_{1(\sigma)1(\alpha)}\,u^{l(\alpha)}
		\notag\\&=-\cancel{\frac{\Gamma^{2(\alpha)}_{2(\alpha)2(\alpha)}}{u^{1(\alpha)}-u^{1(\sigma)}}\,\Gamma^{(n-1)(\alpha)}_{1(\sigma)1(\alpha)}\,u^{3(\alpha)}}-\frac{\Gamma^{2(\alpha)}_{2(\alpha)2(\alpha)}}{u^{1(\alpha)}-u^{1(\sigma)}}\,\overset{n-2}{\underset{l=3}{\sum}}\,\Gamma^{(n-l+1)(\alpha)}_{1(\sigma)1(\alpha)}\,u^{(l+1)(\alpha)}\notag\\&+\frac{1}{u^{1(\alpha)}-u^{1(\sigma)}}\,\bigg(\Gamma^{2(\alpha)}_{2(\alpha)2(\alpha)}\,\frac{u^{3(\alpha)}}{u^{2(\alpha)}}\bigg)\,\bigg[\cancel{\Gamma^{(n-1)(\alpha)}_{1(\sigma)1(\alpha)}\,u^{2(\alpha)}}+\overset{n-2}{\underset{l=3}{\sum}}\,\Gamma^{(n-l+1)(\alpha)}_{1(\sigma)1(\alpha)}\,u^{l(\alpha)}\bigg]\notag\\&-\frac{1}{u^{1(\alpha)}-u^{1(\sigma)}}\,\overset{n-3}{\underset{s=2}{\sum}}\,\big(\Gamma^{(s+2)(\alpha)}_{2(\alpha)2(\alpha)}-\Gamma^{s(\alpha)}_{1(\alpha)1(\alpha)}\big)\,\overset{n-s-1}{\underset{l=2}{\sum}}\,\Gamma^{(n-s-l+2)(\alpha)}_{1(\sigma)1(\alpha)}\,u^{l(\alpha)}
		\notag\\&=\frac{\Gamma^{2(\alpha)}_{2(\alpha)2(\alpha)}}{u^{1(\alpha)}-u^{1(\sigma)}}\,\overset{n-2}{\underset{l=3}{\sum}}\,\Gamma^{(n-l+1)(\alpha)}_{1(\sigma)1(\alpha)}\,\bigg(\frac{u^{3(\alpha)}}{u^{2(\alpha)}}\,u^{l(\alpha)}-u^{(l+1)(\alpha)}\bigg)\notag\\&-\frac{1}{u^{1(\alpha)}-u^{1(\sigma)}}\,\overset{n-3}{\underset{s=2}{\sum}}\,\big(\Gamma^{(s+2)(\alpha)}_{2(\alpha)2(\alpha)}-\Gamma^{s(\alpha)}_{1(\alpha)1(\alpha)}\big)\,\overset{n-s-1}{\underset{l=2}{\sum}}\,\Gamma^{(n-s-l+2)(\alpha)}_{1(\sigma)1(\alpha)}\,u^{l(\alpha)}
		\notag\\&=\frac{\Gamma^{2(\alpha)}_{2(\alpha)2(\alpha)}}{u^{1(\alpha)}-u^{1(\sigma)}}\,\overset{n-2}{\underset{l=3}{\sum}}\,\Gamma^{(n-l+1)(\alpha)}_{1(\sigma)1(\alpha)}\,\bigg(\frac{u^{3(\alpha)}}{u^{2(\alpha)}}\,u^{l(\alpha)}-u^{(l+1)(\alpha)}\bigg)\notag\\&-\frac{1}{u^{1(\alpha)}-u^{1(\sigma)}}\,\overset{n-3}{\underset{s=2}{\sum}}\,\big(\Gamma^{(s+2)(\alpha)}_{2(\alpha)2(\alpha)}-\Gamma^{s(\alpha)}_{1(\alpha)1(\alpha)}\big)\,\overset{n-2}{\underset{t=s+1}{\sum}}\,\Gamma^{(n-t+1)(\alpha)}_{1(\sigma)1(\alpha)}\,u^{(t-s+1)(\alpha)}
		\notag\\&=\frac{\Gamma^{2(\alpha)}_{2(\alpha)2(\alpha)}}{u^{1(\alpha)}-u^{1(\sigma)}}\,\overset{n-2}{\underset{l=3}{\sum}}\,\Gamma^{(n-l+1)(\alpha)}_{1(\sigma)1(\alpha)}\,\bigg(\frac{u^{3(\alpha)}}{u^{2(\alpha)}}\,u^{l(\alpha)}-u^{(l+1)(\alpha)}\bigg)\notag\\&-\frac{1}{u^{1(\alpha)}-u^{1(\sigma)}}\,\overset{n-2}{\underset{t=3}{\sum}}\,\overset{t-1}{\underset{s=2}{\sum}}\,\big(\Gamma^{(s+2)(\alpha)}_{2(\alpha)2(\alpha)}-\Gamma^{s(\alpha)}_{1(\alpha)1(\alpha)}\big)\,\Gamma^{(n-t+1)(\alpha)}_{1(\sigma)1(\alpha)}\,u^{(t-s+1)(\alpha)}
		\notag\\&=\frac{\Gamma^{2(\alpha)}_{2(\alpha)2(\alpha)}}{u^{1(\alpha)}-u^{1(\sigma)}}\,\overset{n-2}{\underset{l=3}{\sum}}\,\Gamma^{(n-l+1)(\alpha)}_{1(\sigma)1(\alpha)}\,\bigg(\frac{u^{3(\alpha)}}{u^{2(\alpha)}}\,u^{l(\alpha)}-u^{(l+1)(\alpha)}\bigg)\notag\\&-\frac{1}{u^{1(\alpha)}-u^{1(\sigma)}}\,\overset{n-2}{\underset{l=3}{\sum}}\,\overset{l-1}{\underset{s=2}{\sum}}\,\big(\Gamma^{(s+2)(\alpha)}_{2(\alpha)2(\alpha)}-\Gamma^{s(\alpha)}_{1(\alpha)1(\alpha)}\big)\,\Gamma^{(n-l+1)(\alpha)}_{1(\sigma)1(\alpha)}\,u^{(l-s+1)(\alpha)}
		\notag\\&=\frac{1}{u^{1(\alpha)}-u^{1(\sigma)}}\,\overset{n-2}{\underset{l=3}{\sum}}\,\Gamma^{(n-l+1)(\alpha)}_{1(\sigma)1(\alpha)}\,\bigg[\Gamma^{2(\alpha)}_{2(\alpha)2(\alpha)}\,\bigg(\frac{u^{3(\alpha)}}{u^{2(\alpha)}}\,u^{l(\alpha)}-u^{(l+1)(\alpha)}\bigg)\notag\\&-\overset{l-1}{\underset{s=2}{\sum}}\,\big(\Gamma^{(s+2)(\alpha)}_{2(\alpha)2(\alpha)}-\Gamma^{s(\alpha)}_{1(\alpha)1(\alpha)}\big)\,u^{(l-s+1)(\alpha)}\bigg]
		\overset{\eqref{atlas}}{=}0.
		\notag
	\end{align}
	\textbf{Case 2: $\alpha=\beta=\gamma\neq\epsilon$.} Our goal is to prove that
	\begin{align}
		R^{i(\alpha)}_{h(\epsilon)k(\alpha)j(\alpha)}&=\partial_{k(\alpha)}\Gamma^{i(\alpha)}_{h(\epsilon)j(\alpha)}-\partial_{h(\epsilon)}\Gamma^{i(\alpha)}_{k(\alpha)j(\alpha)}\notag\\&+\overset{r}{\underset{\sigma=1}{\sum}}\,\overset{m_\sigma}{\underset{l=1}{\sum}}\,\bigg(\Gamma^{i(\alpha)}_{k(\alpha)l(\sigma)}\Gamma^{l(\sigma)}_{h(\epsilon)j(\alpha)}-\Gamma^{i(\alpha)}_{h(\epsilon)l(\sigma)}\Gamma^{l(\sigma)}_{k(\alpha)j(\alpha)}\bigg)
		\label{Rdef_caso2}
	\end{align}
	vanishes. Let us first consider the case where $h\geq2$. We have
	\begin{align}
		R^{i(\alpha)}_{h(\epsilon)k(\alpha)j(\alpha)}&=\partial_{k(\alpha)}\xcancel{\Gamma^{i(\alpha)}_{h(\epsilon)j(\alpha)}}-\partial_{h(\epsilon)}\Gamma^{i(\alpha)}_{k(\alpha)j(\alpha)}\notag\\&+\overset{r}{\underset{\sigma=1}{\sum}}\,\overset{m_\sigma}{\underset{l=1}{\sum}}\,\bigg(\Gamma^{i(\alpha)}_{k(\alpha)l(\sigma)}\Gamma^{l(\sigma)}_{h(\epsilon)j(\alpha)}-\Gamma^{i(\alpha)}_{h(\epsilon)l(\sigma)}\Gamma^{l(\sigma)}_{k(\alpha)j(\alpha)}\bigg)
		\notag
	\end{align}
	where $\Gamma^{i(\alpha)}_{k(\alpha)j(\alpha)}$ only depends on $\{u^{s(\alpha)}\,|\,1\leq s\leq m_\alpha\}$ and $\{u^{1(\sigma)}\,|\,\sigma\neq\alpha\}$ (thus it does not depend on $u^{h(\epsilon)}$ as $h\geq2$, that is $\partial_{h(\epsilon)}\Gamma^{i(\alpha)}_{k(\alpha)j(\alpha)}=0$). This yields
	\begin{align}
		R^{i(\alpha)}_{h(\epsilon)k(\alpha)j(\alpha)}&=\overset{r}{\underset{\sigma=1}{\sum}}\,\overset{m_\sigma}{\underset{l=1}{\sum}}\,\bigg(\Gamma^{i(\alpha)}_{k(\alpha)l(\sigma)}\Gamma^{l(\sigma)}_{h(\epsilon)j(\alpha)}-\Gamma^{i(\alpha)}_{h(\epsilon)l(\sigma)}\Gamma^{l(\sigma)}_{k(\alpha)j(\alpha)}\bigg)
		\notag
	\end{align}
	where the terms $\Gamma^{l(\sigma)}_{h(\epsilon)j(\alpha)}$ and $\Gamma^{i(\alpha)}_{h(\epsilon)l(\sigma)}$ trivially vanishes for $\sigma\notin\{\alpha,\epsilon\}$. Thus
	\begin{align}
		R^{i(\alpha)}_{h(\epsilon)k(\alpha)j(\alpha)}&=\overset{m_\alpha}{\underset{l=1}{\sum}}\,\bigg(\Gamma^{i(\alpha)}_{k(\alpha)l(\alpha)}\xcancel{\Gamma^{l(\alpha)}_{h(\epsilon)j(\alpha)}}-\xcancel{\Gamma^{i(\alpha)}_{h(\epsilon)l(\alpha)}}\Gamma^{l(\alpha)}_{k(\alpha)j(\alpha)}\bigg)\notag\\&+\overset{m_\epsilon}{\underset{l=1}{\sum}}\,\bigg(\Gamma^{i(\alpha)}_{k(\alpha)l(\epsilon)}\Gamma^{l(\epsilon)}_{h(\epsilon)j(\alpha)}-\xcancel{\Gamma^{i(\alpha)}_{h(\epsilon)l(\epsilon)}}\Gamma^{l(\epsilon)}_{k(\alpha)j(\alpha)}\bigg)
		\notag\\&=\overset{m_\epsilon}{\underset{l=h}{\sum}}\,\Gamma^{i(\alpha)}_{k(\alpha)l(\epsilon)}\Gamma^{l(\epsilon)}_{h(\epsilon)j(\alpha)}=0
		\notag
	\end{align}
	as $\Gamma^{i(\alpha)}_{k(\alpha)l(\epsilon)}=0$ for every $l\geq h$ ($h\geq2$ implies $l\geq 2$). Let us now fix $h=1$. We have
	\begin{align}
		R^{i(\alpha)}_{1(\epsilon)k(\alpha)j(\alpha)}&=\partial_{k(\alpha)}\Gamma^{i(\alpha)}_{1(\epsilon)j(\alpha)}-\partial_{1(\epsilon)}\Gamma^{i(\alpha)}_{k(\alpha)j(\alpha)}\notag\\&+\overset{r}{\underset{\sigma=1}{\sum}}\,\overset{m_\sigma}{\underset{l=1}{\sum}}\,\bigg(\Gamma^{i(\alpha)}_{k(\alpha)l(\sigma)}\Gamma^{l(\sigma)}_{1(\epsilon)j(\alpha)}-\Gamma^{i(\alpha)}_{1(\epsilon)l(\sigma)}\Gamma^{l(\sigma)}_{k(\alpha)j(\alpha)}\bigg)
		\notag\\&=\partial_{k(\alpha)}\Gamma^{i(\alpha)}_{1(\epsilon)j(\alpha)}-\partial_{1(\epsilon)}\Gamma^{i(\alpha)}_{k(\alpha)j(\alpha)}\notag\\&+\overset{m_\alpha}{\underset{l=1}{\sum}}\,\bigg(\Gamma^{i(\alpha)}_{k(\alpha)l(\alpha)}\Gamma^{l(\alpha)}_{1(\epsilon)j(\alpha)}-\Gamma^{i(\alpha)}_{1(\epsilon)l(\alpha)}\Gamma^{l(\alpha)}_{k(\alpha)j(\alpha)}\bigg)\notag\\&+\overset{m_\epsilon}{\underset{l=1}{\sum}}\,\bigg(\Gamma^{i(\alpha)}_{k(\alpha)l(\epsilon)}\Gamma^{l(\epsilon)}_{1(\epsilon)j(\alpha)}-\Gamma^{i(\alpha)}_{1(\epsilon)l(\epsilon)}\Gamma^{l(\epsilon)}_{k(\alpha)j(\alpha)}\bigg)
		\notag\\&=\partial_{k(\alpha)}\Gamma^{i(\alpha)}_{1(\epsilon)j(\alpha)}-\partial_{1(\epsilon)}\Gamma^{i(\alpha)}_{k(\alpha)j(\alpha)}\notag\\&+\overset{m_\alpha}{\underset{l=1}{\sum}}\,\bigg(\Gamma^{i(\alpha)}_{k(\alpha)l(\alpha)}\Gamma^{l(\alpha)}_{1(\epsilon)j(\alpha)}-\Gamma^{i(\alpha)}_{1(\epsilon)l(\alpha)}\Gamma^{l(\alpha)}_{k(\alpha)j(\alpha)}\bigg)\notag\\&+\Gamma^{i(\alpha)}_{k(\alpha)1(\epsilon)}\Gamma^{1(\epsilon)}_{1(\epsilon)j(\alpha)}-\Gamma^{i(\alpha)}_{1(\epsilon)1(\epsilon)}\Gamma^{1(\epsilon)}_{k(\alpha)j(\alpha)}.
		\label{Rdef_caso2_h1}
	\end{align}
	We distinguish between the following subcases:
	\begin{itemize}
		\item[$a.$] both $j$ and $k$ are greater or equal than $2$
		\item[$b.$] $j=k=1$
		\item[$c.$] $j=1$, $k\geq2$
		\item[$d.$] $j\geq2$, $k=1$.
	\end{itemize}
	\textbf{Subcase a:} both $j$ and $k$ are greater or equal than $2$. Let us first claim that in this case
	\begin{align}
		\partial_{k(\alpha)}\Gamma^{i(\alpha)}_{1(\epsilon)j(\alpha)}-\partial_{1(\epsilon)}\Gamma^{i(\alpha)}_{k(\alpha)j(\alpha)}&=0.
		\label{claim_Rcaso2_jk2}
	\end{align}
	Indeed, if $i<j$ then both $\Gamma^{i(\alpha)}_{1(\epsilon)j(\alpha)}$ and $\Gamma^{i(\alpha)}_{k(\alpha)j(\alpha)}$ trivially vanish. If $i=j$ then
	\begin{align}
		\partial_{k(\alpha)}\Gamma^{i(\alpha)}_{1(\epsilon)j(\alpha)}-\partial_{1(\epsilon)}\Gamma^{i(\alpha)}_{k(\alpha)j(\alpha)}&=\partial_{k(\alpha)}\Gamma^{j(\alpha)}_{1(\epsilon)j(\alpha)}-\partial_{1(\epsilon)}\Gamma^{j(\alpha)}_{k(\alpha)j(\alpha)}
		\notag\\&=\xcancel{\partial_{k(\alpha)}\bigg[\frac{m_\epsilon\epsilon_\epsilon}{u^{1(\alpha)}-u^{1(\epsilon)}}\bigg]}-\partial_{1(\epsilon)}\Gamma^{(4-k)(\alpha)}_{2(\alpha)2(\alpha)}
		\notag\\&=-\partial_{1(\epsilon)}\Gamma^{2(\alpha)}_{2(\alpha)2(\alpha)}\,\delta^2_k
		\notag\\&=\delta^2_k\,\xcancel{\partial_{1(\epsilon)}\bigg[\frac{m_\alpha\epsilon_\alpha}{u^{2(\alpha)}}\bigg]}=0.
		\notag
	\end{align}
	We are going to prove \eqref{claim_Rcaso2_jk2} when $i>j$ by induction over $i$ (starting from the case $i=j$ that we just proved). Given an integer $s\geq1$, let us suppose that \eqref{claim_Rcaso2_jk2} holds true when $i=j+t$ for each $t\leq s-1$ that is
	\begin{align}
		\partial_{k(\alpha)}\Gamma^{i(\alpha)}_{1(\epsilon)j(\alpha)}-\partial_{1(\epsilon)}\Gamma^{i(\alpha)}_{k(\alpha)j(\alpha)}&=0\quad\text{for }i=j+t\quad\forall t\leq s-1
		\label{claim_Rcaso2_jk2_induz}
	\end{align}
	and show it holds for $t=s$ as well. We are thus considering $i=j+s$, so that
	\begin{align}
		\partial_{k(\alpha)}\Gamma^{i(\alpha)}_{1(\epsilon)j(\alpha)}-\partial_{1(\epsilon)}\Gamma^{i(\alpha)}_{k(\alpha)j(\alpha)}&=\partial_{k(\alpha)}\Gamma^{(j+s)(\alpha)}_{1(\epsilon)j(\alpha)}-\partial_{1(\epsilon)}\Gamma^{(j+s)(\alpha)}_{k(\alpha)j(\alpha)}
		\notag\\&=\partial_{k(\alpha)}\Gamma^{(s+1)(\alpha)}_{1(\epsilon)1(\alpha)}-\partial_{1(\epsilon)}\Gamma^{(s-k+4)(\alpha)}_{2(\alpha)2(\alpha)}
		\notag\\&\overset{\eqref{ascendente}}{=}\partial_{k(\alpha)}\bigg[-\frac{1}{u^{1(\alpha)}-u^{1(\epsilon)}}\,\overset{s+1}{\underset{l=2}{\sum}}\,\Gamma^{(s-l+2)(\alpha)}_{1(\epsilon)1(\alpha)}\,u^{l(\alpha)}\bigg]\notag\\&-\partial_{1(\epsilon)}\Gamma^{(s-k+2)(\alpha)}_{1(\alpha)1(\alpha)}
		\notag\\&=-\frac{1}{u^{1(\alpha)}-u^{1(\epsilon)}}\,\overset{s+1}{\underset{l=2}{\sum}}\,\bigg(\partial_{k(\alpha)}\Gamma^{(s-l+2)(\alpha)}_{1(\epsilon)1(\alpha)}\bigg)\,u^{l(\alpha)}\notag\\&-\frac{1}{u^{1(\alpha)}-u^{1(\epsilon)}}\,\Gamma^{(s-k+2)(\alpha)}_{1(\epsilon)1(\alpha)}+\partial_{1(\epsilon)}\Gamma^{(s-k+2)(\alpha)}_{1(\epsilon)1(\alpha)}
		\notag\\&\overset{\eqref{claim_Rcaso2_jk2_induz}}{=}-\frac{1}{u^{1(\alpha)}-u^{1(\epsilon)}}\,\overset{s+1}{\underset{l=2}{\sum}}\,\bigg(\partial_{1(\epsilon)}\Gamma^{(s-l-k+5)(\alpha)}_{2(\alpha)2(\alpha)}\bigg)\,u^{l(\alpha)}\notag\\&-\frac{1}{u^{1(\alpha)}-u^{1(\epsilon)}}\,\Gamma^{(s-k+2)(\alpha)}_{1(\epsilon)1(\alpha)}+\partial_{1(\epsilon)}\Gamma^{(s-k+2)(\alpha)}_{1(\epsilon)1(\alpha)}
		\notag\\&\overset{\eqref{ascendente}}{=}-\frac{1}{u^{1(\alpha)}-u^{1(\epsilon)}}\,\overset{s+1}{\underset{l=2}{\sum}}\,\bigg(\partial_{1(\epsilon)}\Gamma^{(s-l-k+3)(\alpha)}_{1(\alpha)1(\alpha)}\bigg)\,u^{l(\alpha)}\notag\\&-\frac{1}{u^{1(\alpha)}-u^{1(\epsilon)}}\,\Gamma^{(s-k+2)(\alpha)}_{1(\epsilon)1(\alpha)}+\partial_{1(\epsilon)}\Gamma^{(s-k+2)(\alpha)}_{1(\epsilon)1(\alpha)}
		\notag
	\end{align}
	where $\Gamma^{(s-l-k+3)(\alpha)}_{1(\alpha)1(\alpha)}$ vanishes for each $l\geq s-k+3$. It follows that
	\begin{align}
		\partial_{k(\alpha)}\Gamma^{i(\alpha)}_{1(\epsilon)j(\alpha)}-\partial_{1(\epsilon)}\Gamma^{i(\alpha)}_{k(\alpha)j(\alpha)}&=\frac{1}{u^{1(\alpha)}-u^{1(\epsilon)}}\,\overset{s-k+2}{\underset{l=2}{\sum}}\,\bigg(\partial_{1(\epsilon)}\Gamma^{(s-l-k+2)(\alpha)}_{1(\epsilon)1(\alpha)}\bigg)\,u^{l(\alpha)}\notag\\&-\frac{1}{u^{1(\alpha)}-u^{1(\epsilon)}}\,\Gamma^{(s-k+2)(\alpha)}_{1(\epsilon)1(\alpha)}\notag\\&+\partial_{1(\epsilon)}\bigg[-\frac{1}{u^{1(\alpha)}-u^{1(\epsilon)}}\,\overset{s-k+2}{\underset{l=2}{\sum}}\,\Gamma^{(s-l-k+3)(\alpha)}_{1(\epsilon)1(\alpha)}\,u^{l(\alpha)}\bigg]
		\notag\\&=\cancel{\frac{1}{u^{1(\alpha)}-u^{1(\epsilon)}}\,\overset{s-k+2}{\underset{l=2}{\sum}}\,\bigg(\partial_{1(\epsilon)}\Gamma^{(s-l-k+3)(\alpha)}_{1(\epsilon)1(\alpha)}\bigg)\,u^{l(\alpha)}}\notag\\&-\frac{1}{u^{1(\alpha)}-u^{1(\epsilon)}}\,\Gamma^{(s-k+2)(\alpha)}_{1(\epsilon)1(\alpha)}\notag\\&-\frac{1}{(u^{1(\alpha)}-u^{1(\epsilon)})^2}\,\overset{s-k+2}{\underset{l=2}{\sum}}\,\Gamma^{(s-l-k+3)(\alpha)}_{1(\epsilon)1(\alpha)}\,u^{l(\alpha)}\notag\\&-\cancel{\frac{1}{u^{1(\alpha)}-u^{1(\epsilon)}}\,\overset{s-k+2}{\underset{l=2}{\sum}}\,\bigg(\partial_{1(\epsilon)}\Gamma^{(s-l-k+3)(\alpha)}_{1(\epsilon)1(\alpha)}\bigg)\,u^{l(\alpha)}}
		\notag\\&=\frac{1}{(u^{1(\alpha)}-u^{1(\epsilon)})^2}\,\overset{s-k+2}{\underset{l=2}{\sum}}\,\Gamma^{(s-l-k+3)(\alpha)}_{1(\epsilon)1(\alpha)}\,u^{l(\alpha)}\notag\\&-\frac{1}{(u^{1(\alpha)}-u^{1(\epsilon)})^2}\,\overset{s-k+2}{\underset{l=2}{\sum}}\,\Gamma^{(s-l-k+3)(\alpha)}_{1(\epsilon)1(\alpha)}\,u^{l(\alpha)}=0.
		\notag
	\end{align}
	This proves \eqref{claim_Rcaso2_jk2}, thus \eqref{Rdef_caso2_h1} becomes
	\begin{align}
		R^{i(\alpha)}_{1(\epsilon)k(\alpha)j(\alpha)}&=\overset{m_\alpha}{\underset{l=1}{\sum}}\,\bigg(\Gamma^{i(\alpha)}_{k(\alpha)l(\alpha)}\Gamma^{l(\alpha)}_{1(\epsilon)j(\alpha)}-\Gamma^{i(\alpha)}_{1(\epsilon)l(\alpha)}\Gamma^{l(\alpha)}_{k(\alpha)j(\alpha)}\bigg)\notag\\&+\Gamma^{i(\alpha)}_{k(\alpha)1(\epsilon)}\xcancel{\Gamma^{1(\epsilon)}_{1(\epsilon)j(\alpha)}}-\Gamma^{i(\alpha)}_{1(\epsilon)1(\epsilon)}\xcancel{\Gamma^{1(\epsilon)}_{k(\alpha)j(\alpha)}}
		\notag
	\end{align}
	where in the first summation only the terms for $j\leq l\leq i$ survive. We get
	\begin{align}
		R^{i(\alpha)}_{1(\epsilon)k(\alpha)j(\alpha)}&=\overset{i}{\underset{l=j}{\sum}}\,\bigg(\Gamma^{i(\alpha)}_{k(\alpha)l(\alpha)}\Gamma^{l(\alpha)}_{1(\epsilon)j(\alpha)}-\Gamma^{i(\alpha)}_{1(\epsilon)l(\alpha)}\Gamma^{l(\alpha)}_{k(\alpha)j(\alpha)}\bigg)
		\notag
	\end{align}
	which trivially vanishes for $i<j$. For $i\geq j$ we have
	\begin{align}
		R^{i(\alpha)}_{1(\epsilon)k(\alpha)j(\alpha)}&=\overset{i}{\underset{l=j}{\sum}}\,\bigg(\Gamma^{(i-k-l+4)(\alpha)}_{2(\alpha)2(\alpha)}\Gamma^{(l-j+1)(\alpha)}_{1(\epsilon)1(\alpha)}-\Gamma^{(i-l+1)(\alpha)}_{1(\epsilon)1(\alpha)}\Gamma^{(l-k-j+4)(\alpha)}_{2(\alpha)2(\alpha)}\bigg)
		\notag\\&=\overset{i}{\underset{l=j}{\sum}}\,\Gamma^{(i-k-l+4)(\alpha)}_{2(\alpha)2(\alpha)}\Gamma^{(l-j+1)(\alpha)}_{1(\epsilon)1(\alpha)}-\overset{i}{\underset{l=j}{\sum}}\,\Gamma^{(i-l+1)(\alpha)}_{1(\epsilon)1(\alpha)}\Gamma^{(l-k-j+4)(\alpha)}_{2(\alpha)2(\alpha)}
		\notag\\&=\cancel{\overset{i}{\underset{l=j}{\sum}}\,\Gamma^{(i-k-l+4)(\alpha)}_{2(\alpha)2(\alpha)}\Gamma^{(l-j+1)(\alpha)}_{1(\epsilon)1(\alpha)}}-\cancel{\overset{i}{\underset{t=j}{\sum}}\,\Gamma^{(t-j+1)(\alpha)}_{1(\epsilon)1(\alpha)}\Gamma^{(i-k-t+4)(\alpha)}_{2(\alpha)2(\alpha)}}=0.
		\notag
	\end{align}
	\textbf{Subcase b:} $j=k=1$. We have
	\begin{align}
		R^{i(\alpha)}_{1(\epsilon)1(\alpha)1(\alpha)}&=\partial_{1(\alpha)}\Gamma^{i(\alpha)}_{1(\epsilon)1(\alpha)}-\partial_{1(\epsilon)}\Gamma^{i(\alpha)}_{1(\alpha)1(\alpha)}\notag\\&+\overset{m_\alpha}{\underset{l=1}{\sum}}\,\bigg(\Gamma^{i(\alpha)}_{1(\alpha)l(\alpha)}\Gamma^{l(\alpha)}_{1(\epsilon)1(\alpha)}-\Gamma^{i(\alpha)}_{1(\epsilon)l(\alpha)}\Gamma^{l(\alpha)}_{1(\alpha)1(\alpha)}\bigg)\notag\\&+\Gamma^{i(\alpha)}_{1(\alpha)1(\epsilon)}\Gamma^{1(\epsilon)}_{1(\epsilon)1(\alpha)}-\Gamma^{i(\alpha)}_{1(\epsilon)1(\epsilon)}\Gamma^{1(\epsilon)}_{1(\alpha)1(\alpha)}
		\notag
	\end{align}
	where in the summation only the terms for $l\leq i$ survive and
	\begin{align}
		\partial_{1(\epsilon)}\Gamma^{i(\alpha)}_{1(\alpha)1(\alpha)}&=-\partial_{1(\epsilon)}\Gamma^{i(\alpha)}_{1(\epsilon)1(\alpha)}=\partial_{1(\alpha)}\Gamma^{i(\alpha)}_{1(\epsilon)1(\alpha)}
		\notag
	\end{align}
	as $\Gamma^{i(\alpha)}_{1(\epsilon)1(\alpha)}$ only depends on both $u^{1(\epsilon)}$ and $u^{1(\alpha)}$ by means of the term $u^{1(\alpha)}-u^{1(\epsilon)}$. It follows that
	\begin{align}
		R^{i(\alpha)}_{1(\epsilon)1(\alpha)1(\alpha)}&=\overset{i}{\underset{l=1}{\sum}}\,\bigg(\Gamma^{i(\alpha)}_{1(\alpha)l(\alpha)}\Gamma^{l(\alpha)}_{1(\epsilon)1(\alpha)}-\Gamma^{i(\alpha)}_{1(\epsilon)l(\alpha)}\Gamma^{l(\alpha)}_{1(\alpha)1(\alpha)}\bigg)\notag\\&+\Gamma^{i(\alpha)}_{1(\alpha)1(\epsilon)}\Gamma^{1(\epsilon)}_{1(\epsilon)1(\alpha)}-\Gamma^{i(\alpha)}_{1(\epsilon)1(\alpha)}\Gamma^{1(\epsilon)}_{1(\alpha)1(\epsilon)}
		\notag\\&=-\underset{\sigma\neq\alpha}{\sum}\,\overset{i}{\underset{l=1}{\sum}}\,\bigg(\Gamma^{i(\alpha)}_{1(\sigma)l(\alpha)}\Gamma^{l(\alpha)}_{1(\epsilon)1(\alpha)}-\Gamma^{i(\alpha)}_{1(\epsilon)l(\alpha)}\Gamma^{l(\alpha)}_{1(\sigma)1(\alpha)}\bigg)\notag\\&+\cancel{\Gamma^{i(\alpha)}_{1(\alpha)1(\epsilon)}\Gamma^{1(\epsilon)}_{1(\epsilon)1(\alpha)}}-\cancel{\Gamma^{i(\alpha)}_{1(\epsilon)1(\alpha)}\Gamma^{1(\epsilon)}_{1(\alpha)1(\epsilon)}}
		\notag\\&=-\overset{i}{\underset{l=1}{\sum}}\,\bigg(\cancel{\Gamma^{i(\alpha)}_{1(\epsilon)l(\alpha)}\Gamma^{l(\alpha)}_{1(\epsilon)1(\alpha)}}-\cancel{\Gamma^{i(\alpha)}_{1(\epsilon)l(\alpha)}\Gamma^{l(\alpha)}_{1(\epsilon)1(\alpha)}}\bigg)\notag\\&-\underset{\sigma\notin\{\alpha,\epsilon\}}{\sum}\,\overset{i}{\underset{l=1}{\sum}}\,\bigg(\Gamma^{i(\alpha)}_{1(\sigma)l(\alpha)}\Gamma^{l(\alpha)}_{1(\epsilon)1(\alpha)}-\Gamma^{i(\alpha)}_{1(\epsilon)l(\alpha)}\Gamma^{l(\alpha)}_{1(\sigma)1(\alpha)}\bigg)\notag\\&+\Gamma^{i(\alpha)}_{1(\alpha)1(\epsilon)}\Gamma^{1(\epsilon)}_{1(\epsilon)1(\alpha)}-\Gamma^{i(\alpha)}_{1(\epsilon)1(\alpha)}\Gamma^{1(\epsilon)}_{1(\alpha)1(\epsilon)}
		\notag\\&=-\underset{\sigma\notin\{\alpha,\epsilon\}}{\sum}\,\bigg(\overset{i}{\underset{l=1}{\sum}}\,\Gamma^{(i-l+1)(\alpha)}_{1(\sigma)1(\alpha)}\Gamma^{l(\alpha)}_{1(\epsilon)1(\alpha)}-\overset{i}{\underset{l=1}{\sum}}\,\Gamma^{(i-l+1)(\alpha)}_{1(\epsilon)1(\alpha)}\Gamma^{l(\alpha)}_{1(\sigma)1(\alpha)}\bigg)
		\notag\\&=-\underset{\sigma\notin\{\alpha,\epsilon\}}{\sum}\,\bigg(\cancel{\overset{i}{\underset{l=1}{\sum}}\,\Gamma^{(i-l+1)(\alpha)}_{1(\sigma)1(\alpha)}\Gamma^{l(\alpha)}_{1(\epsilon)1(\alpha)}}-\cancel{\overset{i}{\underset{t=1}{\sum}}\,\Gamma^{t(\alpha)}_{1(\epsilon)1(\alpha)}\Gamma^{(i-t+1)(\alpha)}_{1(\sigma)1(\alpha)}}\bigg)=0.
		\notag
	\end{align}
	\textbf{Subcase c:} $j=1$, $k\geq2$. We have
	\begin{align}
		R^{i(\alpha)}_{1(\epsilon)k(\alpha)1(\alpha)}&=\partial_{k(\alpha)}\Gamma^{i(\alpha)}_{1(\epsilon)1(\alpha)}-\partial_{1(\epsilon)}\Gamma^{i(\alpha)}_{k(\alpha)1(\alpha)}\notag\\&+\overset{m_\alpha}{\underset{l=1}{\sum}}\,\bigg(\Gamma^{i(\alpha)}_{k(\alpha)l(\alpha)}\Gamma^{l(\alpha)}_{1(\epsilon)1(\alpha)}-\Gamma^{i(\alpha)}_{1(\epsilon)l(\alpha)}\Gamma^{l(\alpha)}_{k(\alpha)1(\alpha)}\bigg)\notag\\&+\Gamma^{i(\alpha)}_{k(\alpha)1(\epsilon)}\Gamma^{1(\epsilon)}_{1(\epsilon)1(\alpha)}-\Gamma^{i(\alpha)}_{1(\epsilon)1(\epsilon)}\Gamma^{1(\epsilon)}_{k(\alpha)1(\alpha)}
		\notag\\&=\partial_{k(\alpha)}\Gamma^{i(\alpha)}_{1(\epsilon)1(\alpha)}+\partial_{1(\epsilon)}\Gamma^{(i-k+1)(\alpha)}_{1(\epsilon)1(\alpha)}\notag\\&+\overset{i}{\underset{l=1}{\sum}}\,\bigg(\Gamma^{i(\alpha)}_{k(\alpha)l(\alpha)}\Gamma^{l(\alpha)}_{1(\epsilon)1(\alpha)}-\Gamma^{i(\alpha)}_{1(\epsilon)l(\alpha)}\Gamma^{l(\alpha)}_{k(\alpha)1(\alpha)}\bigg)\notag\\&+\Gamma^{i(\alpha)}_{k(\alpha)1(\epsilon)}\Gamma^{1(\epsilon)}_{1(\epsilon)1(\alpha)}-\Gamma^{i(\alpha)}_{1(\epsilon)1(\alpha)}\xcancel{\Gamma^{1(\epsilon)}_{k(\alpha)1(\epsilon)}}.
		\notag
	\end{align}
	Let us first claim that in this case
	\begin{align}
		\partial_{k(\alpha)}\Gamma^{i(\alpha)}_{1(\epsilon)1(\alpha)}+\partial_{1(\epsilon)}\Gamma^{(i-k+1)(\alpha)}_{1(\epsilon)1(\alpha)}&=0.
		\label{claim_Rcaso2_j1k2}
	\end{align}
	Indeed, if $i=1$ then
	\begin{align}
		\partial_{k(\alpha)}\Gamma^{1(\alpha)}_{1(\epsilon)1(\alpha)}+\partial_{1(\epsilon)}\xcancel{\Gamma^{(2-k)(\alpha)}_{1(\epsilon)1(\alpha)}}&=\partial_{k(\alpha)}\bigg[\frac{m_\epsilon\epsilon_\epsilon}{u^{1(\alpha)}-u^{1(\epsilon)}}\bigg]=0.
		\notag
	\end{align}
	We are going to prove \eqref{claim_Rcaso2_j1k2} when $i\geq1$ by induction over $i$ (starting from the case $i=1$ that we just proved). Given an integer $s\geq1$, let us suppose that \eqref{claim_Rcaso2_j1k2} holds true when $i=1+t$ for each $t\leq s-1$ that is
	\begin{align}
		\partial_{k(\alpha)}\Gamma^{i(\alpha)}_{1(\epsilon)1(\alpha)}+\partial_{1(\epsilon)}\Gamma^{(i-k+1)(\alpha)}_{1(\epsilon)1(\alpha)}&=0\quad\text{for }i=1+t\quad\forall t\leq s-1
		\label{claim_Rcaso2_j1k2_induz}
	\end{align}
	and show it holds for $t=s$ as well. We are thus considering $i=1+s$, so that
	\begin{align}
		\partial_{k(\alpha)}\Gamma^{i(\alpha)}_{1(\epsilon)1(\alpha)}&=\partial_{k(\alpha)}\Gamma^{(1+s)(\alpha)}_{1(\epsilon)1(\alpha)}\overset{\eqref{Lemma7.1eq}}{=}\partial_{(k-1)(\alpha)}\Gamma^{s(\alpha)}_{1(\epsilon)1(\alpha)}.
		\label{partial1_Rcaso2_j1k2_induz}
	\end{align}
	If $k=2$ then \eqref{partial1_Rcaso2_j1k2_induz} becomes
	\begin{align}
		\partial_{k(\alpha)}\Gamma^{i(\alpha)}_{1(\epsilon)1(\alpha)}&=\partial_{1(\alpha)}\Gamma^{s(\alpha)}_{1(\epsilon)1(\alpha)}=-\partial_{1(\epsilon)}\Gamma^{s(\alpha)}_{1(\epsilon)1(\alpha)}=-\partial_{1(\epsilon)}\Gamma^{(i-k+1)(\alpha)}_{1(\epsilon)1(\alpha)}
		\notag
	\end{align}
	as $\Gamma^{s(\alpha)}_{1(\epsilon)1(\alpha)}$ only depends on both $u^{1(\epsilon)}$ and $u^{1(\alpha)}$ by means of the term $u^{1(\alpha)}-u^{1(\epsilon)}$. If $k\geq3$ then \eqref{partial1_Rcaso2_j1k2_induz} becomes
	\begin{align}
		\partial_{k(\alpha)}\Gamma^{i(\alpha)}_{1(\epsilon)1(\alpha)}&=\partial_{(k-1)(\alpha)}\Gamma^{s(\alpha)}_{1(\epsilon)1(\alpha)}\overset{\eqref{claim_Rcaso2_j1k2_induz}}{=}-\partial_{1(\epsilon)}\Gamma^{(s-k+2)(\alpha)}_{1(\epsilon)1(\alpha)}=-\partial_{1(\epsilon)}\Gamma^{(i-k+1)(\alpha)}_{1(\epsilon)1(\alpha)}.
		\notag
	\end{align}
	Thus \eqref{claim_Rcaso2_j1k2} holds, yielding
	\begin{align}
		R^{i(\alpha)}_{1(\epsilon)k(\alpha)1(\alpha)}&=\overset{i}{\underset{l=1}{\sum}}\,\bigg(\Gamma^{i(\alpha)}_{k(\alpha)l(\alpha)}\Gamma^{l(\alpha)}_{1(\epsilon)1(\alpha)}-\Gamma^{i(\alpha)}_{1(\epsilon)l(\alpha)}\Gamma^{l(\alpha)}_{k(\alpha)1(\alpha)}\bigg)\notag\\&+\Gamma^{i(\alpha)}_{k(\alpha)1(\epsilon)}\Gamma^{1(\epsilon)}_{1(\epsilon)1(\alpha)}
		\notag\\&=\overset{i-k+2}{\underset{l=1}{\sum}}\,\Gamma^{i(\alpha)}_{k(\alpha)l(\alpha)}\Gamma^{l(\alpha)}_{1(\epsilon)1(\alpha)}-\overset{i}{\underset{l=k}{\sum}}\,\Gamma^{i(\alpha)}_{1(\epsilon)l(\alpha)}\Gamma^{l(\alpha)}_{k(\alpha)1(\alpha)}\notag\\&+\Gamma^{i(\alpha)}_{k(\alpha)1(\epsilon)}\Gamma^{1(\epsilon)}_{1(\epsilon)1(\alpha)}
		\notag
	\end{align}
	which trivially vanishes for $i<k$. For $i=k$ we get
	\begin{align}
		R^{i(\alpha)}_{1(\epsilon)k(\alpha)1(\alpha)}&=\cancel{\Gamma^{k(\alpha)}_{k(\alpha)1(\alpha)}\Gamma^{1(\alpha)}_{1(\epsilon)1(\alpha)}}+\Gamma^{k(\alpha)}_{k(\alpha)2(\alpha)}\Gamma^{2(\alpha)}_{1(\epsilon)1(\alpha)}\notag\\&-\cancel{\Gamma^{k(\alpha)}_{1(\epsilon)k(\alpha)}\Gamma^{k(\alpha)}_{k(\alpha)1(\alpha)}}+\Gamma^{k(\alpha)}_{k(\alpha)1(\epsilon)}\Gamma^{1(\epsilon)}_{1(\epsilon)1(\alpha)}
		\notag\\&=\Gamma^{2(\alpha)}_{2(\alpha)2(\alpha)}\Gamma^{2(\alpha)}_{1(\epsilon)1(\alpha)}+\Gamma^{1(\alpha)}_{1(\alpha)1(\epsilon)}\Gamma^{1(\epsilon)}_{1(\epsilon)1(\alpha)}
		\notag\\&=\frac{m_\alpha\epsilon_\alpha}{u^{2(\alpha)}}\,\frac{\Gamma^{1(\alpha)}_{1(\epsilon)1(\alpha)}\,u^{2(\alpha)}}{u^{1(\alpha)}-u^{1(\epsilon)}}-\Gamma^{1(\alpha)}_{1(\alpha)1(\epsilon)}\,\frac{m_\alpha\epsilon_\alpha}{u^{1(\alpha)}-u^{1(\epsilon)}}=0.
		\notag
	\end{align}
	For $i\geq k$ we get
	\begin{align}
		R^{i(\alpha)}_{1(\epsilon)k(\alpha)1(\alpha)}&=\overset{i-k+2}{\underset{l=1}{\sum}}\,\Gamma^{i(\alpha)}_{k(\alpha)l(\alpha)}\Gamma^{l(\alpha)}_{1(\epsilon)1(\alpha)}-\overset{i}{\underset{l=k}{\sum}}\,\Gamma^{i(\alpha)}_{1(\epsilon)l(\alpha)}\Gamma^{l(\alpha)}_{k(\alpha)1(\alpha)}\notag\\&+\Gamma^{i(\alpha)}_{k(\alpha)1(\epsilon)}\Gamma^{1(\epsilon)}_{1(\epsilon)1(\alpha)}
		\notag\\&=\Gamma^{(i-k+1)(\alpha)}_{1(\alpha)1(\alpha)}\Gamma^{1(\alpha)}_{1(\epsilon)1(\alpha)}+\overset{i-k+2}{\underset{l=2}{\sum}}\,\Gamma^{(i-k-l+4)(\alpha)}_{2(\alpha)2(\alpha)}\Gamma^{l(\alpha)}_{1(\epsilon)1(\alpha)}\notag\\&-\overset{i}{\underset{l=k}{\sum}}\,\Gamma^{(i-l+1)(\alpha)}_{1(\epsilon)1(\alpha)}\Gamma^{l(\alpha)}_{k(\alpha)1(\alpha)}+\Gamma^{(i-k+1)(\alpha)}_{1(\alpha)1(\epsilon)}\Gamma^{1(\epsilon)}_{1(\epsilon)1(\alpha)}
		\notag\\&=\cancel{\Gamma^{(i-k+1)(\alpha)}_{1(\alpha)1(\alpha)}\Gamma^{1(\alpha)}_{1(\epsilon)1(\alpha)}}+\overset{i-k+2}{\underset{l=2}{\sum}}\,\Gamma^{(i-k-l+4)(\alpha)}_{2(\alpha)2(\alpha)}\Gamma^{l(\alpha)}_{1(\epsilon)1(\alpha)}\notag\\&-\overset{i-k+1}{\underset{t=\cancelto{2}{1}}{\sum}}\,\Gamma^{t(\alpha)}_{1(\epsilon)1(\alpha)}\Gamma^{(i-t+1)(\alpha)}_{k(\alpha)1(\alpha)}+\Gamma^{(i-k+1)(\alpha)}_{1(\alpha)1(\epsilon)}\Gamma^{1(\epsilon)}_{1(\epsilon)1(\alpha)}
		\notag\\&=\overset{i-k+1}{\underset{l=2}{\sum}}\,\Gamma^{l(\alpha)}_{1(\epsilon)1(\alpha)}\,\bigg(\Gamma^{(i-k-l+4)(\alpha)}_{2(\alpha)2(\alpha)}-\Gamma^{(i-l+1)(\alpha)}_{k(\alpha)1(\alpha)}\bigg)\notag\\&+\Gamma^{2(\alpha)}_{2(\alpha)2(\alpha)}\Gamma^{(i-k+2)(\alpha)}_{1(\epsilon)1(\alpha)}+\Gamma^{(i-k+1)(\alpha)}_{1(\alpha)1(\epsilon)}\Gamma^{1(\epsilon)}_{1(\epsilon)1(\alpha)}
		\notag\\&=\overset{i-k+1}{\underset{l=2}{\sum}}\,\Gamma^{l(\alpha)}_{1(\epsilon)1(\alpha)}\,\bigg(\Gamma^{(i-k-l+4)(\alpha)}_{2(\alpha)2(\alpha)}-\Gamma^{(i-k-l+2)(\alpha)}_{1(\alpha)1(\alpha)}\bigg)\notag\\&+\Gamma^{2(\alpha)}_{2(\alpha)2(\alpha)}\Gamma^{(i-k+2)(\alpha)}_{1(\epsilon)1(\alpha)}+\Gamma^{(i-k+1)(\alpha)}_{1(\alpha)1(\epsilon)}\Gamma^{1(\epsilon)}_{1(\epsilon)1(\alpha)}
		\notag
	\end{align}
	where
	\begin{align}
		\Gamma^{2(\alpha)}_{2(\alpha)2(\alpha)}\Gamma^{(i-k+2)(\alpha)}_{1(\epsilon)1(\alpha)}+\Gamma^{(i-k+1)(\alpha)}_{1(\alpha)1(\epsilon)}\Gamma^{1(\epsilon)}_{1(\epsilon)1(\alpha)}&=\frac{m_\alpha\epsilon_\alpha}{u^{2(\alpha)}}\,\frac{1}{u^{1(\alpha)}-u^{1(\epsilon)}}\,\overset{i-k+2}{\underset{s=2}{\sum}}\,\Gamma^{(i-k-s+3)(\alpha)}_{1(\epsilon)1(\alpha)}\,u^{s(\alpha)}\notag\\&+\Gamma^{(i-k+1)(\alpha)}_{1(\alpha)1(\epsilon)}\Gamma^{1(\epsilon)}_{1(\epsilon)1(\alpha)}
		\notag\\&=-\frac{\Gamma^{1(\epsilon)}_{1(\alpha)1(\epsilon)}}{u^{2(\alpha)}}\,\bigg[\cancel{\Gamma^{(i-k+1)(\alpha)}_{1(\epsilon)1(\alpha)}\,u^{2(\alpha)}}\notag\\&+\overset{i-k+2}{\underset{s=3}{\sum}}\,\Gamma^{(i-k-s+3)(\alpha)}_{1(\epsilon)1(\alpha)}\,u^{s(\alpha)}\bigg]\notag\\&+\cancel{\Gamma^{(i-k+1)(\alpha)}_{1(\alpha)1(\epsilon)}\Gamma^{1(\epsilon)}_{1(\epsilon)1(\alpha)}}
		\notag\\&=-\frac{\Gamma^{1(\epsilon)}_{1(\alpha)1(\epsilon)}}{u^{2(\alpha)}}\,\overset{i-k+2}{\underset{s=3}{\sum}}\,\Gamma^{(i-k-s+3)(\alpha)}_{1(\epsilon)1(\alpha)}\,u^{s(\alpha)}
		\notag
	\end{align}
	thus
	\begin{align}
		R^{i(\alpha)}_{1(\epsilon)k(\alpha)1(\alpha)}&=\overset{i-k+1}{\underset{l=2}{\sum}}\,\Gamma^{l(\alpha)}_{1(\epsilon)1(\alpha)}\,\bigg(\Gamma^{(i-k-l+4)(\alpha)}_{2(\alpha)2(\alpha)}-\Gamma^{(i-k-l+2)(\alpha)}_{1(\alpha)1(\alpha)}\bigg)\notag\\&-\frac{\Gamma^{1(\epsilon)}_{1(\alpha)1(\epsilon)}}{u^{2(\alpha)}}\,\overset{i-k+2}{\underset{s=3}{\sum}}\,\Gamma^{(i-k-s+3)(\alpha)}_{1(\epsilon)1(\alpha)}\,u^{s(\alpha)}.
		\label{Rcaso2_j1k2_bis}
	\end{align}
	We are going to prove that \eqref{Rcaso2_j1k2_bis} vanishes for each $i\geq k$ by induction over $i$ (starting from the case $i=k$, where \eqref{Rcaso2_j1k2_bis} vanishes trivially). Given an integer $s\geq1$, let us suppose that \eqref{Rcaso2_j1k2_bis} vanishes when $i=k+t$ for each $t\leq s-1$ that is
	\begin{align}
		&\overset{t+1}{\underset{l=2}{\sum}}\,\Gamma^{l(\alpha)}_{1(\epsilon)1(\alpha)}\,\bigg(\Gamma^{(t-l+4)(\alpha)}_{2(\alpha)2(\alpha)}-\Gamma^{(t-l+2)(\alpha)}_{1(\alpha)1(\alpha)}\bigg)\notag\\&-\frac{\Gamma^{1(\epsilon)}_{1(\alpha)1(\epsilon)}}{u^{2(\alpha)}}\,\overset{t+2}{\underset{s=3}{\sum}}\,\Gamma^{(t-s+3)(\alpha)}_{1(\epsilon)1(\alpha)}\,u^{s(\alpha)}=0\qquad\forall t\leq s-1
		\label{claim_Rcaso2_j1k2_induz_bis}
	\end{align}
	and show that \eqref{Rcaso2_j1k2_bis} vanishes for $t=s$ as well. We are thus considering $i=k+s$, so that
	\begin{align}
		R^{i(\alpha)}_{1(\epsilon)k(\alpha)1(\alpha)}&=\overset{s+1}{\underset{l=2}{\sum}}\,\Gamma^{l(\alpha)}_{1(\epsilon)1(\alpha)}\,\bigg(\Gamma^{(s-l+4)(\alpha)}_{2(\alpha)2(\alpha)}-\Gamma^{(s-l+2)(\alpha)}_{1(\alpha)1(\alpha)}\bigg)\notag\\&-\frac{\Gamma^{1(\epsilon)}_{1(\alpha)1(\epsilon)}}{u^{2(\alpha)}}\,\overset{s+2}{\underset{l=3}{\sum}}\,\Gamma^{(s-l+3)(\alpha)}_{1(\epsilon)1(\alpha)}\,u^{l(\alpha)}
		\notag
	\end{align}
	where
	\begin{align}
		&\overset{s+1}{\underset{l=2}{\sum}}\,\Gamma^{l(\alpha)}_{1(\epsilon)1(\alpha)}\,\bigg(\Gamma^{(s-l+4)(\alpha)}_{2(\alpha)2(\alpha)}-\Gamma^{(s-l+2)(\alpha)}_{1(\alpha)1(\alpha)}\bigg)
		\notag\\&=\overset{s}{\underset{t=1}{\sum}}\,\Gamma^{(s-t+2)(\alpha)}_{1(\epsilon)1(\alpha)}\,\bigg(\Gamma^{(t+2)(\alpha)}_{2(\alpha)2(\alpha)}-\Gamma^{t(\alpha)}_{1(\alpha)1(\alpha)}\bigg)
		\notag\\&=\overset{s}{\underset{t=1}{\sum}}\,\bigg[-\frac{1}{u^{1(\alpha)}-u^{1(\epsilon)}}\,\overset{s-t+2}{\underset{r=2}{\sum}}\,\Gamma^{(s-t-r+3)(\alpha)}_{1(\epsilon)1(\alpha)}\,u^{r(\alpha)}\bigg]\,\bigg(\Gamma^{(t+2)(\alpha)}_{2(\alpha)2(\alpha)}-\Gamma^{t(\alpha)}_{1(\alpha)1(\alpha)}\bigg)
		\notag\\&=-\frac{1}{u^{1(\alpha)}-u^{1(\epsilon)}}\,\overset{s}{\underset{t=1}{\sum}}\,\bigg[\overset{s-t+1}{\underset{l=1}{\sum}}\,\Gamma^{l(\alpha)}_{1(\epsilon)1(\alpha)}\,u^{(s-t-l+3)(\alpha)}\bigg]\,\bigg(\Gamma^{(t+2)(\alpha)}_{2(\alpha)2(\alpha)}-\Gamma^{t(\alpha)}_{1(\alpha)1(\alpha)}\bigg)
		\notag\\&=-\frac{1}{u^{1(\alpha)}-u^{1(\epsilon)}}\,\overset{s}{\underset{l=1}{\sum}}\,\Gamma^{l(\alpha)}_{1(\epsilon)1(\alpha)}\,\overset{s-l+1}{\underset{t=1}{\sum}}\,\bigg(\Gamma^{(t+2)(\alpha)}_{2(\alpha)2(\alpha)}-\Gamma^{t(\alpha)}_{1(\alpha)1(\alpha)}\bigg)\,u^{(s-t-l+3)(\alpha)}
		\notag\\&=-\frac{1}{u^{1(\alpha)}-u^{1(\epsilon)}}\,\overset{s}{\underset{l=1}{\sum}}\,\Gamma^{l(\alpha)}_{1(\epsilon)1(\alpha)}\,\bigg[\bigg(\Gamma^{3(\alpha)}_{2(\alpha)2(\alpha)}-\Gamma^{1(\alpha)}_{1(\alpha)1(\alpha)}\bigg)\,u^{(s-l+2)(\alpha)}\notag\\&+\overset{s-l+1}{\underset{t=2}{\sum}}\,\bigg(\Gamma^{(t+2)(\alpha)}_{2(\alpha)2(\alpha)}-\Gamma^{t(\alpha)}_{1(\alpha)1(\alpha)}\bigg)\,u^{(s-t-l+3)(\alpha)}\bigg]
		\notag\\&\overset{\eqref{atlas}}{=}-\frac{1}{u^{1(\alpha)}-u^{1(\epsilon)}}\,\overset{s}{\underset{l=1}{\sum}}\,\Gamma^{l(\alpha)}_{1(\epsilon)1(\alpha)}\,\bigg[-\cancel{\Gamma^{2(\alpha)}_{2(\alpha)2(\alpha)}\,\frac{u^{3(\alpha)}}{u^{2(\alpha)}}\,u^{(s-l+2)(\alpha)}}\notag\\&+\Gamma^{2(\alpha)}_{2(\alpha)2(\alpha)}\,\bigg(\cancel{\frac{u^{3(\alpha)}}{u^{2(\alpha)}}\,u^{(s-l+2)(\alpha)}}-u^{(s-l+3)(\alpha)}\bigg)\bigg]
		\notag\\&\overset{\eqref{atlas}}{=}\frac{\Gamma^{2(\alpha)}_{2(\alpha)2(\alpha)}}{u^{1(\alpha)}-u^{1(\epsilon)}}\,\overset{s}{\underset{l=1}{\sum}}\,\Gamma^{l(\alpha)}_{1(\epsilon)1(\alpha)}\,u^{(s-l+3)(\alpha)}
		\notag
	\end{align}
	thus
	\begin{align}
		R^{i(\alpha)}_{1(\epsilon)k(\alpha)1(\alpha)}&=\frac{\Gamma^{2(\alpha)}_{2(\alpha)2(\alpha)}}{u^{1(\alpha)}-u^{1(\epsilon)}}\,\overset{s}{\underset{l=1}{\sum}}\,\Gamma^{l(\alpha)}_{1(\epsilon)1(\alpha)}\,u^{(s-l+3)(\alpha)}\notag\\&-\frac{\Gamma^{1(\epsilon)}_{1(\alpha)1(\epsilon)}}{u^{2(\alpha)}}\,\overset{s+2}{\underset{l=3}{\sum}}\,\Gamma^{(s-l+3)(\alpha)}_{1(\epsilon)1(\alpha)}\,u^{l(\alpha)}
		\notag\\&=\frac{\Gamma^{2(\alpha)}_{2(\alpha)2(\alpha)}}{u^{1(\alpha)}-u^{1(\epsilon)}}\,\overset{s}{\underset{l=1}{\sum}}\,\Gamma^{l(\alpha)}_{1(\epsilon)1(\alpha)}\,u^{(s-l+3)(\alpha)}\notag\\&-\frac{\Gamma^{1(\epsilon)}_{1(\alpha)1(\epsilon)}}{u^{2(\alpha)}}\,\overset{s}{\underset{t=1}{\sum}}\,\Gamma^{t(\alpha)}_{1(\epsilon)1(\alpha)}\,u^{(s-t+3)(\alpha)}
		\notag\\&=\overset{s}{\underset{l=1}{\sum}}\,\Gamma^{l(\alpha)}_{1(\epsilon)1(\alpha)}\,u^{(s-l+3)(\alpha)}\,\bigg(\frac{\Gamma^{2(\alpha)}_{2(\alpha)2(\alpha)}}{u^{1(\alpha)}-u^{1(\epsilon)}}-\frac{\Gamma^{1(\epsilon)}_{1(\alpha)1(\epsilon)}}{u^{2(\alpha)}}\bigg)
		\notag\\&=\overset{s}{\underset{l=1}{\sum}}\,\Gamma^{l(\alpha)}_{1(\epsilon)1(\alpha)}\,u^{(s-l+3)(\alpha)}\,\bigg(-\frac{m_\alpha\epsilon_\alpha}{u^{2(\alpha)}}\,\frac{1}{u^{1(\alpha)}-u^{1(\epsilon)}}+\frac{1}{u^{2(\alpha)}}\,\frac{m_\alpha\epsilon_\alpha}{u^{1(\alpha)}-u^{1(\epsilon)}}\bigg)=0.
		\notag
	\end{align}
	\textbf{Subcase d:} $j\geq2$, $k=1$. We have
	\begin{align}
		R^{i(\alpha)}_{1(\epsilon)1(\alpha)j(\alpha)}&=\partial_{1(\alpha)}\Gamma^{i(\alpha)}_{1(\epsilon)j(\alpha)}-\partial_{1(\epsilon)}\Gamma^{i(\alpha)}_{1(\alpha)j(\alpha)}\notag\\&+\overset{m_\alpha}{\underset{l=1}{\sum}}\,\bigg(\Gamma^{i(\alpha)}_{1(\alpha)l(\alpha)}\Gamma^{l(\alpha)}_{1(\epsilon)j(\alpha)}-\Gamma^{i(\alpha)}_{1(\epsilon)l(\alpha)}\Gamma^{l(\alpha)}_{1(\alpha)j(\alpha)}\bigg)\notag\\&+\Gamma^{i(\alpha)}_{1(\alpha)1(\epsilon)}\xcancel{\Gamma^{1(\epsilon)}_{1(\epsilon)j(\alpha)}}-\Gamma^{i(\alpha)}_{1(\epsilon)1(\epsilon)}\xcancel{\Gamma^{1(\epsilon)}_{1(\alpha)j(\alpha)}}
		\notag
	\end{align}
	where
	\begin{align}
		\partial_{1(\epsilon)}\Gamma^{i(\alpha)}_{1(\alpha)j(\alpha)}&=-\underset{\sigma\neq\alpha}{\sum}\,\partial_{1(\epsilon)}\Gamma^{i(\alpha)}_{1(\sigma)j(\alpha)}=-\partial_{1(\epsilon)}\Gamma^{i(\alpha)}_{1(\epsilon)j(\alpha)}-\underset{\sigma\notin\{\alpha,\epsilon\}}{\sum}\,\xcancel{\partial_{1(\epsilon)}\Gamma^{i(\alpha)}_{1(\sigma)j(\alpha)}}
		\notag\\&=\partial_{1(\alpha)}\Gamma^{i(\alpha)}_{1(\epsilon)j(\alpha)}
		\notag
	\end{align}
	as $\Gamma^{i(\alpha)}_{1(\epsilon)j(\alpha)}$ only depends on both $u^{1(\epsilon)}$ and $u^{1(\alpha)}$ by means of the term $u^{1(\alpha)}-u^{1(\epsilon)}$. It follows that
	\begin{align}
		R^{i(\alpha)}_{1(\epsilon)1(\alpha)j(\alpha)}&=\overset{m_\alpha}{\underset{l=1}{\sum}}\,\bigg(\Gamma^{i(\alpha)}_{1(\alpha)l(\alpha)}\Gamma^{l(\alpha)}_{1(\epsilon)j(\alpha)}-\Gamma^{i(\alpha)}_{1(\epsilon)l(\alpha)}\Gamma^{l(\alpha)}_{1(\alpha)j(\alpha)}\bigg)
		\notag\\&=\overset{i}{\underset{l=j}{\sum}}\,\Gamma^{(i-l+1)(\alpha)}_{1(\alpha)1(\alpha)}\Gamma^{(l-j+1)(\alpha)}_{1(\epsilon)1(\alpha)}-\overset{i}{\underset{l=j}{\sum}}\,\Gamma^{(i-l+1)(\alpha)}_{1(\epsilon)1(\alpha)}\Gamma^{(l-j+1)(\alpha)}_{1(\alpha)1(\alpha)}
		\notag\\&=\overset{i}{\underset{l=j}{\sum}}\,\Gamma^{(i-l+1)(\alpha)}_{1(\alpha)1(\alpha)}\Gamma^{(l-j+1)(\alpha)}_{1(\epsilon)1(\alpha)}-\overset{i}{\underset{t=j}{\sum}}\,\Gamma^{(t-j+1)(\alpha)}_{1(\epsilon)1(\alpha)}\Gamma^{(i-t+1)(\alpha)}_{1(\alpha)1(\alpha)}=0.
		\notag
	\end{align}
	\textbf{Case 3: $\alpha=\gamma=\epsilon\neq\beta$.}
	Out goal is to prove that
	\begin{align}
		R^{i(\alpha)}_{h(\alpha)k(\alpha)j(\beta)}&=\partial_{k(\alpha)}\Gamma^{i(\alpha)}_{h(\alpha)j(\beta)}-\partial_{h(\alpha)}\Gamma^{i(\alpha)}_{k(\alpha)j(\beta)}\notag\\&+\overset{r}{\underset{\sigma=1}{\sum}}\,\overset{m_\sigma}{\underset{l=1}{\sum}}\,\bigg(\Gamma^{i(\alpha)}_{k(\alpha)l(\sigma)}\Gamma^{l(\sigma)}_{h(\alpha)j(\beta)}-\Gamma^{i(\alpha)}_{h(\alpha)l(\sigma)}\Gamma^{l(\sigma)}_{k(\alpha)j(\beta)}\bigg)
		\notag\\&=\partial_{k(\alpha)}\Gamma^{i(\alpha)}_{h(\alpha)j(\beta)}-\partial_{h(\alpha)}\Gamma^{i(\alpha)}_{k(\alpha)j(\beta)}\notag\\&+\overset{m_\alpha}{\underset{l=1}{\sum}}\,\bigg(\Gamma^{i(\alpha)}_{k(\alpha)l(\alpha)}\Gamma^{l(\alpha)}_{h(\alpha)j(\beta)}-\Gamma^{i(\alpha)}_{h(\alpha)l(\alpha)}\Gamma^{l(\alpha)}_{k(\alpha)j(\beta)}\bigg)\notag\\&+\overset{m_\beta}{\underset{l=1}{\sum}}\,\bigg(\Gamma^{i(\alpha)}_{k(\alpha)l(\beta)}\Gamma^{l(\beta)}_{h(\alpha)j(\beta)}-\Gamma^{i(\alpha)}_{h(\alpha)l(\beta)}\Gamma^{l(\beta)}_{k(\alpha)j(\beta)}\bigg)
		\label{Rdef_caso3}
	\end{align}
	vanishes. If $j\geq2$ we get
	\begin{align}
		R^{i(\alpha)}_{h(\epsilon)k(\gamma)j(\beta)}&=\partial_{k(\alpha)}\xcancel{\Gamma^{i(\alpha)}_{h(\alpha)j(\beta)}}-\partial_{h(\alpha)}\xcancel{\Gamma^{i(\alpha)}_{k(\alpha)j(\beta)}}\notag\\&+\overset{m_\alpha}{\underset{l=1}{\sum}}\,\bigg(\Gamma^{i(\alpha)}_{k(\alpha)l(\alpha)}\xcancel{\Gamma^{l(\alpha)}_{h(\alpha)j(\beta)}}-\Gamma^{i(\alpha)}_{h(\alpha)l(\alpha)}\xcancel{\Gamma^{l(\alpha)}_{k(\alpha)j(\beta)}}\bigg)\notag\\&+\overset{m_\beta}{\underset{l=1}{\sum}}\,\bigg(\Gamma^{i(\alpha)}_{k(\alpha)l(\beta)}\Gamma^{l(\beta)}_{h(\alpha)j(\beta)}-\Gamma^{i(\alpha)}_{h(\alpha)l(\beta)}\Gamma^{l(\beta)}_{k(\alpha)j(\beta)}\bigg)
		\notag
	\end{align}
	where in the last summation only the terms for $l=1$ survive. Thus
	\begin{align}
		R^{i(\alpha)}_{h(\epsilon)k(\gamma)j(\beta)}&=\Gamma^{i(\alpha)}_{k(\alpha)1(\beta)}\xcancel{\Gamma^{1(\beta)}_{h(\alpha)j(\beta)}}-\Gamma^{i(\alpha)}_{h(\alpha)1(\beta)}\xcancel{\Gamma^{1(\beta)}_{k(\alpha)j(\beta)}}=0
		\notag
	\end{align}
	for $j\geq2$. Let us then fix $j=1$. We have
	\begin{align}
		R^{i(\alpha)}_{h(\epsilon)k(\gamma)1(\beta)}&=\partial_{k(\alpha)}\Gamma^{i(\alpha)}_{h(\alpha)1(\beta)}-\partial_{h(\alpha)}\Gamma^{i(\alpha)}_{k(\alpha)1(\beta)}\notag\\&+\overset{m_\alpha}{\underset{l=1}{\sum}}\,\bigg(\Gamma^{i(\alpha)}_{k(\alpha)l(\alpha)}\Gamma^{l(\alpha)}_{h(\alpha)1(\beta)}-\Gamma^{i(\alpha)}_{h(\alpha)l(\alpha)}\Gamma^{l(\alpha)}_{k(\alpha)1(\beta)}\bigg)\notag\\&+\overset{m_\beta}{\underset{l=1}{\sum}}\,\bigg(\Gamma^{i(\alpha)}_{k(\alpha)l(\beta)}\Gamma^{l(\beta)}_{h(\alpha)1(\beta)}-\Gamma^{i(\alpha)}_{h(\alpha)l(\beta)}\Gamma^{l(\beta)}_{k(\alpha)1(\beta)}\bigg).
		\label{Rdef_caso3_j1}
	\end{align}
	Let us recall that, as we have already seen in \eqref{claim_Rcaso2_j1k2},
	\begin{align}
		\partial_{t(\alpha)}\Gamma^{i(\alpha)}_{1(\epsilon)1(\alpha)}&=-\partial_{1(\epsilon)}\Gamma^{(i-t+1)(\alpha)}_{1(\epsilon)1(\alpha)}
		\label{helpful_deriv}
	\end{align}
	for every $\epsilon\neq\alpha$ and $t\geq2$. We distinguish between the following subcases:
	\begin{itemize}
		\item[$a.$] both $h$ and $k$ are greater or equal than $2$
		\item[$b.$] $h=1$, $k\geq2$ (this covers $h\geq2$, $k=1$ as well)
	\end{itemize}
	observing that $R^{i(\alpha)}_{h(\epsilon)k(\gamma)1(\beta)}=0$ automatically whenever $k=h$.
	\\\textbf{Subcase a:} both $k$ and $h$ are greater or equal than $2$. We have
	\begin{align}
		R^{i(\alpha)}_{h(\epsilon)k(\gamma)1(\beta)}&=\partial_{k(\alpha)}\Gamma^{i(\alpha)}_{h(\alpha)1(\beta)}-\partial_{h(\alpha)}\Gamma^{i(\alpha)}_{k(\alpha)1(\beta)}\notag\\&+\overset{i-k+2}{\underset{l=h}{\sum}}\,\Gamma^{i(\alpha)}_{k(\alpha)l(\alpha)}\Gamma^{l(\alpha)}_{h(\alpha)1(\beta)}-\overset{i-h+2}{\underset{l=k}{\sum}}\,\Gamma^{i(\alpha)}_{h(\alpha)l(\alpha)}\Gamma^{l(\alpha)}_{k(\alpha)1(\beta)}\notag\\&+\overset{m_\beta}{\underset{l=1}{\sum}}\,\bigg(\Gamma^{i(\alpha)}_{k(\alpha)l(\beta)}\xcancel{\Gamma^{l(\beta)}_{h(\alpha)1(\beta)}}-\Gamma^{i(\alpha)}_{h(\alpha)l(\beta)}\xcancel{\Gamma^{l(\beta)}_{k(\alpha)1(\beta)}}\bigg)
		\notag
	\end{align}
	where
	\begin{align}
		\partial_{k(\alpha)}\Gamma^{i(\alpha)}_{h(\alpha)1(\beta)}-\partial_{h(\alpha)}\Gamma^{i(\alpha)}_{k(\alpha)1(\beta)}&=\partial_{k(\alpha)}\Gamma^{(i-h+1)(\alpha)}_{1(\alpha)1(\beta)}-\partial_{h(\alpha)}\Gamma^{(i-k+1)(\alpha)}_{1(\alpha)1(\beta)}
		\notag\\&\overset{\eqref{helpful_deriv}}{=}-\partial_{1(\beta)}\Gamma^{(i-h-k+2)(\alpha)}_{1(\alpha)1(\beta)}+\partial_{1(\beta)}\Gamma^{(i-k-h+2)(\alpha)}_{1(\alpha)1(\beta)}=0.
		\notag
	\end{align}
	This yields
	\begin{align}
		R^{i(\alpha)}_{h(\epsilon)k(\gamma)1(\beta)}&=\overset{i-k+2}{\underset{l=h}{\sum}}\,\Gamma^{i(\alpha)}_{k(\alpha)l(\alpha)}\Gamma^{l(\alpha)}_{h(\alpha)1(\beta)}-\overset{i-h+2}{\underset{l=k}{\sum}}\,\Gamma^{i(\alpha)}_{h(\alpha)l(\alpha)}\Gamma^{l(\alpha)}_{k(\alpha)1(\beta)}
		\label{Rdef_caso3_j1_hk2}
	\end{align}
	which automatically vanishes for $i<k$. For $i=k$ we get
	\begin{align}
		R^{k(\alpha)}_{h(\epsilon)k(\gamma)1(\beta)}&=\overset{2}{\underset{l=h}{\sum}}\,\Gamma^{k(\alpha)}_{k(\alpha)l(\alpha)}\Gamma^{l(\alpha)}_{h(\alpha)1(\beta)}-\overset{k-h+2}{\underset{l=k}{\sum}}\,\Gamma^{k(\alpha)}_{h(\alpha)l(\alpha)}\Gamma^{l(\alpha)}_{k(\alpha)1(\beta)}
		\notag
	\end{align}
	which trivially vanishes for $h\geq3$ and reads
	\begin{align}
		R^{k(\alpha)}_{2(\epsilon)k(\gamma)1(\beta)}&=\Gamma^{k(\alpha)}_{k(\alpha)2(\alpha)}\Gamma^{2(\alpha)}_{2(\alpha)1(\beta)}-\Gamma^{k(\alpha)}_{2(\alpha)k(\alpha)}\Gamma^{k(\alpha)}_{k(\alpha)1(\beta)}
		\notag\\&=\Gamma^{2(\alpha)}_{2(\alpha)2(\alpha)}\Gamma^{1(\alpha)}_{1(\alpha)1(\beta)}-\Gamma^{2(\alpha)}_{2(\alpha)2(\alpha)}\Gamma^{1(\alpha)}_{1(\alpha)1(\beta)}=0
		\notag
	\end{align}
	for $h=2$. For $i>k$ \eqref{Rdef_caso3_j1_hk2} becomes
	\begin{align}
		R^{i(\alpha)}_{h(\epsilon)k(\gamma)1(\beta)}&=\overset{i-k+2}{\underset{l=h}{\sum}}\,\Gamma^{(i-k-l+4)(\alpha)}_{2(\alpha)2(\alpha)}\Gamma^{(l-h+1)(\alpha)}_{1(\alpha)1(\beta)}-\overset{i-h+2}{\underset{l=k}{\sum}}\,\Gamma^{(i-h-l+4)(\alpha)}_{2(\alpha)2(\alpha)}\Gamma^{(l-k+1)(\alpha)}_{1(\alpha)1(\beta)}
		\notag\\&=\overset{i-k+2}{\underset{l=h}{\sum}}\,\Gamma^{(i-k-l+4)(\alpha)}_{2(\alpha)2(\alpha)}\Gamma^{(l-h+1)(\alpha)}_{1(\alpha)1(\beta)}-\overset{i-k+2}{\underset{t=h}{\sum}}\,\Gamma^{(i-k-t+4)(\alpha)}_{2(\alpha)2(\alpha)}\Gamma^{(t-h+1)(\alpha)}_{1(\alpha)1(\beta)}=0.
		\notag
	\end{align}
	\textbf{Subcase b:} $h=1$, $k\geq2$. We have
	\begin{align}
		R^{i(\alpha)}_{1(\epsilon)k(\gamma)1(\beta)}&=\partial_{k(\alpha)}\Gamma^{i(\alpha)}_{1(\alpha)1(\beta)}-\partial_{1(\alpha)}\Gamma^{i(\alpha)}_{k(\alpha)1(\beta)}\notag\\&+\overset{i-k+2}{\underset{l=1}{\sum}}\,\Gamma^{i(\alpha)}_{k(\alpha)l(\alpha)}\Gamma^{l(\alpha)}_{1(\alpha)1(\beta)}-\overset{i}{\underset{l=k}{\sum}}\,\Gamma^{i(\alpha)}_{1(\alpha)l(\alpha)}\Gamma^{l(\alpha)}_{k(\alpha)1(\beta)}\notag\\&+\overset{m_\beta}{\underset{l=1}{\sum}}\,\bigg(\Gamma^{i(\alpha)}_{k(\alpha)l(\beta)}\Gamma^{l(\beta)}_{1(\alpha)1(\beta)}-\Gamma^{i(\alpha)}_{1(\alpha)l(\beta)}\xcancel{\Gamma^{l(\beta)}_{k(\alpha)1(\beta)}}\bigg)
		\notag
	\end{align}
	where in the last summation only the term for $l=1$ survives and
	\begin{align}
		\partial_{k(\alpha)}\Gamma^{i(\alpha)}_{1(\alpha)1(\beta)}-\partial_{1(\alpha)}\Gamma^{i(\alpha)}_{k(\alpha)1(\beta)}&\overset{\eqref{helpful_deriv}}{=}-\partial_{1(\beta)}\Gamma^{(i-k+1)(\alpha)}_{1(\alpha)1(\beta)}+\partial_{1(\beta)}\Gamma^{i(\alpha)}_{k(\alpha)1(\beta)}=0.
		\notag
	\end{align}
	This yields
	\begin{align}
		R^{i(\alpha)}_{1(\epsilon)k(\gamma)1(\beta)}&=\overset{i-k+2}{\underset{l=1}{\sum}}\,\Gamma^{i(\alpha)}_{k(\alpha)l(\alpha)}\Gamma^{l(\alpha)}_{1(\alpha)1(\beta)}-\overset{i}{\underset{l=k}{\sum}}\,\Gamma^{i(\alpha)}_{1(\alpha)l(\alpha)}\Gamma^{l(\alpha)}_{k(\alpha)1(\beta)}\notag\\&+\Gamma^{i(\alpha)}_{k(\alpha)1(\beta)}\Gamma^{1(\beta)}_{1(\alpha)1(\beta)}
		\notag\\&=\Gamma^{i(\alpha)}_{k(\alpha)1(\alpha)}\Gamma^{1(\alpha)}_{1(\alpha)1(\beta)}+\overset{i-k+2}{\underset{l=2}{\sum}}\,\Gamma^{(i-k-l+4)(\alpha)}_{2(\alpha)2(\alpha)}\Gamma^{l(\alpha)}_{1(\alpha)1(\beta)}-\overset{i}{\underset{l=k}{\sum}}\,\Gamma^{(i-l+1)(\alpha)}_{1(\alpha)1(\alpha)}\Gamma^{(l-k+1)(\alpha)}_{1(\alpha)1(\beta)}\notag\\&+\Gamma^{i(\alpha)}_{k(\alpha)1(\beta)}\Gamma^{1(\beta)}_{1(\alpha)1(\beta)}
		\notag\\&=\Gamma^{i(\alpha)}_{k(\alpha)1(\alpha)}\Gamma^{1(\alpha)}_{1(\alpha)1(\beta)}+\overset{i-k+2}{\underset{l=2}{\sum}}\,\Gamma^{(i-k-l+4)(\alpha)}_{2(\alpha)2(\alpha)}\Gamma^{l(\alpha)}_{1(\alpha)1(\beta)}-\overset{i-k+1}{\underset{t=1}{\sum}}\,\Gamma^{(i-k-t+2)(\alpha)}_{1(\alpha)1(\alpha)}\Gamma^{t(\alpha)}_{1(\alpha)1(\beta)}\notag\\&+\Gamma^{i(\alpha)}_{k(\alpha)1(\beta)}\Gamma^{1(\beta)}_{1(\alpha)1(\beta)}
		\notag\\&=\cancel{\Gamma^{(i-k+1)(\alpha)}_{1(\alpha)1(\alpha)}\Gamma^{1(\alpha)}_{1(\alpha)1(\beta)}}+\overset{i-k+2}{\underset{l=2}{\sum}}\,\Gamma^{(i-k-l+4)(\alpha)}_{2(\alpha)2(\alpha)}\Gamma^{l(\alpha)}_{1(\alpha)1(\beta)}\notag\\&-\cancel{\Gamma^{(i-k+1)(\alpha)}_{1(\alpha)1(\alpha)}\Gamma^{1(\alpha)}_{1(\alpha)1(\beta)}}-\overset{i-k+1}{\underset{l=2}{\sum}}\,\Gamma^{(i-k-l+2)(\alpha)}_{1(\alpha)1(\alpha)}\Gamma^{l(\alpha)}_{1(\alpha)1(\beta)}\notag\\&+\Gamma^{i(\alpha)}_{k(\alpha)1(\beta)}\Gamma^{1(\beta)}_{1(\alpha)1(\beta)}
		\notag\\&=\overset{i-k+1}{\underset{l=2}{\sum}}\,\bigg(\Gamma^{(i-k-l+4)(\alpha)}_{2(\alpha)2(\alpha)}-\Gamma^{(i-k-l+2)(\alpha)}_{1(\alpha)1(\alpha)}\bigg)\Gamma^{l(\alpha)}_{1(\alpha)1(\beta)}+\Gamma^{2(\alpha)}_{2(\alpha)2(\alpha)}\Gamma^{(i-k+2)(\alpha)}_{1(\alpha)1(\beta)}\notag\\&+\Gamma^{(i-k+1)(\alpha)}_{1(\alpha)1(\beta)}\Gamma^{1(\beta)}_{1(\alpha)1(\beta)}\overset{\eqref{fulm}}{=}0.
		\label{Rdef_caso3_j1_h1k2}
	\end{align}
	\textbf{Case 4: $\beta=\gamma=\epsilon\neq\alpha$.} Our goal is to prove that
	\begin{align}
		R^{i(\alpha)}_{h(\beta)k(\beta)j(\beta)}&=\partial_{k(\beta)}\Gamma^{i(\alpha)}_{h(\beta)j(\beta)}-\partial_{h(\beta)}\Gamma^{i(\alpha)}_{k(\beta)j(\beta)}\notag\\&+\overset{r}{\underset{\sigma=1}{\sum}}\,\overset{m_\sigma}{\underset{l=1}{\sum}}\,\bigg(\Gamma^{i(\alpha)}_{k(\beta)l(\sigma)}\Gamma^{l(\sigma)}_{h(\beta)j(\beta)}-\Gamma^{i(\alpha)}_{h(\beta)l(\sigma)}\Gamma^{l(\sigma)}_{k(\beta)j(\beta)}\bigg)
		\notag\\&=\partial_{k(\beta)}\Gamma^{i(\alpha)}_{h(\beta)j(\beta)}-\partial_{h(\beta)}\Gamma^{i(\alpha)}_{k(\beta)j(\beta)}\notag\\&+\overset{m_\alpha}{\underset{l=1}{\sum}}\,\bigg(\Gamma^{i(\alpha)}_{k(\beta)l(\alpha)}\Gamma^{l(\alpha)}_{h(\beta)j(\beta)}-\Gamma^{i(\alpha)}_{h(\beta)l(\alpha)}\Gamma^{l(\alpha)}_{k(\beta)j(\beta)}\bigg)\notag\\&+\overset{m_\beta}{\underset{l=1}{\sum}}\,\bigg(\Gamma^{i(\alpha)}_{k(\beta)l(\beta)}\Gamma^{l(\beta)}_{h(\beta)j(\beta)}-\Gamma^{i(\alpha)}_{h(\beta)l(\beta)}\Gamma^{l(\beta)}_{k(\beta)j(\beta)}\bigg)
		\notag
	\end{align}
	vanishes. Without loss of generality, by the simmetries of $R$, we can set $h>k$. In particular, $h\geq2$. We get
	\begin{align}
		R^{i(\alpha)}_{h(\beta)k(\beta)j(\beta)}&=\partial_{k(\beta)}\xcancel{\Gamma^{i(\alpha)}_{h(\beta)j(\beta)}}-\partial_{h(\beta)}\Gamma^{i(\alpha)}_{k(\beta)j(\beta)}\notag\\&+\overset{m_\alpha}{\underset{l=1}{\sum}}\,\bigg(\Gamma^{i(\alpha)}_{k(\beta)l(\alpha)}\xcancel{\Gamma^{l(\alpha)}_{h(\beta)j(\beta)}}-\Gamma^{i(\alpha)}_{h(\beta)l(\alpha)}\xcancel{\Gamma^{l(\alpha)}_{k(\beta)j(\beta)}}\bigg)\notag\\&+\overset{m_\beta}{\underset{l=1}{\sum}}\,\bigg(\Gamma^{i(\alpha)}_{k(\beta)l(\beta)}\Gamma^{l(\beta)}_{h(\beta)j(\beta)}-\xcancel{\Gamma^{i(\alpha)}_{h(\beta)l(\beta)}}\Gamma^{l(\beta)}_{k(\beta)j(\beta)}\bigg)
		\label{Rdef_caso4}
	\end{align}
	which reads
	\begin{align}
		R^{i(\alpha)}_{h(\beta)k(\beta)j(\beta)}&=-\partial_{h(\beta)}\xcancel{\Gamma^{i(\alpha)}_{k(\beta)j(\beta)}}+\overset{m_\beta}{\underset{l=1}{\sum}}\,\xcancel{\Gamma^{i(\alpha)}_{k(\beta)l(\beta)}}\Gamma^{l(\beta)}_{h(\beta)j(\beta)}=0
		\notag
	\end{align}
	for $k\geq2$. For $k=1$ \eqref{Rdef_caso4} becomes
	\begin{align}
		R^{i(\alpha)}_{h(\beta)1(\beta)j(\beta)}&=-\partial_{h(\beta)}\Gamma^{i(\alpha)}_{1(\beta)j(\beta)}+\overset{m_\beta}{\underset{l=1}{\sum}}\,\Gamma^{i(\alpha)}_{1(\beta)l(\beta)}\Gamma^{l(\beta)}_{h(\beta)j(\beta)}
		\notag\\&=\partial_{h(\beta)}\Gamma^{i(\alpha)}_{1(\alpha)j(\beta)}-\Gamma^{i(\alpha)}_{1(\alpha)1(\beta)}\Gamma^{1(\beta)}_{h(\beta)j(\beta)}
		\notag
	\end{align}
	which is
	\begin{align}
		R^{i(\alpha)}_{h(\beta)1(\beta)j(\beta)}&=\partial_{h(\beta)}\xcancel{\Gamma^{i(\alpha)}_{1(\alpha)j(\beta)}}-\Gamma^{i(\alpha)}_{1(\alpha)1(\beta)}\xcancel{\Gamma^{1(\beta)}_{h(\beta)j(\beta)}}=0
		\notag
	\end{align}
	for $j\geq2$ and
	\begin{align}
		R^{i(\alpha)}_{h(\beta)1(\beta)1(\beta)}&=\partial_{h(\beta)}\Gamma^{i(\alpha)}_{1(\alpha)1(\beta)}-\Gamma^{i(\alpha)}_{1(\alpha)1(\beta)}\Gamma^{1(\beta)}_{h(\beta)1(\beta)}=0
		\notag
	\end{align}
	for $j=1$, as $\Gamma^{i(\alpha)}_{1(\alpha)1(\beta)}$ does not depend on any of the $u^{h(\beta)}$s for $h\geq2$ and ($1<2\leq h$)
	\begin{align}
		\Gamma^{1(\beta)}_{h(\beta)1(\beta)}&=-\underset{\sigma\neq\beta}{\sum}\,\Gamma^{1(\beta)}_{h(\beta)1(\sigma)}=0.
		\notag
	\end{align}
	\textbf{Case 5: $\alpha=\beta\neq\gamma=\epsilon$.} Our goal is to prove that
	\begin{align}
		R^{i(\alpha)}_{h(\gamma)k(\gamma)j(\alpha)}&=\partial_{k(\gamma)}\Gamma^{i(\alpha)}_{h(\gamma)j(\alpha)}-\partial_{h(\gamma)}\Gamma^{i(\alpha)}_{k(\gamma)j(\alpha)}\notag\\&+\overset{r}{\underset{\sigma=1}{\sum}}\,\overset{m_\sigma}{\underset{l=1}{\sum}}\,\bigg(\Gamma^{i(\alpha)}_{k(\gamma)l(\sigma)}\Gamma^{l(\sigma)}_{h(\gamma)j(\alpha)}-\Gamma^{i(\alpha)}_{h(\gamma)l(\sigma)}\Gamma^{l(\sigma)}_{k(\gamma)j(\alpha)}\bigg)
		\notag\\&=\partial_{k(\gamma)}\Gamma^{i(\alpha)}_{h(\gamma)j(\alpha)}-\partial_{h(\gamma)}\Gamma^{i(\alpha)}_{k(\gamma)j(\alpha)}\notag\\&+\overset{m_\alpha}{\underset{l=1}{\sum}}\,\bigg(\Gamma^{i(\alpha)}_{k(\gamma)l(\alpha)}\Gamma^{l(\alpha)}_{h(\gamma)j(\alpha)}-\Gamma^{i(\alpha)}_{h(\gamma)l(\alpha)}\Gamma^{l(\alpha)}_{k(\gamma)j(\alpha)}\bigg)\notag\\&+\overset{m_\gamma}{\underset{l=1}{\sum}}\,\bigg(\Gamma^{i(\alpha)}_{k(\gamma)l(\gamma)}\Gamma^{l(\gamma)}_{h(\gamma)j(\alpha)}-\Gamma^{i(\alpha)}_{h(\gamma)l(\gamma)}\Gamma^{l(\gamma)}_{k(\gamma)j(\alpha)}\bigg)
	\end{align}
	vanishes. Without loss of generality, by the simmetries of $R$, we can set $h>k$. In particular, $h\geq2$. We get
	\begin{align}
		R^{i(\alpha)}_{h(\gamma)k(\gamma)j(\alpha)}&=\partial_{k(\gamma)}\xcancel{\Gamma^{i(\alpha)}_{h(\gamma)j(\alpha)}}-\partial_{h(\gamma)}\Gamma^{i(\alpha)}_{k(\gamma)j(\alpha)}\notag\\&+\overset{m_\alpha}{\underset{l=1}{\sum}}\,\bigg(\Gamma^{i(\alpha)}_{k(\gamma)l(\alpha)}\xcancel{\Gamma^{l(\alpha)}_{h(\gamma)j(\alpha)}}-\xcancel{\Gamma^{i(\alpha)}_{h(\gamma)l(\alpha)}}\Gamma^{l(\alpha)}_{k(\gamma)j(\alpha)}\bigg)\notag\\&+\overset{m_\gamma}{\underset{l=1}{\sum}}\,\bigg(\Gamma^{i(\alpha)}_{k(\gamma)l(\gamma)}\Gamma^{l(\gamma)}_{h(\gamma)j(\alpha)}-\xcancel{\Gamma^{i(\alpha)}_{h(\gamma)l(\gamma)}}\Gamma^{l(\gamma)}_{k(\gamma)j(\alpha)}\bigg)
		\notag
	\end{align}
	where
	\begin{align}
		\overset{m_\gamma}{\underset{l=1}{\sum}}\,\Gamma^{i(\alpha)}_{k(\gamma)l(\gamma)}\Gamma^{l(\gamma)}_{h(\gamma)j(\alpha)}&=\overset{m_\gamma}{\underset{l=h}{\sum}}\,\xcancel{\Gamma^{i(\alpha)}_{k(\gamma)l(\gamma)}}\Gamma^{l(\gamma)}_{h(\gamma)j(\alpha)}=0.
		\notag
	\end{align}
	It follows that
	\begin{align}
		R^{i(\alpha)}_{h(\gamma)k(\gamma)j(\alpha)}&=-\partial_{h(\gamma)}\Gamma^{i(\alpha)}_{k(\gamma)j(\alpha)}=0
		\notag
	\end{align}
	as $\Gamma^{i(\alpha)}_{k(\gamma)j(\alpha)}$ does not depend on any of the $u^{h(\gamma)}$s for $h\geq2$.
	\\\textbf{Case 6: $\alpha=\gamma\neq\beta=\epsilon$.} Our goal is to prove that
	\begin{align}
		R^{i(\alpha)}_{h(\beta)k(\alpha)j(\beta)}&=\partial_{k(\alpha)}\Gamma^{i(\alpha)}_{h(\beta)j(\beta)}-\partial_{h(\beta)}\Gamma^{i(\alpha)}_{k(\alpha)j(\beta)}\notag\\&+\overset{r}{\underset{\sigma=1}{\sum}}\,\overset{m_\sigma}{\underset{l=1}{\sum}}\,\bigg(\Gamma^{i(\alpha)}_{k(\alpha)l(\sigma)}\Gamma^{l(\sigma)}_{h(\beta)j(\beta)}-\Gamma^{i(\alpha)}_{h(\beta)l(\sigma)}\Gamma^{l(\sigma)}_{k(\alpha)j(\beta)}\bigg)
		\notag\\&=\partial_{k(\alpha)}\Gamma^{i(\alpha)}_{h(\beta)j(\beta)}-\partial_{h(\beta)}\Gamma^{i(\alpha)}_{k(\alpha)j(\beta)}\notag\\&+\overset{m_\alpha}{\underset{l=1}{\sum}}\,\bigg(\Gamma^{i(\alpha)}_{k(\alpha)l(\alpha)}\Gamma^{l(\alpha)}_{h(\beta)j(\beta)}-\Gamma^{i(\alpha)}_{h(\beta)l(\alpha)}\Gamma^{l(\alpha)}_{k(\alpha)j(\beta)}\bigg)\notag\\&+\overset{m_\beta}{\underset{l=1}{\sum}}\,\bigg(\Gamma^{i(\alpha)}_{k(\alpha)l(\beta)}\Gamma^{l(\beta)}_{h(\beta)j(\beta)}-\Gamma^{i(\alpha)}_{h(\beta)l(\beta)}\Gamma^{l(\beta)}_{k(\alpha)j(\beta)}\bigg)\notag\\&+\overset{}{\underset{\sigma\notin\{\alpha,\beta\}}{\sum}}\,\overset{m_\sigma}{\underset{l=1}{\sum}}\,\bigg(\Gamma^{i(\alpha)}_{k(\alpha)l(\sigma)}\Gamma^{l(\sigma)}_{h(\beta)j(\beta)}-\xcancel{\Gamma^{i(\alpha)}_{h(\beta)l(\sigma)}}\Gamma^{l(\sigma)}_{k(\alpha)j(\beta)}\bigg)
		\label{Rdef_caso6}
	\end{align}
	vanishes. For $j\geq2$ \eqref{Rdef_caso6} vanishes trivially, as
	\begin{align}
		R^{i(\alpha)}_{h(\beta)k(\alpha)j(\beta)}&=\partial_{k(\alpha)}\xcancel{\Gamma^{i(\alpha)}_{h(\beta)j(\beta)}}-\partial_{h(\beta)}\xcancel{\Gamma^{i(\alpha)}_{k(\alpha)j(\beta)}}\notag\\&+\overset{m_\alpha}{\underset{l=1}{\sum}}\,\bigg(\Gamma^{i(\alpha)}_{k(\alpha)l(\alpha)}\xcancel{\Gamma^{l(\alpha)}_{h(\beta)j(\beta)}}-\Gamma^{i(\alpha)}_{h(\beta)l(\alpha)}\xcancel{\Gamma^{l(\alpha)}_{k(\alpha)j(\beta)}}\bigg)\notag\\&+\overset{m_\beta}{\underset{l=1}{\sum}}\,\bigg(\Gamma^{i(\alpha)}_{k(\alpha)l(\beta)}\Gamma^{l(\beta)}_{h(\beta)j(\beta)}-\Gamma^{i(\alpha)}_{h(\beta)l(\beta)}\Gamma^{l(\beta)}_{k(\alpha)j(\beta)}\bigg)\notag\\&+\overset{}{\underset{\sigma\notin\{\alpha,\beta\}}{\sum}}\,\overset{m_\sigma}{\underset{l=1}{\sum}}\,\Gamma^{i(\alpha)}_{k(\alpha)l(\sigma)}\xcancel{\Gamma^{l(\sigma)}_{h(\beta)j(\beta)}}
		\notag\\&=\Gamma^{i(\alpha)}_{k(\alpha)1(\beta)}\Gamma^{1(\beta)}_{h(\beta)j(\beta)}-\Gamma^{i(\alpha)}_{h(\beta)1(\beta)}\xcancel{\Gamma^{1(\beta)}_{k(\alpha)j(\beta)}}
		\notag\\&=\Gamma^{i(\alpha)}_{k(\alpha)1(\beta)}\Gamma^{1(\beta)}_{h(\beta)j(\beta)}
		\notag\\&=
		\begin{cases}
			\Gamma^{i(\alpha)}_{k(\alpha)1(\beta)}\Gamma^{1(\beta)}_{1(\beta)j(\beta)}=-\Gamma^{i(\alpha)}_{k(\alpha)1(\beta)}\underset{\sigma\neq\beta}{\sum}\,\xcancel{\Gamma^{1(\beta)}_{1(\sigma)j(\beta)}}=0\qquad&\text{if }h=1\\
			\Gamma^{i(\alpha)}_{k(\alpha)1(\beta)}\Gamma^{1(\beta)}_{h(\beta)j(\beta)}\overset{1-h-j\leq-3}{=}0\qquad&\text{if }h\geq2.
		\end{cases}
		\notag
	\end{align}
	Let us then fix $j=1$. \eqref{Rdef_caso6} becomes
	\begin{align}
		R^{i(\alpha)}_{h(\beta)k(\alpha)1(\beta)}&=\partial_{k(\alpha)}\Gamma^{i(\alpha)}_{h(\beta)1(\beta)}-\partial_{h(\beta)}\Gamma^{i(\alpha)}_{k(\alpha)1(\beta)}\notag\\&+\overset{m_\alpha}{\underset{l=1}{\sum}}\,\bigg(\Gamma^{i(\alpha)}_{k(\alpha)l(\alpha)}\Gamma^{l(\alpha)}_{h(\beta)1(\beta)}-\Gamma^{i(\alpha)}_{h(\beta)l(\alpha)}\Gamma^{l(\alpha)}_{k(\alpha)1(\beta)}\bigg)\notag\\&+\overset{m_\beta}{\underset{l=1}{\sum}}\,\bigg(\Gamma^{i(\alpha)}_{k(\alpha)l(\beta)}\Gamma^{l(\beta)}_{h(\beta)1(\beta)}-\Gamma^{i(\alpha)}_{h(\beta)l(\beta)}\Gamma^{l(\beta)}_{k(\alpha)1(\beta)}\bigg)\notag\\&+\overset{}{\underset{\sigma\notin\{\alpha,\beta\}}{\sum}}\,\overset{m_\sigma}{\underset{l=1}{\sum}}\,\Gamma^{i(\alpha)}_{k(\alpha)l(\sigma)}\Gamma^{l(\sigma)}_{h(\beta)1(\beta)}.
		\label{Rdef_caso6_bis}
	\end{align}
	We distinguish between the following subcases:
	\begin{itemize}
		\item[$a.$] both $h$ and $k$ are greater or equal than $2$
		\item[$b.$] $h=k=1$
		\item[$c.$] $h\geq2$, $k=1$
		\item[$d.$] $h=1$, $k\geq2$
	\end{itemize}
	\textbf{Subcase a:} both $k$ and $h$ are greater or equal than $2$. We have
	\begin{align}
		R^{i(\alpha)}_{h(\beta)k(\alpha)1(\beta)}&=\partial_{k(\alpha)}\Gamma^{i(\alpha)}_{h(\beta)1(\beta)}-\partial_{h(\beta)}\Gamma^{i(\alpha)}_{k(\alpha)1(\beta)}\notag\\&+\overset{m_\alpha}{\underset{l=1}{\sum}}\,\bigg(\Gamma^{i(\alpha)}_{k(\alpha)l(\alpha)}\xcancel{\Gamma^{l(\alpha)}_{h(\beta)1(\beta)}}-\xcancel{\Gamma^{i(\alpha)}_{h(\beta)l(\alpha)}}\Gamma^{l(\alpha)}_{k(\alpha)1(\beta)}\bigg)\notag\\&+\overset{m_\beta}{\underset{l=1}{\sum}}\,\bigg(\Gamma^{i(\alpha)}_{k(\alpha)l(\beta)}\Gamma^{l(\beta)}_{h(\beta)1(\beta)}-\xcancel{\Gamma^{i(\alpha)}_{h(\beta)l(\beta)}}\Gamma^{l(\beta)}_{k(\alpha)1(\beta)}\bigg)\notag\\&+\overset{}{\underset{\sigma\notin\{\alpha,\beta\}}{\sum}}\,\overset{m_\sigma}{\underset{l=1}{\sum}}\,\Gamma^{i(\alpha)}_{k(\alpha)l(\sigma)}\xcancel{\Gamma^{l(\sigma)}_{h(\beta)1(\beta)}}
		\notag\\&=-\partial_{k(\alpha)}\xcancel{\Gamma^{i(\alpha)}_{h(\beta)1(\alpha)}}-\partial_{h(\beta)}\Gamma^{i(\alpha)}_{k(\alpha)1(\beta)}+\Gamma^{i(\alpha)}_{k(\alpha)1(\beta)}\xcancel{\Gamma^{1(\beta)}_{h(\beta)1(\beta)}}
		\notag
	\end{align}
	where $\Gamma^{i(\alpha)}_{k(\alpha)1(\beta)}$ does not depend on $u^{h(\beta)}$ for $h\geq2$. Thus $R^{i(\alpha)}_{h(\beta)k(\alpha)1(\beta)}=0$.
	\\\textbf{Subcase b:} $h=k=1$. \eqref{Rdef_caso6_bis} reads
	\begin{align}
		R^{i(\alpha)}_{1(\beta)1(\alpha)1(\beta)}&=\partial_{1(\alpha)}\Gamma^{i(\alpha)}_{1(\beta)1(\beta)}-\partial_{1(\beta)}\Gamma^{i(\alpha)}_{1(\alpha)1(\beta)}\notag\\&+\overset{m_\alpha}{\underset{l=1}{\sum}}\,\bigg(\Gamma^{i(\alpha)}_{1(\alpha)l(\alpha)}\Gamma^{l(\alpha)}_{1(\beta)1(\beta)}-\Gamma^{i(\alpha)}_{1(\beta)l(\alpha)}\Gamma^{l(\alpha)}_{1(\alpha)1(\beta)}\bigg)\notag\\&+\overset{m_\beta}{\underset{l=1}{\sum}}\,\bigg(\Gamma^{i(\alpha)}_{1(\alpha)l(\beta)}\Gamma^{l(\beta)}_{1(\beta)1(\beta)}-\Gamma^{i(\alpha)}_{1(\beta)l(\beta)}\Gamma^{l(\beta)}_{1(\alpha)1(\beta)}\bigg)\notag\\&+\overset{}{\underset{\sigma\notin\{\alpha,\beta\}}{\sum}}\,\overset{m_\sigma}{\underset{l=1}{\sum}}\,\Gamma^{i(\alpha)}_{1(\alpha)l(\sigma)}\Gamma^{l(\sigma)}_{1(\beta)1(\beta)}
		\notag
	\end{align}
	where in the second and in the third summation only the terms for $l=1$ survive and
	\begin{align}
		\partial_{1(\alpha)}\Gamma^{i(\alpha)}_{1(\beta)1(\beta)}-\partial_{1(\beta)}\Gamma^{i(\alpha)}_{1(\alpha)1(\beta)}&=-\partial_{1(\alpha)}\Gamma^{i(\alpha)}_{1(\alpha)1(\beta)}+\partial_{1(\alpha)}\Gamma^{i(\alpha)}_{1(\alpha)1(\beta)}=0.
		\notag
	\end{align}
	We get
	\begin{align}
		R^{i(\alpha)}_{1(\beta)1(\alpha)1(\beta)}&=\overset{m_\alpha}{\underset{l=1}{\sum}}\,\bigg(\Gamma^{i(\alpha)}_{1(\alpha)l(\alpha)}\Gamma^{l(\alpha)}_{1(\beta)1(\beta)}-\Gamma^{i(\alpha)}_{1(\beta)l(\alpha)}\Gamma^{l(\alpha)}_{1(\alpha)1(\beta)}\bigg)\notag\\&+\Gamma^{i(\alpha)}_{1(\alpha)1(\beta)}\Gamma^{1(\beta)}_{1(\beta)1(\beta)}-\Gamma^{i(\alpha)}_{1(\beta)1(\beta)}\Gamma^{1(\beta)}_{1(\alpha)1(\beta)}\notag\\&+\overset{}{\underset{\sigma\notin\{\alpha,\beta\}}{\sum}}\,\Gamma^{i(\alpha)}_{1(\alpha)1(\sigma)}\Gamma^{1(\sigma)}_{1(\beta)1(\beta)}
		\notag\\&=\overset{i}{\underset{l=1}{\sum}}\,\bigg(\Gamma^{i(\alpha)}_{1(\alpha)l(\alpha)}\Gamma^{l(\alpha)}_{1(\beta)1(\beta)}-\Gamma^{i(\alpha)}_{1(\beta)l(\alpha)}\Gamma^{l(\alpha)}_{1(\alpha)1(\beta)}\bigg)\notag\\&+\Gamma^{i(\alpha)}_{1(\alpha)1(\beta)}\Gamma^{1(\beta)}_{1(\beta)1(\beta)}-\Gamma^{i(\alpha)}_{1(\beta)1(\beta)}\Gamma^{1(\beta)}_{1(\alpha)1(\beta)}\notag\\&+\overset{}{\underset{\sigma\notin\{\alpha,\beta\}}{\sum}}\,\Gamma^{i(\alpha)}_{1(\alpha)1(\sigma)}\Gamma^{1(\sigma)}_{1(\beta)1(\beta)}
		\notag\\&=\overset{i}{\underset{l=1}{\sum}}\,\bigg(-\cancel{\Gamma^{i(\alpha)}_{1(\beta)l(\alpha)}\Gamma^{l(\alpha)}_{1(\beta)1(\beta)}}-\underset{\sigma\notin\{\alpha,\beta\}}{\sum}\,\Gamma^{i(\alpha)}_{1(\sigma)l(\alpha)}\Gamma^{l(\alpha)}_{1(\beta)1(\beta)}+\cancel{\Gamma^{i(\alpha)}_{1(\beta)l(\alpha)}\Gamma^{l(\alpha)}_{1(\beta)1(\beta)}}\bigg)\notag\\&-\Gamma^{i(\alpha)}_{1(\alpha)1(\beta)}\bigg(\bcancel{\Gamma^{1(\beta)}_{1(\alpha)1(\beta)}}+\underset{\sigma\notin\{\alpha,\beta\}}{\sum}\,\Gamma^{1(\beta)}_{1(\sigma)1(\beta)}\bigg)+\bcancel{\Gamma^{i(\alpha)}_{1(\beta)1(\alpha)}\Gamma^{1(\beta)}_{1(\alpha)1(\beta)}}\notag\\&+\overset{}{\underset{\sigma\notin\{\alpha,\beta\}}{\sum}}\,\Gamma^{i(\alpha)}_{1(\alpha)1(\sigma)}\Gamma^{1(\sigma)}_{1(\beta)1(\beta)}
		\notag\\&=\overset{}{\underset{\sigma\notin\{\alpha,\beta\}}{\sum}}\,\bigg(-\overset{i}{\underset{l=1}{\sum}}\,\Gamma^{(i-l+1)(\alpha)}_{1(\sigma)1(\alpha)}\Gamma^{l(\alpha)}_{1(\beta)1(\beta)}-\Gamma^{i(\alpha)}_{1(\alpha)1(\beta)}\Gamma^{1(\beta)}_{1(\sigma)1(\beta)}+\Gamma^{i(\alpha)}_{1(\alpha)1(\sigma)}\Gamma^{1(\sigma)}_{1(\beta)1(\beta)}\bigg)
		\notag\\&=\overset{}{\underset{\sigma\notin\{\alpha,\beta\}}{\sum}}\,\bigg(\overset{i}{\underset{l=1}{\sum}}\,\Gamma^{(i-l+1)(\alpha)}_{1(\sigma)1(\alpha)}\Gamma^{l(\alpha)}_{1(\beta)1(\alpha)}-\Gamma^{i(\alpha)}_{1(\alpha)1(\beta)}\Gamma^{1(\beta)}_{1(\sigma)1(\beta)}-\Gamma^{i(\alpha)}_{1(\alpha)1(\sigma)}\Gamma^{1(\sigma)}_{1(\beta)1(\sigma)}\bigg)
		\notag
	\end{align}
	which vanishes by \eqref{Blemma}.
	\\\textbf{Subcase c:} $h\geq2$, $k=1$. The argument of \emph{subcase a} applies here as well.
	\\\textbf{Subcase d:} $h=1$, $k\geq2$. \eqref{Rdef_caso6_bis} reads
	\begin{align}
		R^{i(\alpha)}_{1(\beta)k(\alpha)1(\beta)}&=\partial_{k(\alpha)}\Gamma^{i(\alpha)}_{1(\beta)1(\beta)}-\partial_{1(\beta)}\Gamma^{i(\alpha)}_{k(\alpha)1(\beta)}\notag\\&+\overset{m_\alpha}{\underset{l=1}{\sum}}\,\bigg(\Gamma^{i(\alpha)}_{k(\alpha)l(\alpha)}\Gamma^{l(\alpha)}_{1(\beta)1(\beta)}-\Gamma^{i(\alpha)}_{1(\beta)l(\alpha)}\Gamma^{l(\alpha)}_{k(\alpha)1(\beta)}\bigg)\notag\\&+\overset{m_\beta}{\underset{l=1}{\sum}}\,\bigg(\Gamma^{i(\alpha)}_{k(\alpha)l(\beta)}\Gamma^{l(\beta)}_{1(\beta)1(\beta)}-\Gamma^{i(\alpha)}_{1(\beta)l(\beta)}\Gamma^{l(\beta)}_{k(\alpha)1(\beta)}\bigg)\notag\\&+\overset{}{\underset{\sigma\notin\{\alpha,\beta\}}{\sum}}\,\overset{m_\sigma}{\underset{l=1}{\sum}}\,\Gamma^{i(\alpha)}_{k(\alpha)l(\sigma)}\Gamma^{l(\sigma)}_{1(\beta)1(\beta)}\notag
	\end{align}
	where in the second and in the third summation only the terms for $l=1$ survive and
	\begin{align}
		\partial_{k(\alpha)}\Gamma^{i(\alpha)}_{1(\beta)1(\beta)}-\partial_{1(\beta)}\Gamma^{i(\alpha)}_{k(\alpha)1(\beta)}&=-\partial_{k(\alpha)}\Gamma^{i(\alpha)}_{1(\beta)1(\alpha)}-\partial_{1(\beta)}\Gamma^{(i-k+1)(\alpha)}_{1(\alpha)1(\beta)}
		\notag\\&\overset{\eqref{claim_Rcaso2_j1k2}}{=}\partial_{1(\beta)}\Gamma^{(i-k+1)(\alpha)}_{1(\beta)1(\alpha)}-\partial_{1(\beta)}\Gamma^{(i-k+1)(\alpha)}_{1(\alpha)1(\beta)}=0.
		\notag
	\end{align}
	We get
	\begin{align}
		R^{i(\alpha)}_{1(\beta)k(\alpha)1(\beta)}&=\overset{i-k+2}{\underset{l=1}{\sum}}\,\Gamma^{i(\alpha)}_{k(\alpha)l(\alpha)}\Gamma^{l(\alpha)}_{1(\beta)1(\beta)}-\overset{i}{\underset{l=k}{\sum}}\,\Gamma^{i(\alpha)}_{1(\beta)l(\alpha)}\Gamma^{l(\alpha)}_{k(\alpha)1(\beta)}\notag\\&+\Gamma^{i(\alpha)}_{k(\alpha)1(\beta)}\Gamma^{1(\beta)}_{1(\beta)1(\beta)}-\Gamma^{i(\alpha)}_{1(\beta)1(\beta)}\xcancel{\Gamma^{1(\beta)}_{k(\alpha)1(\beta)}}\notag\\&+\overset{}{\underset{\sigma\notin\{\alpha,\beta\}}{\sum}}\,\Gamma^{i(\alpha)}_{k(\alpha)1(\sigma)}\Gamma^{1(\sigma)}_{1(\beta)1(\beta)}
		\notag\\&=-\Gamma^{(i-k+1)(\alpha)}_{1(\alpha)1(\alpha)}\Gamma^{1(\alpha)}_{1(\beta)1(\alpha)}-\overset{i-k+2}{\underset{l=2}{\sum}}\,\Gamma^{(i-k-l+4)(\alpha)}_{2(\alpha)2(\alpha)}\Gamma^{l(\alpha)}_{1(\beta)1(\alpha)}\notag\\&-\overset{i}{\underset{l=k}{\sum}}\,\Gamma^{(i-l+1)(\alpha)}_{1(\beta)1(\alpha)}\Gamma^{(l-k+1)(\alpha)}_{1(\alpha)1(\beta)}\notag\\&-\Gamma^{(i-k+1)(\alpha)}_{1(\alpha)1(\beta)}\Gamma^{1(\beta)}_{1(\alpha)1(\beta)}-\overset{}{\underset{\sigma\notin\{\alpha,\beta\}}{\sum}}\,\Gamma^{(i-k+1)(\alpha)}_{1(\alpha)1(\beta)}\Gamma^{1(\beta)}_{1(\sigma)1(\beta)}\notag\\&-\overset{}{\underset{\sigma\notin\{\alpha,\beta\}}{\sum}}\,\Gamma^{(i-k+1)(\alpha)}_{1(\alpha)1(\sigma)}\Gamma^{1(\sigma)}_{1(\beta)1(\sigma)}
		\notag\\&=\Gamma^{(i-k+1)(\alpha)}_{1(\beta)1(\alpha)}\Gamma^{1(\alpha)}_{1(\beta)1(\alpha)}+\overset{}{\underset{\sigma\notin\{\alpha,\beta\}}{\sum}}\,\Gamma^{(i-k+1)(\alpha)}_{1(\sigma)1(\alpha)}\Gamma^{1(\alpha)}_{1(\beta)1(\alpha)}\notag\\&-\overset{i-k+2}{\underset{l=2}{\sum}}\,\Gamma^{(i-k-l+4)(\alpha)}_{2(\alpha)2(\alpha)}\Gamma^{l(\alpha)}_{1(\beta)1(\alpha)}-\overset{i}{\underset{l=k}{\sum}}\,\Gamma^{(i-l+1)(\alpha)}_{1(\beta)1(\alpha)}\Gamma^{(l-k+1)(\alpha)}_{1(\alpha)1(\beta)}\notag\\&-\Gamma^{(i-k+1)(\alpha)}_{1(\alpha)1(\beta)}\Gamma^{1(\beta)}_{1(\alpha)1(\beta)}-\overset{}{\underset{\sigma\notin\{\alpha,\beta\}}{\sum}}\,\Gamma^{(i-k+1)(\alpha)}_{1(\alpha)1(\beta)}\Gamma^{1(\beta)}_{1(\sigma)1(\beta)}\notag\\&-\overset{}{\underset{\sigma\notin\{\alpha,\beta\}}{\sum}}\,\Gamma^{(i-k+1)(\alpha)}_{1(\alpha)1(\sigma)}\Gamma^{1(\sigma)}_{1(\beta)1(\sigma)}
		\notag\\&=\cancel{\Gamma^{(i-k+1)(\alpha)}_{1(\beta)1(\alpha)}\Gamma^{1(\alpha)}_{1(\beta)1(\alpha)}}-\overset{i-k+2}{\underset{l=2}{\sum}}\,\Gamma^{(i-k-l+4)(\alpha)}_{2(\alpha)2(\alpha)}\Gamma^{l(\alpha)}_{1(\beta)1(\alpha)}\notag\\&-\cancel{\Gamma^{(i-k+1)(\alpha)}_{1(\beta)1(\alpha)}\Gamma^{1(\alpha)}_{1(\alpha)1(\beta)}}-\overset{i}{\underset{l=k+1}{\sum}}\,\Gamma^{(i-l+1)(\alpha)}_{1(\beta)1(\alpha)}\Gamma^{(l-k+1)(\alpha)}_{1(\alpha)1(\beta)}\notag\\&-\Gamma^{(i-k+1)(\alpha)}_{1(\alpha)1(\beta)}\Gamma^{1(\beta)}_{1(\alpha)1(\beta)}
		\notag\\&+\overset{}{\underset{\sigma\notin\{\alpha,\beta\}}{\sum}}\,\bigg(\Gamma^{(i-k+1)(\alpha)}_{1(\sigma)1(\alpha)}\Gamma^{1(\alpha)}_{1(\beta)1(\alpha)}-\Gamma^{(i-k+1)(\alpha)}_{1(\alpha)1(\beta)}\Gamma^{1(\beta)}_{1(\sigma)1(\beta)}-\Gamma^{(i-k+1)(\alpha)}_{1(\alpha)1(\sigma)}\Gamma^{1(\sigma)}_{1(\beta)1(\sigma)}\bigg)
		\notag\\&=-\overset{i-k+2}{\underset{l=2}{\sum}}\,\Gamma^{(i-k-l+4)(\alpha)}_{2(\alpha)2(\alpha)}\Gamma^{l(\alpha)}_{1(\beta)1(\alpha)}-\overset{i-k+1}{\underset{t=2}{\sum}}\,\Gamma^{(i-k-t+2)(\alpha)}_{1(\beta)1(\alpha)}\Gamma^{t(\alpha)}_{1(\alpha)1(\beta)}\notag\\&-\Gamma^{(i-k+1)(\alpha)}_{1(\alpha)1(\beta)}\Gamma^{1(\beta)}_{1(\alpha)1(\beta)}
		\notag\\&+\overset{}{\underset{\sigma\notin\{\alpha,\beta\}}{\sum}}\,\bigg(\Gamma^{(i-k+1)(\alpha)}_{1(\sigma)1(\alpha)}\Gamma^{1(\alpha)}_{1(\beta)1(\alpha)}-\Gamma^{(i-k+1)(\alpha)}_{1(\alpha)1(\beta)}\Gamma^{1(\beta)}_{1(\sigma)1(\beta)}-\Gamma^{(i-k+1)(\alpha)}_{1(\alpha)1(\sigma)}\Gamma^{1(\sigma)}_{1(\beta)1(\sigma)}\bigg)
		\notag\\&=-\Gamma^{2(\alpha)}_{2(\alpha)2(\alpha)}\Gamma^{(i-k+2)(\alpha)}_{1(\beta)1(\alpha)}-\overset{i-k+1}{\underset{l=2}{\sum}}\,\bigg(\Gamma^{(i-k-l+4)(\alpha)}_{2(\alpha)2(\alpha)}+\Gamma^{(i-k-l+2)(\alpha)}_{1(\beta)1(\alpha)}\bigg)\Gamma^{l(\alpha)}_{1(\beta)1(\alpha)}\notag\\&-\Gamma^{(i-k+1)(\alpha)}_{1(\alpha)1(\beta)}\Gamma^{1(\beta)}_{1(\alpha)1(\beta)}
		\notag\\&+\overset{}{\underset{\sigma\notin\{\alpha,\beta\}}{\sum}}\,\bigg(\Gamma^{(i-k+1)(\alpha)}_{1(\sigma)1(\alpha)}\Gamma^{1(\alpha)}_{1(\beta)1(\alpha)}-\Gamma^{(i-k+1)(\alpha)}_{1(\alpha)1(\beta)}\Gamma^{1(\beta)}_{1(\sigma)1(\beta)}-\Gamma^{(i-k+1)(\alpha)}_{1(\alpha)1(\sigma)}\Gamma^{1(\sigma)}_{1(\beta)1(\sigma)}\bigg)
		\notag\\&=-\Gamma^{2(\alpha)}_{2(\alpha)2(\alpha)}\Gamma^{(i-k+2)(\alpha)}_{1(\beta)1(\alpha)}-\overset{i-k+1}{\underset{l=2}{\sum}}\,\bigg(\Gamma^{(i-k-l+4)(\alpha)}_{2(\alpha)2(\alpha)}-\Gamma^{(i-k-l+2)(\alpha)}_{1(\alpha)1(\alpha)}\bigg)\Gamma^{l(\alpha)}_{1(\beta)1(\alpha)}\notag\\&+\underset{\sigma\notin\{\alpha,\beta\}}{\sum}\,\overset{i-k+1}{\underset{l=2}{\sum}}\,\Gamma^{(i-k-l+2)(\alpha)}_{1(\sigma)1(\alpha)}\Gamma^{l(\alpha)}_{1(\beta)1(\alpha)}-\Gamma^{(i-k+1)(\alpha)}_{1(\alpha)1(\beta)}\Gamma^{1(\beta)}_{1(\alpha)1(\beta)}
		\notag\\&+\overset{}{\underset{\sigma\notin\{\alpha,\beta\}}{\sum}}\,\bigg(\Gamma^{(i-k+1)(\alpha)}_{1(\sigma)1(\alpha)}\Gamma^{1(\alpha)}_{1(\beta)1(\alpha)}-\Gamma^{(i-k+1)(\alpha)}_{1(\alpha)1(\beta)}\Gamma^{1(\beta)}_{1(\sigma)1(\beta)}-\Gamma^{(i-k+1)(\alpha)}_{1(\alpha)1(\sigma)}\Gamma^{1(\sigma)}_{1(\beta)1(\sigma)}\bigg)
		\notag
	\end{align}
	where
	\begin{align}
		-\Gamma^{2(\alpha)}_{2(\alpha)2(\alpha)}\Gamma^{(i-k+2)(\alpha)}_{1(\beta)1(\alpha)}-\Gamma^{(i-k+1)(\alpha)}_{1(\alpha)1(\beta)}\Gamma^{1(\beta)}_{1(\alpha)1(\beta)}&=\overset{i-k+1}{\underset{l=2}{\sum}}\,\bigg(\Gamma^{(i-k-l+4)(\alpha)}_{2(\alpha)2(\alpha)}-\Gamma^{(i-k-l+2)(\alpha)}_{1(\alpha)1(\alpha)}\bigg)\,\Gamma^{l(\alpha)}_{1(\beta)1(\alpha)}
		\notag
	\end{align}
	by means of \eqref{fulm}. This yields
	\begin{align}
		R^{i(\alpha)}_{1(\beta)k(\alpha)1(\beta)}&=\underset{\sigma\notin\{\alpha,\beta\}}{\sum}\,\overset{i-k+1}{\underset{l=2}{\sum}}\,\Gamma^{(i-k-l+2)(\alpha)}_{1(\sigma)1(\alpha)}\Gamma^{l(\alpha)}_{1(\beta)1(\alpha)}
		\notag\\&+\overset{}{\underset{\sigma\notin\{\alpha,\beta\}}{\sum}}\,\bigg(\Gamma^{(i-k+1)(\alpha)}_{1(\sigma)1(\alpha)}\Gamma^{1(\alpha)}_{1(\beta)1(\alpha)}-\Gamma^{(i-k+1)(\alpha)}_{1(\alpha)1(\beta)}\Gamma^{1(\beta)}_{1(\sigma)1(\beta)}-\Gamma^{(i-k+1)(\alpha)}_{1(\alpha)1(\sigma)}\Gamma^{1(\sigma)}_{1(\beta)1(\sigma)}\bigg)
		\notag
	\end{align}
	where for each $\sigma\notin\{\alpha,\beta\}$ we have
	\begin{align}
		-\Gamma^{(i-k+1)(\alpha)}_{1(\alpha)1(\beta)}\Gamma^{1(\beta)}_{1(\sigma)1(\beta)}-\Gamma^{(i-k+1)(\alpha)}_{1(\alpha)1(\sigma)}\Gamma^{1(\sigma)}_{1(\beta)1(\sigma)}&=-\overset{i-k+1}{\underset{t=1}{\sum}}\,\Gamma^{(i-k-t+2)(\alpha)}_{1(\epsilon)1(\alpha)}\,\Gamma^{t(\alpha)}_{1(\beta)1(\alpha)}
		\notag
	\end{align}
	by means of \eqref{Blemma}. Thus
	\begin{align}
		R^{i(\alpha)}_{1(\beta)k(\alpha)1(\beta)}&=\underset{\sigma\notin\{\alpha,\beta\}}{\sum}\,\overset{i-k+1}{\underset{l=2}{\sum}}\,\Gamma^{(i-k-l+2)(\alpha)}_{1(\sigma)1(\alpha)}\Gamma^{l(\alpha)}_{1(\beta)1(\alpha)}
		\notag\\&+\overset{}{\underset{\sigma\notin\{\alpha,\beta\}}{\sum}}\,\bigg(\Gamma^{(i-k+1)(\alpha)}_{1(\sigma)1(\alpha)}\Gamma^{1(\alpha)}_{1(\beta)1(\alpha)}-\overset{i-k+1}{\underset{t=1}{\sum}}\,\Gamma^{(i-k-t+2)(\alpha)}_{1(\sigma)1(\alpha)}\,\Gamma^{t(\alpha)}_{1(\beta)1(\alpha)}\bigg)
		\notag\\&=\underset{\sigma\notin\{\alpha,\beta\}}{\sum}\,\bigg(\cancel{\overset{i-k+1}{\underset{l=2}{\sum}}\,\Gamma^{(i-k-l+2)(\alpha)}_{1(\sigma)1(\alpha)}\Gamma^{l(\alpha)}_{1(\beta)1(\alpha)}}+\bcancel{\Gamma^{(i-k+1)(\alpha)}_{1(\sigma)1(\alpha)}\Gamma^{1(\alpha)}_{1(\beta)1(\alpha)}}
		\notag\\&-\bcancel{\Gamma^{(i-k+1)(\alpha)}_{1(\sigma)1(\alpha)}\,\Gamma^{1(\alpha)}_{1(\beta)1(\alpha)}}-\cancel{\overset{i-k+1}{\underset{t=2}{\sum}}\,\Gamma^{(i-k-t+2)(\alpha)}_{1(\sigma)1(\alpha)}\,\Gamma^{t(\alpha)}_{1(\beta)1(\alpha)}}\bigg)=0.
		\notag
	\end{align}
	\textbf{Case 7: $\alpha=\beta\notin\{\gamma,\epsilon\}$, $\gamma\neq\epsilon$.} Our goal is to prove that
	\begin{align}
		R^{i(\alpha)}_{h(\epsilon)k(\gamma)j(\alpha)}&=\xcancel{\partial_{k(\gamma)}\Gamma^{i(\alpha)}_{h(\epsilon)j(\alpha)}}-\xcancel{\partial_{h(\epsilon)}\Gamma^{i(\alpha)}_{k(\gamma)j(\alpha)}}\notag\\&+\overset{r}{\underset{\sigma=1}{\sum}}\,\overset{m_\sigma}{\underset{l=1}{\sum}}\,\bigg(\Gamma^{i(\alpha)}_{k(\gamma)l(\sigma)}\Gamma^{l(\sigma)}_{h(\epsilon)j(\alpha)}-\Gamma^{i(\alpha)}_{h(\epsilon)l(\sigma)}\Gamma^{l(\sigma)}_{k(\gamma)j(\alpha)}\bigg)
		\notag\\&=\overset{m_\alpha}{\underset{l=1}{\sum}}\,\bigg(\Gamma^{i(\alpha)}_{k(\gamma)l(\alpha)}\Gamma^{l(\alpha)}_{h(\epsilon)j(\alpha)}-\Gamma^{i(\alpha)}_{h(\epsilon)l(\alpha)}\Gamma^{l(\alpha)}_{k(\gamma)j(\alpha)}\bigg)
		\label{Rdef_caso7}
	\end{align}
	vanishes. This trivially holds when $h\geq2$ or $k\geq2$. Let us then fix $h=k=1$. We have
	\begin{align}
		R^{i(\alpha)}_{1(\epsilon)1(\gamma)j(\alpha)}&=\overset{m_\alpha}{\underset{l=1}{\sum}}\,\bigg(\Gamma^{i(\alpha)}_{1(\gamma)l(\alpha)}\Gamma^{l(\alpha)}_{1(\epsilon)j(\alpha)}-\Gamma^{i(\alpha)}_{1(\epsilon)l(\alpha)}\Gamma^{l(\alpha)}_{1(\gamma)j(\alpha)}\bigg)
		\notag\\&=\overset{i}{\underset{l=j}{\sum}}\,\Gamma^{(i-l+1)(\alpha)}_{1(\gamma)1(\alpha)}\Gamma^{(l-j+1)(\alpha)}_{1(\epsilon)1(\alpha)}-\overset{i}{\underset{l=j}{\sum}}\,\Gamma^{(i-l+1)(\alpha)}_{1(\epsilon)1(\alpha)}\Gamma^{(l-j+1)(\alpha)}_{1(\gamma)1(\alpha)}
		\notag\\&=\overset{i}{\underset{l=j}{\sum}}\,\Gamma^{(i-l+1)(\alpha)}_{1(\gamma)1(\alpha)}\Gamma^{(l-j+1)(\alpha)}_{1(\epsilon)1(\alpha)}-\overset{i}{\underset{t=j}{\sum}}\,\Gamma^{(t-j+1)(\alpha)}_{1(\epsilon)1(\alpha)}\Gamma^{(i-t+1)(\alpha)}_{1(\gamma)1(\alpha)}=0.
		\notag
	\end{align}
	\textbf{Case 8: $\alpha=\gamma\notin\{\beta,\epsilon\}$, $\beta\neq\epsilon$.} Our goal is to prove that
	\begin{align}
		R^{i(\alpha)}_{h(\epsilon)k(\alpha)j(\beta)}&=\partial_{k(\alpha)}\xcancel{\Gamma^{i(\alpha)}_{h(\epsilon)j(\beta)}}-\xcancel{\partial_{h(\epsilon)}\Gamma^{i(\alpha)}_{k(\alpha)j(\beta)}}\notag\\&+\overset{r}{\underset{\sigma=1}{\sum}}\,\overset{m_\sigma}{\underset{l=1}{\sum}}\,\bigg(\Gamma^{i(\alpha)}_{k(\alpha)l(\sigma)}\Gamma^{l(\sigma)}_{h(\epsilon)j(\beta)}-\Gamma^{i(\alpha)}_{h(\epsilon)l(\sigma)}\Gamma^{l(\sigma)}_{k(\alpha)j(\beta)}\bigg)
		\notag\\&=\overset{m_\alpha}{\underset{l=1}{\sum}}\,\bigg(\Gamma^{i(\alpha)}_{k(\alpha)l(\alpha)}\xcancel{\Gamma^{l(\alpha)}_{h(\epsilon)j(\beta)}}-\Gamma^{i(\alpha)}_{h(\epsilon)l(\alpha)}\Gamma^{l(\alpha)}_{k(\alpha)j(\beta)}\bigg)\notag\\&+\overset{m_\beta}{\underset{l=1}{\sum}}\,\bigg(\Gamma^{i(\alpha)}_{k(\alpha)l(\beta)}\Gamma^{l(\beta)}_{h(\epsilon)j(\beta)}-\xcancel{\Gamma^{i(\alpha)}_{h(\epsilon)l(\beta)}}\Gamma^{l(\beta)}_{k(\alpha)j(\beta)}\bigg)\notag\\&+\overset{m_\epsilon}{\underset{l=1}{\sum}}\,\bigg(\Gamma^{i(\alpha)}_{k(\alpha)l(\epsilon)}\Gamma^{l(\epsilon)}_{h(\epsilon)j(\beta)}-\Gamma^{i(\alpha)}_{h(\epsilon)l(\epsilon)}\xcancel{\Gamma^{l(\epsilon)}_{k(\alpha)j(\beta)}}\bigg)
		\notag\\&=-\overset{i}{\underset{l=k}{\sum}}\,\Gamma^{i(\alpha)}_{h(\epsilon)l(\alpha)}\Gamma^{l(\alpha)}_{k(\alpha)j(\beta)}+\Gamma^{i(\alpha)}_{k(\alpha)1(\beta)}\Gamma^{1(\beta)}_{h(\epsilon)j(\beta)}+\Gamma^{i(\alpha)}_{k(\alpha)1(\epsilon)}\Gamma^{1(\epsilon)}_{h(\epsilon)j(\beta)}
		\label{Rdef_caso8}
	\end{align}
	vanishes. If $j\geq2$ or $h\geq2$ then \eqref{Rdef_caso8} trivially vanishes. Let us then fix $j=1$ and $h=1$. We have
	\begin{align}
		R^{i(\alpha)}_{1(\epsilon)k(\alpha)1(\beta)}&=-\overset{i}{\underset{l=k}{\sum}}\,\Gamma^{i(\alpha)}_{1(\epsilon)l(\alpha)}\Gamma^{l(\alpha)}_{k(\alpha)1(\beta)}+\Gamma^{i(\alpha)}_{k(\alpha)1(\beta)}\Gamma^{1(\beta)}_{1(\epsilon)1(\beta)}+\Gamma^{i(\alpha)}_{k(\alpha)1(\epsilon)}\Gamma^{1(\epsilon)}_{1(\epsilon)1(\beta)}
		\notag\\&=-\overset{i}{\underset{l=k}{\sum}}\,\Gamma^{(i-l+1)(\alpha)}_{1(\epsilon)1(\alpha)}\Gamma^{(l-k+1)(\alpha)}_{1(\alpha)1(\beta)}+\Gamma^{(i-k+1)(\alpha)}_{1(\alpha)1(\beta)}\Gamma^{1(\beta)}_{1(\epsilon)1(\beta)}+\Gamma^{(i-k+1)(\alpha)}_{1(\alpha)1(\epsilon)}\Gamma^{1(\epsilon)}_{1(\epsilon)1(\beta)}
		\notag\\&=-\overset{i-k+1}{\underset{t=1}{\sum}}\,\Gamma^{(i-t-k+2)(\alpha)}_{1(\epsilon)1(\alpha)}\Gamma^{t(\alpha)}_{1(\alpha)1(\beta)}+\Gamma^{(i-k+1)(\alpha)}_{1(\alpha)1(\beta)}\Gamma^{1(\beta)}_{1(\epsilon)1(\beta)}+\Gamma^{(i-k+1)(\alpha)}_{1(\alpha)1(\epsilon)}\Gamma^{1(\epsilon)}_{1(\epsilon)1(\beta)}
		\notag
	\end{align}
	which vanishes by means of \eqref{Blemma}.
	\\\textbf{Case 9: $\beta=\gamma\notin\{\alpha,\epsilon\}$, $\alpha\neq\epsilon$.} Our goal is to prove that
	\begin{align}
		R^{i(\alpha)}_{h(\epsilon)k(\beta)j(\beta)}&=\partial_{k(\beta)}\xcancel{\Gamma^{i(\alpha)}_{h(\epsilon)j(\beta)}}-\xcancel{\partial_{h(\epsilon)}\Gamma^{i(\alpha)}_{k(\beta)j(\beta)}}\notag\\&+\overset{r}{\underset{\sigma=1}{\sum}}\,\overset{m_\sigma}{\underset{l=1}{\sum}}\,\bigg(\Gamma^{i(\alpha)}_{k(\beta)l(\sigma)}\Gamma^{l(\sigma)}_{h(\epsilon)j(\beta)}-\Gamma^{i(\alpha)}_{h(\epsilon)l(\sigma)}\Gamma^{l(\sigma)}_{k(\beta)j(\beta)}\bigg)
		\notag\\&=\overset{m_\alpha}{\underset{l=1}{\sum}}\,\bigg(\Gamma^{i(\alpha)}_{k(\beta)l(\alpha)}\xcancel{\Gamma^{l(\alpha)}_{h(\epsilon)j(\beta)}}-\Gamma^{i(\alpha)}_{h(\epsilon)l(\alpha)}\Gamma^{l(\alpha)}_{k(\beta)j(\beta)}\bigg)\notag\\&+\overset{m_\beta}{\underset{l=1}{\sum}}\,\bigg(\Gamma^{i(\alpha)}_{k(\beta)l(\beta)}\Gamma^{l(\beta)}_{h(\epsilon)j(\beta)}-\xcancel{\Gamma^{i(\alpha)}_{h(\epsilon)l(\beta)}}\Gamma^{l(\beta)}_{k(\beta)j(\beta)}\bigg)\notag\\&+\overset{m_\epsilon}{\underset{l=1}{\sum}}\,\bigg(\xcancel{\Gamma^{i(\alpha)}_{k(\beta)l(\epsilon)}}\Gamma^{l(\epsilon)}_{h(\epsilon)j(\beta)}-\Gamma^{i(\alpha)}_{h(\epsilon)l(\epsilon)}\Gamma^{l(\epsilon)}_{k(\beta)j(\beta)}\bigg)
		\notag\\&=-\overset{i}{\underset{l=1}{\sum}}\,\Gamma^{i(\alpha)}_{h(\epsilon)l(\alpha)}\Gamma^{l(\alpha)}_{k(\beta)j(\beta)}+\Gamma^{i(\alpha)}_{k(\beta)1(\beta)}\Gamma^{1(\beta)}_{h(\epsilon)j(\beta)}-\Gamma^{i(\alpha)}_{h(\epsilon)1(\epsilon)}\Gamma^{1(\epsilon)}_{k(\beta)j(\beta)}
		\label{Rdef_caso9}
	\end{align}
	vanishes. If $j\geq2$ or $h\geq2$ or $k\geq2$ then \eqref{Rdef_caso9} trivially vanishes. Let us then fix $j=h=k=1$. We have
	\begin{align}
		R^{i(\alpha)}_{1(\epsilon)1(\beta)1(\beta)}&=-\overset{i}{\underset{l=1}{\sum}}\,\Gamma^{(i-l+1)(\alpha)}_{1(\epsilon)1(\alpha)}\Gamma^{l(\alpha)}_{1(\beta)1(\beta)}+\Gamma^{i(\alpha)}_{1(\beta)1(\beta)}\Gamma^{1(\beta)}_{1(\epsilon)1(\beta)}-\Gamma^{i(\alpha)}_{1(\epsilon)1(\epsilon)}\Gamma^{1(\epsilon)}_{1(\beta)1(\beta)}
		\notag\\&=\overset{i}{\underset{l=1}{\sum}}\,\Gamma^{(i-l+1)(\alpha)}_{1(\epsilon)1(\alpha)}\Gamma^{l(\alpha)}_{1(\beta)1(\alpha)}-\Gamma^{i(\alpha)}_{1(\beta)1(\alpha)}\Gamma^{1(\beta)}_{1(\epsilon)1(\beta)}-\Gamma^{i(\alpha)}_{1(\epsilon)1(\alpha)}\Gamma^{1(\epsilon)}_{1(\beta)1(\epsilon)}
		\notag
	\end{align}
	which vanishes by means of \eqref{Blemma}.
	\\\textbf{Case 10: $\gamma=\epsilon\notin\{\alpha,\beta\}$, $\alpha\neq\beta$.} Our goal is to prove that
	\begin{align}
		R^{i(\alpha)}_{h(\gamma)k(\gamma)j(\beta)}&=\partial_{k(\gamma)}\xcancel{\Gamma^{i(\alpha)}_{h(\gamma)j(\beta)}}-\partial_{h(\gamma)}\xcancel{\Gamma^{i(\alpha)}_{k(\gamma)j(\beta)}}\notag\\&+\overset{r}{\underset{\sigma=1}{\sum}}\,\overset{m_\sigma}{\underset{l=1}{\sum}}\,\bigg(\Gamma^{i(\alpha)}_{k(\gamma)l(\sigma)}\Gamma^{l(\sigma)}_{h(\gamma)j(\beta)}-\Gamma^{i(\alpha)}_{h(\gamma)l(\sigma)}\Gamma^{l(\sigma)}_{k(\gamma)j(\beta)}\bigg)
		\notag\\&=\overset{m_\gamma}{\underset{l=1}{\sum}}\,\bigg(\Gamma^{i(\alpha)}_{k(\gamma)l(\gamma)}\Gamma^{l(\gamma)}_{h(\gamma)j(\beta)}-\Gamma^{i(\alpha)}_{h(\gamma)l(\gamma)}\Gamma^{l(\gamma)}_{k(\gamma)j(\beta)}\bigg)
		\notag\\&=\Gamma^{i(\alpha)}_{k(\gamma)1(\gamma)}\Gamma^{1(\gamma)}_{h(\gamma)j(\beta)}-\Gamma^{i(\alpha)}_{h(\gamma)1(\gamma)}\Gamma^{1(\gamma)}_{k(\gamma)j(\beta)}
		\label{Rdef_caso10}
	\end{align}
	vanishes. If $j\geq2$ or $h\geq2$ or $k\geq2$ then \eqref{Rdef_caso10} trivially vanishes. Let us then fix $j=h=k=1$. We have
	\begin{align}
		R^{i(\alpha)}_{1(\gamma)1(\gamma)1(\beta)}&=\Gamma^{i(\alpha)}_{1(\gamma)1(\gamma)}\Gamma^{1(\gamma)}_{1(\gamma)1(\beta)}-\Gamma^{i(\alpha)}_{1(\gamma)1(\gamma)}\Gamma^{1(\gamma)}_{1(\gamma)1(\beta)}=0.
		\notag
	\end{align}
	\\\textbf{Case 11:} $\alpha$, $\beta$, $\gamma$ and $\epsilon$ are pairwise distinct. Our goal is to prove that
	\begin{align}
		R^{i(\alpha)}_{h(\epsilon)k(\gamma)j(\beta)}&=\partial_{k(\gamma)}\Gamma^{i(\alpha)}_{h(\epsilon)j(\beta)}-\partial_{h(\epsilon)}\Gamma^{i(\alpha)}_{k(\gamma)j(\beta)}\notag\\&+\overset{r}{\underset{\sigma=1}{\sum}}\,\overset{m_\sigma}{\underset{l=1}{\sum}}\,\bigg(\Gamma^{i(\alpha)}_{k(\gamma)l(\sigma)}\Gamma^{l(\sigma)}_{h(\epsilon)j(\beta)}-\Gamma^{i(\alpha)}_{h(\epsilon)l(\sigma)}\Gamma^{l(\sigma)}_{k(\gamma)j(\beta)}\bigg)
		\label{Rdef_caso11}
	\end{align}
	vanishes. Since the only Christoffel symbols appearing have each of the three indices belonging to a different Jordan block, \eqref{Rdef_caso11} trivially vanishes.
	
	This concludes the proof about the flatness of $\nabla$.
	\end{proof}


\begin{thebibliography}{99}

\bibitem{ABLR1} A. Arsie, A. Buryak, P. Lorenzoni, P. Rossi. \emph{Semisimple flat F-manifolds in higher genus}. arXiv:2001.05599, accepted for publication
 in Comm. Math. Phys.

\bibitem{ABLR2} A. Arsie, A. Buryak, P. Lorenzoni, P. Rossi. {\it Flat F-manifolds, F-CohFTs, and integrable hierarchies}. Comm. Math. Phys. (2021).

\bibitem{ALcomplex}
A. Arsie and P. Lorenzoni \emph{Complex reflection groups, logarithmic connections and bi-flat F-manifolds}, Lett Math Phys (2017). 

\bibitem{ALimrn} 
A. Arsie and P. Lorenzoni \emph{F-manifolds with eventual identities, bidifferential calculus and twisted Lenard-Magri chains},
 Int. Math. Res. Not. IMRN 2012 doi.10.1093/imrn/rns172.
 
\bibitem{ALjgp} 
A. Arsie and P. Lorenzoni \emph{From Darboux-Egorov system to bi-flat F-manifolds}, Journal of Geometry and Physics, Vol 70, (2013), 98-116. 

\bibitem{ALmulti}
A. Arsie and P. Lorenzoni \emph{F-manifolds, multi-flat structures and and Painlev\'e trascendents},  Asian J. Math, Vol. 23, No. 5, pp. 877--904, (2019).

\bibitem{BR} A. Buryak, P. Rossi. {\it Extended $r$-spin theory in all genera and the discrete KdV hierarchy}. Adv. Math. 386 (2021).

\bibitem{DH}
L. David and C. Hertling, \emph{Regular F-manifolds: initial conditions and Frobenius metrics}, arxiv:1411.4553.

\bibitem{FN} A. Fr\"{o}licher, A. Nijenhuis,
\emph{Theory of vector-valued differential forms},
 Proc. Ned. Acad. Wetensch. Ser. A {\bf 59} (1956), 338--359.
 
 \bibitem{H} A. Haantjes, 
 \emph{On $X_{n-1}$-forming sets of eigenvectors}, Indagationes Mathematicae 17 (1955),
no. 2, 158–162.

 \bibitem{HM} C. Hertling and Yu. Manin, \emph{Weak Frobenius Manifolds}, International Mathematics Research Notices (1999), {\bf 6}
 
 \bibitem{KM}
 H. Kawakami and T. Mano, \emph{Regular flat structure and generalized Okubo system}, arXiv:1702.03074.
 
 \bibitem{KMS15} M. Kato, T. Mano, J. Sekiguchi. \emph{Flat structure on the space of isomonodromic deformations}. SIGMA Symmetry Integrability Geom. Methods Appl. 16 (2020), paper no.~110.

\bibitem{KMS}
Y. Konishi, S. Minabe and Y. Shiraishi, \emph{Almost duality for Saito structure and complex reflection groups}, J. Integrable Syst. \textbf{3} (2018), no. 1

\bibitem{KK} Y. Kodama and B. Konopelchenko, \emph{Confluence of hypergeometric functions and integrable hydrodynamic type systems}, TMF {\bf 188}, Number 3, pp 429--455 (2016).

\bibitem{Lauricella}
G. Lauricella, \emph{Sulle funzioni ipergeometriche a pi\`u variabili}, Rendiconti del Circolo Mat. Palermo 7 (1893) 111-158.

\bibitem{Looijenga}
E. Looijenga, \emph{Uniformization by Lauricella functions: an overview of the theory of Deligne-
Mostow},  in Arithmetic and Geometry Around Hypergeometric Functions, Prog. Math.
260 Birkh\"auser, Basel (2007) 207-244.

\bibitem{LM} P. Lorenzoni and F. Magri, \emph{A cohomological construction of integrable hierarchies of hydrodynamic type},  Int Math Res Notices (2005) 2005 (34): 2087-2100. 


\bibitem{L2006}
P. Lorenzoni, \emph{Flat bidifferential ideals and semihamiltonian PDEs},
  J. Phys. A: Math. Gen.  39 (2006) 13701-13715.

\bibitem{Limrn}
P. Lorenzoni, \emph{Darboux-Egorov system, bi-flat F-manifolds and Painlev\'e VI}, IMRN (2014), Vol. 12, 3279-3302. 


\bibitem{LPR}
P. Lorenzoni, M. Pedroni, A. Raimondo, \emph{F-manifolds and integrable systems
 of hydrodynamic type}, Archivum Mathematicum 
{\bf 47} (2011), 163-180.

\bibitem{LP}
P. Lorenzoni and M. Pedroni,  \emph{Natural connections for semi-Hamiltonian systems: the case of the $\epsilon$-system}, Lett. Math. Phys. {\bf 97} (2011), no. 1, 85–108.

\bibitem{LP22}
P. Lorenzoni and S. Perletti,  \emph{Regular non-semisimple Dubrovin-Frobenius manifolds}, arXiv:2202.07383.

\bibitem{Magri} F. Magri, 
 \emph{Lenard chains for classical integrable systems}, 
 Theoretical and Mathematical Physics, {\bf 137} (2003), no. 3, 1716--1722.

\bibitem{manin} Yu. Manin,
\emph{F-manifolds with flat structure and Dubrovin's duality},
Adv. Math. {\bf 198} (2005), no. 1, 5--26.

\bibitem{pavlov}
M.V. Pavlov, \emph{Integrable hydrodynamic chains}, J. Math. Phys. {\bf 44} (2003), no. 9, 4134--4156. 

\bibitem{PSS}
M.V. Pavlov, R.A. Sharipov, S.I. Svinolupov,
\emph{Invariant Integrability Criterion for the Equations of Hydrodynamical Type}

\bibitem{Sevennec}
B. Sevennec, \emph{G\'eom\'etrie des syst\`emes hyperboliques de lois de conservation}, M\'emoires de la Soci\'et\'e Math\'ematique de France, {\bf 56}, pag 1--125 (1994).
 
\bibitem{ts} S.P. Tsarev,
\emph{The geometry of Hamiltonian systems of hydrodynamic type. The
generalised hodograph transform},
USSR Izv. {\bf 37} (1991) 397--419.

\end{thebibliography}
\end{document}